\numberwithin{equation}{section}
\newtheorem{thm}{Theorem}[section]
\newtheorem{corollario}{Corollary}
\numberwithin{corollario}{section}
\newtheorem{lemma}{Lemma}
\numberwithin{lemma}{section}
\newtheorem{prop}{Proposition}
\numberwithin{prop}{section}
\theoremstyle{definition}
\newtheorem{defn}[thm]{Definition}
\theoremstyle{remark}
\newtheorem{oss}[thm]{Remark}
\newcommand{\vertiii}[1]{{\left\vert\kern-0.25ex\left\vert\kern-0.25ex\left\vert #1 \right\vert\kern-0.25ex\right\vert\kern-0.25ex\right\vert}}
\newcommand{\N}{\mathbb{N}}
\newcommand{\Z}{\mathbb{Z}}
\newcommand{\R}{\mathbb{R}}
\newcommand{\T}{\mathbb{T}}
\newcommand{\F}{\mathcal{F}}
\newcommand{\I}{\mathcal{I}}
\renewcommand{\P}{\mathbf{P}}
\newcommand{\C}{\mathscr{C}}
\newcommand{\dvg}{\mathord{{\rm div}}}
\newcommand{\Id}{\mathrm{Id}}
\newcommand{\ktimes}{k_i \otimes k_i}
\newcommand{\varz}{\overline{z}}
\newcommand{\varp}{\overline{p}}
\newcommand{\gammai}{\Gamma_{k_i}}
\newcommand{\lambdai}{\Lambda_{k_i}}
\newcommand{\AntiDiv}{\mathcal{R}}
\newcommand{\proj}{\mathbf{P}}
\DeclareMathOperator{\supp}{Supp}
\DeclareMathOperator{\Tr}{Tr}
\newcommand{\qq}{_{q+1}}
\newcommand{\qqq}{_{q+2}}
\newcommand{\SymOt}{\odot}
\newcommand{\SymOtNoTr}{\,\mathring{\odot}\,}
\newcommand{\odelta}{\overline{\delta}}
\newcommand{\oM}{\overline{M}}
\begin{document}
\title[Dissipative solutions to randomly forced 3D Euler equations]{Dissipative solutions to randomly forced 3D Euler equations}
\author[U. Pappalettera]{Umberto Pappalettera}
  \address{Departement Mathematik und Informatik, Universit\"at Basel, Spiegelgasse 1,
4051 Basel, Switzerland}
  \email{\href{mailto:umberto.pappalettera@unibas.ch}{umberto.pappalettera@unibas.ch}}
\author[F. Triggiano]{Francesco Triggiano}
  \address{Scuola Normale Superiore, Piazza dei Cavalieri, 7, 56126 Pisa, Italia}
  \email{\href{mailto:francesco.triggiano@sns.it}{francesco.triggiano@sns.it}}

\keywords{Stochastic Euler equations, Local energy inequality, Convex integration, Ergodicity}
\date{\today}
\begin{abstract}
The purpose of this work is twofold. First, we construct probabilistically strong solutions to the three-dimensional Euler equations perturbed by additive noise that are $\mathbb{P}$-almost surely continuous in time, H\"older in space, and satisfy the local energy inequality up to an arbitrarily large stopping time.
Second, we prove several non-unique ergodicity results for the forced Euler equations with continuous-in-time external forcing.
The solutions we construct are genuinely random and, almost surely, strictly dissipative and not steady states.
\end{abstract}
\maketitle

\vspace{-1cm}

\setcounter{secnumdepth}{4}
\setcounter{tocdepth}{4}
\tableofcontents

\section{Introduction}
In the first part of this work, we consider the stochastic Euler equations on the three-dimensional torus $\T^3 := \R^3/2\pi\Z^3$ driven by additive noise
\begin{align} \label{eq:SE} \tag{SE}
\begin{cases}
d u 
+
\dvg (u\otimes u)\,dt
+
\nabla p \,dt
=
Q^{1/2} dW,
\\
\dvg(u) = 0.
\end{cases}
\end{align}
Euler equations govern the evolution of an ideal fluid and constitute one of the most important models in fluid dynamics.
The presence of a random external force in \eqref{eq:SE} can help represent many phenomena perturbing the dynamics that are neglected by the deterministic equation, such as endogenous microscopic thermal effects or exogenous influences of the environment on the fluid. 
First investigations on \eqref{eq:SE} date back to the works \cite{MiVa00,Ki09}, where local existence of smooth solutions is proved (see also \cite{BeFl99} for other results in dimension two).
Measure-valued solutions satisfying weak-strong uniqueness and a version of the energy inequality have been constructed in \cite{BrMo21}, building upon results by DiPerna and Majda \cite{DPMa87} for the deterministic Euler equations $Q=0$.
Whether solutions are unique or not remained an open problem until the work \cite{HoZhZh22} by Hofmanov\'a, Zhu, and Zhu, adapting for the first time the convex integration scheme developed by De Lellis and Székelyhidi Jr. in \cite{DLSz09} to the stochastic case, and obtaining non-uniqueness in law for \eqref{eq:SE}.
After \cite{HoZhZh22} made clear that convex integration techniques can be applied to stochastic equations (let us also mention \cite{ChFeFl21} in this regard), a sequence of papers has appeared \cite{HoZhZh25,LuZh24,KiKo24,LuLuZh25} extending the works \cite{DLS13,DLS14,BuDLIsSz15,BuDLIsSz16,Is18,BuDLSzVi19} on the deterministic Euler equations.
In particular, for every $\vartheta \in (0,1/3)$, L\"u, L\"u, and Zhu proved in \cite{LuLuZh25} existence and non-uniqueness of solutions of \eqref{eq:SE} with regularity $u \in C([0,\infty),C^\vartheta(\T^3,\R^3)$ almost surely, which are strictly dissipative up to an arbitrarily large stopping time $\mathfrak{t}$ in the sense that $u$ satisfies almost surely the energy inequality
\begin{align} \label{eq:EI}
    \frac12 \| u(t\wedge\mathfrak{t}) \|_{L^2_x}^2
    \leq
    \frac12 \| u(s\wedge\mathfrak{t}) \|_{L^2_x}^2
    +
    \int_{s \wedge \mathfrak{t}}^{t \wedge \mathfrak{t}}
    \langle u(r), Q^{1/2}dW_r \rangle + 
    \frac12 \mbox{Tr}(Q) (t \wedge \mathfrak{t}-s \wedge \mathfrak{t}),
\end{align}
for every $s <t$, and the inequality is strict if $s<\mathfrak{t}$.
In the present work we investigate solutions of \eqref{eq:SE} that satisfy almost surely the \emph{local energy inequality} up to time $\mathfrak{t}$:
\begin{align} \label{eq:LEI} 
\frac12 d |u|^2
+
\dvg\left( \left( \frac12 |u|^2 + p \right) u \right)dt
-
\mathfrak{q}\,dt
-
u \cdot Q^{1/2} dW
\leq 0,\tag{LEI}
\end{align}
where $\mathfrak{q} := \frac12 \sum_{k} |Q^{1/2}e_k|^2$ for a CONS $\{e_k\}_{k \in \N}$ of the space of $L^2_x$ solenoidal vector fields, $d|u|^2$ and $dW$ are It\=o stochastic differentials, and the equation is meant as an almost sure identity in the sense of distributions.
Notice that \eqref{eq:EI} can be obtained from \eqref{eq:LEI} by space integration.

The importance of a local form of energy inequality towards partial regularity results for \emph{suitable} weak solutions of the Navier-Stokes equations is known since the works of Scheffer \cite{Sc77} and Caffarelli, Kohn, and Nirenberg \cite{CaKoNi82}. Similar results are available for the stochastic Navier-Stokes equations, see \cite{Ro10,FlRo02}.
The local energy balance \eqref{eq:LEI} is a natural condition to impose also on the stochastic Euler equations \eqref{eq:SE}, since any physically relevant $L^3_{t,x}$ zero-viscosity limit of suitable solutions of the stochastic Navier-Stokes equations must inherit this property.
It describes the local change in kinetic energy due to spatial roughness of $u$: indeed, the equality in \eqref{eq:LEI} holds for smooth solutions $u$, and in general the inequality corresponds to no local creation of kinetic energy due to irregularities of $u$.
We refer to the celebrated work of Duchon and Robert \cite{DuRo00} for more details.

In the deterministic setting $Q=0$, existence and non-uniqueness of solutions to the system \eqref{eq:SE}-\eqref{eq:LEI} have been shown in the series of papers \cite{DLSz10,Is22,DLK23,GiKwNo}.
In the present work we provide similar results in the stochastic case $Q \neq 0$.
More specifically, we will prove the following: 
\begin{thm}\label{thm:main1}
    Let $(\Omega, \mathbb{F},\{\mathbb{F}_t\}_{t \geq 0}, \mathbb{P})$ be a stochastic basis with right-continuous and complete filtration supporting a sufficiently smooth-in-space $Q$-Wiener process $Q^{1/2}W$ with zero space average and null divergence. 
    Let $T>0$, $\varkappa\in (0,1)$, $\alpha \in (0,1/7)$ and $E\in C^1_{loc}([0,\infty),\R)$ be given.
    Then there exist two $\{\mathbb{F}_t\}_{t \geq 0}$-progressively measurable stochastic processes 
    \begin{align}
        u : \Omega \to C([0,\infty), C^\alpha(\T^3,\R^3)),
        \qquad
        p: \Omega \to C([0,\infty), C^{2\alpha}(\T^3,\R)),
        \qquad
        \mathbb{P}\mbox{-almost surely,}
    \end{align}
    such that $(u,p)$ is $\mathbb{P}$-almost surely a solution of \eqref{eq:SE} on the time interval $[0,\infty)$ in analytically weak sense, and a stopping time $\mathfrak{t}$ satisfying $\mathbb{P}\{\mathfrak{t} \geq T\} \geq \varkappa$ such that the local energy equality
      \begin{align} \label{eq:LEE} \tag{LEE}
      \frac12 d |u|^2
+
\dvg\left( \left( \frac12 |u|^2 + p \right) u \right)dt
-
\mathfrak{q}\,dt
-
u \cdot Q^{1/2} dW
= -E'dt
  \end{align}
holds $\mathbb{P}$-almost surely in the sense of distributions on the time interval $[0,\mathfrak{t}]$. In particular, \eqref{eq:LEI} holds strictly if $E'>0$.
Finally, non-uniqueness in law for $(u,p)$ holds within this class.
\end{thm}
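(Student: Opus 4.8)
The strategy is the now-standard convex integration scheme, adapted to the stochastic setting and to the presence of the local energy equality \eqref{eq:LEE}, in the spirit of \cite{DLSz10,Is22,DLK23} on the deterministic side and \cite{HoZhZh22,LuLuZh25} on the stochastic side. The plan is to work pathwise after a suitable reduction: first I would subtract the stochastic convolution $z$ solving the linear equation $dz + \nabla p_z\, dt = Q^{1/2}dW$, $\dvg z = 0$, so that $v := u - z$ satisfies a random PDE with the regularity of $z$ (here the smoothness assumption on $Q^{1/2}W$ is used) as a coefficient. I would then set up an iteration producing a sequence $(v_q, p_q, \mathring R_q, \varphi_q)$ solving the Euler–Reynolds system augmented with the scalar error coming from the local energy balance — i.e. $v_q$ solves $\partial_t v_q + \dvg(v_q\otimes v_q) + \dvg(v_q \otimes z + z\otimes v_q) + \nabla p_q = \dvg \mathring R_q$ together with a companion relation tracking $\tfrac12\partial_t |v_q + z|^2 + \dvg(\cdots) - \mathfrak q + E' = \dvg \varphi_q$ (or the analogous scalar-corrector formulation used in \cite{DLSz10,Is22}). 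The inductive estimates are the familiar ones: $\|v_{q+1}-v_q\|_{C^0} \lesssim \delta_{q+1}^{1/2}$, $\|\mathring R_q\|_{C^0} \lesssim \delta_{q+1}\lambda_q^{-\varepsilon}$, and matching bounds on $\varphi_q$, with frequency parameters $\lambda_q \sim a^{b^q}$ and amplitudes $\delta_q \sim \lambda_q^{-2\beta}$ chosen so that $\beta$ can be pushed up to $\alpha$ for $\alpha < 1/7$; the exponent $1/7$ (rather than $1/3$) is the price paid for simultaneously controlling the local energy error, exactly as in the deterministic works.

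The perturbation $v_{q+1} - v_q$ is built from Mikado or Beltrami-type building blocks modulated by slowly varying amplitude coefficients chosen so that their quadratic self-interaction cancels $\mathring R_q$ to leading order, while a second family of corrector flows is added to absorb the scalar energy error $\varphi_q$; the construction must be done with cutoffs in time adapted to the stopping time, so that all objects are $\{\mathbb F_t\}$-adapted and the scheme only needs to close on a random time interval whose length can be made large with probability $\ge \varkappa$ (this is where $\mathfrak t$ and the constant $\varkappa$ enter, via a stopping time measuring the size of $z$ in the relevant Hölder norms). Passing to the limit $q\to\infty$ gives $v \in C([0,\infty), C^\alpha)$, hence $u = v + z$ with the stated regularity, solving \eqref{eq:SE} weakly with $\mathring R_\infty = 0$ and $\varphi_\infty = 0$, the latter being precisely \eqref{eq:LEE}; progressive measurability is inherited from the adaptedness of every step.

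For the final assertion — non-uniqueness in law — I would exploit the freedom in the prescribed energy profile $E$: choosing two different functions $E_1 \ne E_2$ in $C^1_{loc}$ (for instance with different total variation on $[0,T]$, or simply $E_1 \equiv 0$ versus $E_2$ strictly increasing) produces two solutions $(u^{(1)},p^{(1)})$ and $(u^{(2)},p^{(2)})$ on the same stochastic basis whose associated local energy defects differ, hence whose laws on $C([0,\infty),C^\alpha(\T^3,\R^3))$ cannot coincide: the map $u \mapsto$ (distribution given by the left-hand side of \eqref{eq:LEE}) is a measurable functional of the path (together with the driving noise, which has a fixed law), and it takes the deterministic value $-E_i'$, so equality of laws of $u^{(i)}$ would force $E_1' = E_2'$ on $[0,T]$ with probability $\varkappa > 0$, a contradiction. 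One must be slightly careful that $\mathfrak t$ is itself solution-dependent; this is handled by noting that on the event $\{\mathfrak t \ge T\}$, which has probability $\ge \varkappa$ for both solutions, the energy identity holds on all of $[0,T]$, and one compares the laws of the pair $(u, \mathbf 1_{\{\mathfrak t \ge T\}})$ or equivalently restricts attention to that event.

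\medskip

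\noindent\textbf{Main obstacle.} The crux is the simultaneous control of the Reynolds stress $\mathring R_q$ and the scalar local-energy error $\varphi_q$ under the same iteration: the building blocks that efficiently cancel $\mathring R_q$ generically create an energy error of the wrong size, and reconciling the two forces the Hölder threshold down to $1/7$ and demands a delicate choice of correctors and of the interplay between the two families of parameters — all of this while keeping every object adapted to the filtration and the construction localized to the random time interval $[0,\mathfrak t]$. The stochastic bookkeeping (adaptedness of cutoffs, measurability of $\mathfrak t$, uniform-in-$\omega$ estimates on the gluing and mollification steps) is technically heavy but, given \cite{HoZhZh22,LuLuZh25}, essentially routine once the deterministic local-energy machinery of \cite{DLSz10,Is22,DLK23} has been transplanted.
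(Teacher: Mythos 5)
Your outline of the construction itself matches the paper's: the same Da Prato--Debussche reduction $u=v+z$ (here $z=Q^{1/2}W$ directly, since the noise is already solenoidal and mean-free, so no auxiliary pressure $p_z$ is needed), the same augmented Euler--Reynolds system carrying both a stress $R_q$ and a current $\phi_q$, Mikado building blocks with amplitudes cancelling $R_q$ and $\phi_q$, adapted one-sided time mollifications, and a stopping time controlling $\|z\|_{C^{1/2-\delta}_tC^{N_1}_x}$. Two points, however, need attention.

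First, a genuine gap: the convex integration closes only on $(-\infty,\mathfrak{t}]$, so the limit solves \eqref{eq:SE} only up to $\mathfrak{t}$, whereas \cref{thm:main1} asserts a solution on all of $[0,\infty)$. You cannot simply say that passing to the limit gives $v\in C([0,\infty),C^\alpha)$ solving the equation globally; the paper closes this by restarting at time $\mathfrak{t}$ with initial datum $u_{\mathfrak{t}}\in C^\beta_x$ via \cite[Theorem 1.7]{LuLuZh25}, recovering the pressure from $-\Delta \tilde p=\dvg\dvg(\tilde u\otimes\tilde u)$, and gluing. This continuation step is not optional, and in the paper it also delivers the last claim.

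Second, your route to non-uniqueness in law differs from the paper's and is problematic as stated. The paper obtains non-uniqueness because the continuation after $\mathfrak{t}$ is itself non-unique: two distinct continuations $(\tilde u^1,\tilde p^1)\neq(\tilde u^2,\tilde p^2)$ glued to the \emph{same} process on $[0,\mathfrak{t}]$ give two solutions in the class of the theorem (same $E$, same stopping time, \eqref{eq:LEE} satisfied on $[0,\mathfrak{t}]$) with different laws. Your proposal instead varies $E$. Since $E$ is part of the data in the statement, two solutions with $E_1\neq E_2$ do not both lie ``within this class''; they exhibit non-uniqueness of a weaker kind. Moreover, your argument that the laws must then differ relies on reading off $-E_i'$ from the left-hand side of \eqref{eq:LEE}, which contains the stochastic integral $u\cdot Q^{1/2}dW$ and is therefore a functional of the joint law of $(u,W)$, not of the law of $(u,p)$ alone; to distinguish the laws of $u$ itself one would have to pass through the Doob--Meyer decomposition of $\tfrac12\|u(t\wedge\mathfrak{t})\|_{L^2}^2$, which you do not do. Replacing this step by the non-unique continuation argument resolves both issues at once.
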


In the second part of the paper, we move to addressing the problem of \emph{ergodicity} associated with Euler equations.
The so-called ergodic hypothesis postulates the existence of an equilibrium measure $\mu$ over some phase space $H$ of velocity fields $u$, such that the time average of observables $F(u_t)$ over large time intervals tends to $\int_{H} F d\mu$. 
This ergodic assumption is at the basis of several foundational studies of turbulence (see \cite[Chapter IV]{FMRT01} for a thorough discussion) but its validity lacks a rigorous mathematical justification. 
Therefore, in order to gain additional insight into the behaviour of fluids in a turbulent regime, it is of the utmost importance to consider ergodic measures $\mu$ concentrated on solutions to \eqref{eq:SE}.
We point out that the presence of an external forcing introducing energy into the system (on average) is somehow necessary if one is interested in isolating statistically stationary solutions that are strictly dissipative.

Ideally, $\mu$ should be a suitable vanishing-viscosity limit of ergodic measures $\mu^\nu$ associated with the stochastically forced Navier-Stokes equations, cf. also the discussion at page 472 in \cite{Ku04}.
However, the recent work \cite{HoZhZh25} has shown that \eqref{eq:SE} admits non-unique ergodic invariant measures; Moreover, \cite[Theorem 1.5]{HoZhZh25} states that there exists $\vartheta>0$ with the property that \emph{every} statistically stationary solution of \eqref{eq:SE} with finite moments locally in $C_t H^\vartheta_x \cap C^\vartheta L^2_x$ is a vanishing-viscosity limit in law of statistically stationary solutions of stochastically forced Navier-Stokes equations. Therefore, vanishing viscosity alone is not a good selection criterion among statistically stationary solutions of \eqref{eq:SE} (cf. also \cite[Theorem 1.3]{BuVi19} for a deterministic version of this statement).

Nonetheless, extending \autoref{thm:main1} to construct non-unique ergodic measures concentrated on solutions to \eqref{eq:SE} satisfying the local energy inequality \eqref{eq:LEI} is not straightforward.
Let us only mention that one major issue is the unboundedness of the external forcing $Q^{1/2}dW$.
On the other hand, we can construct non-unique ergodic measures $\mu$ on the path space 
\begin{align}
    \mathscr{X}:=\{ (u,p,g) \in C(\R, C^\alpha(\T^3,\R^3)) \times C(\R, C^{2\alpha}(\T^3,\R)) \times C(\R, C^{2\alpha}(\T^3,\R^3)), \, \dvg(u)=\dvg(g)=0\},
\end{align}
ergodicity being meant with respect to the shift operator on the path space\footnote{Namely, $\mu(A) \in \{0,1\}$ for every shift invariant Borel set $S_tA =A\subset \mathscr{X}$, where $S_t(u,p,g)(s) := (u,p,g)(t+s)$.
}, that are concentrated on strictly dissipative solutions of the forced Euler equations
\begin{align} \label{eq:FE} \tag{FE}
\begin{cases}
\partial_t u 
+
\dvg (u\otimes u) 
+
\nabla p  
=
g,
\\
\dvg(u) = 0,
\\
\frac12 \partial_t |u|^2
+
\dvg\left( \left( \frac12 |u|^2 + p \right) u \right)dt
-
u \cdot g
< 0,
\end{cases}
\end{align}
with some external forcing $g$ having a given law, and such that $u$ is genuinely random and almost surely not a steady-state.  
More precisely, we have the following:
\begin{thm} \label{thm:main2}
 There exist two distinct ergodic measures $\mu_1,\mu_2$ on the path space $\mathscr{X}$, concentrated on solutions of \eqref{eq:FE}, such that for $i \in \{1,2\}$ the process $u$ is not a steady state  
 \begin{align} \label{eq:no-steady}
     \sup_{s,t \in \R}   \| u(t)-u(s)\|_{C_x} > 0,
     \quad
     \mu_i\,\mbox{-\,almost surely},
 \end{align}
and is genuinely random
\begin{align} \label{eq:variance>0}
     \inf_{t \in \R} \mathbb{E}^{\mu_i} \left[ \| u(t)-\mathbb{E}^{\mu_i}[u(t)] \|^2_{C_x}\right]
        > 0.
\end{align}
 Moreover, we can construct $\mu_1,\mu_2$ such that the laws of $g$ under $\mu_1$ and $\mu_2$ coincide, but the laws of $u$ under $\mu_1$ and $\mu_2$ differ. In particular, for strictly dissipative ergodic measures associated with \eqref{eq:FE}-\eqref{eq:no-steady}-\eqref{eq:variance>0} one can not uniquely define a map $\mathscr{L}_{\mu}(g) \mapsto \mathscr{L}_{\mu}(u)$.
\end{thm}

\begin{oss}
The validity of \eqref{eq:no-steady}-\eqref{eq:variance>0} is a well-known open problem in limiting procedures à la Kuksin \cite{Ku04}, see for example the discussion in \cite[Section 2.3]{BeCZGH16}.
\end{oss}

What distinguishes the two ergodic measures $\mu_1,\mu_2$ in the previous theorem is the typical size of the process $u$, which also entails that the amount of kinetic energy dissipated under $\mu_1$ and $\mu_2$ differs. 
If we don't prescribe the law of the external forcing $g$ then we can fix the right-hand side in the third line of \eqref{eq:FE}. In addition, similarly to \cite[Theorems 6.1 - 6.3]{HoZhZh25} we can almost arbitrarily prescribe the law of $u$ up to a small error.  
\begin{thm} \label{thm:main3}
For every $\iota \in (0,1/8)$, constant $E' >0$ sufficiently small and every shift-invariant law $\nu$ on the space $\mathscr{Z} := \{Z \in C(\R,C^1(\T^3,\R^3), \, \dvg(Z)=0, \, \inf_{t \in \R}\|Z(t+1) - Z(t)\|_{C_x} > \frac12  \|Z\|_{C_{t,x}} \}$ satisfying the assumptions of \autoref{ass:Z}, there exists an ergodic measure ${\mu}$ on the path space $\mathscr{X} \times \mathscr{Z}$, concentrated on solutions of \eqref{eq:FE} with dissipation equal to $-E'$, such that \eqref{eq:no-steady} and \eqref{eq:variance>0} hold, $\mathscr{L}_\mu(Z) = \nu$, and
\begin{align}
\sup_{t \in \R} \| u(t) - Z(t) \|_{C_x} < \iota \inf_{t \in \R} \| Z(t) \|_{C_x},
\quad
\mu\,\mbox{-\,almost surely}.
\end{align}
\end{thm}

The paper is organized as follows.
In \autoref{sec:prelimi} we give some preliminaries and introduce the convex integration schemes that we will use to prove our theorems.
Iterative estimates for \eqref{eq:SE} are contained in \autoref{sec:velocity} (for the velocity field),
\autoref{sec:new_reynolds} (for the Reynolds stress and pressure), and in 
\autoref{sec:current} (for the current).
These results are also at the basis of the iteration lemma needed to construct solutions of \eqref{eq:FE}, and are complemented by the results in \autoref{sec:forced} that are specific to this case.
Some auxiliary results are collected in the Appendix.

\section*{Acknowledgements}
 The authors are sincerely grateful to Eliseo Luongo and Marco Romito for many interesting discussions.
UP has received funding from the European Research Council (ERC) under the European Union’s Horizon 2020 research and innovation programme (grant agreement No. 949981) and the Swiss National Science Foundation under the SNSF Ambizione grant No. 233216.

FT is supported by the Istituto Nazionale di
Alta Matematica (INdAM) through the project GNAMPA 2025 “Modelli stocastici in Fluidodinamica
e Turbolenza” -CUP E5324001950001.
FT acknowledges the hospitality of Universit\"at Bielefeld, where part of this work has been done.

\section{Preliminaries and main iterative propositions} \label{sec:prelimi}

\subsection{Notation}
Let $\N$ denote the set of natural numbers, including $0$ in our convention, and let $n,m \in \N$. For a multi-index $\mathbf{r} = (r_1,r_2,\dots,r_n) \in \N^n$ of order $|\mathbf{r}|:= r_1 + r_2 + \dots+ r_n$ and a smooth function $f:\mathcal{O} \to \R^m$ defined on a smooth domain $\mathcal{O} \subset \R^n$ without boundary, we denote $D^{\mathbf{r}}f := \partial_{1}^{r_1} \partial_2^{r_2} \dots \partial_n^{r_n} f$.
With a slight abuse of notation, in the following we identify functions defined on the torus $\T^3$ as periodic functions defined on $\mathcal{O}=\R^3$.

H\"older spaces $C^\gamma(\mathcal{O},\R^m)$, $\gamma \geq 0$ are defined according to the following notion of norms.
Let $N\in \N$ and $\delta \in (0,1)$, and let us introduce the following H\"older  seminorms:
\[[f]_N := \max\limits_{|\mathbf{r}|=N}\|D^{\mathbf{r}}f\|_{C^0},
\quad 
[f]_{N+\delta}:=\max\limits_{|\mathbf{r}|=N}\sup\limits_{x\neq y}\frac{|D^{\mathbf{r}}f(x)-D^{\mathbf{r}}f(y)|}{|x-y|^\delta}.\]
Then, the H\"older norms are defined as
\[\|f\|_N:=\max_{j=0,\dots,N}\{[f]_j\},
\quad 
\|f\|_{N+\delta}:= \max\{\|f\|_{N},[f]_{N+\delta}\}.\]
Furthermore, in order to simplify the statement of many propositions here contained, we adopt the notational convention $\| f \|_M \equiv 0$ whenever the index $M<0$. 
For $\gamma \geq 0$ we denote $C^\gamma_x := C^\gamma(\mathcal{O},\R^m)$, with $\mathcal{O}$ and $m$ depending on the context, and similarly for other functional spaces.
We use the symbol $c^\gamma_x$ to denote the closure of smooth functions with respect to the topology of $C^\gamma_x$.

\subsection{Preparation of the convex integration scheme}\label{subsec:prep}
The proofs of \autoref{thm:main1}, \autoref{thm:main2} and \autoref{thm:main3} rely on a convex integration scheme, mostly inspired by the works \cite{Is22,DLK23,LuLuZh25}.
Let us introduce various parameters that will allow to estimate all the objects appearing in the upcoming sections. Define for $q \in \N$: 
\[\lambda_q:= \lfloor a^{b^q} \rfloor,\quad \odelta_q:=\lambda_q^{-2\alpha},\quad \delta_q:=\begin{cases}
    \lambda_0^{\gamma}, \, &q=0,\\
    \lambda_0^{\gamma}\odelta_q\lambda_1^{2\alpha},\,&q\geq 1,
\end{cases}\]
where $a>2$ will be very large, $b>1$ will be chosen close to 1, $\gamma := (b-1)^2$, and $\alpha\in (0,1/7)$ indicates the H\"older regularity of the solution identified by the iterative procedure.

\subsection{ Stochastic Euler Equations: Induction scheme}\label{ssec:SE_InductionScheme} 
In this subsection, we intend to establish \autoref{thm:main1} by making use of an Iteration Lemma (see \autoref{IterLemma}), whose proof will be given in the following sections.

It is convenient to rewrite \eqref{eq:SE}-\eqref{eq:LEE} without the local martingale terms $Q^{1/2} dW$ and $u \cdot Q^{1/2} dW$. To attain this, we reformulate the problem using the so-called Da Prato-Debussche trick, namely we decompose solutions of \eqref{eq:SE} as $u=v+z$, where $z:=Q^{1/2}W$ and $v$ solves the random PDE
\begin{align}\label{eq:EulerV}
\begin{cases}
\partial_t v 
+
\dvg\big((v+z) \otimes (v+z)\big)
+
\nabla p 
=0,
\\
\dvg(v) = 0.
\end{cases}
\end{align}
A local energy inequality can be derived for $v$ as follows. First, rewrite the stochastic differential
\begin{align}
   \frac12  d |v|^2 = \frac12  d |u-z|^2 = \frac12  d|u|^2 + \frac12 d|z|^2 - d ( u \cdot z ).
\end{align} 
Since by assumption $z$ is spatially smooth enough, It\=o Formula gives 
\begin{align}
    \frac12 d|z|^2 = z \cdot dz + \mathfrak{q} \,dt,
    \quad
    d(u\cdot z) = du \cdot z + u \cdot dz + 2\mathfrak{q}\,dt,
\end{align}
so that the local energy equality \eqref{eq:LEE} is equivalent to
\begin{align}
    \partial_t \frac{|v|^2}{2}+v\cdot \nabla p+v \cdot \dvg\big((v+z)\otimes (v+z)\big)=-E'.
\end{align}
We refer to \cite{Ro10} for additional details.
Therefore, solutions $(u,p)$ of \eqref{eq:SE}-\eqref{eq:LEE} are equivalent to solutions of the following system:
\begin{align}\label{eq:EulerV2}
\begin{cases}
\partial_t v+\dvg\big((v+z)\otimes (v+z)\big)+\nabla p=0,
\\
\dvg(v) = 0,
\\
\partial_t\frac{|v|^2}{2}+v\cdot \nabla p+v \cdot\dvg\big((v+z)\otimes (v+z)\big)=-E'.
\end{cases}
\end{align}

 We aim at building solutions of \eqref{eq:SE}-\eqref{eq:LEE} by applying the convex integration scheme to \eqref{eq:EulerV2}. In particular, following \cite{Is22}, we obtain a solution of \eqref{eq:EulerV2} as a limit of smooth solutions of approximating systems, which include a Reynolds stress tensor $R$ in the momentum equation and a flux current $\phi$ in the local energy inequality, and boil down to the \eqref{eq:EulerV2} when $R=0$ and $\phi=0$.
In this sense, the size of the \emph{errors} $R$ and $\phi$ give an indication of ``how far'' a solution of the approximating system is from being an exact solution.

The approximating system to \eqref{eq:EulerV2} is slightly more involved than the one introduced at \cite{DLK23},  
since we need to keep track of the stochastic process $z$ affecting the dynamics.
Whenever we work on Stochastic Euler equations, we shall deal with \emph{probabilistically strong} solutions, i.e. our objects will be defined on the same stochastic basis $(\Omega, \mathbb{F},\{\mathbb{F}_t\}_{t \in \R}, \mathbb{P})$  supporting the Wiener process $z$. 

\begin{defn}\label{defn:Euler-Reynolds}
    A tuple of $\{\mathbb{F}_t\}$-progressively measurable smooth tensors $(v,p,R,\phi,z)$ is a dissipative Euler-Reynolds flow, with energy loss $E \in C^1_b$, if it satisfies the following system
    \begin{align} \label{eq:ApproxEulerV}
    \begin{cases}
    \partial_tv+\dvg\big((v+z)\otimes(v+z)\big)+\nabla p=\dvg(R),
    \\
    \dvg(v) = 0,
    \\
     \partial_t\frac{|v|^2}{2}+v\cdot \nabla p+v \cdot\dvg\big((v+z)\otimes (v+z)\big)=-E'
     +\frac{1}{2}D_t(\Tr(R))+\dvg(Rv)+R:\nabla z^T+\dvg (\phi),
    \end{cases}
    \end{align}
     where $D_t:=\partial_t+(v+z)\cdot \nabla$ denotes the advective derivative.
\end{defn}

We point out that our approximating Euler-Reynolds flow will solve \eqref{eq:ApproxEulerV} (in an analytically strong sense) also for negative times $t < 0$, as we work with one-sided time mollifications in order to preserve adaptedness to the filtration $\{\mathbb{F}_t\}_{t \in \R}$. We shall impose zero external noise for negative times, so that the process $(v,p,R,\phi,z)$ will be $\mathbb{F}_0$-adapted for negative times (and thus progressively measurable with respect to the filtration $\{\mathbb{F}_t\}_{t \in \R}$).

As usual in convex integration schemes, we set up an iterative procedure to modify a given dissipative Euler-Reynolds flow $(v_q,p_q,R_q,\phi_q,z_q)$, $q \in \N$, in such a way to identify a new dissipative Euler-Reynolds flow $(v\qq,p\qq,R\qq,\phi\qq,z\qq)$ whose errors satisfy $\|R\qq\|_0\ll\|R_q\|_0$ and $\|\phi\qq\|_0\ll\|\phi_q\|_0$.

Before reporting the Iteration Lemma, let us introduce various assumptions on $z$ and  define $z_q$.
Let $N_1:=n_0+5$, where the parameter $n_0\in \N$ will be chosen in the following sections and will depend on the value $b$. In particular, $n_0$ gets larger as $b$ is closer to $1$.
We assume that the $Q$-Wiener process $z$ takes values $\mathbb{P}$-almost surely in the Sobolev space $H^{2+N_1}(\T^3,\R^3)$ with zero mean and divergence. 

Let us also introduce $0<\delta \ll 1$ sufficiently small such that 
\[\frac{1/2+\delta}{1/2-\delta}<\frac{1-4\alpha}{3\alpha}\wedge \frac43,\]
and $L>1$ such that the following stopping time
\begin{equation}\label{stoppingtime}
    \mathfrak{t}:=\inf\left\{t\geq 0: 
    \|z\|_{C^{1/2-\delta}([0,t],C_x^{N_1})}\geq L\right\}\wedge 2T
\end{equation}
is larger than $T$ with probability greater than $\varkappa \in (0,1)$ given by the statement of \autoref{thm:main1}.
Notice that is always possible to find such $L$ since $H^{2+N_1}_x \subset C^{N_1}_x$ by Sobolev embedding. 
In this way, we have a $\mathbb{P}$-almost sure control over the size of $\| z_t \|_{N_1}$ on the time interval $t \in [0,\mathfrak{t}]$.
However, since $z$ is temporally rough, we build $z_q$ by introducing a time mollification
\begin{equation}\label{e:SmoothNoise}
    z_q(t): =\int_{\R}r_{i_q}(t-s)z(s)\,ds,
    \quad
    t \in \R
\end{equation}
where $z(-s):=z(0)$ for all $s>0$, $r$ is a smooth time mollifier with support contained in $[0,1]$, and the parameter $i_q$ is defined as 
\begin{equation}
i_q^{1/2-\delta} :=\begin{cases}
    \lambda_0^{7\alpha/2}\lambda_1^{-7\alpha/2},\, &q=0,\\
    \delta_q^{-2}\delta_{q+1}^{7/2},\, &q\geq 1.
\end{cases}
\end{equation}
The choice of the asymmetric mollifier guarantees that $z_q$ remains adapted to the filtration $\{ \mathbb{F}_t\}_{t\in \R}$, extended to negative times identically equal to $\mathbb{F}_0$. Hereafter, we will always adopt this convention when referring to the filtration $\{ \mathbb{F}_t\}_{t\in \R}$.

\begin{lemma}
\label{IterLemma}
Let $\alpha \in (0,1/7)$ and $E\in C^1_b((-\infty,2T],\R)$. Then there exist parameters $a,b$ as above, $n_0\in \N$ depending only on $b$, and $C_v, \oM \in (1,\infty)$ such that the following holds.

    Assume that $(v_q,p_q,R_q,\phi_q,z_q)$, $q \in \N$, is a $\{\mathbb{F}_t\}$-progressively measurable dissipative Euler-Reynolds flow solving \eqref{eq:ApproxEulerV} on the time interval $(-\infty,\mathfrak{t}]$ and such that $\mathbb{P}$-almost surely the following estimates hold for all $t \leq \mathfrak{t}$:
   \begin{align}
        \label{s:V} \|v_q(t)\|_{N}
        &\le C_v \oM\lambda_q^N\delta_q^{1/2}, 
        &&
        && 
        N \in \{1,2,\dots,n_0+3\},
        \\
        \label{s:P} \|p_q(t)\|_{N}&\le \overline{M} \lambda_q^N\delta_q,
        &&
        \|D_{t,q}p_q(t)\|_{N-2}\le \overline{M}\lambda_q^{N-1}\delta_q^{3/2}, 
        &&  
        N \in \{1,2,\dots,n_0+3\},
        \\
        \label{s:R} \|R_q(t)\|_{N}&\le \oM\lambda_q^{N-\gamma}\delta_{q+1},
        &&
        \|D_{t,q}R_q(t)\|_{N-2}\le \oM \lambda_q^{N-1-\gamma}\delta\qq\delta_q^{1/2}, 
        &&  
        N \in \{0,1,\dots,n_0+3\},
        \\
        \label{s:Phi} \|\phi_q(t)\|_{N}
        &\le \oM\lambda_q^{N-3\gamma/2}\delta_{q+1}^{\frac{3}{2}},
        &&
        \|D_{t,q}\phi_q(t)\|_{N-1}\le \oM \lambda_q^{N-3\gamma/2}\delta_{q+1}^{\frac{3}{2}}\delta_q^{1/2}, 
        &&   
        N \in \{0,1,2\},
   \end{align}
where $\gamma:=(b-1)^2$ and $D_{t,q}:=\partial_t+(v_q+z_q)\cdot \nabla$.

   Then, there exists a new tuple $(v\qq,p\qq,R\qq,\phi\qq,z\qq)$ of $\{\mathbb{F}_t\}$-progressively measurable tensors which solves \eqref{eq:ApproxEulerV}, such that the estimates \eqref{s:V}, \eqref{s:P}, \eqref{s:R}, \eqref{s:Phi} hold with $q$ replaced by $q+1$ and, in addition, $\mathbb{P}$-almost surely for every $t \leq \mathfrak{t}$:
   \begin{align}
   \label{s:distance_v}
       \|v_{q+1}(t)-v_q(t)\|_{0}+ \frac{1}{\lambda\qq}\|v_{q+1}(t)-v_q(t)\|_{1}
       &\le 
       C_v\oM\delta_{q+1}^{1/2}\lambda_q^{-\gamma/2},
   \\ \label{s:distance_p}
       \|p\qq(t)-p_q(t)\|_0+\frac{1}{\lambda\qq}\|p\qq(t)-p_q(t)\|_1
       &\le 
       \overline{M}\delta\qq.
   \end{align}
\end{lemma}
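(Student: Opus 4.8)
The plan is to perform one step of an Isett--type convex integration scheme, adapted so as to carry the random drift $z$ along the iteration. Fix a small length scale $\ell$, polynomial in $\lambda_q,\lambda_{q+1}$ and to be optimized at the end, and begin with a \emph{mollification step}: mollify $v_q$ and $R_q$ in space at scale $\ell$, writing $v_\ell,R_\ell$, and rewrite \eqref{eq:ApproxEulerV} for $(v_\ell,p_q,R_\ell,\phi_q,z_{q+1})$, collecting into error terms the mollification commutators in the quadratic flux $(v_q+z_q)^{\otimes 2}$ (of Constantin--E--Titi type), the analogous commutators in the energy balance, the increment $z_q-z_{q+1}$ produced by the change of the noise cutoff parameter $i_q$, and the coupling term $R_\ell:\nabla z_{q+1}^T$. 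The inductive bounds on $\|v_q\|_N,\|R_q\|_N$ and the a priori control $\|z\|_{C^{1/2-\delta}_tC^{N_1}_x}\le L$ on $[0,\mathfrak{t}]$, together with the definition of $i_q$, make all of these quantitatively small; in particular the temporal roughness of $z$ enters only through $z_{q+1}$, which is smooth in time by construction.

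Next comes the \emph{perturbation step}. Using a finite family of Beltrami plane waves oscillating at frequency $\sim\lambda_{q+1}$, with amplitudes $a_\xi$ that are smooth functions of $\rho\,\Id - R_\ell$ for a suitable positive scalar $\rho$ comparable to $\delta_{q+1}$ (chosen so that the new flow realizes the prescribed energy profile $E$), define a principal perturbation $w^{(p)}_{q+1}$ whose self-interaction $w^{(p)}_{q+1}\otimes w^{(p)}_{q+1}$ has low-frequency part equal to $\rho\,\Id-R_\ell$ modulo a gradient; add a divergence-free corrector $w^{(c)}_{q+1}$ and a temporal corrector $w^{(t)}_{q+1}$ absorbing the non-gradient part of the transport error, and set $v_{q+1}:=v_\ell+w_{q+1}$, $w_{q+1}:=w^{(p)}_{q+1}+w^{(c)}_{q+1}+w^{(t)}_{q+1}$. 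The amplitudes and phases have to be chosen so that, simultaneously, the momentum residual can be written as $\dvg(R_{q+1})+\nabla(p_{q+1}-p_q)$ with $R_{q+1}:=\AntiDiv(\cdots)$ of the prescribed small size, and the energy residual evaluated on $(v_{q+1},p_{q+1},R_{q+1},z_{q+1})$ reduces, modulo the terms already present on the right-hand side of \eqref{eq:ApproxEulerV}, to a pure divergence $\dvg(\phi_{q+1})$ with $\phi_{q+1}$ of size $\lesssim\lambda_{q+1}^{-3\gamma/2}\delta_{q+2}^{3/2}$; enforcing this last requirement is exactly the role of the flux current $\phi$ and the reason the ``dissipative'' design of the building blocks is needed. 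One then \emph{defines} $R_{q+1}$ by applying the inverse-divergence operator $\AntiDiv$ to the oscillation, Nash, transport, mollification and noise errors (after subtracting the pressure increment), and $\phi_{q+1}$ by collecting the corresponding divergence-form terms in the energy identity.

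The final step is to verify the inductive estimates \eqref{s:V}--\eqref{s:Phi} with $q$ replaced by $q+1$, together with \eqref{s:distance_v}--\eqref{s:distance_p}. The bounds on $\|w_{q+1}\|_N$ and $\|D_{t,q+1}w_{q+1}\|_N$ follow from Leibniz and the amplitude estimates --- the $a_\xi$ inherit $C^N$-control at scale $\ell^{-1}$ from $R_\ell$ and material-derivative control from the $D_{t,q}$-bound on $R_q$ --- giving \eqref{s:V} and \eqref{s:distance_v} with the geometric constant $C_v$, while $\oM$ records the accumulated influence of $z$. The estimates for $R_{q+1},\phi_{q+1}$ use the gain of one factor $\lambda_{q+1}^{-1}$ from $\AntiDiv$ (via a stationary-phase argument) against the loss $\ell^{-N}$ from differentiating amplitudes, together with the product rule for material derivatives. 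Choosing $b>1$ close to $1$, then $n_0$ large depending on $b$, then $a$ large depending on everything, all powers of $\lambda_q,\lambda_{q+1}$ close with the claimed gain $\gamma=(b-1)^2$; progressive measurability, and determinism for $t<0$, are preserved throughout since spatial mollification, one-sided time mollification of the noise, algebraic operations and $\AntiDiv$ do not break them. The main obstacle is the simultaneous balancing of three competing demands: closing the momentum estimate, writing the energy residual as a pure divergence up to a flux of the sharp size $\delta_{q+2}^{3/2}$ (which heavily constrains the building blocks and the choice of $\rho$), and keeping the stochastic errors --- notably $R_\ell:\nabla z_{q+1}^T$ and the $z_q\to z_{q+1}$ increment, which inherit only $C^{1/2-\delta}$ regularity in time --- below the required threshold; it is precisely this interplay that dictates the exponents defining $i_q$, the smallness of $\delta$, and the restriction $\alpha<1/7$.
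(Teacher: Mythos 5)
There is a genuine gap at the heart of your proposal: you never provide a mechanism that cancels the \emph{inherited} flux current $\phi_q$. The inductive hypothesis gives $\|\phi_q\|_0\le \oM\lambda_q^{-3\gamma/2}\delta_{q+1}^{3/2}$, while the conclusion demands $\|\phi_{q+1}\|_0\le \oM\lambda_{q+1}^{-3\gamma/2}\delta_{q+2}^{3/2}\ll\|\phi_q\|_0$; so $\phi_q$ cannot simply be ``collected into the divergence-form terms'' of the new energy identity --- it must be destroyed by the perturbation, in exact analogy with the way the quadratic self-interaction $w\otimes w$ destroys $R_\ell$. The only term available for this is the \emph{cubic} self-interaction $\tfrac12|w|^2w$, and your building blocks are not designed to control it: for Beltrami plane waves the triple products of distinct wave vectors produce resonances, and the zero mode of $|w|^2w$ is not prescribable as a function of a given vector field. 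This is precisely why the paper uses Lagrangian-transported Mikado flows split into two disjoint families $\F_R\cup\F_\phi$, with the normalizations $\fint\psi_f^2=1$ on $\F_R$ and $\fint\psi_f^3=1$ on $\F_\phi$, the two geometric lemmas (\cref{GeomLemma1} for symmetric matrices, \cref{GeomLemma2} for vectors), and amplitudes \eqref{e:amplPhi} chosen so that the zero mode $d_0$ of $\tfrac12|w_o|^2w_o$ equals $-\phi_l$ exactly (while the $R$-amplitudes \eqref{e:amplR} must then also subtract the quadratic contribution $M_{m,n}$ of the $\phi$-pipes from $R_l$). The disjointness of the pipe supports, enforced through the space-time partition of unity and the shifts $s_I$, is what makes the cube of the perturbation again a superposition of individually oscillating profiles amenable to stationary phase. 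Without this ``dissipative'' design --- which you invoke by name but never construct --- the estimate \eqref{s:Phi} at level $q+1$ cannot close.

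Two smaller inaccuracies: in the paper $\rho$ is the fixed quantity \eqref{eq:definitio_rho}, proportional to $\lambda_q^{-\gamma}\delta_{q+1}$, and the energy profile $E$ enters only through the initialization $R_0=z_0\mathring\otimes z_0+\tfrac{2E}{3}\Id$; the per-step bookkeeping of spatial averages is instead done by the scalar function $f(t)$ of \eqref{e:f} added to $R_{q+1}$ as $\tfrac23 f\,\Id$, not by tuning $\rho$ to $E$. Also, the paper has no temporal corrector $w^{(t)}$: the transport error is kept small because the phases $\lambda_{q+1}\xi_m\cdot k$ are themselves advected by $u_l$ via \eqref{e:flow}, and the only corrector is the compressibility term $w_c$. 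Your mollification step and the treatment of the stochastic errors (the role of $i_q$, the one-sided mollifiers, adaptedness) are broadly consistent with the paper.
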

We point out that, by definition of $\| \cdot \|_{-1}$ and $\|\cdot\|_{-2}$, estimates \eqref{s:P}, \eqref{s:R} and \eqref{s:Phi} on the advective derivatives are void in these specific cases. Notice also that $\mathfrak{t} \leq 2T$ by definition, so that the system \eqref{eq:ApproxEulerV} is meaningful on the time interval $(-\infty,\mathfrak{t}]$. 

Since $\delta_q \to 0$ exponentially fast, the estimates above imply $\|R_q\|_0,\|\phi_q\|_0 \to 0$ as $q \to \infty$.
Moreover, \eqref{s:distance_v} and \eqref{s:distance_p} on the increments and interpolation guarantee convergence $v_q \to v$ and $p_q \to p$ in a space of continuous-in-time, H\"older-in-space functions.
In fact, given the previous lemma, we can prove \autoref{thm:main1} arguing as follows.

\begin{proof}[Proof of \autoref{thm:main1}]
\textit{Step 1: Initialization}.
Let $E \in C^1_{loc}([0,\infty),\R)$ be as in the statement of the theorem. Then we can find an extension of $E$ to negative times, still denoted $E$ with a little abuse of notation, such that $C^1_b((-\infty,2T],\R)$. 
Consider the following dissipative Euler-Reynolds flow at step $q=0$: 
\begin{equation}
    (z_0,v_0,p_0,R_0,\phi_0):=\left(z_0,0,\frac{|z_0|^2}{3},z_0\mathring{\otimes}z_0+\frac{2E}{3}\Id, -\frac{|z_0|^2}{2}z_0\right),
\end{equation}
where $\mathring{\otimes}$ denotes the traceless tensor product.
Using $\dvg (z_0)=0$ and the algebraic identity $z_0\otimes z_0:\nabla z_0^T = \dvg(z_0 |z_0|^2/2)$, it is immediate to verify that $(z_0,v_0,p_0,R_0,\phi_0)$ solves the system \eqref{eq:ApproxEulerV} for every $t \in \R$.
Moreover, on the time interval $t \in (-\infty,\mathfrak{t}]$, the following estimates on the Reynolds stress are valid for every $N \leq n_0+3$:
\begin{align}
    \|R_0(t)\|_N
    &\leq 
    |E(t)|  + \sum_{k=0}^N \binom{N}{k}\|z_0\|_k \|z_0\|_{N-k}
    \\
    &\leq
    |E(t)| + (N+1)2^N L^2
    \le \lambda_0^{N-\gamma}(L^2+|E(t)|) \lambda_0^\gamma,
    \\
    \|D_{t,0}R_0(t)\|_{N-2} 
    &\leq 
    |E'(t)|
    +
    (N-1)2^{N-2} i_0^{-1/2-\delta}L^2 
    + 
    \sum_{k + h = 0}^{N-2} \binom{N-2}{k,h} \|z_0\|_k \|z_0\|_{h} \|z_0\|_{N-1-k-h}
    \\
    &\le
    |E'(t)|
    +
    (N-1)2^{N-2} i_0^{-1/2-\delta}L^2
    +
    (N-1)^2 3^{N-2} L^3
    \leq 
    \lambda_0^{N-1-\gamma} (L^3+|E'(t)|) \lambda_0^{3\gamma/2} ,
    \end{align}
 where we have used that $i_0^{-1/2-\delta}\le \lambda_0^{1+\gamma/2}$ and we have taken $a$ sufficiently large,
 as well as the bounds on the current
    \begin{align}
    \|\phi_0(t)\|_N
    &\leq
    (N+1)^2 3^N L^3\le L^3 \lambda_0^{N-3\gamma/2}\lambda_0^{3\gamma/2},
    \\
    \|D_{t,0}\phi_0(t)\|_{N-1}
    &\leq 
    N^2 3^{N-1} i_0^{-1/2-\delta} L^3
    +
    N^3 4^{N-1} L^4\le L^4 \lambda_0^{N-3\gamma/2} \lambda_0^{2\gamma},
    \end{align}
    and on the pressure
    \begin{align}
    \|p_0(t)\|_N
    &\leq 
    (N+1)2^N L^2\le L^2\lambda_0^{N},
    \\
    \|D_{t,0}p_0(t)\|_{N-2}
    &\le 
    (N-1)2^{N-2} i_0^{-1/2-\delta}L^2
    +
    (N-1)^2 3^{N-2}L^3
    \le 
    L^3 \lambda_0^{N-1}\lambda_0^{\gamma/2}.
\end{align}
Then, recalling that by definition $\delta_0 = \delta_1 = \lambda_0^\gamma>1$, the estimates above imply the existence of a constant $\overline{M} \in (1,\infty)$, depending only on $L$ and $E$, such that \eqref{s:V}, \eqref{s:P}, \eqref{s:R}, \eqref{s:Phi} hold true $\mathbb{P}$-almost surely on the time interval $(-\infty,\mathfrak{t}]$.

\textit{Step 2: Construction of $(u_{t\wedge\mathfrak{t}},p_{t\wedge\mathfrak{t}})$}.
We iteratively apply \autoref{IterLemma}, starting from the initial tuple $(z_0,v_0,p_0,R_0,\phi_0)$ defined in Step 1. 
We obtain a sequence of dissipative Euler-Reynolds flows, $(v_q,p_q,R_q,\phi_q,z_q)$, and by \eqref{s:distance_v} we have that $v_q$ converges $\mathbb{P}$-almost surely to some $v$ in $C_t^0C^{\beta}_x$ for any $\beta<\alpha$.
Similarly, by \eqref{s:distance_p}, $p_q$ converges $\mathbb{P}$-almost surely to some $p$ in $C_t^0C^{2\beta}_x$.
Moreover, by construction $z_q$ converges $\mathbb{P}$-almost surely to $z$ uniformly on the time interval $[0,\mathfrak{t}]$, while $\|R_q\|_0$ and $\|\phi_q\|_0$ vanish by \eqref{s:R} and \eqref{s:Phi}. 
Notice that the tuple $(v,p,z)$ is $\{\mathbb{F}_t\}$-progressively measurable since each $(v_q,p_q,z_q)$ is so, by \autoref{IterLemma}. Thus, we have built a solution of the system \eqref{eq:EulerV2} on the time interval $[0,\mathfrak{t}]$. By the discussion in \autoref{ssec:SE_InductionScheme}, we have that the process $(u,p)$ with $u:=v+z$ solves \eqref{eq:SE} and \eqref{eq:LEE} on the same time interval, and has the regularity stated in \autoref{thm:main1}.

\textit{Step 3: Continuation of $(u,p)$}.
The proof is similar to \cite[Corollary 1.8]{LuLuZh25}.
Let $\tilde{W}_t := W_{t+\mathfrak{t}}-W_\mathfrak{t}$ be a cylindrical Wiener process on the filtration $\tilde{\mathbb{F}}_t := \sigma(\{\tilde{W}_s\}_{s \leq t},u_{\mathfrak{t}},p_\mathfrak{t})$. We intend to apply \cite[Theorem 1.7]{LuLuZh25} with initial condtion $u^{in} := u_{\mathfrak{t}} \in C^\beta_x$ $\mathbb{P}$-almost surely. This produces at least two distinct solutions $(\tilde{u}^1,\tilde{p}^1)$ and $(\tilde{u}^2,\tilde{p}^2)$ to \eqref{eq:SE} with noise $Q^{1/2}d\tilde{W}$, $\mathbb{P}$-almost surely of class $\tilde{u}^i \in C^0_t C^\vartheta_x$ with arbitrary $\vartheta<\beta$.
The pressure fields $\tilde{p}^i$ can be recovered from $\tilde{u}^i$ given by the aforementioned theorem by solving the equations $-\Delta \tilde{p}^i = \dvg \dvg (\tilde{u}^i \otimes \tilde{u}^i)$ subject to the boundary conditions $\int_{\T^3}\tilde{p}^i(t,x)dx = \int_{\T^3}p_{\mathfrak{t}}(x)dx$ for every $t \geq 0$, and are of class $\tilde{p}^i \in C_tC^{2\vartheta}_x$ $\mathbb{P}$-almost surely by \cite[Corollary 3.3]{CoDR20}.

Then the glued processes $(u^i_t,p^i_t) := (u_t,p_t) \mathbf{1}_{\{t \leq \mathfrak{t}\}} + (\tilde{u}^i_{t-\mathfrak{t}},\tilde{p}^i_{t-\mathfrak{t}}) \mathbf{1}_{\{t > \mathfrak{t}\}}$, $i=1,2$, are solutions to \eqref{eq:SE} that satisfy \eqref{eq:LEE} by construction on the time interval $t \in [0,\mathfrak{t}]$.
Since $\vartheta<\beta<\alpha$ is arbitrary, each $(u^i,p^i)$ has the space regularity stated in \autoref{thm:main1}, concluding the proof.

\end{proof}
\begin{oss}
    Let us stress that \eqref{s:V} and \eqref{s:distance_v} imply that, for any $q\in \N$ and $N\le n_0+3$,
    \begin{equation}\label{s:estUqN}
        \|u_q(t)\|_N+\|v_q(t)\|_N\lesssim 
        \begin{cases}
        1\,,\quad &N=0,\\
        \lambda_q^N\delta_q^{1/2},\quad &N \geq 1,
        \end{cases}
    \end{equation}
    where the prefactor does not depend on the parameter $a$. This independence of $a$ will be crucial in the following sections. Indeed, we will be able to absorb various implicit constants emerging from the convex integration scheme by picking $a$ large enough.
    Since this remark is not trivial just for $N=0$, let us focus on that case. Indeed, since $a>2$ it holds for every $q \in \N$ 
    \begin{equation}
    \begin{aligned}
        \|v_q(t)\|_0&\le \sum_{j=1}^q \|v_j(t)-v_{j-1}(t)\|_0\le C_v \oM\sum_{j=1}^q\delta_j^{1/2}\lambda_{j-1}^{-\gamma/2}\\
        &\le C_v \oM\sum_{j=1}^q\left(\frac{\lambda_1}{\lambda_j}\right)^{\alpha}\left(\frac{\lambda_0}{\lambda_{j-1}}\right)^{\gamma/2}\le C_v \oM\left(1+\sum_{j=2}^{q}a^{\alpha (b-b^j)}\right)\le C_v \oM \left(1+\sum_{j=2}^{q}2^{\alpha (b-b^j)}\right).
    \end{aligned}
    \end{equation}
\end{oss}

\subsection{Randomly forced Euler equations: Induction scheme}\label{ssec:FE_InductionScheme}
In this subsection, we focus on proving \autoref{thm:main2} and \autoref{thm:main3}, namely building solutions to \eqref{eq:FE}. Since we are not assuming any Brownian structure on $g$, we do not perform any decomposition as previously done in \autoref{ssec:SE_InductionScheme}, and we work directly on \eqref{eq:FE}.
Moreover, we do not fix a priori a probability space as we will also deal with probabilistically weak solutions.

In order to keep as coherent as possible the notation used in the different approximating Euler-Reynolds systems and in the Iteration Lemmas, in the following we will maintain the symbol $v$ for the velocity field (instead of using $u$ as in the statement of \autoref{thm:main2} and \autoref{thm:main3}). 
    \begin{defn}\label{defn:forced-Euler-Reynolds}
    A tuple of smooth tensors $(v,p,R,\phi,g,e)$, defined on some probability space $(\Omega,\mathbb{F},\mathbb{P)}$ satisfying the usual conditions, is a dissipative forced Euler-Reynolds flow, with forcing $g$ and energy loss $E' -e$, if it satisfies the following system
    \begin{align} \label{eq:ApproxEuler.forced}
    \begin{cases}
    \partial_tv+\dvg(v\otimes v)+\nabla p=\dvg(R) + g,
    \\
    \dvg(v) = 0,
    \\
     \partial_t\frac{|v|^2}{2}+\dvg\left(\left(\frac{|v|^2}{2}+p\right)v\right)=v \cdot g-E'+e
     +\frac{1}{2}D_t(\Tr(R))+\dvg(Rv)+\dvg (\phi).
    \end{cases}
    \end{align}
     where $D_t:=\partial_t+v\cdot \nabla$ denotes the advective derivative.
\end{defn}
We pass to introducing the Iteration Lemma, whose proof is postponed and will follow the same lines of the proof of \autoref{IterLemma}. 

\begin{lemma}\label{lemma:iter3}
    Let $\alpha \in (0,1/7)$. Then there exist parameters $a>2$, $b>1$, $n_0\in \N$ depending only on $b$, and $C_v, \oM \in (1,\infty)$ such that the following holds.

   Assume that $(v_q,p_q,R_q,\phi_q,g_q,e_q)$ is a solution to \eqref{eq:ApproxEuler.forced} satisfying for every $t\in \R$ the estimates \eqref{s:V}, \eqref{s:P}, \eqref{s:R}, \eqref{s:Phi}, as well as:
   \begin{align}
        \label{s:g3}
        \|g_q(t)\|_N&\le \overline{M}\lambda_q^N\delta_q,
        &&
        \|D_{t,q}g_q(t)\|_{N-2}\le\overline{M}\lambda_q^{N-1}\delta_q^{3/2},
        &&
        N \in \{1,\dots,n_0+3\},
        \\
        \label{s:e3}
        |e_q (t)| 
        &\leq \lambda_{q}^{-2\gamma} \delta\qq^{3/2},
        &&
        |e_q' (t)| 
        \leq \lambda_{q}^{1-3\gamma/2} \delta\qq^{2},
   \end{align}
where $D_{t,q}:=\partial_t+v_q\cdot \nabla$.
Then, there exists a new tuple $(v\qq,p\qq,R\qq,\phi\qq,g\qq,e\qq)$ which solves \eqref{eq:ApproxEuler.forced}, such that $p\qq=p_q$ and \eqref{s:distance_v} hold, the estimates \eqref{s:V}, \eqref{s:R}, \eqref{s:Phi},  \eqref{s:g3}, \eqref{s:e3} are satisfied with $q$ replaced by $q+1$, and in addition for all $t\in \R$ 
   \begin{align}
       \label{s:distance.g}
       \|g_{q+1}(t)-g_q(t)\|_{0}+ \frac{1}{\lambda\qq}\|g_{q+1}(t)-g_q(t)\|_{1}
       &\le 
       \oM\delta_{q+1} \lambda_q^{-\gamma/2}.
   \end{align}

Finally, if we replace \eqref{s:e3} with the condition on the increments
\begin{align}
    \label{s:distance.e3}
        |e_{q+1}(t)-e_q (t)| 
        &\leq \lambda\qq^{-2\gamma} \delta\qqq^{3/2},
\end{align}
then we can additionally enforce $g\qq = g_q$.
\end{lemma}

The first part of this iteration lemma will be used to show \autoref{thm:main3}; the final statement will be only needed to guarantee that the external forcing $g$ in \autoref{thm:main2} will have the same law under the two ergodic measures $\mu_1,\mu_2$.

For the sake of presentation, it seems better to prove \autoref{thm:main3} first, assuming \autoref{lemma:iter3}. 
We shall later discuss which modifications are needed to prove \autoref{thm:main2} instead.

As a preliminary step towards the proof of \autoref{thm:main3}, we need to properly craft the starting tuple of the induction scheme. This is the content of the next lemma.

\begin{lemma} \label{ass:Z}
Let $(\Omega,\mathbb{F},\mathbb{P})$ be a probability space satisfying the usual conditions, and let $Z$ be a smooth random process taking values in the space of divergence-free, zero-mean vector fields satisfying $\mathbb{P}$-almost surely the following properties:
\begin{itemize}
    \item $\| Z(t) \|_0 = 4\iota^{-1} \lambda_0^{\gamma/2}\lambda_1^{-\gamma/2}\lambda_1^{\alpha} \lambda_2^{-\alpha} =: \zeta^{1/2}$ for every $t \in \R$;
    \item $\| Z(t) \|_{N} \leq \lambda_1^{N-\gamma} \zeta^{1/2}$ for every $t \in \R$ and $N \in \{1,2,\dots,n_0+3\}$;
    \item $\| \partial_t Z(t) \|_{N} \leq \lambda_1^N \zeta^{1/2}$ for every $t \in \R$ and $N \in \{0,1,2,\dots,n_0+3\}$;
    \item $\| \mathcal{R}\partial_t Z(t) \|_{N} \leq \lambda_1^{N-\gamma} \zeta$ for every $t \in \R$ and $N \in \{0,1,2,\dots,n_0+3\}$;
    \item $\| \mathcal{R}\partial_t^2 Z(t) \|_{N} \leq \lambda_1^{N+1-\gamma} \zeta^{3/2}$ for every $t \in \R$ and $N \in \{0,1,2,\dots,n_0+1\}$.
\end{itemize}
Then, up to choosing $a \gg2$ sufficiently large and $E'$ small enough, the following tuple:
\begin{align}
    v_1 := Z,
    \quad
    p_1:=0,
    \quad
    g_1 := 0,
    \quad
    R_1 := Z \otimes Z + \mathcal{R}(\partial_t Z),
    \quad
    \phi _1 := -R_1 Z,
    \quad
    e_1 := E',
\end{align}
satisfies the iterative estimates \eqref{s:V}, \eqref{s:P}, \eqref{s:R}, \eqref{s:Phi}, \eqref{s:g3}, \eqref{s:e3} at step $q=1$, with constants $\overline{M} \geq (16\iota^{-2}+1)^2$ and $C_v = 1/\overline{M}$.
\end{lemma}

\begin{proof}
The iterative estimates \eqref{s:P}, \eqref{s:g3}, and \eqref{s:e3} are trivially satisfied, up to taking $E'$ small enough. 
Recalling the definition of $\delta_2 := \lambda_0^\gamma \lambda_1^{2\alpha} \overline{\delta}_2 = \frac{\iota^2 \lambda_1^\gamma}{16} \zeta > \zeta$ for $a$ sufficiently large, the assumption on $\| Z(t) \|_{N}$ readily implies \eqref{s:V}.
Let us move to verifying \eqref{s:R}. We have for $N=0$
\begin{align}
    \| R_1\|_0 \leq \| Z \|_0^2 + \| \mathcal{R}(\partial_t Z) \|_0 
    \leq
    \zeta + \lambda_1^{-\gamma}\delta_2
    \leq
    (16\iota^{-2}+1)\lambda_1^{-\gamma}\delta_2.
\end{align}
For $N \in \{1,\dots,n_0+3\}$ we have instead
\begin{align}
    \| R_1\|_N &\leq \| Z \otimes Z \|_N + \| \mathcal{R}(\partial_t Z) \|_N
    \leq 
    \lambda_1^{N-\gamma} \zeta + \sum_{k =0}^{N} \binom{N}{k} \| Z \|_k \|Z\|_{N-k}
    \\
    &\leq
    (n_0+5)2^{n_0+3}\lambda_1^{N-\gamma} \zeta
    \leq
    16\iota^{-2}(n_0+5)2^{n_0+3}\lambda_1^{N-2\gamma} \delta_2.
\end{align}
Imposing $a \gg 2$ large enough so that $(n_0+5)2^{n_0+3}\lambda_1^{-\gamma} < 1$, the first condition in \eqref{s:R} is satisfied.
For the second one we have
\begin{align}
\| D_{t,1} R_1 \|_{N-2}
&\leq
2\|\partial_t Z \otimes 
Z\|_{N-2}
+
\| \mathcal{R} \partial_t^2 Z \|_{N-2}
+
\| Z \cdot \nabla R_1 \|_{N-2}
\\
&\leq
\lambda_1^{N-1-\gamma} \zeta^{3/2} +\sum_{k=0}^{N-2} \binom{N-2}{k} \|Z\|_k (2\| \partial_t Z\|_{N-2-k}+\|R_1\|_{N-1-k})
\\
&\leq
64\iota^{-3}\lambda_1^{N-1-5\gamma/2} {\delta}_2^{3/2} 
+ 
16\iota^{-2}(n_0+2)2^{n_0+2} \lambda_1^{N-2-\gamma} {\delta}_2 
+ 
64\iota^{-3}(n_0+5)^2 2^{2n_0+4} \lambda_1^{N-1-5\gamma/2}{\delta}_2^{3/2} 
\\
&\leq
64\iota^{-3}(n_0+5)^2 2^{2n_0+5} \lambda_1^{N-1-5\gamma/2} {\delta}_2^{3/2},
\end{align}
and thus it suffices to impose $(n_0+5)^2 2^{2n_0+5} \lambda_1^{-3\gamma/2} < 1$. 
One can argue similarly for \eqref{s:Phi}; indeed
\begin{align}
  \| \phi_1 \|_0
    \leq  
\| R_1\|_0 \|Z\|_0 \leq
(16\iota^{-2}+1)^{3/2} \lambda_1^{-3\gamma/2} \delta_2^{3/2},
\end{align}
whereas for $N \in \{1,2\}$
\begin{align}
    \| \phi_1 \|_N
    &\leq
    \sum_{k=0}^{N} \binom{N}{k} \|Z\|_k \|R_1\|_{N-k} 
    \leq 
    4\iota^{-1}(16\iota^{-2}+1) \lambda_1^{N-5\gamma/2} \delta_2^{3/2} +64\iota^{-3} (n_0+5) 2^{n_0+4} \lambda_1^{N-5\gamma/2} \delta_2^{3/2}
    \\
    &\leq
    (16\iota^{-2}+1)^{3/2}(n_0+5) 2^{n_0+5}\lambda_1^{N-5\gamma/2} \delta_2^{3/2},
\end{align}
which requires the condition $(n_0+5) 2^{n_0+5} \lambda_1^{-3\gamma/2} < 1$.
Finally, for $N \in \{1,2\}$
\begin{align}
  \| D_{t,1}\phi_1 \|_{N-1} 
  &\leq \|R_1 D_{t,1}Z  \|_{N-1}+\|Z D_{t,1}R_1 \|_{N-1}
\\&
\leq \sum_{k=0}^{N-1} (\|R_1\|_k \|D_{t,1}Z  \|_{N-1-k} + \|Z\|_k \|D_{t,1}R_1  \|_{N-1-k})
\\
&\leq
4\iota^{-1}(16\iota^{-2}+1)(n_0+5)2^{n_0+3} \lambda_1^{N-1-3\gamma/2}\delta_2^{3/2}
+
(16\iota^{-2}+1)^2(n_0+5)2^{n_0+3} \lambda_1^{N-3\gamma}\delta_2^{2}
\\
&\quad+
64\iota^{-3}(n_0+5)^2 2^{2n_0+5} \lambda_1^{N-3\gamma} \delta_2^2
\leq
(16\iota^{-2}+1)^2(n_0+5)^2 2^{2n_0+6}\lambda_1^{N-3\gamma} \delta_2^2,
\end{align}
Therefore we impose $(n_0+5)^2 2^{2n_0+6} \lambda_1^{-3\gamma/2} < 1$.
\end{proof}

\begin{oss} \label{oss:Z}
    There exists a measure $\nu$ on the path space $\mathscr{Z}$ satisfying the assumptions of \autoref{thm:main3}.
    A possible construction goes as follows.
We take the probability space $(\Omega,\mathbb{F},\mathbb{P}) := ([0,1],\mathscr{B}, \mathcal{L}eb)$ and the statistically stationary process 
\begin{align}
 Z(\omega,t,x) := \zeta^{1/2}  \cos(\varpi x_3) (\sin(t+2\pi\omega) e_1 + \cos(t+2\pi \omega)e_2),   
\end{align}
with $\varpi \in 2\pi \N$. Here $\{e_i\}_{i={1,2,3}}$ are orthonormal vectors in $\R^3$ and $x_3 := x \cdot e_3$. 
Notice that $\partial_tZ$, $\mathcal{R}\partial_tZ$ and $\mathcal{R}\partial_t^2Z$ are explicitly computable:
\begin{align}
  \partial_t Z(\omega,t,x) =  \zeta^{1/2} (\cos(t+2\pi\omega) e_1 - \sin(t+2\pi \omega)e_2) \cos(\varpi x_3),
\end{align}
\begin{align}
        \mathcal{R}\partial_t Z (\omega,t,x) 
        = 
        \zeta^{1/2}  \frac{\sin(\varpi x_3)}{\varpi}  \begin{pmatrix}
0&0&\cos(t+2\pi\omega) \\
             0&0&-\sin(t+2\pi\omega) \\
             \cos(t+2\pi\omega)&-\sin(t+2\pi\omega)&0
\end{pmatrix},
\end{align}
and
\begin{align}
        \mathcal{R}\partial_t^2 Z (\omega,t,x) 
        = 
        -\zeta^{1/2}  \frac{\sin(\varpi x_3)}{\varpi}  \begin{pmatrix}
0&0&\sin(t+2\pi\omega) \\
             0&0&\cos(t+2\pi\omega) \\
             \sin(t+2\pi\omega)&\cos(t+2\pi\omega)&0
\end{pmatrix},
\end{align}
and the assumptions of \autoref{ass:Z} are satisfied if we take $\varpi \in 2\pi \N$ such that $\lambda_1^\gamma \zeta^{-1/2}\leq\varpi \leq \lambda_1^{1-\gamma}$. Such $\varpi$ exists by our choice of parameters, up to taking $a$ large enough.
By construction $\|Z(t+1) - Z(t)\|_{C_x}=\zeta^{1/2}2\sin(1/2) > \frac12 \zeta^{1/2}$ for every $t \in \R$ and $\omega \in \Omega$; in particular, $\inf_{t \in \R}\|Z(t+1) - Z(t)\|_{C_x} > \frac12  \zeta^{1/2}$ holds and therefore $\nu := \mathscr{L}_\mathbb{P}(Z)$ is concentrated on the path space $\mathscr{Z}$ defined in \autoref{thm:main3}.
    
\end{oss}

We are now ready to give the proof of \autoref{thm:main3}, assuming \autoref{lemma:iter3}. 
\begin{proof}[Proof of \autoref{thm:main3}]
\textit{Step 1: Construction of global solutions to \eqref{eq:FE}.}
Let $Z$ be the canonical process on the path space $\mathscr{Z}$ with law $\nu$.
By \autoref{lemma:iter3} and the iterative estimates we obtain a solution $(v,p,g):\mathscr{Z} \to \mathscr{X}$ of \eqref{eq:FE} with dissipation equal to $-E'$, satisfying $p=0$ and the global $\nu$-almost sure bounds:
\begin{align} \label{eq:bound.tightness}
   \|v\|_{C_tC^\alpha_x} 
   + \|v\|_{C^1_tH^{-10}_x} 
+ 
\|g\|_{C_tC^{2\alpha}_x} 
+ \|g\|_{C^\alpha_tH^{-10}_x}
\leq 
M 
\end{align}
for some finite constant $M$. 
Indeed, the bounds on $\|v\|_{C_tC^\alpha_x}$ and $\|g\|_{C_tC^{2\alpha}_x}$ descend from the iterative estimates \eqref{s:distance_v} and \eqref{s:distance.g}, 
the bound on $\|v\|_{C^1_tH^{-10}_x}$ descends from the equation satisfied by $v$, and the bound on $\|g\|_{C^\alpha_tH^{-10}_x}$ follows by \eqref{s:g3}, \eqref{s:distance.g}, and
\begin{align}
   \| g\|_{C^\alpha_tH^{-10}_x}
    &\lesssim
   \sum_{q \geq 0} \| g\qq-g_q \|_{C_tH^{-10}_x}^{1-\alpha}  \|\partial_t (g\qq-g_q )\|_{C_tH^{-10}_x}^\alpha
    \\
&\lesssim
    \sum_{q \geq 0}\| g\qq-g_q \|_{0}^{1-\alpha} \left( \|D_{t,q+1} (g\qq-g_q ) \|_0^\alpha + \|v\qq \otimes (g\qq-g_q )\|_0^\alpha \right)
    < \infty.
\end{align}

Moreover, by construction $(v,p,g)$ belongs $\nu$-almost surely to the space
\begin{align}
    \mathscr{X}_0 := \{ (u,p,g) \in C(\R, c^\alpha(\T^3,\R^3) \times  c^{2\alpha}(\T^3,\R) \times  c^{2\alpha}(\T^3,\R^3)), \, \dvg(u)=\dvg(g)=0\}
\end{align}
where $c^\vartheta$ denotes the closure of smooth functions with respect to the H\"older topology, which is a separable subspace of $C^\vartheta$.

\textit{Step 2: Estimate on $\|v-Z\|_0$.}
By \eqref{s:distance_v} and recalling that $v_1 = Z$ and $C_v\overline{M}=1$ we can bound
\begin{align}
   \|v-Z\|_{0}
   &\leq
   \sum_{q=1}^\infty
   \|v_{q+1}-v_q\|_{0}
   \leq
   \sum_{q=1}^\infty \delta\qq^{1/2} \lambda_q^{-\gamma/2}
   \leq 
   \lambda_0^{\gamma/2} \lambda_1^{-\gamma/2} \lambda_1^{\alpha} \sum_{q=1}^\infty \lambda\qq^{-\alpha}.
\end{align}
Next we want to bound the sum $\sum_{q=1}^\infty \lambda\qq^{-\alpha}$ from above. Observe that $\lambda\qq^{-\alpha} \leq 2 a^{-\alpha b^{q+1}}$ and for every $q \geq 1$
\begin{align}
    \frac{ a^{-\alpha b^{q+2}}}{a^{-\alpha b^{q+1}}}
    =
    a^{-\alpha b^{q+1}(b-1)}
    \leq
    a^{-\alpha b^2(b-1)}
    =: c
    < 1/2,
\end{align}
up to taking $a \gg 2$ large enough.
Therefore, by comparison with the geometric series we have
\begin{align} \label{eq:estimate.v-Z}
    \|v-Z\|_{0}
   &\leq
   4 
   \lambda_0^{\gamma/2} \lambda_1^{-\gamma/2} \lambda_1^{\alpha} \lambda_2^{-\alpha}
   = \iota \zeta^{1/2} = \iota \inf_{t \in \R} \|Z(t)\|_0.
\end{align}

\textit{Step 3: Construction of ergodic stationary solutions.}
Consider the Krylov-Bogoliubov time averages
\begin{align}
  \mu_T :=  \frac{1}{T} \int_0^T \mathscr{L}_\nu(S_t(v,p,g,Z))dt
\end{align}
where $S_t$ is the shift operator on the path space\footnote{In this case $S_t(v,p,g,Z)(s) := (v,p,g,Z)(t+s)$.}.
Using that $\mathscr{L}_{\nu}(S_t Z) \equiv \nu$ and Simon compactness criterion \cite[Corollary 5]{Si86} (recalling bound \eqref{eq:bound.tightness} and Ascoli-Arzelà Theorem) we have that the laws $\{\mu_T\}_{T >0}$ are tight in $\mathscr{X}^\beta_{loc} \times \mathscr{Z}_{loc}$, where for arbitrary $\beta \in(0,\alpha)$ we have denoted
\begin{align}
    \mathscr{X}^\beta_{loc} :=
    \{ (u,p,g) \in C_{loc}(\R, c^\beta(\T^3,\R^3) \times  c^{2\beta}(\T^3,\R) \times  c^{2\beta}(\T^3,\R^3)), \, \dvg(u)=\dvg(g)=0\},
\end{align}
and $\mathscr{Z}_{loc}$ means that we use the topology induced by $C_{loc}(\R,C^1(\T^3,\R^3))$.
Notice that the space $\mathscr{X}^\beta_{loc} \times \mathscr{Z}_{loc}$ is metrizable and separable. Therefore, by Prokhorov Theorem there exists a subsequence $T_n \to \infty$ such that $\mu_{T_n} \rightharpoonup \mu$, where $\mu$ is a measure on $\mathscr{X}^\beta_{loc} \times \mathscr{Z}_{loc}$. 
Skorokhod Theorem then implies the existence of an auxiliary probability space $(\tilde{\Omega},\tilde{\mathbb{F}},\tilde{\mathbb{P}})$, supporting $\mathscr{X}^\beta_{loc} \times \mathscr{Z}_{loc}$-valued random variables $\{(\tilde{v}_n,\tilde{p}_n,\tilde{g}_n,\tilde{Z}_n)\}_{n \in \N}$ with $\mathscr{L}_{\tilde{\mathbb{P}}}(\tilde{v}_n,\tilde{p}_n,\tilde{g}_n,\tilde{Z}_n) = \mu_{T_n}$ and $(\tilde{v}_n,\tilde{p}_n,\tilde{g}_n,\tilde{Z}_n) \to (\tilde{v},\tilde{p},\tilde{g},\tilde{Z})$ $\tilde{\mathbb{P}}$-almost surely, with $\mathscr{L}_{\tilde{\mathbb{P}}}(\tilde{v},\tilde{p},\tilde{g},\tilde{Z})=\mu$. 
In particular $\mathscr{L}_{\tilde{\mathbb{P}}}(\tilde{Z}) = \nu$.

By construction the law $\mu$ is shift invariant on $\mathscr{X}^\beta_{loc} \times \mathscr{Z}_{loc}$, namely the process $(\tilde{v},\tilde{p},\tilde{g},\tilde{Z})$ is statistically stationary.
Moreover, the following set
\begin{align}
A := \Big\{ &(v,p,g,Z) \in \mathscr{X}^\beta_{loc} \times \mathscr{Z}_{loc} :\quad \,(v,p,g) \mbox{ solves \eqref{eq:FE} weakly with dissipation equal to}-E',
\\
&\quad {p}=0, \quad  \eqref{eq:estimate.v-Z} \mbox{ holds},
\quad\|v\|_{C_tC^\beta_x} 
+ 
\|g\|_{C_tC^{2\beta}_x} 
\leq 
M 
 \Big\}
\end{align}
is given full probability by the measures $\mu_{T_n}$ for every $n \in \N$ and is stable under convergence with respect to the topology of $\mathscr{X}^\beta_{loc} \times \mathscr{Z}_{loc}$, thus $\mu(A) = 1$.

Next, we want to construct an ergodic measure. Let us consider the set $\mathfrak{M}$ of statistically stationary measures $\mu$ on $\mathscr{X}^\beta_{loc} \times \mathscr{Z}_{loc}$ with second marginal equal to $\nu$ and such that $\mu(A) = 1$. 
By the discussion above, $\mathfrak{M}$ is non-empty; moreover, $\mathfrak{M}$ is convex and closed and therefore by Krein-Milman Theorem $\mathfrak{M}$ admits extremal points, which correspond to ergodic measures (similar argument as in \cite[Theorem 4.2]{HoZhZh25}).
The corresponding canonical process $(v,p,g,Z)$ satisfies $\mu$-almost surely $ \|v\|_{C_tC^\beta_x} 
+ 
\|g\|_{C_tC^{2\beta}_x} 
+
\|Z\|_{C_tC^1_x} \leq 2M$,
since $\mu(A)=1$ and $\mathscr{L}_\mu(Z)=\nu$. 
Finally, using that $\beta<\alpha<1/7$ are arbitrary, $\mu$ is concentrated on the set $\mathscr{X} \times \mathscr{Z}$, namely the regularity stated in \autoref{thm:main3} is fulfilled.

\textit{Step 4: $v$ is not a steady state and is genuinely random.}
Let $\mu \in \mathfrak{M}$ be an ergodic measure as above.  
Using that $\mu(A)=1$ we have that for the canonical process $(v,p,g,Z)$ it holds $\mu$-almost surely:
\begin{align}
\sup_{s,t \in \R} \|v(t) - v(s)\|_0
&=
\sup_{s,t \in \R} \|v(t) - Z(t) + Z(t) - Z(s) + Z(s)- v(s)\|_0
\\
&\geq
\sup_{s,t \in \R} \|Z(t) - Z(s)\|_0 - 2 \|v-Z\|_0
\geq (1-2\iota) \zeta^{1/2}>0.
\end{align}

To give a lower bound on the variance of $v$, we observe that stationarity of $Z$ plus the control on the excursions $\|Z(t+1) - Z(t)\|_0 > \frac12 \zeta^{1/2}$ give a lower bound on the variance of $Z(t)$. More specifically, we have the following:
\begin{align} \label{eq:variance.Z}
    \forall \psi \in C_x, \quad \forall t \in \R:
    \quad
    \mu \left\{ Z(t) \in B_{C_x}\left(\psi,\frac14 \zeta^{1/2}\right)  \right\} \leq \frac12.
\end{align}
Indeed, since $\|Z(t+1) - Z(t)\|_0 > \frac12 \zeta^{1/2}$ we have
\begin{align}
    Z(t) \in B_{C_x}\left(\psi,\frac14 \zeta^{1/2}\right) \Longrightarrow
    Z(t+1) \notin B_{C_x}\left(\psi,\frac14 \zeta^{1/2}\right),
\end{align}
and thus by stationarity and the line above we deduce
\begin{align}
    \mu \left\{ Z(t) \notin B_{C_x}\left(\psi,\frac14 \zeta^{1/2}\right)  \right\}=\mu \left\{ Z(t+1) \notin B_{C_x}\left(\psi,\frac14 \zeta^{1/2}\right)  \right\} \geq \mu \left\{ Z(t) \in B_{C_x}\left(\psi,\frac14 \zeta^{1/2}\right)  \right\}.
\end{align}

From \eqref{eq:variance.Z} we deduce that for every $\psi \in C_x$ and $t \in \R$
\begin{align}
   \mathbb{E}^\mu \left[ \|Z(t)-\psi\|_0^2\right]
   \geq
   \mathbb{E}^\mu \left[ \|Z(t)-\psi\|_0^2 \mathbf{1}_{\{Z(t) \notin B(\psi,\zeta^{1/2}/4)\}} \right]
   \geq \frac{\zeta}{16} \mu \left\{ Z(t) \notin B_{C_x}\left(\psi,\frac14 \zeta^{1/2}\right)  \right\}
   \geq
   \frac{\zeta}{32}. 
\end{align}

From the line above and triangular inequality we deduce the following:
\begin{align}
  \mathbb{E}^\mu \left[ \|v(t)-\psi\|_0^2\right]
  \geq
  \mathbb{E}^\mu \left[ \|Z(t)-\psi\|_0^2\right]
  -
  \mathbb{E}^\mu \left[ \|v(t)-Z(t)\|_0^2\right]
  \geq
  \frac{\zeta}{32} - 
   \iota^2 \zeta
   > \frac{\zeta}{64} ,
\end{align}
since we have choosen $\iota<1/8$. 
We conclude by taking $t \in \R$ arbitrary and $\psi = \mathbb{E}^\mu[v(t)]$:
\begin{align}
    \mathbb{E}^\mu \left[ \|v(t)-\mathbb{E}^\mu[v(t)]\|_0^2\right]
  \geq
  \frac{\zeta}{64}.
\end{align}
\end{proof}

Next we sketch the proof of \autoref{thm:main2}. 

\begin{proof}[Proof of \autoref{thm:main2}]
Let $Z$ be the process defined in \autoref{oss:Z}. It holds $\fint_{\T^d}|Z|^2 = \frac12 \zeta$.
Fix $E' \equiv \lambda_1^{-2\gamma} \delta_2^{3/2}$ and, for $i \in \{1,2\}$, initialize two tuples as follows:
   \begin{align}
    v^i_1 &:= \frac1i Z,
    \quad
    p^i_1:=0,
    \quad
    g^i_1 := \frac{E'}{\zeta} Z,
    \quad
    R^i_1 := \frac{1}{i^2} Z \otimes Z + \mathcal{R}\left( \frac1i \partial_t Z- \frac{E'}{\zeta} Z \right),
    \\
    \phi^i_1 &:= -\frac1i R_1^i Z - \frac{E'}{\zeta i} \overline{\mathcal{R}}\left( Z^2-\frac12 \zeta \right),
    \quad
    e^i_1 := E' \left( 1-\frac{1}{2i}\right).
\end{align}
Similar computations as those in \autoref{ass:Z} show that the iterative estimates are satisfied for $q=1$ and $a$ sufficiently large.
Applying iteratively the second part of \autoref{lemma:iter3} we obtain two tuples $\{(v^i,p^i,g^i,e^i)\}_{i=1,2}$ satisfying $p^1=p^2=0$ and $g^1=g^2=\frac{E'}{\zeta}Z$.
We claim that $\{(v^i,p^i,g^i)\}_{i=1,2}$ solve \eqref{eq:FE}: Indeed, by \eqref{s:distance.e3} we have for $i \in \{1,2\}$ and arbitrary $t \in \R$,
\begin{align}
    E' - e^i(t) &\geq E' - e^i_1(t) - \sum_{q=1}^\infty |e^i\qq(t)-e^i_q(t)|
    \\
    &= \frac{1}{2i} \lambda_1^{-2\gamma} \delta_2^{3/2}
    -
    \sum_{q=1}^\infty
    \lambda\qq^{-2\gamma} \delta\qqq^{3/2}
    >
    \frac{1}{2i} \lambda_1^{-2\gamma} \delta_2^{3/2}
    -
    4 \lambda_2^{-2\gamma} \delta_3^{3/2}
    >
    \frac{1}{8} \lambda_1^{-2\gamma} \delta_2^{3/2},
\end{align}
up to taking $a$ sufficiently large.
Then we can repeat the Krylov-Bogoliubov argument of the proof of \autoref{thm:main3}, with the only difference that the set $A$ is now replaced by
\begin{align}
A^i := \Big\{ &(v,p,g,Z) \in \mathscr{X}^\beta_{loc} \times \mathscr{Z}_{loc}:\quad \,(v,p,g) \mbox{ solves \eqref{eq:FE} weakly with dissipation } \leq -\frac{1}{8} \lambda_1^{-2\gamma} \delta_2^{3/2},
\\
&\quad {p}=0, \quad g = \frac{E'}{\zeta}Z, \quad\|v-\frac{1}{i}Z\|_{0} \leq \iota \zeta^{1/2},
\quad\|v\|_{C_tC^\beta_x} 
\leq 
M
 \Big\}.
\end{align}
Then we obtain two ergodic measures $\tilde\mu_1$ and $\tilde\mu_2$ concentrated on $\mathscr{X} \times \mathscr{Z}$ with $\mathscr{L}_{\tilde\mu_1}(g)=\mathscr{L}_{\tilde\mu_2}(g)$ and $\tilde\mu_i(A^i)=1$;
\eqref{eq:no-steady} and \eqref{eq:variance>0} are deduced by the same computations as above, up to replacing $Z$ with $\frac1i Z$ (notice however that \eqref{eq:variance>0} requires $\iota < 1/8\sqrt{2}$ when $i=2$). Moreover, the two measures $\tilde\mu_1$ and $\tilde\mu_2$ do not coincide since
\begin{align}
    \| v \|_0 \geq \|Z\|_0-\|v - Z\|_0 \geq (1-\iota) \zeta^{1/2} > \frac34 \zeta^{1/2},
    \quad
    \tilde\mu_1 \, \mbox{-\,almost surely},
\end{align}
while 
\begin{align}
    \| v \|_0 \leq \|Z/2\|_0 +\|v - Z/2\|_0 \leq (1/2+\iota) \zeta^{1/2} < \frac34 \zeta^{1/2},
    \quad
    \tilde\mu_2 \, \mbox{-\,almost surely}.
\end{align}
To conclude, we observe that the restriction of $\tilde\mu_i$ to $\mathscr{X}$, defined as $\mu_i(\,\cdot\,) := \tilde\mu_i(\,\cdot \,\times \mathscr{Z})$, remains ergodic with respect to the shift operator $S_t$.

\end{proof}

\section{Construction of the velocity perturbation} \label{sec:velocity}
This section explains the construction of the perturbation $w:=v\qq-v_{q}$, where $(v_q,p_q,R_q, \phi_q,z_q)$ is a solution to \eqref{eq:ApproxEulerV}. In particular, $w$ is obtained by adapting the construction introduced in \cite{DLK23} to our stochastic framework, to which we refer for more details.

\subsection{Mikado flows} \label{ssec:mikado}
 The perturbation $w$ is built by combining highly oscillating vector fields modulated by scalar amplitudes, that will be chosen to cancel out $R_q$ and $\phi_q$ through the following two geometric lemmas.

\begin{lemma}
\label{GeomLemma1}
Let $\mathcal{S}^3$ denote the space of $3 \times 3$ symmetric matrices, and consider a family of vectors $\mathcal{F_{R}}:=\{k_i\}_{i=1}^6\subset \Z^3$ and a constant $C>0$ such that
 \begin{equation}
     \sum\limits_{i=1}^6\ktimes=C\Id,\quad\text{and } \{k_i\otimes k_i\}\text{ is a basis of }\mathcal{S}^3.
 \end{equation}
 Then, there exists a positive constants $N_0=N_0(\mathcal{F_{R}})$ such that for any $N\le N_0$ we can find functions $\{\gammai\}_{i=1}^6 \subset \C^{\infty}(S_N,(0,\infty))$, where $S_N:=\{
 K \in \mathcal{S}^3 :  |K-Id|_{\infty}\le N\}$, such that for every $K \in S_N$ it holds
 \begin{equation}\label{RapprFormMatrix}
     K=\sum\limits_{i=1}^6\gammai^2(K)\ktimes.
 \end{equation}
\end{lemma}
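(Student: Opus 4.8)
\textbf{Proof plan for Lemma \ref{GeomLemma1}.}
The statement is a standard ``geometric lemma'' of the De Lellis--Székelyhidi type, and the plan is to obtain the smooth functions $\gammai$ from the Implicit Function Theorem applied to a well-chosen linear-algebra identity, after which smoothness on a neighborhood $S_N$ comes for free. First I would fix the family $\mathcal{F}_{\mathcal{R}} = \{k_i\}_{i=1}^6$ and the constant $C$ with $\sum_{i=1}^6 \ktimes = C\,\Id$ and such that $\{k_i \otimes k_i\}_{i=1}^6$ is a basis of $\mathcal{S}^3$ (the six-dimensional space of symmetric $3\times 3$ matrices). Concretely, one can exhibit such a family explicitly, e.g. the six vectors obtained from cyclic permutations of $(1,1,0)$ and $(1,-1,0)$, for which $\sum_i k_i\otimes k_i = 4\,\Id$ and linear independence is checked by a determinant computation; but since the family is given in the hypothesis, I only need to record that $\{k_i\otimes k_i\}$ being a basis makes the linear map $\Lambda:\R^6 \to \mathcal{S}^3$, $\Lambda(c_1,\dots,c_6) := \sum_{i=1}^6 c_i\, k_i\otimes k_i$, a linear isomorphism.

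The key step is the following. Since $\Lambda$ is a linear isomorphism, for every $K \in \mathcal{S}^3$ there is a unique vector $(c_1(K),\dots,c_6(K)) = \Lambda^{-1}(K) \in \R^6$ depending linearly (hence smoothly) on $K$ with $K = \sum_{i=1}^6 c_i(K)\, k_i\otimes k_i$. Evaluating at $K = \Id$ and using $\sum_i k_i\otimes k_i = C\,\Id$, uniqueness forces $c_i(\Id) = 1/C > 0$ for every $i$. By continuity of $K \mapsto c_i(K)$, there exists $N_0 = N_0(\mathcal{F}_{\mathcal{R}}) > 0$ such that $c_i(K) > 0$ for all $i$ and all $K$ in the closed neighborhood $S_{N_0} = \{K \in \mathcal{S}^3 : |K - \Id|_\infty \le N_0\}$; explicitly one may take any $N_0$ with $N_0 \cdot \max_i \|\Lambda^{-1}\|_{\text{row } i} < 1/C$. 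Then for any $N \le N_0$ and any $K \in S_N \subset S_{N_0}$ I set
\begin{align}
    \gammai(K) := \sqrt{c_i(K)},
    \qquad i = 1,\dots,6,
\end{align}
which is well defined and strictly positive because $c_i(K) > 0$ on $S_{N_0}$, and smooth because $c_i$ is smooth (in fact linear) and the square root is smooth away from $0$; thus $\gammai \in \C^\infty(S_N,(0,\infty))$. Squaring gives exactly \eqref{RapprFormMatrix}: $K = \sum_{i=1}^6 \gammai^2(K)\, k_i\otimes k_i$ for every $K \in S_N$.

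There is no real obstacle here; the only points requiring a little care are (i) checking that the value of $c_i$ at $\Id$ is genuinely $1/C$ and positive — this is where the hypothesis $\sum_i k_i\otimes k_i = C\Id$ with $C > 0$ is used, together with uniqueness of the representation — and (ii) making the dependence of $N_0$ only on $\mathcal{F}_{\mathcal{R}}$ transparent, which follows since both $\Lambda^{-1}$ and $C$ are determined by $\{k_i\}_{i=1}^6$ alone. If one prefers to phrase the argument via the Implicit Function Theorem rather than inverting the explicit linear map, one applies it to $F(\gamma_1,\dots,\gamma_6,K) := \sum_i \gamma_i^2\, k_i\otimes k_i - K$ at the point $\gamma_i = 1/\sqrt{C}$, $K = \Id$: the partial differential in $\gamma$ is $2\,\mathrm{diag}(1/\sqrt C)$ composed with $\Lambda$, which is invertible precisely because $\{k_i\otimes k_i\}$ is a basis, yielding the same smooth branch $\gammai(K)$ on a neighborhood of $\Id$, which one then takes to contain $S_{N_0}$.
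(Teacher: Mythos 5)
Your proof is correct and is exactly the standard argument behind this geometric lemma (the paper does not reproduce a proof, relying on the classical De Lellis--Székelyhidi-type statement as in the cited references): invert the linear isomorphism $\Lambda(c)=\sum_i c_i\,k_i\otimes k_i$, observe $c_i(\Id)=1/C>0$ by uniqueness, and take square roots on a neighborhood where positivity persists. No gaps.
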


\begin{lemma}
\label{GeomLemma2}
   Let $\mathcal{F_{\phi}}:=\{k_i\}_{i=1}^4\subset \Z^3$ be a family of vectors such that $\{k_1,k_2,k_3\}\subset \Z^3$ is an orthogonal frame and $k_4 = -(k_1+k_2+k_3)$. 
    Then, for any $N_0>0$, there exist real-valued affine functions $\{\lambdai\}_{i=1}^4$ on $\R^3$ such that their restrictions to $\mathcal{V}_{N_0}:=\{u \in \R^3: |u|\le N_0\}$ are of class $\C^{\infty}(\mathcal{V}_{N_0},[N_0,\infty))$ and for every $u \in \mathcal{V}_{N_0}$ it holds
    \begin{equation}\label{RapprFormVect}
        u=\sum\limits_{i=1}^4\lambdai(u)k_i.
    \end{equation}
\end{lemma}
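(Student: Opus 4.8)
\textbf{Proof strategy for Lemma \ref{GeomLemma2}.}

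The plan is to mimic the proof of Lemma \ref{GeomLemma1} but in the simpler vector-valued setting: we seek \emph{affine} functions $\Lambda_{k_i}$ so that the linear combination $\sum_{i=1}^4 \Lambda_{k_i}(u) k_i$ reproduces $u$, with the constraint that each $\Lambda_{k_i}$ stays bounded below by $N_0$ on the ball $\mathcal V_{N_0}$. The first observation is purely linear algebra: since $\{k_1,k_2,k_3\}$ is an orthogonal frame of $\R^3$, it is in particular a basis, so every $u\in\R^3$ can be written uniquely as $u=\sum_{i=1}^3 \mu_i(u)k_i$ with $\mu_i(u) = \langle u,k_i\rangle/|k_i|^2$ linear in $u$. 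This already gives a representation, but only with three vectors and with linear (not affine, hence not bounded below) coefficients. The role of the fourth vector $k_4 = -(k_1+k_2+k_3)$ is exactly to allow a constant shift: adding $c\,k_4 = -c(k_1+k_2+k_3)$ to the first three terms changes nothing in the sum, so for any constant $c\in\R$ we may set
\begin{equation}
\Lambda_{k_i}(u) := \mu_i(u) + c, \quad i=1,2,3, \qquad \Lambda_{k_4}(u) := c,
\end{equation}
and then $\sum_{i=1}^4 \Lambda_{k_i}(u)k_i = \sum_{i=1}^3 \mu_i(u)k_i + c\,k_4 + c(k_1+k_2+k_3) = \sum_{i=1}^3\mu_i(u)k_i = u$, so \eqref{RapprFormVect} holds for every $c$.

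It remains to choose $c$ large enough to guarantee the lower bounds. On the compact ball $\mathcal V_{N_0}$ each linear functional $\mu_i$ is bounded: $|\mu_i(u)| \le |k_i|^{-1}|u| \le N_0/|k_i| \le N_0$ since $k_i\in\Z^3\setminus\{0\}$ forces $|k_i|\ge 1$. Hence choosing $c := 2N_0$ makes $\Lambda_{k_i}(u) = \mu_i(u) + 2N_0 \ge N_0$ for $i=1,2,3$ and $\Lambda_{k_4}(u)=2N_0\ge N_0$, for all $u\in\mathcal V_{N_0}$. Each $\Lambda_{k_i}$ is affine on all of $\R^3$, and its restriction to $\mathcal V_{N_0}$ is smooth and takes values in $[N_0,\infty)$, which is exactly the assertion. (If one wants strict positivity and room to spare one can take $c$ any constant $\ge 2N_0$; the precise value is irrelevant.)

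There is no real obstacle here: the only thing to be careful about is that the required output is a genuine \emph{affine} function with a uniform lower bound on a prescribed ball, which forces the nonzero constant term and hence the introduction of the auxiliary vector $k_4$ with $\sum_{i=1}^4 k_i = 0$; once that algebraic identity is in place, the estimate $|\mu_i|\le N_0$ on $\mathcal V_{N_0}$ is immediate from $|k_i|\ge 1$ and Cauchy--Schwarz, and the choice $c=2N_0$ closes the argument. Smoothness is trivial since affine functions are $C^\infty$.
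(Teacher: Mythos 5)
Your proof is correct and is essentially the standard argument (the one the paper imports from the deterministic construction in [DLK23]): decompose $u$ in the orthogonal basis $\{k_1,k_2,k_3\}$ with linear coefficients $\mu_i(u)=\langle u,k_i\rangle/|k_i|^2$, exploit the identity $\sum_{i=1}^4 k_i=0$ to add a constant shift $c$ without changing the sum, and use $|k_i|\ge 1$ for nonzero integer vectors to bound $|\mu_i|\le N_0$ on $\mathcal V_{N_0}$, so that $c=2N_0$ gives the required lower bound. No gaps.
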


As in \cite{DLK23}, these two lemmas allow to find 27 disjoint families of directions $\{\F^j\}_{j\in\Z^3_3}$, indexed by equivalence classes $j \in \Z^3_3$ (triples of integers modulo $3$), such that any $\F^j$ can be decomposed as $\F^j=\F^j_R\cup\F^j_{\phi}$, where $\F^j_R$ and $\F^j_{\phi}$ satisfy the assumptions of \autoref{GeomLemma1}, \autoref{GeomLemma2}, respectively.
For any direction $f\in \F := \bigcup_{j \in \Z^3_3} \mathcal{F}^j$ let us define 
\begin{equation}
l_f:=\{x \in \T^3: x-\sigma f \in 2\pi\Z^3 \text{ for some }\sigma \in \R\},
\end{equation}
which is nothing but the the line in direction $f$ passing through the origin, but seen as subset of $\T^3$.
Moreover, let $\eta$ be a positive constant and $\psi_f\in C^{\infty}(\T^3,\R)$ be a function such that:
\begin{align}
    &\label{a:psi1}\supp(\psi_f)\subset B\left(l_f,\frac{\eta}{10}\right),\quad f\cdot \nabla\psi_f=0,
    \end{align}
namely with support concentrated in the pipe of radius $\eta/10$ around the line $l_f$, and invariant by translations in the direction $f$. We assume the following renormalization properties:
    \begin{align}
    &\label{e:meanPipesR}
    \int_{\T^3} \psi_fdx = \int_{\T^3} \psi_f^3dx = 0,
    \quad 
    \fint_{\T^3} \psi_f^2dx=1, 
    &\text{ for all } f\in \F_R;
    \\
    &\label{e:meanPipesPhi}
    \int_{\T^3} \psi_fdx = 0, 
    \quad 
    \fint_{\T^3} \psi^3_fdx = 1,
    &\text{ for all } f\in \F_\phi.
\end{align}

\subsection{Partition of unity and shifts}
In the following we denote $Q(x,r) := \{ y \in \T^3 : |y-x|_{\infty} \leq r \}$ the hypercube in $\T^3$ of points whose box distance from $x$ is at most $r$. 
Let $\chi_0 : \T^3 \to \R_+$ and $\theta_0 : \R \to \R_+$ be nonnegative smooth functions and suppose $\supp(\chi_0) \subset Q\left(0,9\pi/8\right)$ with $\chi_0 \equiv 1 \, \mbox{on}\,Q\left(0,7\pi/8\right)$, and similarly $\supp (\theta_0) \subset [-1/8,9/8]$ with $\theta_0 \equiv 1 \, \mbox{on}\, [1/8,7/8]$.
 
For every $n \in \Z^3$ and $m \in \Z$, define $\chi_n(x) := \chi_0(x-2\pi n)$ and $\theta_m(t)=\theta_0(t-m)$.
The functions $\chi_0,\theta_0$ can be chosen in such a way that 
\[\sum_{n\in \Z^3}\chi_n^6(x)=1,
\quad\forall x\in \T^3;
\qquad \sum_{m\in \Z}\theta_m^6(t)=1, \quad\forall t \in \R.\]

Next, let us fix $\nu \in (0,\frac{1-7\alpha}{4\alpha})$. Then, we introduce the parameter
\begin{equation}
\label{eq:definition_eps}
\epsilon:=\begin{cases}
  C_{\epsilon}  \lambda_0^{-1/2}\lambda_1^{-1/2},\,
  &q=0,
  \\
C_{\epsilon}\lambda_q^{-1/2}\lambda\qq^{-1/2} \delta_q^{-1/4-\nu}\delta\qq^{-1/4+\nu},\,
&q\geq 1,
\end{cases}
\qquad 
C_{\epsilon}:=(C_v \oM+L)^{-1}\min\left(\frac{1}{2},\frac{\inf_{j} N_0(\F_R^j)}6,\frac{\eta}{10\pi (L+1)}\right),
\end{equation}
which represents the size of the temporal partition, and
\begin{equation} \label{eq:definition_mu}
\mu^{-1}:=\begin{cases}\left\lceil\lambda_0^{1/2}\lambda_1^{1/2}\right\rceil,\,&q=0,\\
\left\lceil \lambda_q^{1/2}\lambda\qq^{1/2} \delta_q^{1/4+\nu}\delta\qq^{-1/4-\nu}\right\rceil,\,&q\geq 1,
\end{cases}
\end{equation}
which represents instead the size of the spatial partition.

Given index sets 
\begin{equation}
\begin{aligned}
  \I &:= 
  \{(m,n,f)\in \Z\times\Z^3\times\F:f\in\F^n\},
  \\
  \I_R &:= 
  \{I\in\I:f\in\F_R^n\},
  \\
  \I_{\phi} &:=\{I\in\I:f\in\F_{\phi}^n\},  
\end{aligned}
\end{equation}
then let us introduce
\begin{equation}
\chi_I(x):=\begin{cases}
        \chi_n^2(\frac{x}{\mu}),\quad I\in\I_{\phi},\\
        \chi_n^3(\frac{x}{\mu}),\quad I\in\I_{R},
    \end{cases}
    \quad
    \mbox{ and }
    \quad
    \theta_I(t):=\begin{cases}
        \theta_m^2(\frac{t}{\epsilon}),\quad I\in\I_{\phi},\\
        \theta_m^3(\frac{t}{\epsilon}),\quad I\in\I_{R}.
    \end{cases}
\end{equation}

Given a radial smooth function  $\mathfrak{m}:\R^3 \to \R$ supported in $B(0,2)$ and such that $\mathfrak{m} \equiv 1$ on $\overline{B(0,1)}$, consider the standard Littlewood-Paley operators acting on Fourier transforms as follows:
\begin{equation}
    \mathscr{F}[\P_{\le 2^j}f](\xi)
    :=
    \mathfrak{m}(\xi\,2^{-j})\F[f](\xi),
    \quad 
    \mathscr{F}[\P_{> 2^j}f](\xi)
    :=
    [1-\mathfrak{m}(\xi\,2^{-j})]\mathscr{F}[f](\xi).
\end{equation}
Let us introduce the frequency cutoff parameter 
\begin{align}
    \label{eq:definition_l}
l:=\begin{cases}
    \lambda_0^{-1/2}\lambda_1^{-1/2},\,&q=0,\\
\lambda_q^{-1/2}\lambda\qq^{-1/2} \delta_q^{-1/2}\delta\qq^{1/2},\quad&q\geq 1,\\
\end{cases}
\end{align}
and denote
\begin{align}
    \P_{\le l^{-1}}:=\P_{\le 2^S},
    \quad
    S:=\sup \{ j \in \Z \,:\, 2^j\le l^{-1}\}.
\end{align}
Incidentally, we also define here the Littlewood-Paley operator projecting on the frequency shell of size approximately equal to $2^j$, namely: 
\begin{align}
    \P_{2^j}:=\P_{>2^{j-1}}-\P_{>2^{j}},
\end{align}
that will be useful later.
Then we define the spatial regularization of our velocity fields as:
\begin{equation}\label{e:u_l}
    v_l:=\P_{\le l^{-1}}[v_q],\quad z_l:= \P_{\le l^{-1}}[z_q],\quad u_l:=v_l+z_l.
\end{equation}
Denote $D_{t,l} := \partial_t + u_l \cdot \nabla$ the advective derivative with respect to the velocity field $u_l$.
Finally, for any $I\in\I_m:=\{(m,n,f):n\in\Z^3,f\in\F^n\}$, let us consider the flow $\xi_I$ solving 
\begin{equation}\label{e:flow}
    \begin{cases}
        D_{t,l} \xi_I(t,x)=0,\\
        \xi_I(\epsilon(m-1/8),x)=x.
    \end{cases}
\end{equation}
Since $\xi_I$, $I \in \mathcal{I}_m$ only depends on $m \in \Z$, we shall indifferently denote $\xi_m := \xi_I$ hereafter.

\subsection{Perturbation of the velocity}
The principal part of the perturbation $w=v_{q+1}-v_q$ is defined as follows
\begin{equation}\label{e:w0}
    w_o(t,x):=\sum_{I\in \I}\theta_I(t)\chi_I(\xi_I(t,x))a_I(t,x)\nabla\xi_I(t,x)^{-1}f_I\psi_{I}(\lambda\qq \xi_I(t,x))=:\sum_{I\in\I}w_I^o(t,x),
\end{equation}
where the index $o$ stands for \emph{oscillatory}.
The amplitudes $a_I$ will be defined in the following \autoref{ssec:amplitude}, and $\psi_I$ is given by
\begin{equation}\label{e:psiI}
    \psi_I(x)=\psi_{f_I}(x-s_I),
\end{equation}
where $f_I$ is the direction appearing in the index $I$ (i.e. $I=(m,n,f_I)$ for some $m,n$), and the set of shifts $\{s_I\}_{I\in\I} \subset \R^3$ is chosen such that the following conditions hold:
\begin{itemize}
    \item $\supp w_I^o\cap\supp w_J^o=\emptyset$ for all $I\neq J$;
    \item For every $I=(m,n,f)$, the shift $s_I$ satisfies $s_I=s_{m,n}+p_{n,f}$ for some $s_{m,n}$ depending only on $m,n \in \Z \times \Z^3$ and $p_{n,f}$ depending only on $f \in \mathcal{F}^n$;
    \item $s_{m,n}=s_{m,n'}$ for $(n'-n)\mu \in \Z^3$.
\end{itemize}
The existence of such a family of shifts implies the following constraint on the parameters:
\begin{equation}
    C_\epsilon \epsilon\,\mu\|\nabla u_l(t)\|_0\lesssim\frac{\eta}{\lambda\qq}. 
\end{equation}
Its proof is a simple adaptation of \cite[Proposition 3.5]{DLK23}, with only minor modifications needed to take the randomness of $u_l$ into account, and is omitted for the sake of brevity. 
As in \cite{DLK23}, we can expand $\psi_I,\psi^2_I,\psi^3_I$ in Fourier series as follows:
\begin{equation}
    \psi_I(x)
    =
    \sum_{k\in\Z^3\setminus\{0\}}\mathring{b}_{I,k}e^{ik\cdot x}
    ,\quad 
    \psi^2_I(x)
    =
    \mathring{c}_{I,0}+\sum_{k\in\Z^3\setminus\{0\}}\mathring{c}_{I,k}e^{ik\cdot x},
    \quad 
    \psi^3_I(x)=\mathring{d}_{I,0}+\sum_{k\in\Z^3\setminus\{0\}}\mathring{d}_{I,k}e^{ik\cdot x},
\end{equation}
where \eqref{a:psi1} implies that $\mathring{b}_{I,k}(f_I\cdot k)=\mathring{c}_{I,k}(f_I\cdot k)=\mathring{d}_{I,k}(f_I\cdot k)=0$. Then:
\begin{align}
    \label{e:w0Fou} 
    w_o(t,x)
    &=
    \sum_{m\in\Z}\sum_{k\in \Z^3\setminus\{0\}}b_{m,k}(t,x)e^{i\lambda\qq \xi_m(t,x)\cdot k},
    \\
    \label{e:w0w0}
    (w_o\otimes w_o)(t,x)
    &=
    c_0(t,x)+\sum_{m\in\Z}\sum_{k\in \Z^3\setminus\{0\}}c_{m,k}(t,x)e^{i\lambda\qq \xi_m(t,x)\cdot k},
    \\
    \label{e:w03}
    \left( \frac{|w_o|^2}{2}w_o \right) (t,x)
    &=
    d_0(t,x)+\sum_{m\in\Z}\sum_{k\in \Z^3\setminus\{0\}}d_{m,k}(t,x)e^{i\lambda\qq \xi_m(t,x)\cdot k},
\end{align}
where the coefficients $b_{m,k}, c_{m,k}, d_{m,k}$ are given by
\begin{align}
&\label{e:b_mk}
b_{m,k}(t,x)=\sum_{I\in \mathcal{I}_m}\theta_I(t)\chi_I(\xi_I(t,x))a_I(t,x)\overline{f_I}(t,x)\,\mathring{b}_{I,k},
\\
&\label{e:c_mk}
c_{m,k}(t,x)=\sum_{I\in \mathcal{I}_m}\theta^2_I(t)\chi^2_I(\xi_I(t,x))a_I^2(t,x)\left(\overline{f_I}\otimes \overline{f_I}\right)(t,x)\,\mathring{c}_{I,k},
\\
&\label{e:d_mk}
d_{m,k}(t,x)=\sum_{I\in \mathcal{I}_m}\theta^3_I(t)\chi^3_I(\xi_I(t,x))a_I^3(t,x)\left(\frac{|\overline{f_I}|^2}2\overline{f_I}\right)(t,x)\,\mathring{d}_{I,k},
\end{align}
with
\begin{equation} \label{def:bar_f_I}
    \overline{f_I}(t,x):=(\nabla\xi_I)^{-1}(t,x)f_I,
\end{equation}
whereas the zero modes $c_0,d_0$ are given by
\begin{align}
&\label{e:c0}
c_0(t,x)=\sum_{I\in \mathcal{I}}\theta^2_I(t)\chi_I^2(\xi_I(t,x))a_I^2(t,x)\left(\overline{f_I}\otimes \overline{f_I}\right)(t,x) \fint_{\T^3}\psi_I(t,y)^2dy,
\\
&\label{e:d0}
d_0(t,x)=\sum_{I\in \mathcal{I}}\theta^3_I(t)\chi_I^3(\xi_I(t,x))a_I^3(t,x)\left(\frac{|\overline{f_I}|^2}2\overline{f_I}\right)(t,x) \fint_{\T^3}\psi_I(t,y)^3dy.
\end{align}

At last, we need to define a \emph{compressibility} corrector term $w_c$ such that $\text{div}(w_c+w_o)=0$. Let
\begin{align}
\label{e:wc}
w_c(t,x)
:=&
\frac1{\lambda\qq}\sum_{m\in\Z}\sum_{k\in \Z^3\setminus\{0\}}e_{m,k}(t,x)e^{i\lambda\qq \xi_m(t,x)\cdot k},
\\
\label{e:e_mk}
e_{m,k}(t,x)=&\sum_{I\in \mathcal{I}_m}\theta_I(t)\nabla\left(\chi_I(\xi_I(t,x))a_I(t,x)\right)\mathring{b}_{I,k}\times\left(\nabla\xi_I^T(t,x)\frac{ik\times f_I}{|k|^2}\right).
\end{align}

\subsection{Choice of the amplitudes}
\label{ssec:amplitude}
The amplitudes $a_I$ are built differently based on $I$ being in $\mathcal{I}_R$ or in $\mathcal{I}_{\phi}$.
First of all, consider a temporal mollifier $\eta$ with support in $[-1,0]$, and a time mollification parameter 
\begin{equation}
l_{\rm temp}:=\begin{cases}
\lambda_0^{-1/2}\lambda_1^{-1/2},\,&q=0,\\
\lambda_q^{-1} \delta_q^{-2-\nu}\delta\qq^{3/2+\nu},\,&q\geq 1,
\end{cases}
\end{equation}
and let $\eta_{l_{\rm temp}}$ be the rescaled mollifier $\eta_{l_{\rm temp}}(t) := l^{-1}_{\rm temp} \eta( tl^{-1}_{\rm temp})$.
The asymmetric choice for the support of $\eta$ guarantees that convolutions with $\eta_{l_{\rm temp}}$ preserve $\{\mathbb{F}_t\}_{t \geq 0}$ adaptedness.
Then, define 
\begin{align}\label{e:Mollification}
    R_{l}(t,x)&:=\int_{\R}(\P_{\le l^{-1}}R_q)\left(t+s,\mu(t;t+s,x)\right)\eta_{l_{\rm temp}}(s)ds,\\
    \phi_{l}(t,x)&:=\int_{\R}(\P_{\le l^{-1}}\phi_q)\left(t+s,\mu(t;t+s,x)\right)\eta_{l_{\rm temp}}(s)ds,
\end{align}
where $\mu(t;\cdot,\cdot)$ is the unique solution to the transport equation 
\begin{equation}
\begin{cases}
    \partial_r \mu(t;r,x)=u_l(r,\mu(t;r,x)),
    \quad
    r \geq t,\\
    \mu(t;t,x)=x.
\end{cases}
\end{equation}
Notice that this mollification along the Lagrangian flow implies that
\begin{equation}\label{e:VantaggioMollFlux}
\begin{aligned}
        D_{t,l} R_{l}(t,x)&=-\int_{\R}(\P_{\le l^{-1}}R_q)\left(t+s,\mu(t;t+s,x)\right)\eta_{l_{\rm temp}}'(s)ds,\\
        D_{t,l} \phi_{l}(t,x)&=-\int_{\R}(\P_{\le l^{-1}}\phi_q)\left(t+s,\mu(t;t+s,x)\right)\eta_{l_{\rm temp}}'(s)ds.
\end{aligned}
\end{equation}
 {\emph{First case: $I \in \mathcal{I}_{\phi}$}}.
We define $a_I$ to cancel out $\phi_l$ through $d_0$. 
Indeed, consider
\begin{equation}\label{e:amplPhi}
a_I(t,x):=|\overline{f_I}(t,x)|^{-2/3}2^{1/3}\Lambda_{f_I}^{1/3}\left(-\frac{\nabla\xi_I(t,x)\phi_l(t,x)}{\overline{M}^3\lambda_q^{-3\gamma/2}\delta_{q+1}^{3/2}}\right)\overline{M}(\lambda_q^{-3\gamma/2}\delta_{q+1}^{3/2})^{1/3}. 
\end{equation}
Notice that $\epsilon\|u_l(t)\|_1<1/2$ implies $|\overline{f}_I(t,x)|>1/2$ by \eqref{def:bar_f_I}.
Then, \eqref{e:meanPipesPhi} and \eqref{e:d0} plus the disjoint supports property gives
\begin{equation}
    \begin{aligned}
        d_0
        &=
        \sum_{I\in \mathcal{I_{\phi}}}\theta^3_I\chi_I^3(\xi_I)a_I^3\frac{|\overline{f_I}|^2}2\overline{f_I}
        \\
        &=
        \sum_{(m,n)\in\Z\times \Z^3}\theta_{m}^6(t/\epsilon)\chi_n^6(\xi_m)\nabla\xi_m^{-1}\sum_{\substack{I\in  \mathcal{I}_{\phi}
        \\ I=(m,n,f_I)}}\overline{M}^3\lambda_q^{-\gamma}\delta_{q+1}^{3/2}\Lambda_{f_I}\left(-\frac{\nabla\xi_I\phi_l}{\overline{M}^3\lambda_q^{-\gamma}\delta_{q+1}^{3/2}}\right)f_I
        \\
        &= 
        \sum_{(m,n)\in\Z\times \Z^3}
        \theta_{m}^6(t/\epsilon)
        \chi_n^6(\xi_m)(-\phi_l)=-\phi_l,
    \end{aligned}
\end{equation}
where the second-to-last equality is due to \autoref{GeomLemma2} applied with $N_0=e$, while the last one is due to the space and time partitions of unity.

{\emph{Second case: $I \in \mathcal{I}_{R}$}}.
We define $a_I$ to cancel out $R_l$ through $c_0$. 
Indeed, let $I=(m,n,f)$ and define
\begin{align}\label{e:amplR}
a_I &:= \Gamma_f\left(\Id+(\nabla\xi_I\nabla\xi_I^T-\Id)-\frac{\nabla\xi_I(R_l-M_{m,n})\nabla\xi_I^T}{\rho}\right)\rho^{1/2}, \\
\quad 
\label{eq:definitio_rho}
\rho &:=\frac{C \,\overline{M}^2\lambda_q^{-\gamma}\delta_{q+1}}
{\inf_{j} N_0(\F_R^j)},
\end{align}
where $\Gamma_f$ and $N_0(\F_R^j)$ are given by \autoref{GeomLemma1}, $C>0$ is a constant depending on the $\Gamma_f$'s functions and $\sup_{f\in\F}|f|_\infty$, and
\begin{equation}\label{eq:M_mn}
    M_{m,n}(t,x):=\sum_{\substack{J\in I_{\phi},\\
    J=(m',n',f_J),\\
    |m-m'|_{\infty}\le 1,\\
    |n-n'|_{\infty}\le 1}}a_J^2(t,x)\chi_J^2(\xi_J(t,x))\theta_J^2(t)\left(\overline{f_J}\otimes\overline{f_J}\right)(t,x).
\end{equation}
Then, \eqref{e:meanPipesR} and \eqref{e:w0w0} imply that
\begin{equation}\label{e:cancW0W0}
    \begin{aligned}
        c_0
        &=
        \sum_{(m,n)\in \Z\times\Z^3}\theta_m^6(t/\epsilon)\chi_n^6(\xi_m )
        \left[\sum_{I\in\I_{m,n} \cap \mathcal{I}_R}a_I^2 \nabla\xi_I^{-1} (f_I\otimes f_I) \nabla\xi_I^{-T} +M_{m,n} \right]
        \\
        &=
        \sum_{(m,n)\in \Z\times\Z^3}\theta_m^6(t/\epsilon)\chi_n^6(\xi_m )
        \left[\rho\nabla\xi_I^{-1}\left(\nabla\xi_I\nabla\xi_I^T-\frac{\nabla\xi_I(R_l-M_{m,n})\nabla\xi_I^T}{\rho}\right)\nabla\xi_I^{-T}+M_{m,n}\right]
        \\
        &=
        \sum_{(m,n)\in \Z\times\Z^3}\theta_m^6(t/\epsilon)\chi_n^6(\xi_m )
        \left[\rho\Id-R_l\right]=\rho\Id-R_l,
    \end{aligned}
\end{equation}
where the second to last equality comes from \autoref{GeomLemma1} under the additional assumption 
\begin{align}
    \epsilon\|u_l(t)\|_1\le \frac{\inf_{j} N_0(\F_R^j)}6.
\end{align}

\subsection{Estimates on the velocity perturbation} \label{ssec:estimates_v}
With the construction above, we are able to control the size of the perturbation $w$, as stated in the following key result. Since its proof is highly technical and involved, the reader could skip it at a first reading without losing much insight on the remainder of this work.
In brief, the main takeaway of this result is that taking the advective derivative $D_{t,l}$ effectively ``costs'' a factor $\epsilon^{-1}$, whereas a space derivative ``costs'' a factor $\lambda\qq$.
 
\begin{prop}\label{prop:estw}
    For $s\in\{0,1,2\}$, $N\le n_0+3$ and for every $t\le \mathfrak{t}$, it holds
    \begin{align}  
        &\label{est:w0}\epsilon^{s}\|D_{t,l}^sw_o(t)\|_N\lesssim \lambda\qq^N \rho^{1/2},\\
        &\label{est:wC}\epsilon^{s}\|D_{t,l}^sw_c(t)\|_N\lesssim(\mu\lambda\qq)^{-1} \lambda\qq^N \rho^{1/2},
    \end{align}
    and moreover there exists a finite constant $C_2$, independent on $C_v$, $\overline{M}$ and $L$, such that
    \begin{equation}\label{e:wIndOnM}
        \|w(t)\|_N\le C_2\, \lambda\qq^{N}\rho^{1/2}.
    \end{equation}
In addition, for all $N\le n_0+3$ and $t \le\mathfrak{t}$, we have
    \begin{align}
        &\label{e:DiffU}\|u\qq(t)-u_q(t)\|_N\lesssim \,\lambda\qq^N\rho^{1/2},\\
        &\label{e:DiffUl}\|u\qq(t)-u_l(t)\|_N\lesssim \,\lambda\qq^N\rho^{1/2}.
    \end{align}
\end{prop}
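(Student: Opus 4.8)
The plan is to reduce \cref{prop:estw} to a handful of \emph{building-block} estimates — on the Lagrangian flows $\xi_I$, on the regularized velocities $v_l,z_l,u_l$, on the mollified errors $R_l,\phi_l$, and above all on the amplitudes $a_I$ — and then to reassemble $w_o$, $w_c$ from \eqref{e:w0}, \eqref{e:wc} by Leibniz. The guiding principle is the one stressed before the statement: a spatial derivative costs $\lambda\qq$ (it lands on $\psi_I(\lambda\qq\xi_I)$ or on $e^{i\lambda\qq\xi_m\cdot k}$ in the worst case), while $D_{t,l}$ costs $\epsilon^{-1}$. What makes the second cost correct is the algebraic fact $D_{t,l}\xi_I=0$, so that $D_{t,l}$ \emph{annihilates} $\chi_I(\xi_I)$ and $\psi_I(\lambda\qq\xi_I)$; the only factors of $w_I^o$ on which $D_{t,l}$ acts nontrivially are $\theta_I(t)$ (giving $\theta_I'$, the gain $\epsilon^{-1}$), $a_I$ (for which one needs $\|D_{t,l}a_I\|_0\lesssim\epsilon^{-1}\rho^{1/2}$), and $(\nabla\xi_I)^{-1}$ (whose advective derivative equals $(\nabla u_l)(\nabla\xi_I)^{-1}$, costing $\|\nabla u_l\|_0\le\|u_l\|_1\lesssim\epsilon^{-1}$).

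First I would establish the flow estimates on $\supp\theta_I$, a time window of length $\sim\epsilon$: from $D_{t,l}\xi_I=0$ and standard transport/Gr\"onwall bounds one gets $\|\nabla\xi_I-\Id\|_N\lesssim\epsilon\|u_l\|_{N+1}$, $\|(\nabla\xi_I)^{-1}\|_N\lesssim 1+\epsilon\|u_l\|_{N+1}$, and corresponding $D_{t,l}$-derivative bounds. The key qualitative input is that $\epsilon\|u_l\|_1<\tfrac12$ and $\epsilon\|u_l\|_1\le\inf_j N_0(\F_R^j)/6$ hold \emph{with constants not depending on $C_v,\oM,L$}: by \eqref{s:estUqN} and $\|z_q(t)\|_{N_1}\le L$ (for $t\le\mathfrak t$) one has $\|u_l\|_1\lesssim(C_v\oM+L)\lambda_q\delta_q^{1/2}$, and the factor $(C_v\oM+L)^{-1}$ in the definition \eqref{eq:definition_eps} of $C_\epsilon$ is there precisely to cancel it, reducing both conditions to a parametric inequality of the form $(\delta\qq/\delta_q)^{(3+2\nu)/2}\lesssim 1$, valid for $a$ large. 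In the same step I would collect the Littlewood--Paley and mollification bounds for $v_l,z_l$ from \eqref{s:V}, \eqref{s:estUqN} and for $R_l,\phi_l$ from \eqref{s:R}, \eqref{s:Phi}; for the advective derivatives $D_{t,l}R_l$, $D_{t,l}\phi_l$ I would integrate by parts in $s$ in \eqref{e:VantaggioMollFlux} so as to replace $\eta'$ by $D_{t,l}(\P_{\le l^{-1}}R_q)=\P_{\le l^{-1}}D_{t,q}R_q+(u_l-u_q)\cdot\nabla\P_{\le l^{-1}}R_q+[\P_{\le l^{-1}},u_q\cdot\nabla]R_q$, and then invoke the $D_{t,q}$-estimates of \eqref{s:R}, \eqref{s:Phi} together with commutator bounds.

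The core of the argument — and the step I expect to be the main obstacle — is the amplitude bound $\|D_{t,l}^s a_I\|_N\lesssim\epsilon^{-s}\mu^{-N}\rho^{1/2}$, $s\in\{0,1,2\}$, $N\le n_0+3$ (the frequency $\mu^{-1}$, which is $\le\lambda\qq$ because $(4+2\nu)\alpha<1$, being forced by the cutoffs $\chi_J(\xi_J)$ entering $M_{m,n}$). This is obtained by Fa\`a di Bruno applied to the compositions \eqref{e:amplPhi}, \eqref{e:amplR} with the \emph{fixed} smooth functions $\Lambda_f$, $\Gamma_f$ of \cref{GeomLemma1,GeomLemma2}. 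Two delicate points: (i) the arguments of $\Lambda_{f_I}$ and $\Gamma_f$ must stay in the domains $\mathcal{V}_{N_0}$, $S_N$ of those functions — for $\Lambda_{f_I}$ this uses $\|\phi_l\|_0\lesssim\oM\lambda_q^{-3\gamma/2}\delta_{q+1}^{3/2}$ and $|\overline{f_I}|>\tfrac12$, while for $\Gamma_f$ one checks that $\|\nabla\xi_I\nabla\xi_I^T-\Id\|_0$ is small (flow estimate), that $\|R_l\|_0/\rho\lesssim\oM^{-1}$, and that $\|M_{m,n}\|_0/\rho\lesssim\oM^{-4/3}$ (since $a_J^2\lesssim\oM^{2/3}\lambda_q^{-\gamma}\delta_{q+1}$ and $\rho\sim\oM^2\lambda_q^{-\gamma}\delta_{q+1}$ by \eqref{eq:definitio_rho}) — this is exactly why the normalizations by $\rho$ and $\oM^3\lambda_q^{-3\gamma/2}\delta_{q+1}^{3/2}$ were inserted, and it is what yields the independence of the constant $C_2$ in \eqref{e:wIndOnM} from $C_v,\oM,L$; (ii) although $a_I$ for $I\in\I_R$ refers to $M_{m,n}$, which is assembled from the $a_J$ with $J\in\I_\phi$, this is not circular, and one simply propagates the $\I_\phi$-estimates (and their $D_{t,l}$-derivatives, using step two for $D_{t,l}R_l$ and $D_{t,l}\chi_J(\xi_J)=0$) into the $\I_R$-bound.

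Finally I would assemble everything. By the disjoint-support property $\supp w_I^o\cap\supp w_J^o=\emptyset$, at each point exactly one summand of $w_o$ is nonzero, so $\|D_{t,l}^s w_o\|_N\lesssim\max_I\|D_{t,l}^s w_I^o\|_N$; expanding $D_{t,l}^sD^{\mathbf{r}}w_I^o$ by Leibniz, using that $D_{t,l}$ kills $\chi_I(\xi_I)$ and $\psi_I(\lambda\qq\xi_I)$, that spatial derivatives hitting $\psi_I(\lambda\qq\xi_I)$ produce $\lambda\qq$ while those hitting $\chi_I(\xi_I),a_I,\nabla\xi_I$ produce only $\mu^{-1},\mu^{-1},l^{-1}\le\lambda\qq$, and the bounds from steps one--three, gives \eqref{est:w0}. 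Estimate \eqref{est:wC} follows the same pattern for \eqref{e:wc}: the explicit prefactor $\lambda\qq^{-1}$ together with $\|D_{t,l}^s e_{m,k}\|_N\lesssim\epsilon^{-s}\mu^{-1}\lambda\qq^N\rho^{1/2}$ (the extra $\mu^{-1}$ coming from the single gradient in \eqref{e:e_mk}) produces the gain $(\mu\lambda\qq)^{-1}$; and \eqref{e:wIndOnM} is the $s=0$ case of the above, with the $C_v,\oM,L$-independence secured by point (i). For \eqref{e:DiffU}--\eqref{e:DiffUl} one writes $u\qq-u_q=w+(z\qq-z_q)$ and $u\qq-u_l=w+(z\qq-z_q)+(v_q-v_l)+(z_q-z_l)$ and bounds the time-mollification error $\|z\qq-z_q\|_N$ and the Littlewood--Paley errors $\|v_q-v_l\|_N$, $\|z_q-z_l\|_N$ by $\lambda\qq^N\rho^{1/2}$ — a check that, once the definitions of $i_q$, $l$, $\rho$, $\delta_q$ are unwound, is precisely of the parametric type that the choices $\alpha<1/7$, $\nu=(1-7\alpha)/(3\alpha)$, $b$ close to $1$, $a$ large were made to accommodate.
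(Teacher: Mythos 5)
Your proposal is correct and follows essentially the same route as the paper: the paper's proof also reduces everything to the building-block bounds of \cref{lemma:coeff} on the coefficients $b_{m,k},c_{m,k},d_{m,k},e_{m,k}$ (whose single addend $X_I=\theta_m\chi_I(\xi_I)a_I\overline{f_I}\mathring b_{I,k}$ is exactly the product you expand by Leibniz), proves the amplitude estimates by Faà di Bruno in the two cases $\I_\phi$ and $\I_R$ with the same non-circular propagation through $M_{m,n}$, uses the Lagrangian mollification \eqref{e:VantaggioMollFlux} for $D_{t,l}R_l,D_{t,l}\phi_l$, and secures the $C_v,\oM,L$-independence through the factor $C_\epsilon$ exactly as you describe (cf.\ \cref{oss:IndependenceOnML}). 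The only cosmetic difference is that the paper records the amplitude bounds with the sharper scales $l^{-K}$ and $l_{\rm temp}^{-s}$ before relaxing to $\mu^{-N}$ and $\epsilon^{-s}$ at the coefficient level, which does not change the argument.
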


\subsubsection{Proof of \autoref{prop:estw}: Preliminary estimates}
We report some estimates that will be helpful in the proof of \autoref{prop:estw}, as well as in the remainder of the paper.

\begin{lemma}\label{lemma:stocEstimates}
    For any $N\le n_0+4$ and $t \le\mathfrak{t}$, we have
    \begin{align}
        &\label{e:Diffstoc}\|z_l(t)-z\qq(t)\|_N\vee\|z_q(t)-z\qq(t)\|_N\lesssim L\,i_q^{1/2-\delta};\\
        &\label{e:dtStoc}\|\partial_tz_q(t)\|_N\lesssim L \,i_q^{-1/2-\delta}.
    \end{align}
\end{lemma}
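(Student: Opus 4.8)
The plan is to establish the two estimates directly from the definition of the time mollifications $z_q(t) = \int_{\R} r_{i_q}(t-s) z(s)\, ds$ and the a priori bound on $z$ on $[0,\mathfrak{t}]$. Recall that by definition of the stopping time $\mathfrak{t}$ in \eqref{stoppingtime}, we have $\|z\|_{C^{1/2-\delta}([0,t],C^{N_1}_x)} \leq L$ for all $t \leq \mathfrak{t}$; since $N_1 = n_0 + 5$, this controls $\|z(t)\|_N$ and the time-H\"older modulus of $t \mapsto z(t)$ in $C^N_x$ for every $N \leq n_0 + 4 < N_1$. The same holds trivially for negative times, where $z$ is constant equal to $z(0)$.

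\emph{Step 1: proof of \eqref{e:dtStoc}.} Differentiating under the integral sign, $\partial_t z_q(t) = \int_{\R} r_{i_q}'(t-s) z(s)\, ds$. Since $r$ is supported in $[0,1]$, the rescaled mollifier $r_{i_q}(\cdot) = i_q\, r(i_q \cdot)$ is supported in $[0, i_q^{-1}]$ and satisfies $\int r_{i_q} = 1$, hence $\int r_{i_q}' = 0$. Using the latter to subtract $z(t)$, we write $\partial_t z_q(t) = \int_{\R} r_{i_q}'(t-s)\,(z(s) - z(t))\, ds$, and then estimate in $C^N_x$ by
\begin{align}
    \|\partial_t z_q(t)\|_N
    \leq
    \int_{\R} |r_{i_q}'(t-s)|\, \|z(s) - z(t)\|_N\, ds
    \lesssim
    L\, i_q^{1/2-\delta} \int_{\R} |r_{i_q}'(t-s)|\, ds
    \lesssim
    L\, i_q^{1/2-\delta}\, i_q,
\end{align}
where we used $\|z(s)-z(t)\|_N \leq L |t-s|^{1/2-\delta} \leq L\, i_q^{-(1/2-\delta)}$ on the support of the integrand together with $\int |r_{i_q}'| = i_q \int |r'| \lesssim i_q$. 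This gives exactly $\|\partial_t z_q(t)\|_N \lesssim L\, i_q^{-1/2-\delta}$, which is \eqref{e:dtStoc}. (For $t \leq i_q^{-1}$, where the mollification reaches into negative times on which $z$ is constant, the same bound holds since $z$ is still $(1/2-\delta)$-H\"older in time across $0$ with the convention $z(-s) = z(0)$.)

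\emph{Step 2: proof of \eqref{e:Diffstoc}.} Using $\int r_{i_q} = 1$ we write $z_q(t) - z(t) = \int_{\R} r_{i_q}(t-s)\,(z(s) - z(t))\, ds$, and estimating as above,
\begin{align}
    \|z_q(t) - z(t)\|_N
    \leq
    \int_{\R} r_{i_q}(t-s)\, \|z(s)-z(t)\|_N\, ds
    \lesssim
    L\, i_q^{-(1/2-\delta)}
    =
    L\, i_q^{1/2-\delta}\cdot i_q^{2\delta - 1}\cdot i_q^{\,0},
\end{align}
so in fact $\|z_q(t) - z(t)\|_N \lesssim L\, i_q^{-(1/2-\delta)} \ll L\, i_q^{1/2-\delta}$ since $i_q \geq 1$; hence both $\|z_q(t) - z_{q+1}(t)\|_N$ and, via the triangle inequality, $\|z_l(t) - z_{q+1}(t)\|_N$ reduce to comparing consecutive mollification scales and adding the spatial Littlewood–Paley truncation error. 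Concretely, $z_l = \P_{\leq l^{-1}}[z_q]$, so $z_l(t) - z_{q+1}(t) = \P_{\leq l^{-1}}[z_q(t) - z_{q+1}(t)] - \P_{> l^{-1}}[z_{q+1}(t)]$; the first term is bounded by $\|z_q(t) - z_{q+1}(t)\|_N \lesssim L\, i_q^{-(1/2-\delta)}$ (Littlewood–Paley projections are bounded on $C^N_x$ up to a harmless constant, using one extra derivative available since $N \leq n_0+4 < N_1$), and the high-frequency term is bounded by $l^{\,1}\|z_{q+1}(t)\|_{N+1} \lesssim l\, L$. One then checks from the explicit exponents defining $i_q$ and $l$ that both $i_q^{-(1/2-\delta)}$ and $i_{q+1}^{-(1/2-\delta)}$ and $l$ are all $\lesssim i_q^{1/2-\delta}$ — wait, this needs care: the claimed bound is $L\, i_q^{1/2-\delta}$, and since $i_q \geq 1$ every negative power of $i_q$ is automatically $\leq i_q^{1/2-\delta}$, so the inequality holds comfortably with plenty of room to spare.

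\emph{Main obstacle.} The only genuinely delicate point is Step 2's bookkeeping: one must verify that the spatial-truncation error $l\, L$ and the coarser mollification scale $i_{q+1}^{-(1/2-\delta)}$ are both dominated by $L\, i_q^{1/2-\delta}$, which amounts to checking the sign of an explicit linear combination of the exponents $(e_i)$, $(s_i)$-type data and $(m_1,\dots,m_4) = (1/2-\delta)^{-1}(0,0,-2,7/2)$ against the exponents defining $i_q$, uniformly in $q \geq 1$ and at $q = 0$; since $i_q \geq 1$ makes the target a very weak upper bound, this is routine but must be done with the stated parameter relations $b > 1$ close to $1$, $a > 2$ large. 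I do not expect any conceptual difficulty beyond this.
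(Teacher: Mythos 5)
Your overall strategy (Hölder-in-time control of $z$ up to the stopping time, plus mollification estimates and a Bernstein-type bound for the Littlewood--Paley truncation) is the same as the paper's, but the execution contains a genuine error: you have inverted the mollification convention and, as a result, the size of $i_q$. In the paper $r_{i_q}(\tau)=i_q^{-1}r(\tau/i_q)$ is supported in $[0,i_q]$ (this is visible in the paper's computation of $\partial_t z_q$, which rescales as $i_q^{-2}\int r'(s\,i_q^{-1})\cdots$), and $i_q$ is a \emph{small} parameter: from the exponents $(m_1,\dots,m_4)=(1/2-\delta)^{-1}(0,0,-2,7/2)$ one has $i_q^{1/2-\delta}=\delta_{q+1}^{7/2}\delta_q^{-2}\to 0$, and likewise $i_0=(\lambda_0/\lambda_1)^{m_5}<1$. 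Your premise ``$i_q\ge 1$'' is therefore false, and everything you deduce from it goes in the wrong direction.

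Concretely: in Step 1 your displayed chain yields $L\,i_q^{1/2-\delta}\cdot i_q=L\,i_q^{3/2-\delta}$, which is not $L\,i_q^{-1/2-\delta}$; with your stated convention (support of length $i_q^{-1}$, $\int|r_{i_q}'|\lesssim i_q$) the correct product is $L\,i_q^{-(1/2-\delta)}\cdot i_q = L\,i_q^{1/2+\delta}$, again not the claimed bound. The right computation uses window length $i_q$: the Hölder increment over the support is $L\,i_q^{1/2-\delta}$ and $\int|r_{i_q}'|\lesssim i_q^{-1}$, giving $L\,i_q^{-1/2-\delta}$. In Step 2 the same confusion is fatal: the correct mollification error is $\|z_q(t)-z(t)\|_N\lesssim L\,i_q^{1/2-\delta}$ (positive exponent, small), whereas your claimed $L\,i_q^{-(1/2-\delta)}$ is \emph{large}, and the inequality ``$i_q^{-(1/2-\delta)}\ll i_q^{1/2-\delta}$'' is backwards since $i_q<1$. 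Finally, the bound on $z_l-z_q$ requires the parameter inequality $l\le i_q^{1/2-\delta}$ (which the paper invokes explicitly and which does hold for the stated exponents, but only after a genuine check); it is not ``automatic with room to spare,'' and your dismissal of it rests on the same false premise. Once the convention is corrected, your decomposition of $z_l-z_{q+1}$ into $\P_{\le l^{-1}}(z_q-z_{q+1})$ plus the high-frequency tail of $z_{q+1}$, estimated by $l\,\|z\|_{N+1}$, matches the paper's argument.
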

\begin{proof}
    The estimate for $\|z_q(t)-z\qq(t)\|_N$ is standard, while the one for $\|z_l(t)-z\qq(t)\|_N$ is due to 
    \[\|z_l(t)-z_q(t)\|_N\lesssim l\|z(t)\|_{N+1},\quad 
    l\le i_q^{1/2-\delta}.\]
    Instead, \eqref{e:dtStoc} follows from noticing that the time mollifier $\alpha$ used in \eqref{e:SmoothNoise} has compact support, implying 
    \[\int_{\R}r'(s)z(t)ds=0,\]
and giving in particular the bound
    \begin{align}
    |\partial_tz_q(t)|
    &=
        \left|\partial_tz_q(t)-i_q^{-2}\int_{\R}r'(s\,i_q^{-1})z(t)\,ds\right|
    \\
    &=i_q^{-2}\left|\int_{\R}r'(s\,i_q^{-1})[z(t-s)-z(t)]\,ds\right|
     \\
    &=i_q^{-1}\left|\int_{\R}r'(s)[z(t-i_qs)-z(t)]\,ds\right| \lesssim Li_q^{-1/2-\delta}.
    \end{align}

\autoref{lemma:stocEstimates} and \eqref{s:estUqN} imply the following estimates, that will be useful later on.
\begin{corollario}\label{cor:Dtq1Z}
    For $N\le n_0+3$ and $t\le \mathfrak{t}$, we have that
    \begin{align}
        \label{est:Dtq1ZMoll}\|D_{t,q+1}(z_q-z_l)(t)\|_N
        &\lesssim
        l\left(i_q^{-1/2-\delta}+(\lambda\qq^N\delta\qq^{1/2}\vee 1)\right),\\
        \label{est:Dtq1ZVar}\|D_{t,q+1}(z\qq-z_q)(t)\|_N
        &\lesssim 
        i\qq^{-1/2-\delta}+i_q^{1/2-\delta}(\lambda\qq^N\delta\qq^{1/2}\vee 1).
    \end{align}
\end{corollario}

\end{proof}

\begin{lemma}  
    For $N\le n_0+1$ and $t \le\mathfrak{t}$, we have
    \begin{align}
        &\label{est: DtlDiffv}\|D_{t,l}(u_q-u_l)(t)\|_N+\|D_{t,l}(v_q-v_l)(t)\|_N\lesssim l^{1-N}\lambda_q^2\delta_q.
    \end{align}
\end{lemma}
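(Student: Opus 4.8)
The plan is to estimate $D_{t,l}(u_q - u_l)$ and $D_{t,l}(v_q - v_l)$ directly from the Euler–Reynolds system \eqref{eq:ApproxEulerV} satisfied by $(v_q, p_q, R_q, \phi_q, z_q)$, exploiting the commutator structure of the Littlewood–Paley projection $\P_{\le l^{-1}}$. Since $u_q - u_l = (v_q - v_l) + (z_q - z_l)$, and the $z_q - z_l$ piece is already controlled by \cref{cor:Dtq1Z} (after converting $D_{t,q+1}$ to $D_{t,l}$, which differs by a first-order term $(u_q + z_q - u_l)\cdot\nabla$ whose contribution is lower order), it suffices to treat $v_q - v_l$. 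Writing $v_q - v_l = (\Id - \P_{\le l^{-1}})v_q = \P_{>l^{-1}}v_q$, I would apply $D_{t,l} = \partial_t + u_l\cdot\nabla$ and commute it past the projection: $D_{t,l}\P_{>l^{-1}}v_q = \P_{>l^{-1}} D_{t,q} v_q + [u_l\cdot\nabla, \P_{>l^{-1}}]v_q - \P_{>l^{-1}}((u_q - u_l)\cdot\nabla v_q)$, using $\partial_t v_q = D_{t,q} v_q - (v_q + z_q)\cdot\nabla v_q$ and $u_l = v_l + z_l$.

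The three resulting terms are then estimated as follows. For the first term, $D_{t,q} v_q = \partial_t v_q + (v_q + z_q)\cdot\nabla v_q$, and from the momentum equation $\partial_t v_q = -\dvg((v_q + z_q)\otimes(v_q+z_q)) - \nabla p_q + \dvg(R_q)$, so $D_{t,q}v_q = -(v_q+z_q)\dvg(v_q+z_q) - \nabla p_q + \dvg R_q = -\nabla p_q + \dvg R_q$ using $\dvg(v_q) = \dvg(z_q) = 0$; applying $\P_{>l^{-1}}$ gains a factor $l^{N+1}$ off the high-frequency truncation, and the bounds \eqref{s:P}, \eqref{s:R} together with $\lambda_q^2\delta_q$ dominating $\lambda_q\delta_q$ and $\lambda_q^{1-\gamma}\delta_{q+1}$ (for $a$ large and $b$ close to $1$) give the claimed $l^{1-N}\lambda_q^2\delta_q$. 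For the commutator term $[u_l\cdot\nabla,\P_{>l^{-1}}]v_q$, I would use the standard Littlewood–Paley commutator estimate, which gains one derivative off $u_l$ and one frequency-truncation factor $l$ off $v_q$, yielding a bound controlled by $\|u_l\|_{N+1}\|v_q\|_1 l^{\,\cdot}$ — again absorbed into $l^{1-N}\lambda_q^2\delta_q$ via \eqref{s:estUqN}. For the transport error $\P_{>l^{-1}}((u_q - u_l)\cdot\nabla v_q)$, I would bound $\|u_q - u_l\|_0 \lesssim l\|u_q\|_1 \lesssim l\lambda_q\delta_q^{1/2}$ and $\|\nabla v_q\|_N \lesssim \lambda_q^{N+1}\delta_q^{1/2}$, and combine with the $l^N$ gain from $\P_{>l^{-1}}$ at higher $N$ (or estimate crudely at $N=0$), producing $l\lambda_q^2\delta_q$ as desired. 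Throughout, the interpolation/Leibniz structure of the Hölder seminorms and the fact that $l\le i_q^{1/2-\delta}$ together with the parameter inequalities from \cref{subsec:prep} ensure all subleading contributions are dominated.

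The main obstacle I anticipate is bookkeeping the frequency-truncation gains against the loss of derivatives in the commutator and transport terms while keeping everything inside the single clean bound $l^{1-N}\lambda_q^2\delta_q$ uniformly for $N\le n_0+1$: one must check that at the top value $N = n_0+1$ the derivatives falling on $v_q$, $p_q$, $R_q$ (which require $N+1 \le n_0+2$, $N-1 \le n_0-1$, etc.) stay within the ranges where \eqref{s:V}, \eqref{s:P}, \eqref{s:R} are available — this is exactly why the definition $N_1 = n_0 + 5$ and the margin "$n_0+3$" in the iteration estimates are built in, so the verification is a matter of carefully tracking indices rather than a genuine difficulty. A secondary point is that $D_{t,l}$ rather than $D_{t,q}$ appears, so one must confirm the drift replacement $u_l$ vs.\ $v_q + z_q$ costs only $\|(v_q - v_l) + (z_q - z_l)\|_0 \lesssim l(\lambda_q\delta_q^{1/2} + L)$, which is again subleading.
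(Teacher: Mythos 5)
Your proposal follows essentially the same route as the paper: write $u_q-u_l=\proj_{>l^{-1}}u_q$, commute $D_{t,l}$ past the projection, use the Euler--Reynolds momentum equation to express the advective derivative of the velocity, and combine the Bernstein gain from the high-frequency truncation with the commutator estimate of \cite[Lemma A.3]{DLK23} and the inductive bounds \eqref{s:P}, \eqref{s:R}. One small algebra slip: $D_{t,q}v_q=\partial_t v_q+u_q\cdot\nabla v_q$ equals $-\nabla p_q+\dvg R_q-u_q\cdot\nabla z_q$, not $-\nabla p_q+\dvg R_q$ (the quadratic terms do not fully cancel), but the extra term is harmless since $\|u_q\cdot\nabla z_q\|_{N+1}\lesssim L\lambda_q^{N+1}\delta_q^{1/2}\lesssim\lambda_q^{N+2}\delta_q$ by $L\le\lambda_q\delta_q^{1/2}$.
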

\begin{proof}
   To keep the notation light we drop the time $t$ below, keeping in mind that all estimates are uniform in $t \leq \mathfrak{t}$.
   Moving to the proof of the lemma, notice that we have 
    \[D_{t,q}u_q=\partial_tz_q+\mbox{div}(R_q)+\nabla p_q.\]
    Therefore, the bounds \eqref{s:P}, \eqref{s:R}, \cite[Lemma A.3]{DLK23}, $\lambda_q\delta_q^{1/2}\geq L$, $i_q^{-1/2-\delta}\le \lambda_q^2\delta_q$ and $\lambda_q<l^{-1}$ imply
    \begin{equation}
        \begin{aligned}
            \|D_{t,l}(u_q-u_l)\|_N&\le \|\proj_{>l^{-1}}D_{t,l}u_q\|_N+\|[\proj_{>l^{-1}},u_l\cdot \nabla]u_q\|_N\\
                &\lesssim l \|D_{t,q}u_q\|_{N+1}+l\|(u_l-u_q)\cdot \nabla u_q\|_{N+1}+l^{1-N}\|u_q\|_1^2\lesssim
            l^{1-N}\lambda_q^2\delta_q.
        \end{aligned}
    \end{equation}
    The same computations and \autoref{lemma:stocEstimates} yield the result for $\|D_{t,l}(v_q-v_l)\|_N$.
\end{proof}
\begin{lemma}
    For any $N\le n_0+3$ and $t \le\mathfrak{t}$, we have
    \begin{equation}\label{est:DtlNablaU}
        \|D_{t,l}\nabla v_l(t)\|_N+\|D_{t,l}\nabla u_l(t)\|_N\lesssim l^{-N}\lambda^2_q\delta_q.
    \end{equation} 
\end{lemma}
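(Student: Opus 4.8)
The plan is to bound $D_{t,l}\nabla v_l$ and $D_{t,l}\nabla z_l$ separately, since $D_{t,l}\nabla u_l=D_{t,l}\nabla v_l+D_{t,l}\nabla z_l$. The noise part is the easier one: $D_{t,l}\nabla z_l=\P_{\le l^{-1}}\nabla\partial_t z_q+u_l\cdot\nabla^2 z_l$, whose first summand is $\lesssim\|\partial_t z_q\|_{N+1}\lesssim L\,i_q^{-1/2-\delta}\lesssim\lambda_q^2\delta_q\le l^{-N}\lambda_q^2\delta_q$ by \eqref{e:dtStoc} and the parameter inequality $i_q^{-1/2-\delta}\le\lambda_q^2\delta_q$, and whose second summand is controlled by the Leibniz rule using $\|z_l\|_M\lesssim\|z_q\|_M\lesssim L$ for $M\le N_1$ together with $\lambda_q\delta_q^{1/2}\ge L$, giving again $\lesssim l^{-N}\lambda_q^2\delta_q$.

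For $D_{t,l}\nabla v_l$ I would write $D_{t,l}\nabla v_l=\nabla\partial_t v_l+u_l\cdot\nabla^2 v_l$ with $\partial_t v_l=\P_{\le l^{-1}}\partial_t v_q$, and use that, by $\dvg u_q=0$, the first equation in \eqref{eq:ApproxEulerV} reads $\partial_t v_q=\dvg(R_q)-\nabla p_q-u_q\cdot\nabla u_q$. Differentiating, applying $\P_{\le l^{-1}}$ and recalling $u_l=\P_{\le l^{-1}}u_q$, $v_l=\P_{\le l^{-1}}v_q$, this produces
\begin{equation}
\begin{aligned}
 D_{t,l}\nabla v_l &= \P_{\le l^{-1}}\nabla\dvg(R_q)-\P_{\le l^{-1}}\nabla^2 p_q-\P_{\le l^{-1}}\big((\nabla u_q)\cdot\nabla u_q\big)-\P_{\le l^{-1}}\big(u_q\cdot\nabla^2 z_q\big)\\
 &\quad +\Big(u_l\cdot\nabla^2 v_l-\P_{\le l^{-1}}\big(u_q\cdot\nabla^2 v_q\big)\Big).
\end{aligned}
\end{equation}
The four ``explicit'' terms are routine: by \eqref{s:R}, \eqref{s:P} and $\lambda_q^{-\gamma}\delta\qq\le\delta_q$ one has $\|\P_{\le l^{-1}}\nabla\dvg R_q\|_N+\|\P_{\le l^{-1}}\nabla^2 p_q\|_N\lesssim\|R_q\|_{N+2}+\|p_q\|_{N+2}\lesssim\lambda_q^{N+2}\delta_q\le l^{-N}\lambda_q^2\delta_q$, since $\lambda_q<l^{-1}$; by \eqref{s:estUqN}, $\|\P_{\le l^{-1}}((\nabla u_q)\cdot\nabla u_q)\|_N\lesssim\|u_q\|_1\|u_q\|_{N+1}\lesssim\lambda_q^{N+2}\delta_q$; and $\|\P_{\le l^{-1}}(u_q\cdot\nabla^2 z_q)\|_N\lesssim\|u_q\|_0\|z_q\|_{N+2}+\|u_q\|_N\|z_q\|_2\lesssim L+L\lambda_q^N\delta_q^{1/2}\le l^{-N}\lambda_q^2\delta_q$, the last step using $\lambda_q\delta_q^{1/2}\ge L\ge1$. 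At the highest orders $N\in\{n_0+2,n_0+3\}$, where \eqref{s:V}--\eqref{s:Phi} no longer supply enough derivatives of $R_q$, $p_q$ and $u_q$, I would move the surplus derivatives across $\P_{\le l^{-1}}$ by Bernstein's inequality, at cost $l^{-1}$ each, and reabsorb them through $\lambda_q<l^{-1}$ (which works since $(l\lambda_q)^{n_0+1}\le1$). The genuine commutator term $u_l\cdot\nabla^2 v_l-\P_{\le l^{-1}}(u_q\cdot\nabla^2 v_q)$ I would handle by expanding $u_l=\P_{\le l^{-1}}u_q$, $v_l=\P_{\le l^{-1}}v_q$, which rewrites it as a sum of terms each carrying a projection $\P_{>l^{-1}}$ acting on $u_q$, $v_q$, or $u_q\cdot\nabla^2 v_q$, all of effective frequency $\lesssim\lambda_q\ll l^{-1}$; the standard high-frequency bound $\|\P_{>l^{-1}}f\|_M\lesssim(l\lambda_q)^K\lambda_q^M\|f\|_{M+K}$ (cf.\ the appendix of \cite{DLK23}) then provides, with $K$ as large as the inductive estimates permit, a gain of a high power of $l\lambda_q$ that absorbs everything into $l^{-N}\lambda_q^2\delta_q$. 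Summing gives $\|D_{t,l}\nabla v_l(t)\|_N\lesssim l^{-N}\lambda_q^2\delta_q$, and together with the noise part the lemma follows.

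The main difficulty is not any individual estimate but the bookkeeping at the top H\"older orders: above $\|\cdot\|_{n_0+3}$ the inductive bounds are unavailable, so every extra derivative must be paid against the frequency cutoff and recovered via $\lambda_q<l^{-1}$, and all high-frequency error terms must be shown to beat every fixed power of $\lambda_q$. This is exactly where the structural identity $l\lambda_q=(\lambda_q/\lambda\qq)^{\alpha(4+\nu)}$ --- so that $l\lambda_q$ decays only like a small power of $\lambda_q$ --- together with the choice of $n_0$ large depending on $b$ is used, as in the preceding lemmas; everything else reduces to routine product and Littlewood--Paley estimates.
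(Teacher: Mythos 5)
Your proposal is correct and follows essentially the same route as the paper: substitute the (projected) momentum equation for the time derivative, and control the Reynolds, pressure, quadratic, noise, and mollification-commutator contributions via the inductive bounds, Bernstein, and the Littlewood--Paley commutator estimates of \cite[Lemma A.3]{DLK23}, with the noise absorbed through $L\,i_q^{-1/2-\delta}\le\lambda_q^2\delta_q$. The only (cosmetic) difference is that the paper starts from $u_l$ and uses the identity $D_{t,l}\nabla u_l=\nabla(\partial_t u_l+u_l\cdot\nabla u_l)-(\nabla u_l)^2$ so that the commutator appears in divergence form as $\dvg(R_{comm})$ with $R_{comm}=u_l\otimes u_l-\proj_{\le l^{-1}}(u_q\otimes u_q)$, whereas you keep the transport term $u_l\cdot\nabla^2 v_l-\proj_{\le l^{-1}}(u_q\cdot\nabla^2 v_q)$ explicit; these are equivalent rearrangements.
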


\begin{proof}
    Since it holds
    \[D_{t,l}(\nabla u_l)=\nabla(\partial_tu_l+u_l\cdot\nabla u_l)-(\nabla u_l)^2=\nabla\left(\dvg(R_l+R_{comm})-\nabla p_l+\partial_t z_l\right)-(\nabla u_l)^2,\] 
    where $R_{comm}:=u_l\otimes u_l-\proj_{\le l^{-1}}(u_q\otimes u_q)$ and $(\nabla u_l)^2$ denotes the standard product between matrices $[(\nabla u_l)^2]^{i,j} = \sum_k \partial_i u^k \partial_k u^j$, then by \cite[Lemma A.3]{DLK23}, \autoref{lemma:stocEstimates}, and the bound $L\,i_q^{-1/2-\delta}\le \lambda^2_q\delta_q$ we have:
    \begin{equation}
        \|D_{t,l}\nabla u_l\|_N\lesssim l^{-N}\lambda^2_q\delta_q+L\,i_q^{-1/2-\delta}\lesssim l^{-N}\lambda^2_q\delta_q.
    \end{equation}
    Then, the estimate \eqref{est:DtlNablaU} on $\|D_{t,l}\nabla v_l\|_N$ follows from the identity $D_{t,l}\nabla v_l=D_{t,l}\nabla u_l-D_{t,l}\nabla z_l$ and computations similar to those above.
\end{proof}

\subsubsection{Proof of \autoref{prop:estw}: Estimates on the oscillatory and compressible Fourier coefficients}
Recall the definition of the parameters $\epsilon$ and $\mu$, given respectively in \eqref{eq:definition_eps} and \eqref{eq:definition_mu}.

Let us start with estimates aimed to control the Fourier coefficients appearing in $w$ and in the products $w_o\otimes w_o$ and $|w_o|^2w_o$, cf. Equations \eqref{e:w0Fou}, \eqref{e:w0w0}, \eqref{e:w03}, \eqref{e:wc}. 

The key takeaway of the following lemma is that taking the advective derivative of the building blocks costs a factor $\epsilon^{-1}$, whereas taking a space derivative costs a factor $\mu^{-1}$.

\begin{lemma}\label{lemma:coeff}
    For $s\in\{0,1,2\}$ and $N\le n_0+4$ , it holds for all $t\le \mathfrak{t}$ that
    \begin{align}
        &\epsilon^{s}\|D_{t,l}^sb_{m,k}(t)\|_N\lesssim \sup_{I \in \mathcal{I}}|\mathring{b}_{I,k}|\rho^{1/2}\mu^{-N},\\
        &\epsilon^{s}\|D_{t,l}^sc_{m,k}(t)\|_N\lesssim \sup_{I\in \mathcal{I}}|\mathring{c}_{I,k}|\rho\,\mu^{-N},\\
        &\epsilon^{s}\|D_{t,l}^sd_{m,k}(t)\|_N\lesssim \sup_{I\in \mathcal{I}}|\mathring{d}_{I,k}|\rho^{3/2}\mu^{-N},\\
        &\epsilon^{s}\|D_{t,l}^se_{m,k}(t)\|_N\lesssim \sup_{I\in \mathcal{I}}|\mathring{b}_{I,k}|\rho^{1/2} \mu^{-N-1}.
    \end{align}
\end{lemma}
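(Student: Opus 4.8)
The plan is to prove \cref{lemma:coeff} by analyzing the defining formulas \eqref{e:b_mk}--\eqref{e:e_mk} term by term, using the product rule for $D_{t,l}$ together with the fundamental heuristic that each space derivative costs $\mu^{-1}$ (the spatial scale of the partition $\chi_I$) and each advective derivative $D_{t,l}$ costs $\epsilon^{-1}$ (the temporal scale of the partition $\theta_I$). The main building blocks appearing in all four coefficients are: the time cutoff $\theta_m(t/\epsilon)$ (and its powers), the space cutoff $\chi_I(\xi_I(t,x))$ (and its powers), the amplitude $a_I$, and the Jacobian factor $\overline{f_I}=(\nabla\xi_I)^{-1}f_I$ (and its tensor powers). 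I would first collect the relevant estimates on each of these, most of which should already be available from the preliminary lemmas in the previous subsubsection or from standard transport-estimate machinery (e.g.\ \cite[Lemma A.3, A.4]{DLK23} type bounds on $\xi_I$, $\nabla\xi_I$ and its inverse).

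The key sub-estimates I would establish or recall are, roughly: (i) $\epsilon^s\|D_{t,l}^s\theta_m(t/\epsilon)\|_0\lesssim 1$, which is immediate since $\theta_m$ is a fixed smooth function and $D_{t,l}$ acting on a function of $t$ alone is just $\partial_t$; (ii) $\|\nabla\xi_I\|_N$, $\|(\nabla\xi_I)^{-1}\|_N$ and their $D_{t,l}$-derivatives: here $D_{t,l}\xi_I=0$ by \eqref{e:flow}, so $D_{t,l}\nabla\xi_I=-\nabla u_l\,\nabla\xi_I$, and one bootstraps using \eqref{est:DtlNablaU} and the constraint $l^{-1}\le\lambda_q$, $\lambda_q^2\delta_q\lesssim\epsilon^{-1}$, together with $\epsilon\|u_l\|_1\lesssim 1$ (which holds by the assumptions made in \cref{ssec:amplitude}), yielding $\|\nabla\xi_I-\Id\|_0\lesssim\epsilon\|u_l\|_1\ll 1$ and higher-order bounds $\|\nabla\xi_I\|_N\lesssim\mu^{-N}$ and $\epsilon^s\|D_{t,l}^s\nabla\xi_I\|_N\lesssim \mu^{-N}$, hence the same for $\overline{f_I}$; (iii) $\|\chi_I(\xi_I(t,\cdot))\|_N\lesssim\mu^{-N}$ since $\chi_I$ is built at spatial scale $\mu$ and composition with the near-identity $\xi_I$ does not change the scale (chain rule plus the previous point); and $\epsilon^s\|D_{t,l}^s\chi_I(\xi_I)\|_N$: again $D_{t,l}[\chi_I(\xi_I)]=(\nabla\chi_I)(\xi_I)\cdot D_{t,l}\xi_I=0$ for $s\ge1$, so only space derivatives matter; (iv) the amplitude bound $\epsilon^s\|D_{t,l}^sa_I(t)\|_N\lesssim \rho^{1/2}\mu^{-N}$ for $I\in\mathcal{I}_R$ and $\lesssim(\lambda_q^{-3\gamma/2}\delta_{q+1}^{3/2})^{1/3}\mu^{-N}\lesssim \rho^{1/2}\mu^{-N}$ for $I\in\mathcal{I}_\phi$ — this one requires differentiating the composite expressions \eqref{e:amplR}, \eqref{e:amplPhi} built out of $\Gamma_f$, $\Lambda_{f}$, $R_l$, $\phi_l$, $\nabla\xi_I$; here one uses that $R_l$, $\phi_l$ are mollified along the Lagrangian flow so that $D_{t,l}$ falls on $\eta_{l_{\rm temp}}$ with gain $l_{\rm temp}^{-1}\lesssim\epsilon^{-1}$ by \eqref{e:VantaggioMollFlux}, together with the size bounds on $\P_{\le l^{-1}}R_q$, $\P_{\le l^{-1}}\phi_q$ coming from \eqref{s:R}, \eqref{s:Phi} and the definition of $\rho$ in \eqref{eq:definitio_rho}, plus a Faà-di-Bruno / chain-rule argument for the smooth outer functions $\Gamma_f,\Lambda_f$ whose arguments lie in the fixed compact domains $S_N$, $\mathcal{V}_{N_0}$.

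With these in hand, the four stated bounds follow by Leibniz: for $b_{m,k}$ in \eqref{e:b_mk} one distributes $D_{t,l}^s$ and $N$ space derivatives over the product $\theta_m\cdot\chi_I(\xi_I)\cdot a_I\cdot\overline{f_I}\cdot\mathring{b}_{I,k}$ (the constant $\mathring{b}_{I,k}$ just factors out, bounded by $\sup_I|\mathring b_{I,k}|$), with at most a total of $s$ advective and $N$ spatial derivatives, each distributed allocation costing at most $\epsilon^{-s}\mu^{-N}$ and contributing $\rho^{1/2}$ from the single amplitude factor; the disjoint-support property of the $w_I^o$ over $I\in\mathcal{I}_m$ turns the sum over $I$ into a supremum, up to a dimensional constant. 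The coefficients $c_{m,k}$ and $d_{m,k}$ are handled identically, now with two (resp.\ three) amplitude factors producing $\rho$ (resp.\ $\rho^{3/2}$), and $e_{m,k}$ in \eqref{e:e_mk} carries one extra spatial gradient landing on the $\theta_m\chi_I a_I$-bundle, hence one extra $\mu^{-1}$, plus the factor $\nabla\xi_I^T(ik\times f_I)/|k|^2$ which is bounded by a constant times $\|\nabla\xi_I\|_N\lesssim\mu^{-N}$ and whose $D_{t,l}$-derivatives behave as before — this yields the $\mu^{-N-1}$ on the right-hand side. The main obstacle, and the only genuinely delicate point, is item (iv): establishing the amplitude estimates for $D_{t,l}^s a_I$ with $N$ space derivatives uniformly in the parameters, because this requires carefully tracking how $D_{t,l}$ interacts with the Lagrangian mollification of $R_q,\phi_q$ (to ensure one does \emph{not} lose an $\epsilon^{-1}$ from differentiating $\nabla\xi_I$ inside the argument while the true gain is $l_{\rm temp}^{-1}$), and verifying that the chain-rule expansion of the smooth functions $\Gamma_f$, $\Lambda_f$ composed with a matrix-valued argument whose derivatives are controlled at scale $\mu^{-1}$ indeed closes with the claimed powers of $\rho$ — all of which hinges on the parameter inequalities $l_{\rm temp}^{-1},\,l^{-1},\,\mu^{-1}\lesssim\epsilon^{-1}$ and $\lambda_q^2\delta_q\lesssim\epsilon^{-1}$, which must be checked from the explicit exponent choices for $\epsilon,\mu,l,l_{\rm temp}$ and the condition $\alpha\in(0,1/7)$.
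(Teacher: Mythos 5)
Your proposal follows essentially the same route as the paper's proof: isolate a single addend $\theta_m\,\chi_I(\xi_I)\,a_I\,\overline{f_I}\,\mathring{b}_{I,k}$, bound each factor separately (with $D_{t,l}\xi_I=0$ annihilating the advective derivative of the spatial cutoff, the Lagrangian mollification of $R_q,\phi_q$ producing the $l_{\rm temp}^{-1}$ gain per advective derivative, and a Fa\`a di Bruno argument for the compositions with $\Gamma_f,\Lambda_{f}$), then conclude by Leibniz and disjointness of supports, treating $c_{m,k},d_{m,k},e_{m,k}$ identically with the appropriate powers of the amplitude and the extra gradient for $e_{m,k}$. One correction: the parameter inequalities you list at the end are not the ones the argument needs, and two of them are false — for large $q$ one has $\mu<\epsilon$ and $l<\epsilon$ (e.g.\ $\mu/\epsilon \sim C_\epsilon^{-1}\delta_{q+1}^{1/2}\to 0$), so $\mu^{-1}\lesssim\epsilon^{-1}$ and $l^{-1}\lesssim\epsilon^{-1}$ fail, and $\lambda_q^2\delta_q\lesssim\epsilon^{-1}$ also fails (only $\lambda_q^2\delta_q\lesssim\epsilon^{-2}$ holds). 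What is actually used is that spatial and advective costs are tracked separately: $l^{-1}\le\mu^{-1}$ so that all spatial losses are dominated by $\mu^{-N}$, and $l_{\rm temp}^{-1}\le\epsilon^{-1}$ together with $\epsilon\|u_l\|_1\lesssim 1$ (i.e.\ $\lambda_q\delta_q^{1/2}\lesssim\epsilon^{-1}$) so that all advective losses, including $\|D_{t,l}^2\overline{f_I}\|_0\lesssim(\lambda_q\delta_q^{1/2})^2$, are dominated by $\epsilon^{-s}$; since your intermediate claims already state the correct conclusions $\epsilon^{s}\|D_{t,l}^{s}(\cdot)\|_N\lesssim\mu^{-N}$, this slip does not affect the validity of the argument.
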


\begin{proof}
    Let us show the claim for $b_{m,k}$. The estimates on the other coefficients follow from similar inequalities.
     Due to the expression \eqref{e:b_mk} and the spatial partition of unity,  we can focus on a single addend corresponding to $I \in \mathcal{I}_m$ and $k \in \Z^3 \setminus \{0\}$: 
     \begin{equation} \label{eq:aux_001}
         X_I(t,x)
     :=
     \theta_I(t)\chi_I(\xi_I(t,x))a_I(t,x)\overline{f}_I(t,x)\mathring{b}_{I,k}.
     \end{equation} 
     First, we have that
     \begin{equation}\label{e:timePart}
         |D_{t,l}^s\theta_I(t)|\lesssim\epsilon^{-s}.
     \end{equation}
     Second, by \autoref{prop:transport} and \eqref{est:DtlNablaU}, together with the assumed bound $\epsilon\|u_l(t)\|_1<1$ and the algebraic identities 
     \begin{align}\label{e:MatDerFlux}
         &D_{t,l}(\nabla \xi_I)^{-1}=\nabla u_l (\nabla\xi_I)^{-1}, \quad D_{t,l}^2(\nabla\xi_I)^{-1}=D_{t,l}(\nabla u_l)(\nabla\xi_I)^{-1}+(\nabla u_l)^2(\nabla\xi_I)^{-1},
     \end{align}
     imply that for every $K \in \{0,1\dots,N\}$ we have
     \begin{align}\label{e:FluxDirection}
         &\|D_{t,l}^s\overline{f_I}\|_K\lesssim l^{-K} (\lambda_q\delta_q^{1/2})^s, \quad \text{for }s\in \{0,1,2\}.
     \end{align}
     Third, by \autoref{prop:transport}, \eqref{e:flow} and \cite[Proposition C.1]{BD15+}, we have
     \begin{align}\label{e:spacePart}
         \|D_{t,l}^s\chi_I(\xi_I)\|_K\lesssim\mu^{-K}\mathbf{1}_{\{s=0\}}.
     \end{align}  
     Now, we need to estimate $\|D_{t,l}^sa_I\|_K$ for $K \in \{0,1\dots,N\}$, either for $I\in \I_{\phi}$ or $I\in\I_{R}$.
     Before treating each case separately, let us recall some useful inequalities valid for general $C^K$ maps $g:\T^3 \to \R^k$ and $\Lambda:\R^k \to \R^n$, for general $k,n$. First, it is easy to check that the advective derivative of $\Lambda(g):\T^3 \to \R^n$ is given by $D_{t,l}\Lambda(g)= (\nabla \Lambda) (g) \cdot D_{t,l}g$. In particular, by Faà di Bruno's formula we obtain for every $K \in \{0,1\dots,N\}$:
     \begin{align}
         &\label{e:comp1} \|D_{t,l}\Lambda(g)\|_K\lesssim\sum_{K_1+K_2\le K}\|D_{t,l}g\|_{K_1}\|(\nabla\Lambda)(g)\|_{K_2};\\
         &\label{e:comp2} \|D_{t,l}^2\Lambda(g)\|_K\lesssim\sum_{K_1+K_2\le K}\|D_{t,l}^2g\|_{K_1}\|(\nabla\Lambda)(g)\|_{K_2}+\|D_{t,l}g\otimes D_{t,l}g\|_{K_1}\|(\nabla^2\Lambda)(g)\|_{K_2}.
     \end{align}

     \textit{First case: $I\in \I_{\phi}$.}
     Let $\rho_1:=\lambda_q^{-\gamma}\delta_{q+1}^{3/2}$. Recalling the definition of $\rho$ given in \eqref{eq:definitio_rho}, one immediately has $\rho_1 \lesssim \rho^{3/2}$ up to multiplicative constants independent of $\epsilon,\mu$.
      For $s=0$, \cite[Proposition C.1]{BD15+} implies that
      \begin{equation}\label{i:ampPhi0}
          \|a_I\|_K \lesssim l^{-K}\rho_1^{1/3} \lesssim l^{-K}\rho^{1/2}.
      \end{equation}
For $s=1$ instead, we have by \eqref{e:MatDerFlux}, \autoref{prop:transport} and \eqref{e:Mollification}:
      \begin{equation}\label{e:1}
          \|D_{t,l}(\nabla\xi_I\phi_l)\rho_1^{-1/3}\|_K\lesssim l_{\rm temp}^{-1}\,l^{-K}.
      \end{equation}
      On the other hand, \cite[Proposition C.1]{BD15+} implies that for $k \in \{1,2\}$
      \begin{equation}\label{e:2}
          \left\|(\nabla^k \Lambda_{f_I}^{1/3})\left((\nabla\xi_I\phi_l)\rho_1^{-1/3}\right)\right\|_K\lesssim l^{-K},
      \end{equation}
where the implicit constant depends on $\|\nabla^k \Lambda_{f_I}^{1/3}\|_K$.     
Therefore, \eqref{e:comp1}, \eqref{e:1}, \eqref{e:2} imply that
\begin{equation}\label{i:ampPhi1}
\begin{aligned}
           \|D_{t,l}a_I\|_K
           &\lesssim
           \sum_{K_1+K_2\le K}\rho_1^{1/3}\|D_{t,l}(\nabla\xi_I\phi_l)\rho_1^{-1/3}\|_{K_1}\left\|(\nabla \Lambda_{f_I}^{1/3})\left((\nabla\xi_I\phi_l)\rho_1^{-1/3}\right)\right\|_{K_2}
           \\
           &\lesssim
           \rho_1^{1/3}l_{\rm temp}^{-1}l^{-K}
           \lesssim
           \rho^{1/2}l_{\rm temp}^{-1}l^{-K}.
      \end{aligned}
      \end{equation} 
Analogously for $s=2$ we have by \eqref{e:MatDerFlux} and \eqref{e:Mollification} 
      \begin{equation}\label{e:3}
          \|D_{t,l}^2(\nabla \xi_I\phi_j)\rho_1^{-1/3}\|_K \lesssim l_{\rm temp}^{-2}\,l^{-K}
          .
      \end{equation}
Then, \eqref{e:comp2}, \eqref{e:3} and \eqref{e:2} imply that
      \begin{equation}\label{i:amplPhi2}
          \|D_{t,l}^2a_I\|_N \lesssim \rho_1^{1/3}l_{\rm temp}^{-2}\,l^{-N}
          \lesssim \rho^{1/2}l_{\rm temp}^{-2}\,l^{-N}.
      \end{equation}
To summarize, recalling the decomposition \eqref{eq:aux_001}, we can combining all the estimates given in \eqref{e:timePart}, \eqref{e:spacePart}, \eqref{e:FluxDirection}, \eqref{i:ampPhi0}, \eqref{i:ampPhi1}, and \eqref{i:amplPhi2} above and finally obtain
    \begin{equation}\label{i:phiCase}
        \|D_{t,l}^s{X}_I\|_N\lesssim \epsilon^{-s} \sup_{I}|\mathring{b}_{I,k}|\rho^{1/2}\mu^{-N}.
    \end{equation}
      \textit{Second case: $I\in \I_{R}$.}
      The desired estimates follow from the same set of inequalities used in the case $I\in \I_{\phi}$. The main difference, due to \eqref{e:amplR}, is that we now need to estimate the quantity
      \begin{align}
          \left\|D_{t,l}^s\left((\nabla\xi_I\nabla\xi_I^T-\Id)-\frac{\nabla\xi_I(R_l-M_{m,n})\nabla\xi_I^T}{\rho}\right)\right\|_N,
      \end{align}
      where $M_{m,n}$ is given by \eqref{eq:M_mn}.
      Then, \eqref{e:MatDerFlux}, \eqref{e:Mollification}, \eqref{i:phiCase} imply that
      \begin{equation}
          \left\|D_{t,l}^s\left((\nabla\xi_I\nabla\xi_I^T-\Id)-\frac{\nabla\xi_I(R_l-M_{m,n})\nabla\xi_I^T}{\rho}\right)\right\|_N\lesssim \epsilon^{-s} \mu^{-N}.
      \end{equation}
      Let us stress that we have exploited the bound \eqref{i:phiCase} since $M_{m,n}$ is given by a finite summation of $X_J$ for $J \in \I_{\phi}$.
       Therefore,
       \begin{equation}\label{i:RCase}
        \|D_{t,l}^sX_I\|_N\lesssim \epsilon^{-s} \sup_{I}|\mathring{b}_{I,k}|\rho^{1/2}\mu^{-N}.
    \end{equation}
\end{proof}
\begin{oss}\label{oss:IndependenceOnML}
    In particular, the choice of $C_\epsilon$ implies that there exists $C_1$ independent on $C_v$, $\overline{M}$ and $L$ such that
    \begin{equation}
        \|b_{m,k} \|_N+\mu^{-1}\|e_{m,k}\|_N\le C_1\rho^{1/2}\mu^{-N}.
    \end{equation}
We point out that the estimate above depends on $\epsilon$ because of the construction: whenever one has to bound $\xi_m$, terms of the form $\epsilon\|u_q\|_1l^{-j}$ appear; therefore the constant $C_\epsilon$ permits to compensate for the dependence of these terms on the parameters $C_v$ and $\overline{M}$ implicit in the quantity $\|v_q\|_1\le C_v \oM\lambda_q\delta_q^{1/2}$.
\end{oss}

\subsubsection{Proof of \autoref{prop:estw}: Conclusion}
Let us start from the estimates on $w$.
It is enough to show the result for $w_o$, as the claim for $w_c$ follows by similar arguments.
Notice that 
 \begin{align}
     D_{t,l}^s w_o (t,x)
     =
     \sum_{m\in\Z}\sum_{k\in \Z^3\setminus\{0\}}D_{t,l}^s b_{m,k} (t,x)e^{i\lambda_{q+1} \xi_m(t,x)\cdot k},
 \end{align}
where the sum over $m \in \Z$ is effectively a finite sum because the coefficients $b_{m,k}$ have disjoint supports for $|m-m'|>1$. Furthermore, for every fixed $m \in \Z$ we have  
\begin{equation}\label{i:FourierBasisFlux}
    \|e^{i \lambda\qq \xi_m \cdot k}\|_N\lesssim |k|^N \lambda^N_{ q+1},
\end{equation}
and thus the desired inequality follows from  \autoref{lemma:coeff} and the analog of \cite[Equation (3.24)]{DLK23}.

Moreover, the estimates \eqref{est:w0}, \eqref{est:wC}, \eqref{e:Diffstoc}, \eqref{s:V}, $i_q^{1/2-\delta}\le \rho^{1/2}$ and $l\lambda_q\delta_q^{1/2}\le \rho^{1/2}$ imply the bounds \eqref{e:DiffU} and \eqref{e:DiffUl} on $u$. 

\section{Construction of the new Reynolds stress and pressure} \label{sec:new_reynolds}
Based on the previous section, we have defined $v\qq=v_q+w_o+w_c$. Now, we focus on identifying the new Reynolds stress $R\qq$ and pressure $ p\qq$.
Using $v\qq=v_q+w_o+w_c$ and the equation $\partial_tv_q+\dvg\big(u_q\otimes u_q\big)+\nabla p_q=\dvg\big(R_q\big)$, 
we obtain that any putative $R\qq$ and $p\qq$ satisfying the momentum equation in \eqref{eq:ApproxEulerV} must necessarily satisfy also
\begin{equation}
    \begin{aligned}
        -\nabla p\qq+\dvg(R\qq) 
        &=
        \partial_tv\qq+\dvg\big(u\qq\otimes u\qq\big)
        \\
        &=
        \partial_t w+ \dvg(w_o\otimes w_o+R_l)-\nabla p_q+\dvg(w_c\otimes w+w_o\otimes w_c)
        \\
        &\quad+
        u_l\cdot \nabla w+w\cdot \nabla u_l+\dvg\big((v_q-v_l)\SymOt w+R_q-R_l\big)
        \\
        &\quad+
        \dvg\big(w\SymOt (z\qq-z_l)+u_q\SymOt \varz+\varz\otimes \varz\big),
    \end{aligned}
\end{equation}
where $\SymOt$ is the symmetric tensor product, $\varz:=z\qq-z_q$ and $z_l$ is given by \eqref{e:u_l}.
Now, we can exploit \eqref{e:w0w0}  and \eqref{e:cancW0W0}  to get (hereafter the symbol $\mathring{\odot}$ stands for the traceless symmetric product)
\begin{equation} \label{eq:decomposition_Rqq}
    \begin{aligned}
        -\nabla p\qq+\dvg(R\qq)
        &=
        \partial_t w+u_l\cdot \nabla w+\dvg\Big(w_o\otimes w_o-c_0\Big)+w\cdot \nabla u_l
        \\
        &\quad+
        \dvg\Big(w_c\otimes w+w_o\otimes w_c+w\SymOt (v_q-v_l)+R_q-R_l\Big)
        \\
        &\quad+ 
        \dvg\Big(w \SymOtNoTr (z\qq-z_l)+u_q\SymOtNoTr\varz+\varz\mathring{\otimes} \varz\Big)
        \\
        &\quad+
        \nabla \Big(\frac{2}{3}w\cdot (z\qq-z_l)+\frac{2}{3}u_q\cdot \varz +\frac{|\varz|^2}{3}\Big)-\nabla p_q.
    \end{aligned}
\end{equation}
Notice that the last line in the expression above is a gradient; hence, we define:
\begin{align}
  \label{e:varp} \varp:=p\qq-p_{q}:=-\Big(\frac{2}{3}w\cdot (z\qq-z_l)+\frac{2}{3}u_q\cdot \varz +\frac{|\varz|^2}{3}\Big).
\end{align}
Therefore, in order to enforce \eqref{eq:decomposition_Rqq} we can define $R_{q+1}$ as follows: 
\begin{align}\label{e:Rdec}
       R\qq
       &:=
       R_A+R_T+R_S+\frac{2}{3}f(t)\Id,
       \end{align}
where $f$ is an arbitrary space-independent scalar function and 
       \begin{align}\label{e:RdecA}
       R_A&:=\AntiDiv\Big[ \partial_t w+u_l\cdot \nabla w+w\cdot \nabla u_l +\dvg\Big(w_o\otimes w_o-c_0\Big) \Big];
       \\
       \label{e:RdecT}
       R_T&:= w_c\otimes w+w_o\otimes w_c+w\SymOt (v_q-v_l)+R_q-R_l ;
       \\
       \label{e:RdecS}
       R_S&:= w\SymOtNoTr (z\qq-z_l)+u_q\SymOtNoTr \varz+\varz\mathring{\otimes} \varz .
\end{align}
In \eqref{e:RdecA} above, the operator $\mathcal{R}$ denotes the anti-divergence operator (cf. for instance \cite[Definition 4.2]{DLS13}), which to a given smooth periodic vector field $v$ associates a field $\mathcal{R}v$ taking values in the space of symmetric and traceless matrices and satisfying $\dvg \mathcal{R}v = v - \fint_{\T^3}v$. 
In fact, the \emph{antidivergence stress} $R_A$ denotes the part of the Reynolds stress $R\qq$ containing all the terms that we will estimate through properties of the the antidivergence operator $\mathcal{R}$.
Similarly, the \emph{trace stress} $R_T$ contains all the contributions of terms having trace (except for the term $\frac{2}{3}f(t)\Id$), and the \emph{stochastic stress}  $R_S$ contains all the terms with an explicit dependence of the stochastic process $z$.
Our particular $f$ entering in the convex integration scheme will be defined by \eqref{e:f} below. 
It will depend on the flux current $\phi$ and it will serve the purpose of canceling out the space average of several terms appearing in the equation \eqref{e:NewCurrent1} for the new current $\phi\qq$.  
In passing, we record the following expression for the trace of $R_{q+1}$, that we shall use later:
\begin{align}
   &\label{e:TraceR}\Tr\big(R\qq\big)=\Tr\big(R_q-R_l\big)+|w_c|^2+2w_c\cdot w_o+2(v_q-v_l)\cdot w+2f.
\end{align}

\subsection{Estimates on the Reynolds stress}
\label{ssec:estimatesR}
We denote $D_{t,q+1} := \partial_t +u\qq \cdot \nabla$. Our main proposition on the Reynolds stress is the following: morally speaking, taking either a space derivative or an advective derivative of the new Reynolds stress $R\qq$ ``costs'' a factor $\lambda\qq$.

\begin{prop}\label{prop:R}
There exists $ b_0:=b_0(\alpha)>1$ such that, for any $b\in (1,b_0)$, there are parameters $\gamma:=(b-1)^2$,  $a_0>1$ and $n_0,h_0\in \N$ sufficiently large with the following property. 
For every $a>a_0$, $t\le \mathfrak{t}$, and $N\le n_0+3$ the following estimates hold:
    \begin{align}
        \left\|\left(R\qq(t)-\frac23f(t)\Id\right)w(t) \right\|_N&\label{est:Rw1}
        \le 
        \lambda\qq ^{N-2\gamma}\delta_{q+2}^{3/2},
        \\
        \left\|D_{t,q+1}\left(R\qq(t)-\frac23f(t)\Id \right)w(t)\right\|_{N-2}&\label{est:Rw2}
        \le 
        \lambda\qq ^{N-1-2\gamma}\delta_{q+2}^{3/2}\delta\qq^{1/2},
        \\
    \left\|R\qq(t)-\frac23f(t)\Id\right\|_N&\label{est:R1}
    \le 
    \lambda\qq ^{N-2\gamma}\delta_{q+2},\\
        \left\|D_{t,q+1}\left(R\qq(t)-\frac23f(t)\Id\,\right)\right\|_{N-2}&\label{est:R2}
        \le 
        \lambda\qq ^{N-1-2\gamma}\delta_{q+2}\delta\qq^{1/2}.
\end{align}
\end{prop}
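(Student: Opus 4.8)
The plan is to estimate each of the four constituent pieces $R_A$, $R_T$, $R_S$ (together with their advective derivatives $D_{t,q+1}$) separately, and also the products with $w$, and then assemble the bounds. Throughout I would use the mantra already advertised in the excerpt: a space derivative on any building block ``costs'' $\lambda_{q+1}$ while an advective derivative $D_{t,l}$ costs $\epsilon^{-1}$ on the Fourier coefficients but $D_{t,q+1}$ costs only $\lambda_{q+1}\delta_{q+1}^{1/2}$ once one has passed through the stationary-phase cancellation; the gap between $D_{t,l}$ and $D_{t,q+1}$ is controlled via $\|u_{q+1}-u_l\|_N \lesssim \lambda_{q+1}^N\rho^{1/2}$ from \cref{prop:estw}. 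The central tool for $R_A$ is the improved estimate on the antidivergence operator $\mathcal{R}$ applied to a highly oscillatory field: writing the argument of $\mathcal{R}$ in \eqref{e:RdecA} in Fourier series in the phase $e^{i\lambda_{q+1}\xi_m\cdot k}$ via \eqref{e:w0Fou}--\eqref{e:w0w0}, and using the identity $\partial_t w_o + u_l\cdot\nabla w_o = D_{t,l}w_o$ (the transport term is designed precisely so that $D_{t,l}$ kills the fast phase), I would invoke a nonstationary-phase/commutator lemma of the type $\|\mathcal{R}(g\,e^{i\lambda\Phi})\|_N \lesssim \lambda^{-1}\|g\|_0 + \dots$ — this is the analogue of \cite[Lemma A.x]{DLK23} adapted to the transported phase $\xi_m$ — to gain the decisive factor $\lambda_{q+1}^{-1}$. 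The leftover low-frequency error $\dvg(w_o\otimes w_o - c_0)$ combines with the oscillatory part; one must check the frequency of $w_o\otimes w_o$ is comparable to $\lambda_{q+1}$ (the phases $\xi_m\cdot k$ with $k\neq 0$) so that $\mathcal R$ again gains $\lambda_{q+1}^{-1}$, and the resulting bound is $\lesssim \epsilon^{-1}\lambda_{q+1}^{-1}\rho^{1/2}+\mu^{-1}\rho + \dots \lesssim \lambda_{q+1}^{-2\gamma}\delta_{q+2}$ after plugging in the explicit exponents of $\epsilon,\mu,l,\rho$.

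For $R_T$ in \eqref{e:RdecT} the estimates are purely ``soft'': each term is a product of already-estimated quantities. I would bound $\|w_c\otimes w\|_N, \|w_o\otimes w_c\|_N \lesssim (\mu\lambda_{q+1})^{-1}\lambda_{q+1}^N\rho$ from \eqref{est:w0}--\eqref{est:wC}; $\|w\SymOt(v_q-v_l)\|_N$ using $\|v_q-v_l\|_N \lesssim l\,\|v_q\|_{N+1}\lesssim l\lambda_q^{N+1}\delta_q^{1/2}$ from the Littlewood--Paley truncation together with \eqref{e:wIndOnM}; and $\|R_q - R_l\|_N$ using the mollification estimate in space and time (the operators $\P_{\le l^{-1}}$ and convolution along the flow $\mu$ lose at most $l$ and $l_{\rm temp}$ respectively), so $\|R_q-R_l\|_N \lesssim l\lambda_q^{N+1-\gamma}\delta_{q+1} + l_{\rm temp}\|D_{t,q}R_q\|_{N} + \dots$. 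The advective-derivative bounds use \eqref{e:VantaggioMollFlux} to convert $D_{t,l}R_l$ into $l_{\rm temp}^{-1}(\P_{\le l^{-1}}R_q)$ composed with the flow, which is the whole point of mollifying along Lagrangian trajectories. For $R_S$ in \eqref{e:RdecS} I would use \cref{lemma:stocEstimates}, in particular \eqref{e:Diffstoc}, $\|z_{q+1}-z_q\|_N, \|z_l-z_{q+1}\|_N \lesssim L\,i_q^{1/2-\delta}$, together with $\|u_q\|_N \lesssim \lambda_q^N\delta_q^{1/2}$ and $\|w\|_N\lesssim\lambda_{q+1}^N\rho^{1/2}$, giving terms of size $\lesssim \lambda_{q+1}^N\rho^{1/2}L\,i_q^{1/2-\delta} + \lambda_q^N\delta_q^{1/2}L\,i_q^{1/2-\delta} + L^2 i_q^{1-2\delta}$; the role of the mollification parameter $i_q$, chosen with exponents $(m_1,\dots,m_4)$ precisely so that $L\,i_q^{1/2-\delta}$ beats $\lambda_{q+1}^{-2\gamma}\delta_{q+2}$ divided by the relevant amplitude factor, is what makes this work, and for the advective derivatives one also needs \cref{cor:Dtq1Z}. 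The products with $w$ in \eqref{est:Rw1}--\eqref{est:Rw2} follow by the same scheme, using $\|w\|_N\lesssim\lambda_{q+1}^N\rho^{1/2}$ and the already obtained bound $\|R_{q+1}-\tfrac23 f\,\Id\|_N\lesssim\lambda_{q+1}^{N-2\gamma}\delta_{q+2}$, except that the $R_A$ piece times $w$ must be treated before applying $\mathcal R$, i.e.\ one estimates $\|\mathcal{R}[\dots]\,w\|_N$ by distributing derivatives and absorbing $\|w\|$ — this is why \eqref{est:Rw1} is stated with $\delta_{q+2}^{3/2}$ rather than $\delta_{q+2}\rho^{1/2}$: one uses $\rho^{1/2}\lesssim\lambda_{q+1}^{-?}\delta_{q+2}^{1/2}\cdot(\text{stuff})$ via the chosen exponents.

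The bookkeeping at the end is where all the exponent choices are cashed in: one substitutes the definitions $\lambda_q=\lfloor a^{b^q}\rfloor$, $\odelta_q=\lambda_q^{-2\alpha}$, $\delta_q=\lambda_0^\gamma\odelta_q\lambda_1^{2\alpha}$, $\gamma=(b-1)^2$, and the power-law ansätze for $\epsilon,\mu,l,l_{\rm temp},i_q,\rho$, and checks that every error contribution is $\le \tfrac12\lambda_{q+1}^{N-2\gamma}\delta_{q+2}$ (and similarly with the extra $\delta_{q+1}^{1/2}$ for advective derivatives, and the extra $\delta_{q+2}^{1/2}$-vs-$\delta_{q+2}$ adjustment for the $w$-products). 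The inequalities needed — things like $\epsilon^{-1}\lambda_{q+1}^{-1}\ll\delta_{q+1}/\delta_q$, $\mu^{-1}\ll(\delta_{q+2}/\delta_{q+1})^{?}$, $L\,i_q^{1/2-\delta}\ll\lambda_{q+1}^{-2\gamma}\delta_{q+2}\delta_{q+1}^{-1/2}$, $l\lambda_q\delta_q^{1/2}\ll\rho^{1/2}$ — hold for $b$ close enough to $1$ (so $\gamma$ is small), $n_0$ large enough (to absorb the finitely many derivative counts appearing from Faà di Bruno and from $\cite[\text{Prop. C.1}]{BD15+}$), and then $a$ large enough to swallow all the implicit constants — which are independent of $a$ by the remark following the proof of \cref{thm:main1} and by \cref{oss:IndependenceOnML}. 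The main obstacle I expect is the $R_A$ term: getting the honest $\lambda_{q+1}^{-1}$ gain from the antidivergence operator applied to the transported oscillation $e^{i\lambda_{q+1}\xi_m\cdot k}$ (whose phase gradient is $\lambda_{q+1}(\nabla\xi_m)^T k$, not simply $\lambda_{q+1}k$) requires a careful stationary/non-stationary phase argument with the Lagrangian flow, controlling $\|\nabla\xi_m - \Id\|$ via $\epsilon\|u_l\|_1<1/2$ (this is exactly where $C_\epsilon$ is used), and then iterating the gain for the advective-derivative estimate \eqref{est:R2} where one must commute $D_{t,q+1}$ past $\mathcal R$ and past the fast phase — this commutator analysis, rather than any single estimate, is the technical heart of the proposition.
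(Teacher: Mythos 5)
Your plan matches the paper's proof essentially step by step: the paper likewise splits $R\qq-\tfrac23 f\,\Id$ into $R_A,R_T,R_S$, bounds $R_A$ and $D_{t,q+1}R_A$ via the transported-phase antidivergence propositions of the appendix (\cref{prop:antidiv1,prop:antidiv2}, which supply exactly the $\lambda\qq^{-1}$ gain and the commutator analysis you flag as the technical heart), treats $R_T$ by soft product and Lagrangian-mollification estimates and $R_S$ via \cref{lemma:stocEstimates,cor:Dtq1Z}, and then obtains \eqref{est:Rw1}--\eqref{est:Rw2} by the Leibniz bound $\|Xw\|_N\lesssim\sum_j\|X\|_j\|w\|_{N-j}$ together with \cref{prop:estw} before the final exponent bookkeeping. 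The only minor deviation is that the paper does not treat the $R_A$-times-$w$ term inside the antidivergence; it simply multiplies the already-established norm bounds, which is slightly simpler than what you propose.
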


\begin{oss}
The parameters $n_0,h_0$ of previous proposition are given by: 
\begin{align}
    n_0&:=\left\lceil\frac{24b}{b-1}\right\rceil-1,
    \quad
    h_0:=\left\lceil \frac{8b}{b-1}\right\rceil.
\end{align}
They will play a crucial role when perfoming estimates in which we need to apply the antidivergence operator $\mathcal{R}$, more specifically in the bounds \eqref{e:Antidiv1}, \eqref{e:Antidiv2}, \eqref{e:Antidiv3} from the Appendix.
The values of $n_0,h_0$ ensure that
\begin{align}\label{est:n0h0}
    \frac{\rho}{\mu}\lambda_q\delta_q^{1/2}(\mu\lambda\qq)^{-n_0-1}
    &\lesssim \lambda\qq^{-2\gamma}\delta\qqq^{3/2},
    \\
\frac{\rho}{\epsilon}\lambda_q\delta_q^{1/2}(l\lambda\qq)^{-h_0} 
&\lesssim 
\lambda\qq^{-1-2\gamma}\delta\qqq^{3/2}\delta\qq^{1/2}.
\end{align}
\end{oss}

\subsubsection{Proof of \autoref{prop:R}: bounds on $R_T,R_A,R_S$.}
As a first step towards proving \autoref{prop:R}, we separately give bounds on the stress tensors $R_T,R_A,R_S$ appearing in the decomposition of the new Reynolds stress $R\qq$.
\begin{lemma}\label{lemma:RT}
Let $R_T$ be defined by \eqref{e:RdecT}.
    For $N\le n_0+3$, it holds for any $t\le \mathfrak{t}$ that
    \begin{align}
        \|R_T(t)\|_N
        &\lesssim 
        l^{1-N}\lambda_q^{1-\gamma}\delta\qq(l^{-1}\delta_q^{1/2}l_{\rm temp}+1)
        \\
        &\quad+\lambda\qq^N\rho^{1/2}\left(\frac{\rho^{1/2}\mu^{-1}}{\lambda\qq}+l\lambda_q\delta_q^{1/2}\right) ;
        \\
        \|D_{t,q+1}R_T(t)\|_{N-2}
        &\lesssim
        \lambda_q^{N-1-\gamma}\delta\qq\delta_q^{1/2}+ \lambda\qq^{N-2}\lambda_q^{1-\gamma}\delta\qq \rho^{1/2}+l^{1-N}\lambda_q^{-\gamma}\delta\qq l_{\rm temp}^{-1}\\
        &\quad+\lambda\qq^{N-2}\rho^{1/2}\left[\lambda\qq\rho^{1/2}\left(\frac{\rho^{1/2}\mu^{-1}}{\lambda\qq}+l\lambda_q\delta_q^{1/2}\right)\right]
        .
    \end{align}
\end{lemma}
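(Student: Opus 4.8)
The strategy is to estimate each of the five terms in the definition \eqref{e:RdecT} of $R_T$ separately, then combine. First I would bound the quadratic corrector terms $w_c\otimes w + w_o\otimes w_c$: using \eqref{est:w0}, \eqref{est:wC} and the product rule for H\"older norms, a single factor of $w_c$ contributes $(\mu\lambda\qq)^{-1}\rho^{1/2}$ at order zero and costs $\lambda\qq$ per spatial derivative, while $w$ costs $\rho^{1/2}$ at order zero by \eqref{e:wIndOnM}; multiplying and distributing derivatives gives a contribution of size $\lambda\qq^N\rho^{1/2}\cdot\rho^{1/2}\mu^{-1}\lambda\qq^{-1}$, which is exactly the first summand in the second line of the claimed bound on $\|R_T\|_N$. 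Second, for the term $w\SymOt(v_q-v_l)$ I would use $\|v_q-v_l\|_K\lesssim l\|v_q\|_{K+1}\lesssim l\cdot l^{-K}\lambda_q\delta_q^{1/2}$ (the Littlewood--Paley truncation estimate together with \eqref{s:estUqN}), paired again with \eqref{e:wIndOnM}, producing the term $\lambda\qq^N\rho^{1/2}\cdot l\lambda_q\delta_q^{1/2}$. The derivatives here land either on $w$ (cost $\lambda\qq$) or on $v_q-v_l$ (cost $l^{-1}\le\lambda\qq$), so the worst case is controlled by $\lambda\qq$ per derivative, consistent with the stated bound.

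Third, the genuinely new contribution is $R_q-R_l$, where $R_l$ is the Lagrangian-mollified Reynolds stress defined in \eqref{e:Mollification}. I would split $R_q-R_l = (R_q - \P_{\le l^{-1}}R_q) + (\P_{\le l^{-1}}R_q - R_l)$. The first piece is a Littlewood--Paley truncation error, bounded by $l^{n_0+3-N}\|R_q\|_{n_0+3}$-type estimates, i.e.\ $l^{1-N}\lambda_q^{1-\gamma}\delta\qq$ using \eqref{s:R} (schematically, truncating costs one power of $l$ against one derivative of $R_q$, and $R_q$ has size $\lambda_q^{N-\gamma}\delta\qq$). The second piece measures how far the spatial mollification has drifted along the flow $\mu(t;\cdot,\cdot)$ over a time window of length $l_{\rm temp}$: the flow map displacement is controlled by $l_{\rm temp}\|u_l\|_1\lesssim l_{\rm temp}\lambda_q\delta_q^{1/2}$, so a Taylor expansion of $\P_{\le l^{-1}}R_q$ in the spatial variable yields the factor $l^{-1}\delta_q^{1/2}l_{\rm temp}$ multiplying $l^{1-N}\lambda_q^{1-\gamma}\delta\qq$, which accounts for the $(l^{-1}\delta_q^{1/2}l_{\rm temp}+1)$ structure in the first line.

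For the advective-derivative bound, I would commute $D_{t,q+1}$ with each term. On the corrector and $w\SymOt(v_q-v_l)$ terms one writes $D_{t,q+1} = D_{t,l} + (u\qq - u_l)\cdot\nabla$; the $D_{t,l}$ part is handled by \eqref{est:w0}, \eqref{est:wC}, the estimate \eqref{est: DtlDiffv} on $D_{t,l}(v_q-v_l)$, and \cref{prop:estw} again, each advective derivative costing $\epsilon^{-1}$ which is absorbed into a $\lambda\qq$ via the parameter inequalities; the transport remainder $(u\qq-u_l)\cdot\nabla$ costs $\|u\qq-u_l\|_0\lesssim\rho^{1/2}$ times $\lambda\qq$ from $\nabla$, giving the $\lambda\qq^{N-2}\cdot\lambda\qq\rho^{1/2}\cdot[\dots]$ block. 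For $D_{t,q+1}(R_q-R_l)$ the crucial point is \eqref{e:VantaggioMollFlux}: differentiating along the flow converts $D_{t,l}R_l$ into a convolution against $\eta_{l_{\rm temp}}'$, hence costs only $l_{\rm temp}^{-1}$ rather than a worse factor, producing the $l^{1-N}\lambda_q^{-\gamma}\delta\qq l_{\rm temp}^{-1}$ term; one also needs \eqref{s:R} for $\|D_{t,q}R_q\|$ and the commutator between $D_{t,q+1}$ and $\P_{\le l^{-1}}$, handled as in \cite[Lemma A.3]{DLK23}. The main obstacle is purely bookkeeping: carefully tracking which of the many length scales $l, l_{\rm temp}, \mu, \epsilon, \lambda_q, \lambda\qq$ each derivative costs, and checking at the end — via the explicit exponent choices in \eqref{eq:definition_eps}, \eqref{eq:definition_mu}, \eqref{eq:definition_l} and the definition of $l_{\rm temp}$ — that every term is dominated by $\lambda\qq^{N-2\gamma}\delta\qqq^{3/2}$ after choosing $b$ close to $1$ and $a$ large; but at the level of this lemma we only record the raw estimates, deferring that final comparison to the proof of \cref{prop:R}.
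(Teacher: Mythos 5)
Your decomposition and almost all of your estimates coincide with the paper's proof: the quadratic corrector terms and $w\SymOt(v_q-v_l)$ are bounded exactly as you describe, the splitting $R_q-R_l=(R_q-\proj_{\le l^{-1}}R_q)+(\proj_{\le l^{-1}}R_q-R_l)$ is the one used, and the advective-derivative bounds (including the use of \eqref{e:VantaggioMollFlux} to pay only $l_{\rm temp}^{-1}$ for $D_{t,l}R_l$, and the splitting $D_{t,q+1}=D_{t,l}+(u\qq-u_l)\cdot\nabla$ elsewhere) match \eqref{est:DRT1}--\eqref{est:DRT2}.

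There is, however, one step where your stated mechanism would not deliver the claimed bound: the drift term $\proj_{\le l^{-1}}R_q-R_l$. You attribute the factor $l^{-1}\delta_q^{1/2}l_{\rm temp}$ to ``flow map displacement $\times$ spatial Taylor expansion''. Carried out literally, this gives (at $N=0$) a bound of order
\begin{align}
\underbrace{l_{\rm temp}\,\|u_l\|_0}_{\text{displacement}}\cdot\|\nabla \proj_{\le l^{-1}}R_q\|_0\ \lesssim\ l_{\rm temp}\,\lambda_q^{1-\gamma}\delta\qq,
\end{align}
which misses the factor $\delta_q^{1/2}\ll1$ appearing in the first line of the lemma (and, in addition, a purely spatial expansion does not account for the time shift $t\mapsto t+s$ in \eqref{e:Mollification}, whose naive contribution via $\partial_t R_q$ is of the same, too large, size). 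The correct argument — the paper's \eqref{eq:aux002} — writes the difference as $\int\eta_{l_{\rm temp}}(s)\int_s^0(D_{t,l}\proj_{\le l^{-1}}R_q)(t+r,\mu(t;t+r,x))\,dr\,ds$, so that the time shift and the flow displacement combine into a single advective derivative; the gain of $\delta_q^{1/2}$ then comes from the inductive bound \eqref{s:R} on $D_{t,q}R_q$ together with the commutator estimate $\|[u_l\cdot\nabla,\proj_{\le l^{-1}}]R_q\|_N\lesssim l^{1-N}\|u_q\|_1\|R_q\|_1$, yielding $\|D_{t,l}\proj_{\le l^{-1}}R_q\|_N\lesssim l^{-N}\lambda_q^{1-\gamma}\delta\qq\delta_q^{1/2}$ as in \eqref{est:MollSpaceR}. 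In short: the smallness here is not geometric (the displacement is $O(l_{\rm temp})$, not $O(l_{\rm temp}\lambda_q\delta_q^{1/2})$ — you also quote $\|u_l\|_1$ where a displacement bound requires $\|u_l\|_0$), it is dynamical, coming from the inductive smallness of the advective derivative of $R_q$. Once this step is repaired, your proof is the paper's.
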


\begin{proof}
   We start by estimating $R_q-R_l 
   =(R_q-\proj_{\le l^{-1}}R_q)+(\proj_{\le l^{-1}}R_q-R_l).$
   Notice that \eqref{e:Mollification} implies that
   \begin{equation} \label{eq:aux002}
       (\proj_{\le l^{-1}}R_q-R_l)(t,x)=\int_{-l_{\rm temp}}^0ds\,\eta_{l_{\rm temp}}(s)\int_s^0(D_{t,l}\proj_{\le l^{-1}}R_q)(t+r,\mu(t;t+r,x))dr.
   \end{equation}
Moreover, due to \eqref{s:V}, \eqref{s:R}, and \cite[Lemma A.4]{DLK23}, we have the following inequality:
\begin{align}\label{est:MollSpaceR}
           \|D_{t,l}\proj_{\le l^{-1}}R_q\|_N&\le \|\proj_{\le l^{-1}}(D_{t,q}R_q)\|_N+\|[u_l\cdot \nabla, \proj_{\le l^{-1}}]R_q\|_N+\|\proj_{\le l^{-1}}((u_q-u_l)\cdot \nabla R_q)\|_N\\
           &\lesssim l^{-N}\lambda_q^{1-\gamma}\delta\qq\delta_q^{1/2}+l^{1-N}\|u_q\|_1\lambda^{1-\gamma}_q\delta\qq \lesssim l^{-N}\lambda_q^{1-\gamma}\delta\qq \delta_q^{1/2}.
       \end{align}

   Therefore, by \eqref{eq:aux002}, \eqref{est:MollSpaceR}, \cite[Proposition C.1]{BD15+}, and \eqref{e:2ndTransport} we obtain the bound
   \begin{equation}\label{est:RT1}
       \|\proj_{\le l^{-1}}R_q-R_l\|_N\lesssim l^{-N}\lambda_q^{1-\gamma}\delta\qq \delta_q^{1/2} l_{\rm temp}.
   \end{equation}

On the other hand,  Bernstein inequality and $\lambda_q<l^{-1}$ imply that
   \begin{equation}\label{est:RT2}
   \begin{aligned}
      \| R_q-\proj_{\le l^{-1}}R_q\|_N
      &\lesssim 
      \begin{cases}
          l\|R_q\|_{1}, &\mbox{if } N=0
          \\
           \lambda_q^{N-\gamma}\delta\qq, &\mbox{if } N\geq1
      \end{cases}
      \\
      &\lesssim 
      \begin{cases}
          l \lambda_q^{1-\gamma }\delta\qq, &\mbox{if }N=0
          \\
          l^{1-N}\lambda_q^{1-\gamma}\delta\qq, &\mbox{if }N\geq 1
      \end{cases}\\
      &\lesssim l^{1-N}\lambda_q^{1-\gamma}\delta\qq.
    \end{aligned}
   \end{equation}
As for the material derivative
\begin{equation}\label{est:DRT1}
\begin{aligned}
    \|D_{t,q+1}(R_q-R_l)\|_{N-2}&\le \|D_{t,q+1}R_q\|_{N-2}+\|D_{t,q+1}R_l\|_{N-2}\\
    &\le \|D_{t,q}R_q \|_{N-2}+\|(u\qq-u_q)\cdot \nabla R_q \|_{N-2}\\
    &+\|D_{t,l}R_l \|_{N-2}+\|(u\qq-u_l)\cdot \nabla R_l \|_{N-2}\\
    &\lesssim \lambda_q^{N-1-\gamma}\delta\qq\delta_q^{1/2}+ \lambda\qq^{N-2}\lambda_q^{1-\gamma}\delta\qq \rho^{1/2}+l^{2-N}\lambda_q^{-\gamma}\delta\qq l_{\rm temp}^{-1},
\end{aligned}
\end{equation}
where the last inequality is due to \eqref{s:R}, \eqref{e:DiffU}, \eqref{e:DiffUl}, the bounds $l<l_{\rm temp}$ and $l_{\rm temp}\|u_q\|_1\le 1$, as well as the inequality $\|R_l \|_N\lesssim l^{-N}\lambda_q^{-\gamma}\delta\qq$ valid for $N \geq 0$.

Now we estimate $R_T':=R_T-(R_q-R_l)=w_c\otimes w+w_o\otimes w_c+w\SymOt (v_q-v_l)$. The bounds \eqref{est:w0}, \eqref{est:wC}, and \eqref{s:V}
imply
\begin{equation}\label{est:RT3}
    \|R_T' \|_N\lesssim \lambda\qq^N\rho^{1/2}\left(\frac{\rho^{1/2}\mu^{-1}}{\lambda\qq}+l\lambda_q\delta_q^{1/2}\right).
\end{equation}
Instead, the material derivative is controlled thanks to bounds \eqref{est:w0}, \eqref{est:wC}, \eqref{est: DtlDiffv}, \eqref{e:DiffUl}, as well as  $l\lambda_q\delta_q^{1/2}\le \rho^{1/2}$ and $\epsilon^{-1} \lesssim \lambda\qq\rho^{1/2}$, giving
\begin{equation}\label{est:DRT2}
    \begin{aligned}
        \|D_{t,q+1}R_T'\|_{N-2}&\le \|D_{t,l}R_T'\|_{N-1}+\|(u\qq-u_l)\cdot \nabla R_T'\|_{N-2}\\
        &\lesssim \lambda\qq^{N-2}\rho^{1/2}\left[\lambda\qq\rho^{1/2}\left(\frac{\rho^{1/2}\mu^{-1}}{\lambda\qq}+l\lambda_q\delta_q^{1/2}\right)\right].
    \end{aligned}
\end{equation}
\end{proof}

\begin{lemma}\label{lemma:RA}
    Let $R_A$ be defined by \eqref{e:RdecA}. For $N\le n_0+3$ and $t\le \mathfrak{t}$ we have
    \begin{align}
    \|R_A(t)\|_N
    &\lesssim \lambda\qq^N\left[\lambda\qq^{-1}+(\lambda\qq\mu)^{-n_0-1}\right]\frac{\rho}{\mu},\\
    \|D_{t,q+1}R_A(t)\|_{N-2}
    &\lesssim \lambda\qq^{N-2}\left[(\epsilon\lambda\qq)^{-1}+\lambda\qq^{-n_0}\mu^{-n_0-1}+(l\lambda\qq)^{-h_0}\lambda\qq^2\lambda_q\delta_q^{1/2}\right]\frac{\rho}{\mu}
    \end{align}
\end{lemma}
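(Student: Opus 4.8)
The plan is to estimate $R_A = \mathcal{R}[\partial_t w + u_l\cdot\nabla w + w\cdot\nabla u_l + \dvg(w_o\otimes w_o - c_0)]$ term by term, exploiting the crucial fact that every piece of the argument of $\mathcal{R}$ is a sum of modulated high-frequency exponentials $e^{i\lambda\qq\xi_m\cdot k}$ with slowly-varying amplitudes controlled by \cref{lemma:coeff}. The first key observation is that $\partial_t w + u_l\cdot\nabla w = D_{t,l}w$, so after rewriting $w_o = \sum_{m,k} b_{m,k}e^{i\lambda\qq\xi_m\cdot k}$ and using $D_{t,l}\xi_m = 0$ we get $D_{t,l}w_o = \sum_{m,k}(D_{t,l}b_{m,k})e^{i\lambda\qq\xi_m\cdot k}$, a purely amplitude-differentiated object with the same oscillation; the analogous identity holds for $w_c$. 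Similarly, $\dvg(w_o\otimes w_o - c_0) = \sum_{m,k\neq 0}\dvg(c_{m,k}e^{i\lambda\qq\xi_m\cdot k})$, and here the cancellation of the $k=0$ mode is exactly what was arranged in \eqref{e:w0w0}, so no slowly-varying (non-oscillatory) remainder survives.

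Next I would apply the stationary-phase / antidivergence estimates from the Appendix — the bounds \eqref{e:Antidiv1}, \eqref{e:Antidiv2}, \eqref{e:Antidiv3} — to each summand of the form $\mathcal{R}\dvg(G_{m,k}e^{i\lambda\qq\xi_m\cdot k})$ or $\mathcal{R}(G_{m,k}e^{i\lambda\qq\xi_m\cdot k})$, where $G_{m,k}$ is one of $D_{t,l}b_{m,k}$, $(D_{t,l}+\nabla u_l)$-type combinations acting on $b_{m,k}$, $e_{m,k}$, or $c_{m,k}$. The heuristic is that $\mathcal{R}$ applied to an object oscillating at frequency $\lambda\qq|k|$ gains a factor $\lambda\qq^{-1}$, modulo a controlled error involving many derivatives of the amplitude, which by \cref{lemma:coeff} each cost $\mu^{-1}$; this is precisely where the parameter $n_0$ and the inequalities \eqref{est:n0h0} enter — the very-high-derivative error term is of order $(\lambda\qq\mu)^{-n_0-1}\rho/\mu$ per summand. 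Summing over $k$ is absorbed by the rapid decay of $|\mathring b_{I,k}|,|\mathring c_{I,k}|$ in $|k|$ (the analog of \cite[Eq.\ (3.24)]{DLK23}), and summing over $m$ is a finite sum by disjoint supports. For the term $\mathcal{R}(w\cdot\nabla u_l)$ I would note that $\nabla u_l$ is low-frequency ($\le l^{-1}$) while $w$ is high-frequency, so $w\cdot\nabla u_l$ is still essentially a sum of $\lambda\qq$-oscillating terms with amplitude $(\nabla u_l)\cdot b_{m,k}$ (and its $w_c$ analog carrying the extra $(\mu\lambda\qq)^{-1}$), to which the same antidivergence estimate applies, yielding the $\rho/\mu$ scaling with the $\lambda\qq^{-1}$ gain. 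Collecting all contributions gives $\|R_A(t)\|_N \lesssim \lambda\qq^N[\lambda\qq^{-1} + (\lambda\qq\mu)^{-n_0-1}]\rho/\mu$.

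For the material derivative bound I would first write $D_{t,q+1} = D_{t,l} + (u\qq - u_l)\cdot\nabla$ and handle the transport term using \eqref{e:DiffUl}, $\|u\qq - u_l\|_0 \lesssim \rho^{1/2}$, which costs a factor $\lambda\qq\rho^{1/2}$ (a gradient) against the already-estimated $\|R_A\|_{N+1}$ — this produces terms bounded by $\lambda\qq^{N-2}\cdot\lambda\qq^2\rho^{1/2}\cdot(\text{stuff})$, and one checks via the parameter choices (in particular $\rho^{1/2}\lesssim 1$ up to constants and $\epsilon^{-1}\lesssim\lambda\qq\rho^{1/2}$) that this fits inside the claimed right-hand side. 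For the leading $D_{t,l}R_A$ piece, the point is that $D_{t,l}$ commutes nicely past $\mathcal{R}$ only up to a commutator; here one uses that $D_{t,l}$ of an amplitude costs $\epsilon^{-1}$ (by \cref{lemma:coeff}), producing the $(\epsilon\lambda\qq)^{-1}\rho/\mu$ term, while $D_{t,l}$ of the phase factor vanishes, and the commutator $[D_{t,l},\mathcal{R}]$ together with the high-derivative antidivergence remainder produces the $\lambda\qq^{-n_0}\mu^{-n_0-1}$ and $(l\lambda\qq)^{-h_0}\lambda\qq^2\lambda_q\delta_q^{1/2}$ terms — this last one tracks back to the second estimate in \eqref{est:n0h0} and to the fact that $D_{t,l}\nabla u_l$ is controlled by $l^{-N}\lambda_q^2\delta_q$ via \eqref{est:DtlNablaU}, so that differentiating the low-frequency factor $\nabla u_l$ in the $w\cdot\nabla u_l$ term costs $\lambda_q^2\delta_q \lesssim \lambda_q\delta_q^{1/2}\cdot\lambda_q\delta_q^{1/2}$ and the extra $(l\lambda\qq)^{-h_0}$ comes from the antidivergence applied to a product of a frequency-$\le l^{-1}$ function with a frequency-$\lambda\qq$ function using $h_0$ derivatives.

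The main obstacle I expect is bookkeeping rather than conceptual: one must run the antidivergence estimates in the right form (the versions adapted to compositions with the flow $\xi_m$, as in the Appendix lemmas \eqref{e:Antidiv1}–\eqref{e:Antidiv3}), carefully separate in each summand which factors are "slow" (amplitude, $\nabla u_l$, $\nabla\xi_m$) and which is the pure oscillation $e^{i\lambda\qq\xi_m\cdot k}$, and verify at the end that the parameter choices — especially $n_0 = \lceil 24b/(b-1)\rceil - 1$, $h_0 = \lceil 8b/(b-1)\rceil$, and the definitions of $\mu,\epsilon,l,l_{\rm temp}$ in terms of $\lambda_q,\lambda\qq,\delta_q,\delta\qq$ — make the error exponents work out to the stated form uniformly for $b$ close to $1$ and $a$ large. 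The genuinely delicate step is the commutator term in the material-derivative estimate, where one cannot simply pull $D_{t,l}$ through $\mathcal{R}$; isolating the $(l\lambda\qq)^{-h_0}\lambda\qq^2\lambda_q\delta_q^{1/2}$ contribution correctly requires the full strength of \eqref{est:DtlNablaU} together with the stationary-phase remainder bound at order $h_0$.
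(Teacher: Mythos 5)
Your proposal is correct and follows essentially the same route as the paper: the paper likewise rewrites the argument of $\mathcal{R}$ as $\sum_{m,k}a_{m,k}e^{i\lambda\qq\xi_m\cdot k}$ with $a_{m,k}=\dvg(c_{m,k})+D_{t,l}(b_{m,k}+\lambda\qq^{-1}e_{m,k})+(b_{m,k}+\lambda\qq^{-1}e_{m,k})\cdot\nabla u_l$ and then invokes \cref{prop:antidiv1} with $s_j=\rho\mu^{-j-1}$ and \cref{prop:antidiv2} with $c=\epsilon$, which packages exactly the stationary-phase gain, the $(\lambda\qq\mu)^{-n_0-1}$ remainder, and the commutator analysis you describe. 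The only slight misattribution is that the $(l\lambda\qq)^{-h_0}\lambda\qq^2\lambda_q\delta_q^{1/2}$ term arises from the Taylor-remainder part of the commutator $[u_l\cdot\nabla,\mathcal{R}]$ with $\lambda_q\delta_q^{1/2}\sim\|u_l\|_1$ (not from \eqref{est:DtlNablaU}), but you also name this correct source in the same breath, so the argument stands.
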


\begin{proof}
Let us preliminarily observe that 
\begin{align}
   R_A &= \mathcal{R}\left( \sum_{k \in \Z^3 \setminus \{0\}} \sum_{m \in \Z} a_{m,k} e^{i\lambda\qq \xi_m \cdot k}\right),
   \\
   a_{m,k}: &= \dvg(c_{m,k})+D_{t,l}(b_{m,k}+\lambda\qq^{-1}e_{m,k})+(b_{m,k}+\lambda\qq^{-1}e_{m,k})\cdot\nabla u_l.
\end{align}   
For the estimate on $\|R_A \|_N$ we intend to apply \autoref{prop:antidiv1} with $F=R_A$. Indeed, thanks to \autoref{lemma:coeff} and the bound $\epsilon\|u_q\|\leq1$ one can check that the assumptions of the proposition are satisfied with $\lambda=\lambda\qq$, $s_j = \frac{\rho}{\mu} \mu^{-j}$, and with numbers $\mathring{a}_k$ obtained from the constants in the inequalities stated in \autoref{lemma:coeff} and $a_F$ depending only on finitely many derivatives of the  Mikado flows $\psi_I$, in particular independent of $\lambda\qq,\mu,\rho$ et cetera.
Similarly, the estimate on $\|D_{t,q+1}R_A \|_{N-2}$ follows from \autoref{prop:antidiv2}, where the assumptions therein hold true with $c=\epsilon$ because of \autoref{lemma:coeff} and 
    \begin{align}\label{est:DtlRA}
        \|D_{t,l}\dvg(c_{m,k})\|_j &\le \|D_{t,l}c_{m,k}\|_{j+1}+\|\nabla u_l:\nabla c_{m,k}^T\|_j\lesssim \mu^{-j}\frac{\rho}{\mu\,\epsilon},
        \\
        \left\|D_{t,l}\left[(b_{m,k}+\lambda\qq^{-1}e_{m,k})\cdot\nabla u_l\right]\right\|_j 
        &\lesssim
        \mu^{-j}\rho^{1/2}(\epsilon^{-1}\lambda_q\delta_q^{1/2}+\lambda_q^2\delta_q).
    \end{align}
\end{proof}

\begin{lemma}\label{lemma:RS}
    Let $R_S$ be defined by \eqref{e:RdecS}. For $N\le n_0+3$ and $t\le \mathfrak{t}$ we have
    \begin{align}\label{est:RS}
        \|R_S(t)\|_N &\lesssim i_q^{1/2-\delta}\begin{cases}
            1,\,\quad&N=0\\
            \lambda\qq^N\delta\qq^{1/2},\quad&N>0
        \end{cases},\\
        \label{est:RS2} 
            \|D_{t,q+1}R_S(t)\|_{N-2}&\lesssim i\qq^{-1/2-\delta}
            \begin{cases}
                1\vee\delta\qq^{1/2},\quad&N=2\\
                \lambda\qq^{N-2}\delta\qq^{1/2},\quad &N>2
            \end{cases}
            \\
            &\quad
            +i_q^{1/2-\delta}\left[\lambda\qq\rho(\delta\qq\vee\delta\qq^{1/2})\right]\lambda\qq^{N-2}.
    \end{align}
\end{lemma}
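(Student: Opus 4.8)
The statement to prove is \cref{lemma:RS}, the bounds on the stochastic stress $R_S := w \SymOtNoTr (z\qq - z_l) + u_q \SymOtNoTr \varz + \varz \mathring{\otimes} \varz$, where $\varz = z\qq - z_q$. The plan is to treat the three summands separately, using that each contains a factor $z\qq - z_l$ or $\varz = z\qq - z_q$ that is small by \cref{lemma:stocEstimates}, specifically of order $L\, i_q^{1/2-\delta}$ in every norm $\|\cdot\|_N$ with $N \le n_0+4$ (note the stress itself only needs $N \le n_0+3$, so there is a derivative to spare, which will be useful for the material derivative estimate). For the zeroth-order bound \eqref{est:RS}: for $N=0$ one uses $\|w\|_0 \lesssim \rho^{1/2} \lesssim 1$ (by \eqref{e:wIndOnM} and $\rho \lesssim 1$, or more directly $\|w\|_0 \lesssim 1$), $\|u_q\|_0 \lesssim 1$ (by \eqref{s:estUqN}), and $\|\varz\|_0 \lesssim i_q^{1/2-\delta} \le 1$, so each term is $\lesssim i_q^{1/2-\delta}$; for $N>0$ one distributes the $N$ derivatives via the Leibniz rule, using $\|w\|_N \lesssim \lambda\qq^N \rho^{1/2}$, $\|u_q\|_N \lesssim \lambda_q^N \delta_q^{1/2} \le \lambda\qq^N \delta\qq^{1/2}$, and $\|z\qq-z_l\|_N, \|\varz\|_N \lesssim L\,i_q^{1/2-\delta}$, together with $\rho^{1/2} \lesssim \delta\qq^{1/2}$ up to constants (since $\rho \sim \oM^2 \lambda_q^{-\gamma}\delta\qq$ and $\lambda_q^{-\gamma} < 1$) and $i_q^{1/2-\delta} \le \delta\qq^{1/2}$ by the parameter inequalities; so the dominant contribution is $i_q^{1/2-\delta} \lambda\qq^N \delta\qq^{1/2}$.

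For the material-derivative bound \eqref{est:RS2} I would write $D_{t,q+1} = \partial_t + u\qq \cdot \nabla$ and, for each summand, apply the product rule to split $D_{t,q+1}(A \odot B) = (D_{t,q+1}A)\odot B + A \odot (D_{t,q+1}B)$. The terms where the advective derivative hits the small factor $z\qq-z_l$ or $\varz$ produce the first line of \eqref{est:RS2}: one uses \cref{cor:Dtq1Z}, in particular \eqref{est:Dtq1ZVar} for $D_{t,q+1}(z\qq-z_q)$ and an analogous bound (combining \eqref{est:Dtq1ZMoll} and \eqref{est:Dtq1ZVar}, using $l \le i_q^{1/2-\delta}$) for $D_{t,q+1}(z\qq-z_l)$, giving a leading term $i\qq^{-1/2-\delta}$ in norm $\|\cdot\|_{N-2}$, with the $1 \vee \delta\qq^{1/2}$ respectively $\lambda\qq^{N-2}\delta\qq^{1/2}$ factor coming from the $N-2 = 0$ versus $N-2 > 0$ cases of those corollary estimates (and one absorbs the $i_q^{1/2-\delta}(\lambda\qq^N \delta\qq^{1/2} \vee 1)$ tail of \eqref{est:Dtq1ZVar} using $i_q^{1/2-\delta} i\qq^{1/2-\delta} \lesssim 1$ or into the second line). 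The terms where the advective derivative hits $w$ or $u_q$ produce the second line: here one needs $\|D_{t,q+1} w\|_{N-2}$, which by $D_{t,q+1} = D_{t,l} + (u\qq - u_l)\cdot\nabla$ together with \eqref{est:w0}, \eqref{est:wC}, \eqref{e:DiffUl} is $\lesssim \lambda\qq^{N-2}(\epsilon^{-1} + \lambda\qq\rho^{1/2})\rho^{1/2} \lesssim \lambda\qq^{N-1}\rho$ (using $\epsilon^{-1} \lesssim \lambda\qq \rho^{1/2}$), and $\|D_{t,q+1} u_q\|_{N-2}$, which is controlled by $\|D_{t,q}u_q\|_{N-2} + \|(u\qq-u_q)\cdot\nabla u_q\|_{N-2}$ using the equation $D_{t,q}u_q = \partial_t z_q + \dvg R_q + \nabla p_q$, \eqref{s:P}, \eqref{s:R}, \eqref{e:dtStoc}, \eqref{e:DiffU}; multiplying by the small factor $i_q^{1/2-\delta}$ and keeping the worst power yields $i_q^{1/2-\delta}\lambda\qq\rho\,(\delta\qq \vee \delta\qq^{1/2})\lambda\qq^{N-2}$, matching the stated bound.

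\textbf{Main obstacle.} The genuinely delicate point is bookkeeping the many competing terms to confirm that, after multiplying the various factors, everything is dominated by the two displayed right-hand sides — in particular checking that the cross terms (small factor times $D_{t,q+1}$ of the large factor, and large factor times $D_{t,q+1}$ of the small factor) do not generate an extra power of $i_q^{-1/2-\delta}$ beyond what is claimed, and that the $\varz\mathring{\otimes}\varz$ term (quadratic in the small factor, hence $\lesssim i_q^{1-2\delta}$ and $\lesssim i_q^{1/2-\delta} i\qq^{-1/2-\delta}$ for its derivative) is harmless. This is where the precise definitions of $i_q$ and the exponents $(m_1,\dots,m_4)$, and the inequalities $l \le i_q^{1/2-\delta}$, $i_q^{1/2-\delta} \le \delta\qq^{1/2}$, $i_q^{1/2-\delta} i\qq^{1/2-\delta} \lesssim 1$, all get used; none of these steps is deep, but the estimate has to be assembled carefully term by term so that only the two asserted contributions survive.
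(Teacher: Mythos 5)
Your proposal is correct and follows essentially the same route as the paper: the paper's own proof simply observes that the bounds follow from the term-by-term Leibniz expansion of the three summands of $R_S$, invoking \cref{prop:estw} for $w$, \cref{lemma:stocEstimates} and \cref{cor:Dtq1Z} for the factors $z\qq-z_l$ and $\varz$, and the identity $D_{t,q}u_q=\partial_t z_q+\dvg(R_q)+\nabla p_q$ to get $\|D_{t,q+1}u_q\|_N\lesssim i_q^{-1/2-\delta}+\lambda\qq^N\lambda_q\delta_q$ — exactly the ingredients and assembly you describe.
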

\begin{proof}
    We recall that, by definition:
    \begin{align}
      R_S=w\SymOtNoTr (z\qq-z_l)+u_q\SymOtNoTr \varz+\varz\mathring{\otimes} \varz,
      \quad
      \varz := z\qq-z_q.  
    \end{align}
    Therefore, the desired bounds follow from \autoref{lemma:coeff},\autoref{prop:estw}, \autoref{cor:Dtq1Z}, as well as 
    \begin{equation}\label{est:Dtq1uq}
        \|D_{t,q+1}u_q\|_N\le \|D_{t,q}u_q\|_N+\|(u\qq-u_q)\cdot \nabla u_q\|_N\lesssim i_q^{-1/2-\delta}+\lambda\qq^N\lambda_q\,\delta_q.
    \end{equation}
\end{proof}

\subsubsection{Proof of \autoref{prop:R}: Conclusion}
We are ready to show \autoref{prop:R}.
First, bounds \eqref{est:R1} and \eqref{est:R2} follow directly from previous \autoref{lemma:RT}, \autoref{lemma:RA}, \autoref{lemma:RS}, after some some standard computations.
Instead, the bounds \eqref{est:Rw1}  and \eqref{est:Rw2} are due to \autoref{lemma:RT}, \autoref{lemma:RA}, \autoref{lemma:RS} and \autoref{prop:estw} combined with
\begin{align}
   \|Xw \|_N\lesssim\sum_{j=0}^N\|X \|_j\|w \|_{N-j} 
\end{align}
applied with either $X(t,x) :=R\qq(t,x)-\frac23f(t)\Id$ or $X(t,x):=D_{t,q+1}\left(R\qq(t,x)-\frac23f(t)\Id\right).$

\subsection{Estimates on the pressure}
Our next proposition concerns the new pressure $p\qq$. We have the following:
\begin{prop} \label{lemma:pressure}
There exists $ b_0:=b_0(\alpha)>1$ such that, for any $b\in (1,b_0)$, there is $a_0>1$ such that for every $a>a_0$, $t\le \mathfrak{t}$ the following estimates hold:
    \begin{align}
        \label{est:pq1}\|p\qq(t)\|_N
        &\le \overline{M}\lambda\qq^N\delta\qq,
        \quad 
        \|D_{t,q+1}p\qq(t)\|_{N-2}
        \le \overline{M} \lambda\qq^{N-1}\delta\qq^{3/2},
        &\text{  for }N \in \{1,2,\dots,n_0+3\},\\
        \label{est:varp}\|p\qq(t)-p_q(t)\|_N&\le \overline{M}\lambda\qq^{N}\delta\qq &\text{for }N \in \{0,1\}.
    \end{align}
\end{prop}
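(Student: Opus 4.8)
The plan is to observe that, by definition \eqref{e:varp}, the new pressure differs from the old one only by the \emph{explicit} scalar
\[
\varp := p\qq-p_q = -\tfrac23\,w\cdot(z\qq-z_l)-\tfrac23\,u_q\cdot\varz-\tfrac13\,|\varz|^2,\qquad \varz=z\qq-z_q,\quad w=v\qq-v_q.
\]
Hence \eqref{est:varp} is exactly a bound for $\|\varp(t)\|_N$ with $N\in\{0,1\}$, while for \eqref{est:pq1} it suffices to write $p\qq=p_q+\varp$ and $D_{t,q+1}p\qq=D_{t,q}p_q+(u\qq-u_q)\cdot\nabla p_q+D_{t,q+1}\varp$, estimate each summand, and combine with the inductive assumptions \eqref{s:P}. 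The whole argument is bookkeeping: every term entering $\varp$ (and its advective derivative) carries either the noise gain $i_q^{1/2-\delta}$ — which, by the explicit choice of $i_q$ and \cref{lemma:stocEstimates}, decays like a negative power of $\lambda_q$ — or the perturbation gain $\rho^{1/2}\lesssim\overline{M}\lambda_q^{-\gamma/2}\delta\qq^{1/2}$, and these gains will absorb the fixed constants $C_v,\overline{M},L$ once $a$ is chosen large enough (for $b$ close to $1$, as in the statement).

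First I would bound $\|\varp(t)\|_N$ for $N\le n_0+3$. Using $\|fg\|_N\lesssim\sum_j\|f\|_j\|g\|_{N-j}$ together with \eqref{e:wIndOnM} (whose constant $C_2$ is independent of $a$), \eqref{s:estUqN}, and \eqref{e:Diffstoc}, one obtains
\[
\|\varp(t)\|_N\lesssim \lambda\qq^{N}\rho^{1/2}L\,i_q^{1/2-\delta}+\lambda_q^{N}\delta_q^{1/2}L\,i_q^{1/2-\delta}+L^2 i_q^{1-2\delta},
\]
with the middle term replaced by $L\,i_q^{1/2-\delta}$ when $N=0$. Substituting $\rho^{1/2}\lesssim\overline{M}\delta\qq^{1/2}$, $\delta_q/\delta\qq=(\lambda\qq/\lambda_q)^{2\alpha}$, and the explicit value $i_q^{1/2-\delta}=\delta_q^{-2}\delta\qq^{7/2}$ (respectively $i_0^{1/2-\delta}=(\lambda_0/\lambda_1)^{7\alpha/2}$ for $q=0$), each term on the right is seen to be $\overline{M}\lambda\qq^{N}\delta\qq$ times a quantity vanishing as $a\to\infty$: the resulting exponents of $\lambda_q$ are strictly negative because $b>1$ yields positive frequency gaps such as $(b-1)(1-2\alpha)+\tfrac\gamma2>0$ (one uses here also $\alpha<1/7$ and $\delta$ small). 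This proves \eqref{est:varp}, and combined with $\|p_q\|_N\le\overline{M}\lambda_q^N\delta_q\le\tfrac12\overline{M}\lambda\qq^N\delta\qq$ — valid for $N\ge1$ and $a$ large because $\lambda\qq^N\delta\qq/(\lambda_q^N\delta_q)=(\lambda\qq/\lambda_q)^{N-2\alpha}\to\infty$ — it gives the first estimate in \eqref{est:pq1}.

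Next I would handle the advective derivative. Writing $D_{t,q+1}\varp=-\tfrac23 D_{t,q+1}\!\big(w\cdot(z\qq-z_l)\big)-\tfrac23 D_{t,q+1}\!\big(u_q\cdot\varz\big)-\tfrac13 D_{t,q+1}|\varz|^2$ and distributing $D_{t,q+1}=D_{t,l}+(u\qq-u_l)\cdot\nabla$, I would invoke: \eqref{est:w0}--\eqref{est:wC} (which give $\|D_{t,l}w\|_N\lesssim\epsilon^{-1}\lambda\qq^N\rho^{1/2}$, and with \eqref{e:DiffUl}, \eqref{e:wIndOnM} also $\|D_{t,q+1}w\|_N\lesssim(\epsilon^{-1}+\lambda\qq\rho^{1/2})\lambda\qq^N\rho^{1/2}$); \eqref{est:Dtq1uq} for $\|D_{t,q+1}u_q\|_N$; \eqref{est:Dtq1ZMoll}--\eqref{est:Dtq1ZVar} from \cref{cor:Dtq1Z} for $D_{t,q+1}(z_q-z_l)$ and $D_{t,q+1}(z\qq-z_q)$; and again \eqref{e:wIndOnM}, \eqref{s:estUqN}, \eqref{e:Diffstoc}. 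Collecting terms, distributing derivatives by the Leibniz rule, and checking the parameter inequalities as before, one finds $\|D_{t,q+1}\varp(t)\|_{N-2}\lesssim\overline{M}\lambda\qq^{N-1}\delta\qq^{3/2}$ times an $a$-vanishing factor; together with $\|D_{t,q}p_q\|_{N-2}\le\overline{M}\lambda_q^{N-1}\delta_q^{3/2}\le\tfrac12\overline{M}\lambda\qq^{N-1}\delta\qq^{3/2}$ and $\|(u\qq-u_q)\cdot\nabla p_q\|_{N-2}\lesssim\overline{M}\rho^{1/2}\delta_q\lambda_q\lambda\qq^{N-2}\ll\overline{M}\lambda\qq^{N-1}\delta\qq^{3/2}$ (from \eqref{e:DiffU} and \eqref{s:P}), this yields the second estimate in \eqref{est:pq1} and closes the induction.

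I expect the main obstacle to be purely computational rather than conceptual: no new idea is needed — all ingredients are already available through \cref{prop:estw}, \cref{cor:Dtq1Z}, \cref{lemma:stocEstimates} and the inductive hypotheses — but one must carefully track the exponents of $\lambda_q,\delta_q,i_q,\rho,\epsilon$ across the many terms generated by the Leibniz rule and the advective derivative, and verify in each case that the positive frequency gaps forced by $b>1$, together with the smallness of $i_q^{1/2-\delta}$ and the $a$-independence of $C_2$ in \eqref{e:wIndOnM}, suffice to absorb the fixed constants $C_v,\overline{M},L$ by taking $a$ large enough.
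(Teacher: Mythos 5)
Your proposal is correct and follows essentially the same route as the paper: the authors likewise note that $p\qq-p_q$ is the explicit scalar $\varp$ from \eqref{e:varp}, obtain \eqref{est:varp} and the first half of \eqref{est:pq1} by "mimicking the estimates on $R_S$" (whose summands are exactly the tensorial versions of the terms in $\varp$), and bound the advective derivative via the same splitting $D_{t,q+1}p\qq = D_{t,q+1}\varp + D_{t,q}p_q + (u\qq-u_q)\cdot\nabla p_q$, closing with the inductive hypothesis \eqref{s:P} and a suitable choice of $b_0,a_0$. Your version only spells out in more detail the parameter bookkeeping that the paper leaves implicit.
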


\begin{proof}    
    Since $p\qq-p_q=-\Big(\frac{2}{3}w\cdot (z\qq-z_l)+\frac{2}{3}u_q\cdot \varz +\frac{|\varz|^2}{3}\Big)$ by construction, the estimates on $\|p\qq\|_N$ and $\|p\qq-p_q\|_N$ can be easily obtained by mimicking those on $R_S$.
    Analogously,
    \begin{align}
        \|D_{t,q+1}p\qq\|_{N-2}&\lesssim \lambda\qq^{N-1}\delta\qqq\delta\qq^{1/2}+\|D_{t,q}p_q\|_{N-2}+\|(u\qq-u_q)\cdot \nabla p_q\|_{N-2}\\
        &\lesssim \lambda\qq^{N-1}\delta\qqq\delta\qq^{1/2}+\lambda_q^{N-1}\delta_q^{3/2}+\lambda\qq^{N-2}\lambda_q\delta_q\delta\qq^{1/2}\le \overline{M}\lambda\qq^{N-1}\delta\qq^{3/2},
    \end{align}
    where the last inequality is due to picking $b_0$ and $a_0$ properly.
\end{proof}

\section{Construction of the new current} \label{sec:current}
In the previous sections we have constructed the new velocity $v\qq$, pressure $p\qq$, and Reynolds stress $R\qq$. Moreover, the noise $z\qq$ is given by \eqref{e:SmoothNoise}.
The new current $\phi\qq$ is defined by imposing that $(v\qq,p\qq,R\qq,\phi_{q+1},z_{q+1})$ satisfies the relaxed local energy equality in \eqref{eq:ApproxEulerV}, namely
\begin{align}\label{e:LEIq1}
      \partial_t&\frac{|v\qq|^2}{2}
      +
      v\qq\cdot \nabla p\qq
      +
      v\qq \cdot\dvg\big((v\qq+z\qq)\otimes (v\qq+z\qq)\big) = -E'
      \\
      &
      +
      \frac{1}{2}D_{t,q+1}\big(\Tr(R\qq)\big)
      +
      \dvg\big(R\qq v\qq\big)
      +
      R\qq:\nabla z\qq^T 
      +
      \dvg (\phi\qq).
\end{align}

Since $(v_q,p_q, R_q,\phi_q,z_q)$ solves itself the relaxed local energy inequality, we have
    \begin{align}\label{e:NewCurrent1}
        \partial_t&\frac{|v\qq|^2}{2}
        +
        v\qq\cdot \nabla p\qq
        +
        v\qq \cdot\dvg\big(u\qq\otimes u\qq\big) 
        =
        -E'
        \\
        &+\dvg(R\qq v\qq)+R\qq:\nabla z\qq^T
        \\
        &+D_{t,q}\left(\frac{\Tr(R_q)+|w|^2+2(v_q-v_l)\cdot w}{2}\right)+\dvg\left(\phi_q+\frac{|w|^2}{2}w-R\qq w+(\varz\otimes v_q)w\right)
        \\
        &+\dvg\left((R_q+w \otimes w -R\qq)(v_q-v_l)+\left[\varz \otimes \varz+u_q\SymOt \varz\right]v_l
    \right)\\
    &+\dvg\left(\left(\frac{|v_q|^2+|w|^2}{2}\right)\varz+\frac{|v_q-v_l|^2}{2}w\right)
        \\
        &+\left((w_o \otimes w_o-c_0)-R_A\right):\nabla(v_l+z_q)^T-R\qq:\nabla\varz^T
        \\
        &-\left(w\SymOtNoTr(z\qq-z_l)+u_q\SymOtNoTr \varz+\varz \otimes \varz\right):\nabla z_q^T+v_q\cdot\dvg(u_q\otimes \varz+\varz\otimes z\qq)
        \\
        &-(z\qq-z_l)\cdot \dvg(w\otimes v_l)+(v_q-v_l)\cdot\dvg(w\otimes \varz )+w\cdot\dvg\Big\{(p\qq-p_l)\Id+\P_{\leq l^{-1}}R_q
        \\
        &+R_{comm}-z\otimes v_l+(z\qq-z_l)\otimes z_q+ z_l\otimes (z_q-z_l)-v_l\otimes(z_l-z_q)+u\qq\otimes \varz\Big\},
    \end{align} 
       where $R_{comm}:=u_l\otimes u_l-\proj_{\le l^{-1}}(u_q\otimes u_q)$, and we have used multiple times the divergence-free property of all the vector fields at stake, as well as the following standard algebraic identities valid for generic matrices $M$ and vector fields $v,u$:
\begin{align}
  \dvg(Mv)&=\dvg(M)\cdot v+M:\nabla v^T, 
  \\
  u\otimes v:\nabla w^T&=v\cdot \dvg(u\otimes w).
\end{align}
Equation \eqref{e:TraceR} implies that
\begin{align}
    D_{t,q}\Big(\frac{\Tr(R_q)+|w|^2+2(v_q-v_l)\cdot w}{2}\Big)&=D_{t,q}\left(\frac{\Tr(R\qq)+\Tr(w_o \otimes w_o-c_0)}{2}\right)-\partial_t f\\
    &=D_{t,q+1}\left(\frac{\Tr(R\qq)}{2}\right)-\dvg\left((w+\varz)\frac{\Tr(R\qq)}{2}\right)\\
    &\quad+\dvg\left(\AntiDiv\left(D_{t,l}\frac{\Tr(w_o \otimes w_o-c_0)}{2}\right)\right)
    \\
    &\quad+\fint_{\T^3} D_{t,l}\frac{\Tr(w_o \otimes w_o-c_0)}{2}\,dx\\
    &\quad+\dvg\left( (v_q-v_l)\frac{\Tr(w_o \otimes w_o-c_0)}{2}\right)-\partial_t f.
\end{align}
Therefore, \eqref{e:LEIq1} holds for $\phi\qq$ and $f(t)$ given by:
\begin{align}
    \phi\qq&:=\phi_A+ \phi_{D_1}+\phi_{D_2}+\phi_{S_1}+\phi_{S_2},
    \\
    \label{e:f}
    f_{q+1}(t)
    &:=
    f_A(t)+f_{D_2}(t)+f_{S_2}(t)
    :=
    \int_0^t\fint_{\T^3} \left( \varphi_A(s,x)+\varphi_{D_2}(s,x)+\varphi_{S_2}(s,x) \right)\,dx\,ds,
    \end{align}
    where the terms at the right-hand sides above are defined as follows: the \emph{antidivergence current}
    \begin{align}
    \label{e:phiA}
\phi_A
&:=
\overline{\mathcal{R}} \varphi_A,
\\
\varphi_A  \label{e:varphiA}
&:= 
D_{t,l}\frac{\Tr(w_o \otimes w_o-c_0)}{2}
+
\dvg\left(\phi_l+\frac{|w_o|^2}{2}w_o\right)
+
\left(w_o \otimes w_o-c_0\right):\nabla(v_l+z_q)^T
\\
    &\quad+
    w\SymOtNoTr(z\qq-z_l):\nabla z_q^T-(z\qq-z_l)\cdot\dvg(w\otimes v_l)+(v_q-v_l)\cdot \dvg(w\otimes \varz) 
    \\
    &\quad+
    w\cdot\dvg\left(\proj_{\le l^{-1}}R_q+R_{comm}-\varz\otimes v_l+(z\qq-z_l)\otimes z_q\right)
    \\
    &\quad+
    w\cdot \dvg(u_l\otimes(z_q-z_l)+u\qq\otimes \varz)+w\cdot \nabla(p_q-p_l),
    \end{align}
contains all the terms that will be treated by means of the properties of the \emph{antidivergence} operator $\overline{\mathcal{R}} := -\nabla (-\Delta)^{-1}$, that to smooth scalar fields $g$ associates a vector field satisfying $\dvg \overline{\mathcal{R}} g = g - \fint_{\T^3}g$; 
the \emph{stochastic currents} 
\begin{align}
    \label{e:phiS1}\phi_{S_1}&:=\left(\frac{\Tr(R\qq-\frac32f\,\Id)}{2}+\frac{|v_q|^2+|w|^2}{2}\right)\varz+(\varz\otimes v_q)w+(\varz \otimes \varz+u_q\SymOt\varz)v_l+\varp \,w\,,
    \\
    \label{e:phiS2} 
        \phi_{S_2}
        &:=
\overline{\mathcal{R}} \varphi_{S_2},
\\
\varphi_{S_2}  \label{e:varphiS2}
&:=
R_A:\nabla (z_q-z_l)^T+\left(u_q\SymOtNoTr\varz+\varz \otimes \varz\right):\nabla z_q^T
\\
&\quad+
v_q\cdot \dvg(u_q\otimes \varz+\varz\otimes z\qq)-\left(R\qq-\frac32 f\,\Id\right):\nabla\varz^T,
    \end{align}
containing all the terms with explicit dependence of the noise $z$; and the \emph{deterministic currents}
    \begin{align}
    \label{e:phiD1} 
        \phi_{D_1}&:=w\frac{\Tr(R\qq-\frac{3}{2}f\,\Id)}{2}+(v_q-v_l)\frac{\Tr(w_o \otimes w_o-c_0)}{2}+\phi_q-\phi_l\\
        &+\frac{|w|^2}{2}w-\frac{|w_o|^2}{2}w_o-\left(R\qq-\frac{3}{2}f\,\Id\right)w+\frac{|v_q-v_l|^2}{2}w\\
        &+\left(R_q-w \otimes w-R\qq-\rho\Id+\frac32f\,\Id\right)(v_q-v_l)\,,
    \\
\label{e:phiD2}
\phi_{D_2}
&:=
\overline{\mathcal{R}} \varphi_{D_2},
\quad
\varphi_{D_2}
:=
R_A:\nabla u_l^T ,
    \end{align}
containing all the remaining terms that would be there even without external random forcing.
Obviously, if $z \neq 0$ then the deterministic currents $\phi_{D_1}$ and $\phi_{D_2}$ are random objects nonetheless, being so all the other objects involved in the construction.
Furthermore, the terms $\phi_{D_1}$ and $\phi_{S_1}$ can be written in the form $\overline{\mathcal{R}} \varphi_{D_1}$ and $\overline{\mathcal{R}} \varphi_{S_1}$ respectively, where $\varphi_{D_1} = \dvg \phi_{D_1}$ and $ \varphi_{S_1} = \dvg \phi_{S_2}$ have zero space average. On the other hand, the currents $\varphi_A, \varphi_{S_2}$, and $\varphi_{D_2}$ have non-zero space average. 

\subsection{Estimates on the current}
\label{ssec:estimatesPhi}
Our main proposition on the current is the following \autoref{prop:phi}. The same caveat as before \autoref{prop:estw} applies also here: the proof of this results could be skipped at a first reading. 
Roughly speaking, taking either a space derivative or an advective derivative of the new current $\phi\qq$ ``costs'' a factor $\lambda\qq$. On the other hand, the scalar function $f$ and its time derivative go to zero as $\lambda\qq^{-\gamma}\delta\qqq$ and $\lambda\qq^{-\gamma}\delta\qqq\delta\qq^{1/2}$, respectively.

\begin{prop} \label{prop:phi}
   There exists $ b_0:=b_0(\alpha)>1$ such that, for any $b\in (1,b_0)$, there are parameters $\gamma:=(b-1)^2$,  $a_0>1$ and $n_0,h_0\in \N$ sufficiently large with the following property.
   For every $a>a_0$, $t\le \mathfrak{t}$ and $N\le 2$ it holds 
    \begin{align} 
        \|\phi\qq(t)\|_N&\label{est:phi}\le \lambda\qq ^{N-3\gamma/2}\delta_{q+2}^{3/2},
        \\
        \|D_{t,q+1}\phi\qq(t)\|_{N-1}&\label{est:Dphi}\le \lambda\qq ^{N-3\gamma/2}\delta_{q+2}^{3/2}\delta\qq^{1/2},
        \\
        |f(t)|&\le\frac{1}{100} \lambda\qq^{-\gamma}\delta\qqq,
        \\
        |\partial_t f(t)|&\le \frac{1}{100}\lambda\qq^{-\gamma}\delta\qqq\delta\qq^{1/2}.
    \end{align}
\end{prop}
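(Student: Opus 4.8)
The plan is to bound each of the five pieces of $\phi\qq = \phi_A + \phi_{D_1} + \phi_{D_2} + \phi_{S_1} + \phi_{S_2}$ separately, together with their $D_{t,q+1}$-derivatives, and then combine them; the scalar $f$ and $\partial_t f$ are handled at the very end using the space-averages of $\varphi_A, \varphi_{D_2}, \varphi_{S_2}$ already extracted in the construction. For the pieces that are pure algebraic combinations of already-estimated objects, namely $\phi_{D_1}$ and $\phi_{S_1}$, I would simply feed the product rule $\|XY\|_N \lesssim \sum_{j \le N}\|X\|_j\|Y\|_{N-j}$ the bounds from \cref{prop:estw} on $w_o, w_c, w$, from \cref{prop:R} on $R\qq - \tfrac23 f\Id$ and on $(R\qq - \tfrac23 f\Id)w$, from \cref{lemma:RS} on $R_S$, \cref{lemma:stocEstimates,cor:Dtq1Z} on the noise increments $z\qq - z_l$, $\varz = z\qq - z_q$, and from \eqref{s:V}, \eqref{est: DtlDiffv} on $v_q - v_l$; the $D_{t,q+1}$-bounds follow by writing $D_{t,q+1} = D_{t,l} + (u\qq - u_l)\cdot\nabla$ and invoking \eqref{e:DiffUl}, \eqref{est:DtlNablaU}, \eqref{est:Dtq1uq}, plus the heuristic that $D_{t,l}$ costs $\epsilon^{-1}$ and a space derivative costs $\mu^{-1}$ on the building blocks. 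In all these estimates one keeps the implicit constants independent of $a$ (as in \eqref{s:estUqN} and \cref{oss:IndependenceOnML}), so that they can be absorbed by choosing $b$ close to $1$ (hence $\gamma, n_0, h_0$ appropriately) and then $a$ large.

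For the three antidivergence pieces $\phi_A = \overline{\mathcal{R}}\varphi_A$, $\phi_{D_2} = \overline{\mathcal{R}}\varphi_{D_2}$, $\phi_{S_2} = \overline{\mathcal{R}}\varphi_{S_2}$, the point is that each $\varphi$ contains oscillatory terms of the form $\sum_{m,k} g_{m,k} e^{i\lambda\qq \xi_m\cdot k}$ (coming from $w_o\otimes w_o - c_0$, $|w_o|^2 w_o/2$, and products of $w$ with $\nabla(v_l + z_q)$ etc.) whose phase oscillates at frequency $\sim\lambda\qq$, so that $\overline{\mathcal{R}} = -\nabla(-\Delta)^{-1}$ gains a genuine factor $\lambda\qq^{-1}$ — this is exactly the mechanism quantified by the stationary-phase-type antidivergence estimates \cref{prop:antidiv1,prop:antidiv2} in the appendix, applied with $\lambda = \lambda\qq$ and $s_j$ of the form $(\text{amplitude})\,\mu^{-j}$ or $\epsilon^{-1}(\cdots)$, as was done for $R_A$ in \cref{lemma:RA}. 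The non-oscillatory, low-frequency remainders of $\varphi_A$ (the commutator $R_{comm}$, the mollification errors $p_q - p_l$, $\proj_{\le l^{-1}}R_q - R_l$, $z_q - z_l$, $\varz$, and the $D_{t,l}$ of traces) are controlled by the Schauder bound $\|\overline{\mathcal{R}}\varphi\|_N \lesssim \|\varphi\|_{N-1}$ and the already-available estimates on mollification in space (Bernstein, $l\|\cdot\|_{N+1}$) and time (\eqref{e:Mollification}, the flow estimate \eqref{e:2ndTransport}); in particular $R_{comm}$ is estimated by a standard commutator/mollification argument exactly as $R_l$ was. The role of $n_0$ and $h_0$ large is, as noted after \cref{prop:R}, to make the tail factors $(\mu\lambda\qq)^{-n_0-1}$ and $(l\lambda\qq)^{-h_0}$ in \eqref{est:n0h0} beat $\lambda\qq^{-2\gamma}\delta\qqq^{3/2}$; the same choice works here since the target exponent $-3\gamma/2$ is even less demanding than $-2\gamma$.

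For $f$: since $\varphi_A, \varphi_{D_2}, \varphi_{S_2}$ were arranged to absorb precisely the non-zero-mean parts, $f$ is $\int_0^t \fint (\varphi_A + \varphi_{D_2} + \varphi_{S_2})$, so $\partial_t f = \fint(\varphi_A + \varphi_{D_2} + \varphi_{S_2})$ is bounded by the $C^0$ bounds just obtained on these $\varphi$'s (before applying $\overline{\mathcal{R}}$), which are of size $\lesssim \lambda\qq^{-3\gamma/2}\delta\qqq^{3/2}\delta\qq^{1/2}$ or better; the bound on $|f(t)|$ then follows from $|t| \le 2T$, absorbing the $T$-dependence into the choice of $a$, and one gets the factor $\tfrac1{100}$ by taking $a$ large. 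The main obstacle I expect is bookkeeping: $\varphi_A$ has on the order of a dozen structurally different terms mixing oscillatory Mikado contributions with low-frequency mollification errors and with noise increments, and for each one must correctly identify whether it is oscillatory (use \cref{prop:antidiv1,prop:antidiv2} with the right $s_j$) or low-frequency (use Schauder plus a mollification estimate), and must verify that every resulting power of $\lambda_q, \lambda\qq, \delta_q, \delta\qq, \mu, \epsilon, l, l_{\rm temp}, i_q$ collapses — after substituting the explicit exponents $h_i, u_i, e_i, s_i$ and using $\nu = (1-7\alpha)/(3\alpha)$ together with $\gamma = (b-1)^2 \to 0$ — to something $\le \lambda\qq^{N-3\gamma/2}\delta\qqq^{3/2}$ with $a$-independent constant; this is the same type of delicate but routine parameter-chasing already carried out for \cref{prop:R}, so no new conceptual difficulty arises, only a longer list of inequalities of the form $\lambda_q^{\cdots}\delta_q^{\cdots}\cdots \le \lambda\qq^{-3\gamma/2}\delta\qqq^{3/2}$ to be verified for $b$ sufficiently close to $1$.
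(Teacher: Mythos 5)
Your overall architecture --- the five-way splitting of $\phi\qq$, product-rule bounds for $\phi_{D_1},\phi_{S_1}$ fed by \cref{prop:estw,prop:R}, stationary-phase antidivergence estimates (\cref{prop:antidiv1,prop:antidiv2}) for the oscillatory parts, and absorption of constants by taking $b$ close to $1$ and $a$ large --- is the same as the paper's (\cref{lemma:phiA,lemma:phiD1,lemma:phiD2,lemma:phiS1,lemma:phiS2}). But your plan for the $f$-estimates has a genuine gap. You propose to bound $\partial_t f=\fint_{\T^3}(\varphi_A+\varphi_{D_2}+\varphi_{S_2})\,dx$ by the $C^0$ norms of these currents ``before applying $\overline{\mathcal{R}}$'', asserting they are of size $\lambda\qq^{-3\gamma/2}\delta\qqq^{3/2}\delta\qq^{1/2}$. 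That is false: $\|\varphi_A\|_0\sim \delta\qq/\epsilon$ and $\|\varphi_{D_2}\|_0\sim\frac{\rho}{\mu}\lambda\qq^{-1}\lambda_q\delta_q^{1/2}$, both vastly larger than the target $\tfrac1{100}\lambda\qq^{-\gamma}\delta\qqq\delta\qq^{1/2}$ --- the $\lambda\qq^{-1}$ gain in \cref{lemma:phiA,lemma:phiD2} belongs to $\overline{\mathcal{R}}$ acting on high frequencies and is lost the moment you take a plain space average. The correct mechanism is non-stationary phase: $\fint_{\T^3}a_{m,k}e^{i\lambda\qq\xi_m\cdot k}dx$ is controlled via \cite[Lemma A.2]{DLK23} by $\lambda\qq^{-n_0+1}(s_{n_0+1}+s_0\,l^{-n_0-1})$, and for $f_{D_2}$ one uses that the high-frequency part of $\mathcal{R}G_i$ paired with the low-frequency $\nabla u_l^T$ has no zero mode, so only the tiny projection $\proj_{<3\lambda\qq/8}G_i$ of size $(\lambda\qq\mu)^{-n_0-1}\rho/\mu$ (cf. \eqref{est:projGi}) survives. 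Without this $(\lambda\qq\mu)^{-n_0-1}$ decay, the bounds on $|f|$ and $|\partial_tf|$ do not close.

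A second, lesser issue: your proposal to treat the ``low-frequency remainders'' of $\varphi_A$ ($R_{comm}$, $p_q-p_l$, $\proj_{\le l^{-1}}R_q$, the noise increments) by the plain Schauder bound for $\overline{\mathcal{R}}$ is not viable either. Each of these objects enters $\varphi_A$ only as a factor multiplying $w$ (e.g.\ $w\cdot\dvg(\proj_{\le l^{-1}}R_q+R_{comm})$, $w\cdot\nabla(p_q-p_l)$), so the product oscillates at frequency $\lambda\qq$ and must be inserted into \cref{prop:antidiv1,prop:antidiv2} as an amplitude (the $a^2_{m,k},a^3_{m,k}$ of the paper) precisely in order to retain the $\lambda\qq^{-1}$ gain: for instance $\|w\cdot\dvg\proj_{\le l^{-1}}R_q\|_0\sim\rho^{1/2}\lambda_q^{1-\gamma}\delta\qq\sim\lambda_q^{1-3\gamma/2}\delta\qq^{3/2}$, which exceeds $\lambda\qq^{-3\gamma/2}\delta\qqq^{3/2}$ by roughly a factor $\lambda_q$ and only closes after the extra $\lambda\qq^{-1}$. (By contrast, your lumping of $\phi_{S_2}$ with the oscillatory pieces is a harmless mislabel: there the smallness comes from the noise increments $i_q^{1/2-\delta}$ together with $\|\overline{\mathcal{R}}\|_{L^\infty\to L^\infty}\lesssim1$, as in \cref{lemma:phiS2}.)
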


\subsubsection{Proof of \autoref{prop:phi}: bounds on $\phi_A$}

\begin{lemma}\label{lemma:phiA}
Let $\phi_A$, $f_A$ be defined by \eqref{e:phiA} and \eqref{e:f}.
    For $N\le 2$ and  $t\le \mathfrak{t}$ we have
    \begin{align}
    \|\phi_A(t)\|_N 
    &\lesssim \lambda\qq^N\left[\lambda\qq^{-1}+(\lambda\qq\mu)^{-n_0-1}\right]\frac{\delta\qq}{\epsilon},
    \\
    \|D_{t,q+1}\phi_A(t)\|_{N-1}&\lesssim \lambda\qq^{N-1}\left[(\epsilon\lambda\qq)^{-1}+\lambda\qq^{-n_0}\mu^{-n_0-1}+(l\lambda\qq)^{-h_0}\lambda\qq^2\lambda_q\delta_q^{1/2}\right]
    \frac{\delta\qq}{\epsilon},
    \\
    \label{f_A}|f_A(t)|+|f'_A(t)|&\lesssim \frac{\delta\qq}{\epsilon}(\lambda\qq \mu)^{-n_0-1}.
    \end{align}

\end{lemma}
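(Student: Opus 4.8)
\textbf{Proof plan for Lemma~\ref{lemma:phiA}.}
The plan is to estimate $\phi_A = \overline{\mathcal{R}}\varphi_A$ term by term according to the decomposition \eqref{e:varphiA}, grouping the summands of $\varphi_A$ into those that share an oscillatory structure (the ones built from $w_o \otimes w_o - c_0$, $|w_o|^2 w_o$, and the products $w \cdot \dvg(\cdots)$) and those that are lower-order perturbative corrections. For the oscillatory terms, one writes $w_o \otimes w_o - c_0 = \sum_{m}\sum_{k \neq 0} c_{m,k} e^{i\lambda\qq \xi_m \cdot k}$ and similarly for $|w_o|^2 w_o/2$ using \eqref{e:w0w0}, \eqref{e:w03}, so that $\varphi_A$ (modulo its non-oscillatory part) becomes a sum of the form $\sum_{m,k} g_{m,k} e^{i\lambda\qq \xi_m \cdot k}$ with $g_{m,k}$ controlled by \cref{lemma:coeff}, \cref{prop:estw}, and the bounds on $v_l$, $z_q$, $p_l$, $R_l$, $R_{comm}$ established in the preceding lemmas. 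The key device is \cref{prop:antidiv1} (for the $C^N$ bound) and \cref{prop:antidiv2} (for the advective-derivative bound), applied exactly as in the proof of \cref{lemma:RA}: one takes $\lambda = \lambda\qq$, amplitude-size sequence $s_j = (\delta\qq/\epsilon)\mu^{-j}$, and the constant $a_F$ depending only on finitely many derivatives of the Mikado profiles, hence independent of $\lambda\qq, \mu, \rho, \epsilon$. This produces the stated $\lambda\qq^{-1} + (\lambda\qq\mu)^{-n_0-1}$ gain and, for the material derivative, the extra $(\epsilon\lambda\qq)^{-1} + \lambda\qq^{-n_0}\mu^{-n_0-1} + (l\lambda\qq)^{-h_0}\lambda\qq^2\lambda_q\delta_q^{1/2}$ factor coming from \cref{prop:antidiv2} with $c = \epsilon$.

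For the bookkeeping of which term dominates, I would proceed as follows. First I would record that every amplitude appearing in $\varphi_A$ is bounded in $C^N_x$ by $\mu^{-N}$ times a scalar that is at most $\rho \,\epsilon^{-1} \lesssim \delta\qq \epsilon^{-1}$: this uses $\rho \sim \oM^2 \lambda_q^{-\gamma}\delta\qq$, $D_{t,l}$ costing $\epsilon^{-1}$ (from \cref{lemma:coeff} and \eqref{est:DtlNablaU}), and the elementary inequalities $l\lambda_q\delta_q^{1/2} \le \rho^{1/2}$, $\|z_q\|_{N+1} \lesssim L \lesssim \lambda_q \delta_q^{1/2}$, $\|v_l\|_{N+1}\lesssim \lambda_q^{N+1}\delta_q^{1/2}$ from \eqref{s:estUqN} and \cref{lemma:stocEstimates}, together with the mollification bounds $\|R_l\|_N \lesssim l^{-N}\lambda_q^{-\gamma}\delta\qq$, $\|\phi_l\|_N \lesssim l^{-N}\lambda_q^{-3\gamma/2}\delta\qq^{3/2}$ and the commutator estimate $\|R_{comm}\|_N \lesssim l^{2-N}\|u_q\|_1^2 + \dots$ (Constantin–E–Titi type, via Bernstein). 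The non-oscillatory part of $\varphi_A$ is handled directly by $\|\overline{\mathcal{R}} g\|_N \lesssim \|g\|_{N-1}$ and is of strictly lower order, so it is absorbed. Finally, $f_A$ is the time-integral of the spatial average $\fint \varphi_A\,dx$; since the oscillatory terms carry a high-frequency phase $e^{i\lambda\qq\xi_m\cdot k}$, their space average is itself of size $(\lambda\qq\mu)^{-n_0-1}$ after repeated integration by parts along the flow (this is precisely the content of the average estimate in \cref{prop:antidiv1}), giving $|f_A| \lesssim (\delta\qq/\epsilon)(\lambda\qq\mu)^{-n_0-1}$ on the bounded time interval, and the same bound for $|f_A'| = |\fint \varphi_A\,dx|$.

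The main obstacle I expect is the careful treatment of the terms $w \cdot \dvg(\cdots)$ in \eqref{e:varphiA}: here one must be slightly careful because $w = w_o + w_c$ and the divergence of the bracketed matrix contains both genuinely low-frequency pieces (from $\proj_{\le l^{-1}}R_q$, $R_{comm}$, $p_q - p_l$) and pieces involving the rough noise increment $z\qq - z_l$ and $\varz = z\qq - z_q$. The low-frequency pieces combine with the oscillation of $w_o$ to again fit the template of \cref{prop:antidiv1}, but one must verify that the resulting amplitude still obeys the $s_j = (\delta\qq/\epsilon)\mu^{-j}$ bound — the worst contribution is $w_o \cdot \nabla(p_q - p_l)$, where $\|p_q - p_l\|_1 \lesssim \lambda_q\delta_q$ and $\|w_o\|_0 \lesssim \rho^{1/2}$, which is comfortably smaller than $\delta\qq/\epsilon$ once $\epsilon \lesssim \lambda\qq^{-1}\rho^{-1/2}$ and the parameter inequalities from \eqref{est:n0h0} are invoked. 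The $z$-dependent pieces inside $\varphi_A$ carry a factor $i_q^{1/2-\delta}$ from \cref{lemma:stocEstimates}/\cref{cor:Dtq1Z}, which is more than enough to absorb them into the stated bound. Once all groups are checked, the lemma follows by summing over $k \in \Z^3\setminus\{0\}$ using the rapid decay of $\sup_I |\mathring{b}_{I,k}|$, $\sup_I|\mathring{c}_{I,k}|$, $\sup_I|\mathring{d}_{I,k}|$ (smoothness of the Mikado profiles), exactly as the convergence of the series in the proof of \cref{prop:estw}.
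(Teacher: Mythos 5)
Your plan matches the paper's proof essentially step for step: the paper likewise writes $\varphi_A$ as a sum $\sum_{m,k}a_{m,k}e^{i\lambda\qq\xi_m\cdot k}$ (splitting $a_{m,k}$ into the oscillatory quadratic/cubic pieces, the $\dvg(\proj_{\le l^{-1}}R_q+R_{comm})$ piece, and the $\nabla(p_q-p_l)$ piece), applies \cref{prop:antidiv1} and \cref{prop:antidiv2} with $\lambda=\lambda\qq$, $s_j=(\delta\qq/\epsilon)\mu^{-j}$, $c=\epsilon$, and obtains the $f_A$ bound from the smallness of the space average of the oscillatory profiles. The only cosmetic difference is that the paper invokes a non-stationary-phase lemma from \cite{DLK23} for $f_A$ rather than reading it off \cref{prop:antidiv1}, which amounts to the same estimate.
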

\begin{proof}
The result is due to \autoref{prop:antidiv1}, \autoref{prop:antidiv2} applied to $F=\varphi_A$. Indeed, recall the decomposition of $\varphi_A$ given by \eqref{e:varphiA}. Following the notation of \autoref{prop:antidiv1}, let us define $a_{m,k} := a^1_{m,k}+a^2_{m,k}+a^3_{m,k}$ where
\begin{align}
    a_{m,k}^1&:=D_{t,l}\left(\frac{\Tr(c_{m,k})}2\right)+\dvg (d_{m,k})+c_{m,k}:\nabla(v_l+z_q)^T+\left(   b_{m,k}+\frac{e_{m,k}}{\lambda\qq}\right)\SymOtNoTr (z\qq-z_l):\nabla z_q^T\\
    &\quad+
    \left(   b_{m,k}+\frac{e_{m,k}}{\lambda\qq}\right)\cdot \dvg\left((z\qq-z_l)\otimes z_q\right)-(z\qq-z_l)\cdot\left[\left(b_{m,k}+\frac{e_{m,k}}{\lambda\qq}\right)\cdot \nabla v_l\right]\\
    &\quad+
    \left(b_{m,k}+\frac{e_{m,k}}{\lambda\qq}\right)\cdot\left[-(\overline{z}\cdot \nabla)v_l+u_l\cdot \nabla (z_q-z_l)+u\qq\cdot \nabla \varz
    \right]\\
    &\quad+
    (v_q-v_l)\cdot\left[\left(b_{m,k}+\frac{e_{m,k}}{\lambda\qq}\right)\cdot \nabla \varz \right],
  \\
    a_{m,k}^2&:=\left(b_{m,k}+\frac{e_{m,k}}{\lambda\qq}\right)\dvg(\proj_{\le l^{-1}}R_q+R_{comm}),\\
    a_{m,k}^3&:=\left(b_{m,k}+\frac{e_{m,k}}{\lambda\qq}\right)\cdot \nabla (p_q-p_l).
\end{align}
We intend to apply \autoref{prop:antidiv1} with $\lambda=\lambda\qq$, a sequence 
\begin{align}
    s_j = \frac{\delta\qq}{\epsilon} \mu^{-j},
\end{align}
and $\mathring{a}_k, a_F$ defined accordingly.
As for the terms of the form $a_{m,k}^1$, the assumptions of the proposition are readily checked by \autoref{lemma:coeff}, \autoref{lemma:stocEstimates}, and the inductive estimates on $v\qq$.
The assumptions of the proposition for terms of the form $a_{m,k}^2$ can be checked by \autoref{lemma:coeff}, inductive estimate \eqref{s:R}, and \cite[Lemma A.3]{DLK23}. 
Finally, for $a_{m,k}^3$ we invoke \autoref{lemma:coeff} and \autoref{lemma:pressure}. 
Observe that the constant $a_F$ from \autoref{prop:antidiv1} depends only on finitely many derivatives of the Mikado flows $\psi_I$, in particular we absorb it into the $\lesssim$ in the bound on $\|\phi_A\|_N $.

Let us move to the estimate on $\|D_{t,q+1}\phi_A\|_{N-1}$. We apply \autoref{prop:antidiv2} with $\lambda,s_j$ as above and $c=\epsilon$.
In this case, we check the assumptions on $D_{t,l}a_{m,k}^1$ by the inductive estimates, \autoref{lemma:coeff}, the bound \eqref{est:DtlNablaU}, \autoref{lemma:stocEstimates}, and \autoref{cor:Dtq1Z}.
In a similar fashion, in order to control the assumptions on $D_{t,l} a_{m,k}^2$ we use Bernstein inequality, \eqref{est: DtlDiffv} and \cite[Lemma A.4]{DLK23} to control, on one hand:
\begin{align}
    \|D_{t,l}(\dvg R_{comm})\|_N &\lesssim l^{-N}\|D_{t,l}(\dvg R_{comm})\|_0 
    \\
    &\lesssim
    \|D_{t,l}(u_q-u_l)\|_0\|u_q\|_1+l\|u_q\|_1\left[\|D_{t,l}u_l\|_1+\|u_q\|_1^2\right]+l^2\|u_q\|_1^2\|u_q\|_2
    \\
    &\lesssim l^{1-N}\left[\lambda_q^3\delta_q^{3/2}+\lambda_q\delta_q^{1/2}i_q^{-1/2-\delta}\right],
\end{align}
where the second inequality follows by the same computations shown in \cite[Lemma 5.2]{DLK23}, and on the other
\begin{align}  
        \|D_{t,l}(\dvg \proj_{\le l^{-1}}R_q)\|_N&\le\| \proj_{\le l^{-1}}D_{t,l}R_q\|_{N+1}+
        \| [u_l\cdot \nabla, \proj_{\le l^{-1}}]R_q\|_{N+1}+\|(\nabla u_l)\nabla( \proj_{\le l^{-1}}R_q)\|_N\\ &\lesssim l^{-N}\|D_{t,q}R_q\|_1+\|(u_q-u_l)\cdot \nabla R_q\|_1+l^{-N}\|u_q\|_1\|R_q\|_1\\
        &\lesssim l^{-N}\lambda^2_q\delta\qq\delta_q^{1/2}.
\end{align}

Finally, as for the terms $D_{t,l}a_{m,k}^3$, we have by \autoref{lemma:coeff}
\begin{align}
    \|D_{t,l}a_{m,k}^3\|_j
    &\lesssim \sum_{h=0}^j \left\|D_{t,l}\left(b_{m,k}+\frac{e_{m,k}}{\lambda\qq}\right)\right\|_{j-h}\|p_q-p_l\|_{h+1}\\
    &+ \sum_{h=0}^j
    \left\|\left(b_{m,k}+\frac{e_{m,k}}{\lambda\qq}\right)\right\|_{j-h}
    \left(\|D_{t,l}(p_q-p_l)\|_{h+1}+\|(\nabla u_l)\nabla(p_q-p_l)\|_h\right)
    \\
    &\lesssim \lambda_q\delta_q\rho^{1/2}\epsilon^{-1}\mu^{-j},
\end{align}
where in the last line we have used the identity
\begin{align}
    D_{t,l}(p_q-p_l)&=D_{t,q}p_q-(D_{t,q}p_q)_l+(u_l-u_q)\cdot \nabla p_q+(u_q\cdot \nabla p_q)_l-u_l\cdot \nabla u_l,
\end{align}
and the following bound, obtained by \cite[Lemma A.3]{DLK23} and Bernstein inequality:
\begin{equation}\label{est:a3first}
    \|D_{t,l}(p_q-p_l)\|_M\lesssim l^{1-M}\lambda_q^2\delta_q^{3/2}.
\end{equation}
At last, let us consider $|f_A|+|f'_A|$. Due to the spatial and temporal partitions of unity, arguing as in \autoref{prop:estw} it is sufficient to estimate $f_{m,k}(t):=\int_0^t\int_{\T^3}a_{m,k}(s,x)e^{i\lambda\qq\xi_m(s,x)\cdot k}\,dx\,ds$. Then, \cite[Lemma A.2]{DLK23} implies that 
\begin{equation}
   |f_A|+|f'_A|\lesssim \lambda\qq^{-n_0+1}(s_{n_0+1}+s_0\,l^{-n_0-1})\lesssim\frac{\delta\qq}{\epsilon}(\lambda\qq \mu)^{-n_0-1}. 
\end{equation}
\end{proof}

\subsubsection{Proof of \autoref{prop:phi}: bounds on $\phi_D$}

\begin{lemma}\label{lemma:phiD1}
Let $\phi_{D_1}$ be defined by \eqref{e:phiD1}.
For $N\le 2$ and $t\le \mathfrak{t}$ we have
    \begin{align}
    \|\phi_{D_1}(t)\|_N
    &\lesssim 
    \lambda\qq^N\left[l\,\lambda_q\delta_q^{1/2}\rho+(\lambda\qq\mu)^{-1}\rho^{3/2}\right]\\
    &\quad
    +l^{1-N}\lambda_q^{1-\gamma}\delta\qq^{3/2}(l^{-1}\delta_q^{1/2}l_{\rm temp}+1)+\lambda\qq ^{N-2\gamma}\delta_{q+2}^{3/2},
    \\
    \|D_{t,q+1}\phi_{D_1}(t)\|_{N-1}
    &\lesssim 
    \lambda\qq ^{N-2\gamma}\delta_{q+2}^{3/2}\delta\qq^{1/2}+l^{1-N}\lambda_q^{-\gamma}\delta\qq^{3/2}l_{\rm temp}^{-1}\\
   &\quad+\lambda\qq^N[l\delta\qq^{3/2}\lambda_q\delta_q^{1/2}+\lambda\qq^{-1}\lambda_q^{1-\gamma}\delta\qq^{3/2}\rho^{1/2}+(\lambda\qq\mu)^{-1}\rho^{2}]
    \end{align}
\end{lemma}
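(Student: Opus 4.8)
The strategy is to estimate each of the three ``blocks'' of $\phi_{D_1}$ separately, exactly as the decomposition \eqref{e:phiD1} suggests: (i) the algebraic products of perturbations $w_c, w_o, w$ against $w$ or against $v_q-v_l$, together with the cubic term $|w|^2 w/2 - |w_o|^2 w_o/2$; (ii) the low-mode remainder $\phi_q - \phi_l$; and (iii) the term $(R\qq - \tfrac32 f\,\Id)w$, which we have already controlled in \eqref{est:Rw1}--\eqref{est:Rw2}. For block (i) I would expand $|w|^2 w - |w_o|^2 w_o = (2 w_o\SymOt w_c + |w_c|^2)(w_o+w_c) + |w_o|^2 w_c$ so that every summand carries at least one factor $w_c$ or one factor $(v_q-v_l)$; then apply \cref{prop:estw} — in particular \eqref{est:w0}, \eqref{est:wC}, \eqref{e:wIndOnM} — together with the increment bound \eqref{s:distance_v} for $\|v_q-v_l\|_N$ (via $\|v_q - v_l\|_N \lesssim l^{1-N}\lambda_q\delta_q^{1/2}$ from Bernstein) to get a bound of the shape $\lambda\qq^N(l\lambda_q\delta_q^{1/2}\rho + (\lambda\qq\mu)^{-1}\rho^{3/2})$. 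The term $(v_q-v_l)\,\Tr(w_o\otimes w_o - c_0)/2$ and $\tfrac{|v_q-v_l|^2}2 w$ are of the same type and are absorbed here. For block (ii), use the very same mollification-along-the-flow identity as in \eqref{eq:aux002}--\eqref{est:RT2} (now for $\phi_q$ instead of $R_q$), invoking \eqref{s:Phi} and \cite[Lemma A.4]{DLK23} to obtain the piece $l^{1-N}\lambda_q^{1-\gamma}\delta\qq^{3/2}(l^{-1}\delta_q^{1/2}l_{\rm temp}+1)$; this is the direct analogue of \cref{lemma:RT}. Block (iii) contributes $\lambda\qq^{N-2\gamma}\delta\qqq^{3/2}$ by \eqref{est:Rw1}. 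Finally, the leftover algebraic term $(R_q - w\otimes w - R\qq - \rho\,\Id + \tfrac32 f\,\Id)(v_q-v_l)$ must be handled: here one uses \eqref{e:cancW0W0} to recognize that $R\qq - \tfrac23 f\,\Id + \rho\,\Id$ differs from $R_q - w_o\otimes w_o + c_0$ by lower-order stresses, so that after the cancellation only terms already estimated above times $(v_q-v_l)$ remain.

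For the material-derivative bound I would split $D_{t,q+1} = D_{t,l} + (u\qq - u_l)\cdot\nabla$ and treat each block as before, now paying the cost of one advective derivative. By \cref{prop:estw} each $D_{t,l}$ on a perturbation factor costs $\epsilon^{-1}$, and the transport error is handled with \eqref{e:DiffU}--\eqref{e:DiffUl}; for block (ii) the key gain is \eqref{e:VantaggioMollFlux}, i.e.\ $D_{t,l}$ on the Lagrangian-mollified $\phi_l$ costs only $l_{\rm temp}^{-1}$, which produces the term $l^{1-N}\lambda_q^{-\gamma}\delta\qq^{3/2}l_{\rm temp}^{-1}$; for block (iii) one quotes \eqref{est:Rw2}. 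Collecting everything and using the standing inequalities $l\lambda_q\delta_q^{1/2}\le\rho^{1/2}$, $\epsilon^{-1}\lesssim\lambda\qq\rho^{1/2}$ gives the stated right-hand sides.

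The main obstacle I anticipate is purely bookkeeping rather than conceptual: \eqref{e:phiD1} bundles together roughly a dozen terms of genuinely different structure (pure perturbation products, low-mode flux remainders, Reynolds-stress-times-velocity terms, and the delicate combination that only closes after invoking the cancellation \eqref{e:cancW0W0}), and one must be careful that the algebraic identity $R\qq - \tfrac23 f\Id = R_A + R_T + R_S + \rho\Id - (w_o\otimes w_o - c_0) + (\text{stuff})$ is used correctly so that no uncontrolled $w_o\otimes w_o$ survives multiplied by the merely $C^0$-small $(v_q-v_l)$. Once the correct grouping is fixed, every individual estimate is a routine application of \cref{prop:estw}, \cref{lemma:RT}-style Lagrangian mollification, the inductive bounds \eqref{s:V}--\eqref{s:Phi}, and the parameter inequalities; the final choice of $b_0, a_0$ absorbing the implicit constants is then exactly as in \cref{prop:R}.
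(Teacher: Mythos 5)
Your proposal is correct and follows essentially the same route as the paper: term-by-term estimation of \eqref{e:phiD1} using \eqref{est:Rw1}--\eqref{est:Rw2} for the $(R\qq-\tfrac32 f\,\Id)w$ block, a repetition of the \cref{lemma:RT} Lagrangian-mollification argument for $\phi_q-\phi_l$, \cref{prop:estw} for the perturbation products, and the rewriting of the leftover term as $\bar R(v_q-v_l)$ with $\bar R=-\sum_{m,k}c_{m,k}e^{i\lambda\qq k\cdot\xi_m}-R_A-R_S-w\SymOt(v_q-v_l)$ via the cancellation $c_0=\rho\,\Id-R_l$. The only nuance worth noting is that $w_o\otimes w_o-c_0$ \emph{does} survive against $(v_q-v_l)$ and is in fact the dominant contribution $l\lambda_q\delta_q^{1/2}\rho$; it is the zero mode $c_0$ that cancels, not the oscillatory part.
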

\begin{proof}
We bound each term in the decomposition $\eqref{e:phiD1}$ separately, starting from
\begin{align}
\bar{\phi}_{D_1}:=w\frac{\Tr(R\qq-\frac{3}{2}f\,\Id)}{2}-\left(R\qq-\frac{3}{2}f\,\Id\right)w.
\end{align}
In this case, the bounds \eqref{est:Rw1} and \eqref{est:Rw2} imply that
    \begin{align}\label{est:phiD1first}
        &\|\bar{\phi}_{D_1}\|_N
        \lesssim 
        \lambda\qq ^{N-3\gamma/2}\delta_{q+2}^{3/2},
        \qquad 
        \|D_{t,q+1}\bar{\phi}_{D_1}\|_{N-1}
        \lesssim 
        \lambda\qq ^{N-3\gamma/2}\delta_{q+2}^{3/2}\delta\qq^{1/2}.
    \end{align}
Moving to the other terms, we have by \eqref{s:V}, \autoref{lemma:coeff}, \eqref{i:FourierBasisFlux} and the bound $\mu^{-1} < \lambda\qq$ that 
\begin{equation}\label{est:phiD1second1}
    \left\|(v_q-v_l)\frac{\Tr(w_o \otimes w_o-c_0)}{2}\right\|_N\lesssim   l\lambda_q\delta_q^{1/2}\rho \lambda\qq^{N},
\end{equation}
while the advective derivative is controlled by 
    \begin{align}\label{est:phiD1second2} 
        \left\|D_{t,q+1}\left( (v_q-v_l)\frac{\Tr(w_o \otimes w_o-c_0)}{2}\right) \right\|_{N-1}
        &\lesssim 
        \|D_{t,l}(v_q-v_l) \Tr(w_o \otimes w_o-c_0) \|_{N-1}\\
        &\quad+\|(v_q-v_l) D_{t,l} \Tr(w_o \otimes w_o-c_0) \|_{N-1}\\
        &\quad+
        \left\|((u\qq-u_l)\cdot \nabla) \left( (v_q-v_l) \Tr(w_o \otimes w_o-c_0) \right) \right\|_{N-1}\\
        &\lesssim \lambda\qq^N l\, \rho^{3/2}\lambda_q\delta_q^{1/2},
    \end{align}
where in the last line we have used \eqref{est:phiD1second1} above, \autoref{lemma:coeff}, the bounds \eqref{est: DtlDiffv}, \eqref{e:DiffUl}, and \autoref{prop:estw}. 
By similar computations, we can control also
\begin{align}
    \left\|\frac{|v_q-v_l|^2}{2}w\right\|_N
    &\lesssim 
    \lambda\qq ^Nl^2\lambda_q^2\delta_q\,\rho^{1/2},\\
    \left\|D_{t,q+1}\left(\frac{|v_q-v_l|^2}{2}w\right)\right\|_{N-1}
    &\lesssim 
    \lambda\qq ^Nl^2\lambda_q^2\delta_q\,\rho.
\end{align}
The next term in the decomposition \eqref{e:phiD1} we deal with is $\phi_q-\phi_l$. In this case, we can repeat the computations reported in \autoref{lemma:RT}, obtaining
\begin{align}\label{est:phiD1third}
    \|\phi_q-\phi_l\|_N
    &\lesssim 
    l^{1-N}\lambda_q^{1-\gamma}\delta\qq^{3/2}(l^{-1}\delta_q^{1/2}l_{\rm temp}+1),\\
    \|D_{t,q+1}(\phi_q-\phi_l)\|_{N-1}
    &\lesssim 
    \lambda_q^{N-\gamma}\delta\qq^{3/2}\delta_q^{1/2}+ \lambda\qq^{N-1}\lambda_q^{1-\gamma}\delta\qq^{3/2} \rho^{1/2}+l^{1-N}\lambda_q^{-\gamma}\delta\qq^{3/2} l_{\rm temp}^{-1}.
\end{align}
The cubic term in the velocity perturbation is controlled by means of \autoref{prop:estw} as follows:
\begin{align}\label{est:phiD1fourth}
    \left\|\frac{|w|^2w-|w_o|^2w_o}{2}\right\|_N
    &\lesssim 
    \lambda\qq^N(\lambda\qq\mu)^{-1}\rho^{3/2},\\
    \left\|D_{t,q+1}\left(\frac{|w|^2w-|w_o|^2w_o}{2}\right)\right\|_{N-1}
    &\lesssim 
    \lambda\qq^N(\lambda\qq\mu)^{-1}\rho^2.
\end{align}
Finally, the remaining term $\tilde{\phi}_{D_1}$ can be rewritten as $\tilde{\phi}_{D_1} = \bar{R}(v_q-v_l)$, where 
\begin{align}
    \bar{R}
    &:=
     R_q-w\otimes w-R\qq-\rho\Id+\frac32f\,\Id 
    \\
    &=
    -\sum_{m\in\Z}\sum_{k \in \Z^3 \setminus \{0\}}c_{m,k}e^{i\lambda\qq\xi_m \cdot k}-R_A-R_S-w\SymOt(v_q-v_l),
\end{align}
and then we can control it by \autoref{lemma:coeff}, \autoref{lemma:RA}, \autoref{lemma:RS} as follows
\begin{align}
    \|\tilde{\phi}_{D_1}\|_N
    &\lesssim \lambda\qq^Nl\,\lambda_q\delta_q^{1/2}\rho,
\end{align}
whereas the advective derivative can be controlled by using the aforementioned lemmas, \autoref{prop:estw}, and the estimates \eqref{e:DiffUl}, \eqref{est: DtlDiffv}
\begin{align}
    \|D_{t,q+1}\tilde{\phi}_{D_1}\|_{N-1}
    &\le
    \left\|(v_q-v_l)D_{t,q+1}(R_A+R_S+w\SymOt(v_q-v_l))\right\|_{N-1}
    \\
    &\quad+
    \left\|(v_q-v_l)\sum_{m\in\Z}\sum_{k \in \Z^3 \setminus \{0\}} \left( D_{t,l}(c_{m,k})e^{i\lambda\qq\xi_m \cdot k}+(u\qq-u_l)\cdot \nabla (c_{m,k}e^{i\lambda\qq\xi_m \cdot k}) \right) \right\|_{N-1}
    \\
        &\quad+
        \left\|\left( D_{t,l}(v_q-v_l)+(u\qq-u_l)\cdot \nabla(v_q-v_l) \right) \bar{R}\right\|_{N-1}
        \\&\lesssim\lambda\qq^{N-1}\lambda_q\delta_q^{1/2}\left(\lambda\qq l\,\delta\qq^{3/2}+\rho^{3/2}\right).
\end{align}
\end{proof}

\begin{lemma}\label{lemma:phiD2}
Let $\phi_{D_2}$, $f_{D_2}$ be defined by \eqref{e:phiD2} and \eqref{e:f}. For $N\le 2$, it holds for any $t\le \mathfrak{t}$ that
    \begin{align}
    \label{est:phiD2}
    \|\phi_{D_2}(t)\|_N
    &\lesssim 
    \frac{\rho}{\mu}\lambda\qq^N\lambda_q\delta_q^{1/2}(\lambda\qq^{-2}+\mu^{-n_0-1}\lambda\qq^{-n_0-1}),
    \\
    \label{e:DtphiD2}
    \|D_{t,q+1}\phi_{D_2}(t)\|_{N-1}
    &\lesssim 
    \frac{\rho}{\mu} \lambda\qq^{N-1}  \Bigg[\rho^{1/2}\lambda_q\delta_q^{1/2}(\lambda\qq^{-1}+\lambda\qq(\lambda\qq\mu)^{-n_0-1})\\
    &\quad+
    \left(\frac{\lambda_q\delta_q^{1/2}\epsilon^{-1}}{\lambda\qq^2}+\frac{\lambda_q^2\delta_q\,l}{\lambda\qq}\right)
    + 
    (\lambda\qq\mu)^{-n_0-1} \left(\,\lambda\qq\lambda_q\delta_q^{1/2}\right)\Bigg],
    \\
    \label{f_D2}|f_{D_2}(t)|+|f'_{D_2}(t)|
    &\lesssim 
    \frac{\rho}{\mu}\lambda_q\delta_q^{1/2}(\lambda\qq\mu)^{-n_0-1}.
    \end{align}   
\end{lemma}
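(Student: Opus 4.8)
\noindent
The plan is to treat $\phi_{D_2}$ by exactly the machinery already used for $R_A$ in \cref{lemma:RA} and for $\phi_A$ in \cref{lemma:phiA}. Recall from \eqref{e:phiD2} that $\phi_{D_2}=\overline{\mathcal{R}}\varphi_{D_2}$ with $\varphi_{D_2}=R_A:\nabla u_l^T$, and from \eqref{e:f} that $f_{D_2}(t)=\int_0^t\fint_{\T^3}\varphi_{D_2}(x,s)\,dx\,ds$, so $f_{D_2}'(t)=\fint_{\T^3}\varphi_{D_2}(x,t)\,dx$. The only new feature relative to \cref{lemma:phiA} is that $\varphi_{D_2}$ contains $R_A=\mathcal{R}\big(\sum_{m,k}a_{m,k}e^{i\lambda\qq\xi_m\cdot k}\big)$, which is itself an antidivergence of an oscillatory quantity; this adds one layer of the antidivergence expansion, and it is precisely this extra layer that produces the gain $\lambda\qq^{-1}$ visible in \eqref{est:phiD2} beyond the bound on $\|R_A\|_N$.

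First I would put $\varphi_{D_2}$ in oscillatory form. Using the iterated stationary-phase inversion of $\dvg$ underlying \cref{prop:antidiv1} (cf. \cite{DLS13}), one writes $R_A=\sum_{m,k}\tilde a_{m,k}e^{i\lambda\qq\xi_m\cdot k}+R_A^{\mathrm{err}}$, where the principal amplitudes gain a power, $\epsilon^{s}\|D_{t,l}^{s}\tilde a_{m,k}\|_j\lesssim \lambda\qq^{-1}\tfrac{\rho}{\mu}\mu^{-j}\mathring a_k$ for $s\in\{0,1\}$ and $j\le n_0+2$ (the same estimates of \cref{lemma:coeff} used in \cref{lemma:RA}), while $R_A^{\mathrm{err}}$ is non-oscillatory with $\epsilon^{s}\|D_{t,l}^{s}R_A^{\mathrm{err}}\|_j\lesssim \lambda\qq^{j-n_0-1}\mu^{-n_0-1}\tfrac{\rho}{\mu}$. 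Contracting with $\nabla u_l$ and invoking \eqref{s:estUqN} (so $\|u_l\|_j\lesssim\lambda_q^{j}\delta_q^{1/2}$ for $j\ge1$), \eqref{est:DtlNablaU}, the bound $\epsilon\|u_l\|_1<1$, and the parameter relations $\mu\le l$, $\epsilon\lambda_q\delta_q^{1/2}\lesssim1$, I get $\varphi_{D_2}=\sum_{m,k}\hat a_{m,k}e^{i\lambda\qq\xi_m\cdot k}+R_A^{\mathrm{err}}:\nabla u_l^T$ with $\hat a_{m,k}:=\tilde a_{m,k}:\nabla u_l^T$ satisfying $\epsilon^{s}\|D_{t,l}^{s}\hat a_{m,k}\|_j\lesssim S_j\mathring a_k$, where $S_j:=\lambda\qq^{-1}\tfrac{\rho}{\mu}\lambda_q\delta_q^{1/2}\mu^{-j}$ (the $D_{t,l}\nabla u_l$ contribution being absorbed by $\epsilon\lambda_q\delta_q^{1/2}\lesssim1$).

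The bound \eqref{est:phiD2} then follows by applying \cref{prop:antidiv1} to $F=\overline{\mathcal{R}}\big(\sum_{m,k}\hat a_{m,k}e^{i\lambda\qq\xi_m\cdot k}\big)$ with $\lambda=\lambda\qq$ and the sequence $S_j$, giving $\|F\|_N\lesssim\lambda\qq^N(\lambda\qq^{-1}S_0+\lambda\qq^{-n_0-1}S_{n_0+1})=\tfrac{\rho}{\mu}\lambda\qq^{N}\lambda_q\delta_q^{1/2}(\lambda\qq^{-2}+\lambda\qq^{-n_0-2}\mu^{-n_0-1})$, plus a direct estimate of $\overline{\mathcal{R}}(R_A^{\mathrm{err}}:\nabla u_l^T)$ using that $\overline{\mathcal{R}}$ has order $-1$ on the fixed Hölder scale at stake and that its operator norm is absorbed via the largeness of $n_0$. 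For \eqref{e:DtphiD2} I would split $D_{t,q+1}=D_{t,l}+(u\qq-u_l)\cdot\nabla$: the first summand is handled by \cref{prop:antidiv2} with $\lambda=\lambda\qq$, $c=\epsilon$ and the same $S_j$ — the bounds on $\|D_{t,l}\hat a_{m,k}\|_j$ together with \eqref{est:DtlNablaU} produce the summands $\epsilon^{-1}\lambda_q\delta_q^{1/2}\lambda\qq^{-2}$, $\lambda_q^2\delta_q\,l\,\lambda\qq^{-1}$ and $(\lambda\qq\mu)^{-n_0-1}\lambda\qq\lambda_q\delta_q^{1/2}$ in the bracket of \eqref{e:DtphiD2} — while the second summand is bounded by $\|u\qq-u_l\|_0\,\|\phi_{D_2}\|_N\lesssim\rho^{1/2}\tfrac{\rho}{\mu}\lambda\qq^{N}\lambda_q\delta_q^{1/2}(\lambda\qq^{-2}+\lambda\qq^{-n_0-2}\mu^{-n_0-1})$ via \eqref{e:DiffUl} and \eqref{est:phiD2}, yielding the $\rho^{1/2}\lambda_q\delta_q^{1/2}(\lambda\qq^{-1}+\lambda\qq(\lambda\qq\mu)^{-n_0-1})$ contribution. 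Finally, for \eqref{f_D2}, since $f_{D_2}'(t)=\fint_{\T^3}\varphi_{D_2}(\cdot,t)$ is the space mean of a function oscillating at frequency $\lambda\qq$, the stationary-phase lemma \cite[Lemma A.2]{DLK23} (with enough derivatives of the $\hat a_{m,k}$ and of the flows $\xi_m$, and using $\mu\le l$) gives $|f_{D_2}(t)|+|f_{D_2}'(t)|\lesssim\lambda\qq^{-n_0-1}(S_{n_0+1}+S_0\,l^{-n_0-1})\lesssim S_0(\lambda\qq\mu)^{-n_0-1}\le\tfrac{\rho}{\mu}\lambda_q\delta_q^{1/2}(\lambda\qq\mu)^{-n_0-1}$.

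The hard part will be the first step: genuinely exposing the leading oscillatory component of $R_A$ (which is itself a $\dvg^{-1}$) so that the outer antidivergence $\overline{\mathcal{R}}$ really buys the extra $\lambda\qq^{-1}$, and then carrying through the two nested antidivergences the three kinds of error contributions — the spatial remainder $\mu^{-n_0-1}$, the time-mollification remainder $l^{-h_0}$, and the advective cost $\epsilon^{-1}$ — so that the final exponents line up with the statement; this is exactly where the large values of $n_0,h_0$ fixed in \cref{prop:R} are used. Once the oscillatory representation of $\varphi_{D_2}$ is in place, what remains is the same parameter bookkeeping already carried out in \cref{lemma:RA,lemma:phiA}.
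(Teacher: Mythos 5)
Your proposal is correct in outline and lands on the stated bounds, and the central idea coincides with the paper's: since $\nabla u_l^T$ lives at frequencies $\le l^{-1}\ll\lambda\qq$, the product $R_A:\nabla u_l^T$ is still oscillating at frequency $\sim\lambda\qq$ up to a small low-frequency remainder of size $(\lambda\qq\mu)^{-n_0-1}$, so the outer $\overline{\mathcal{R}}$ buys an extra $\lambda\qq^{-1}$; the split into $D_{t,l}$ plus $(u\qq-u_l)\cdot\nabla$ and the treatment of $f_{D_2}$ via the vanishing/smallness of the space average of the oscillatory part also match. The implementation differs, though. The paper never re-expands $R_A$ into amplitudes: it writes $\phi_{D_2}=\overline{\mathcal{R}}[\mathcal{R}(G_1+G_2+G_3):\nabla u_l^T]$ with $G_1=\dvg(w_o\otimes w_o-c_0)$, $G_2=D_{t,l}w$, $G_3=w\cdot\nabla u_l$, splits each $G_i$ as $\proj_{\geq 3\lambda\qq/8}G_i+\proj_{<3\lambda\qq/8}G_i$ using the decomposition \eqref{e:decF} already established inside \cref{prop:antidiv1}, and argues purely by frequency support: $\mathcal{R}\proj_{\geq 3\lambda\qq/8}G_i:\nabla u_l^T$ has frequencies $\gtrsim\lambda\qq$ (so $\|\overline{\mathcal{R}}\cdot\|_N\lesssim\lambda\qq^{-1}\|\cdot\|_N$), while $\proj_{<3\lambda\qq/8}G_i$ equals the stationary-phase remainder $-\sum_{m,k}\epsilon_{n_0}^{\lambda}(k\cdot\xi_m,a^i_{m,k})e^{i\lambda\qq k\cdot\xi_m}$ and is bounded in $C^0$ by \eqref{e:epsN0LSum}; the advective derivative is then handled by commutator estimates $[u_l\cdot\nabla,\mathcal{R}]$, $[u_l\cdot\nabla,\proj_{\geq\lambda\qq}]$ (this is where $h_0$ and the $(l\lambda\qq)^{-h_0}\lambda\qq^2$ term enter, via \cref{oss:postAntidiv} and \eqref{e:Antidiv3}) rather than by a second application of \cref{prop:antidiv2}. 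Your route instead requires the explicit normal form $R_A=\sum_{m,k}\tilde a_{m,k}e^{i\lambda\qq\xi_m\cdot k}+R_A^{\mathrm{err}}$ with $\lambda\qq^{-1}$-gaining amplitudes; this does not follow verbatim from \cref{prop:antidiv1} (which only bounds norms of $\mathcal{R}F$) but would need a separate application of \cref{lemma:antidiv} to the multipliers $\mathcal{R}\proj_{2^j}$, whose symbols are $O(2^{-j})$ on their supports, together with a verification that the resulting $\tilde a_{m,k}$ obey the $\mu^{-j}$ derivative losses and the hypotheses (1)--(3) of \cref{prop:antidiv1}. This is a standard but genuinely additional step; the paper's frequency-support argument is what lets it avoid writing it out. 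Aside from supplying that step (and recording that the non-oscillatory remainder $R_A^{\mathrm{err}}:\nabla u_l^T$ also contributes to $f_{D_2}$ through its $C^0$ norm, which you leave implicit), your argument is sound.
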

\begin{proof}
    Based on the decompositions \eqref{e:phiD2} and \eqref{e:RdecA}, we have that
    \begin{equation}
        \phi_{D_2}=\overline{\mathcal{R}}\left[\mathcal{R}\left(G_1+G_2+G_3\right):\nabla u_l^T\right],
    \end{equation}
where we have denoted $G_1:=\dvg(w_o \otimes w_o-c_0)$, $G_2 := D_{t,l}w$, and $G_3:= w\cdot \nabla u_l$. Notice that, for every $i=1,2,3$, $\mathcal{R}\proj_{\geq 3\lambda\qq/8}G_i:\nabla u_l^T$ has frequencies of size at least $3\lambda\qq/8-l^{-1} \gtrsim \lambda\qq$ and $\mathcal{R}\proj_{< 3\lambda\qq/8}G_i:\nabla u_l^T$ has frequencies of size at most $3\lambda\qq/8+l^{-1} \lesssim \lambda\qq$. Therefore, by the bound on the antidivergence $\|\overline{\mathcal{R}}\proj_{\gtrsim \lambda\qq} f\|_N\lesssim \lambda\qq^{-1}\|f\|_N$ we have
\begin{align} \label{est:phiD2first}
   \left\|\overline{\mathcal{R}}\left[\mathcal{R}\left(G_i\right):\nabla u_l^T\right]\right\|_N
   &\le  
   \left\|\overline{\mathcal{R}}\left[\mathcal{R}\proj_{\geq 3\lambda\qq/8}G_i:\nabla u_l^T\right]\right\|_N
   +
   \left\|\overline{\mathcal{R}}\left[\mathcal{R}\proj_{<3 \lambda\qq/8}G_i:\nabla u_l^T\right]\right\|_N
   \\
&\lesssim
\frac1{ \lambda\qq}\left\|\mathcal{R}\proj_{\geq 3\lambda\qq/8}G_i:\nabla u_l^T\right\|_N
+
{}\lambda\qq^N \left\|\mathcal{R}\proj_{< 3\lambda\qq/8}G_i:\nabla u_l^T \right\|_0
\\
&\lesssim 
\frac1{ \lambda\qq^2}\sum_{j=0}^N \|G_i\|_j\,l^{N-j}\|u_q\|_1
+
{\lambda\qq^N} \|\proj_{<3\lambda\qq/8}G_i\|_0\|u_q\|_1.
\end{align}
Now, recalling the definition on the antidivergence stress $R_A$ given by \eqref{e:RdecA}, we observe that each $G_i$ has already been treated in the proof of \autoref{lemma:RA}. Therefore, the estimates therein and \eqref{i:FourierBasisFlux} imply that
\begin{equation}\label{est:Gi}
    \|G_i\|_j\lesssim \frac{\rho}{\mu}\lambda\qq^j.
\end{equation}
Instead, \eqref{e:decF} implies that $\proj_{<3\lambda\qq/8}G_i=-\sum_{m,k}\epsilon_{n_0}^{\lambda}(k\cdot \xi_m,a_{m,k}^i)e^{i\lambda\qq k\cdot \xi_m}$, so that by \eqref{e:epsN0LSum}
\begin{equation}\label{est:projGi}
    \|\proj_{<3\lambda\qq/8}G_i\|_0\lesssim \frac{\rho}{\mu}\mu^{-n_0-1}\lambda\qq^{-n_0-1}.
\end{equation}
Therefore, \eqref{est:phiD2first},  \eqref{est:Gi}, and \eqref{est:projGi} produce the following bound
\begin{equation}
    \|\phi_{D_2}\|_N\lesssim \frac{\rho}{\mu}\lambda\qq^N\lambda_q\delta_q^{1/2}(\lambda\qq^{-2}+\mu^{-n_0-1}\lambda\qq^{-n_0-1}).
\end{equation}

Now, let us turn to \eqref{e:DtphiD2}.
Denote $H_i:= \mathcal{R}\proj_{\geq 3\lambda/8}G_i:\nabla u_l^T,$ then
    \begin{align}\label{est:DtphiD2first}
        \|D_{t,l}\overline{\mathcal{R}}\proj_{\geq \lambda\qq}H_i\|_{N-1}
        &\le 
        \|\overline{\mathcal{R}} \left(\proj_{\geq \lambda\qq}D_{t,l}H_i
        +[u_l\cdot \nabla,\proj_{\geq \lambda\qq}]H_i\right)\|_{N-1}
        \\
        &\quad+\|[u_l\cdot \nabla,\overline{\mathcal{R}}]\proj_{\geq\lambda\qq}H_i\|_{N-1}
        \\
        &\lesssim \frac{1}{\lambda\qq}\|D_{t,l}H_i+[u_l\cdot \nabla,\proj_{\geq \lambda\qq}]H_i\|_{N-1}+\|[u_l\cdot \nabla,\overline{\mathcal{R}}]\proj_{\geq\lambda\qq}H_i\|_{N-1}\\
        &\lesssim \frac{1}{\lambda\qq}\|D_{t,l}H_i\|_{N-1}+\lambda\qq^{N-3}\|u_q\|_1\|H_i\|_1+\sum_{j_1+j_2\le N-1}l\,\|\nabla u_l\|_{j_1}\|H_i\|_{j_2},
    \end{align}
where the first inequality is due to $\mathcal{R}$, the second to a trivial adaptation of \cite[Lemma A.4]{DLK23}, \autoref{oss:postAntidiv} and $(l\lambda\qq)^{-h_0}\lambda^2\qq\le l$.  
By a similar reasoning as above, we can estimate $\|D_{t,l}(\mathcal{R}\proj_{\geq \lambda\qq}G_i)\|_M$. Indeed, it holds:
\[\|D_{t,l}(\mathcal{R}\proj_{\geq \lambda\qq}G_i)\|_M\lesssim\frac1{\lambda\qq}\|D_{t,l}G_i\|_M+\lambda\qq^{M-2}\|u_l\|_1\|G_i\|_1+\sum_{M_1+M_2\le M}l\|\nabla u_l\|_{M_1}\|G_i\|_{M_2}.\]
Thanks to the previous inequality, \eqref{est:DtlNablaU}, \eqref{est:DtlRA}, we have that 
    \begin{align} \label{est:DtphiD2second}
        \|D_{t,l}H_i\|_{N-1}&\le \|D_{t,l}(\mathcal{R}\proj_{\geq \lambda\qq}G_i):\nabla u_l^T\|_{N-1}+\|(\mathcal{R}\proj_{\geq \lambda\qq}G_i):D_{t,l}\nabla u_l^T\|_{N-1}\\
        &\lesssim \lambda^{N-1}\qq\frac{\rho}{\mu}\left(\frac{\lambda_q\delta_q^{1/2}\epsilon^{-1}}{\lambda\qq^2}+\frac{\lambda_q^2\delta_q\,l}{\lambda\qq}\right)
    \end{align}
Instead by standard computations, it holds true
\begin{equation}\label{est:DtphiD2third}
    \|H_i\|_M\lesssim \lambda\qq^{M-1}\frac{\rho}{\mu}\lambda_q\delta_q^{1/2}.
\end{equation}
Therefore, \eqref{est:DtphiD2first}, \eqref{est:DtphiD2second}, \eqref{est:DtphiD2third} imply that
\begin{equation}\label{est:DtphiD2fourth}
    \|D_{t,l}\overline{\mathcal{R}}\proj_{\geq \lambda\qq}H_i\|_{N-1}\lesssim \lambda^{N-1}\qq\frac{\rho}{\mu}\left(\frac{\lambda_q\delta_q^{1/2}\epsilon^{-1}}{\lambda\qq^2}+\frac{\lambda_q^2\delta_q\,l}{\lambda\qq}\right).
\end{equation}
Instead, Bernstein inequality, \eqref{est:projGi}, \eqref{e:Antidiv3} imply that
    \begin{align} \label{est:DtphiD2fifth}
        \|D_{t,l}\overline{\mathcal{R}}(\mathcal{R}\proj_{\lesssim\lambda\qq}G_i:\nabla u_l^T)\|_{N-1}
        &\lesssim 
        \lambda\qq^{N-1}\big[\|D_{t,l}(\mathcal{R}\proj_{\lesssim\lambda\qq}G_i:\nabla u_l^T)\|_0
        \\
        &+
        \|[u_l\cdot \nabla, \overline{\mathcal{R}}](\mathcal{R}\proj_{\lesssim\lambda\qq}G_i:\nabla u_l^T)\|_0\big]\\
        &\lesssim\lambda\qq^{N-1}\Big[\big(\|D_{t,l}\proj_{\lesssim\lambda\qq}G_i\|_0+\|[u_l\cdot \nabla,\overline{\mathcal{R}}](\mathcal{R}\proj_{\lesssim\lambda\qq}G_i)\|_0\big)\|u_l\|_1\\
        &+\|\proj_{\lesssim\lambda\qq}G_i\|_0\|D_{t,l}\nabla u_l\|_0+\lambda\qq \,\|u_l\|_0\|u_l\|_1\|\proj_{\lesssim\lambda\qq}G_i\|_0\Big]\\
        &\lesssim\lambda\qq^{N-1}\frac{\rho}{\mu}(\lambda\qq\mu)^{-n_0-1}\left[\,\lambda\qq\lambda_q\delta_q^{1/2}\right]
    \end{align}

For later use, we additionally notice that the above implies in particular
\begin{align} \label{eq:auxiliary.E''}
    \|D_{t,l}(\mathcal{R}\proj_{\lesssim\lambda\qq}G_i:\nabla u_l^T)\|_0
    \lesssim
    \frac{\rho}{\mu}(\lambda\qq\mu)^{-n_0-1}\left[\,\lambda\qq\lambda_q\delta_q^{1/2}\right],
    \quad
    \forall i \in \{1,2,3\}.
\end{align}
    
At last, \eqref{e:DiffUl}, \eqref{est:phiD2}
\begin{equation}\label{est:DtphiD2sixth}
    \|(u\qq-u_l)\cdot\nabla \phi_{D_2}\|_{N-1}\lesssim\lambda\qq^N\frac{\rho^{3/2}}{\mu}\lambda_q\delta_q^{1/2}(\lambda\qq^{-2}+(\lambda\qq\mu)^{-n_0-1}).
\end{equation}
This concludes the estimates for $\phi_{D_2}$. Regarding the estimates on $f_{D_2}$, notice that for every $s \leq \mathfrak{t}$
\begin{align}
    \int_{\T^3}\left(\mathcal{R}(G_i):\nabla u_l^T\right)(s,x)\,dx=\int_{\T^3}\left(\mathcal{R}(\proj_{<3\lambda\qq/8} G_i):\nabla u_l^T\right)(s,x)\,dx.
\end{align}
Therefore, \eqref{est:projGi} implies that
\begin{equation}
   |f_{D_2}|+|f'_{D_2}|\lesssim \frac{\rho}{\mu}\lambda_q\delta_q^{1/2}(\lambda\qq\mu)^{-n_0-1}. 
\end{equation}

\end{proof}

\subsubsection{Proof of \autoref{prop:phi}: bounds on $\phi_S$}
\begin{lemma}\label{lemma:phiS1}
Let $\phi_{S_1}$ be defined by \eqref{e:phiS1}. For $N\le 2$, it holds for any $t\le \mathfrak{t}$ that
    \begin{align}
    \label{est:phiS1}
    \|\phi_{S_1}(t)\|_N
    &\lesssim 
    l^{-N}i_q^{1/2-\delta}+\lambda\qq^{N-2\gamma}\delta\qqq^{3/2}+i_q^{1/2-\delta}\begin{cases}
        1,\quad &N=0\\
        \lambda\qq^N\delta\qq^{1/2},\quad &N>0
    \end{cases},\\
    \label{e:DtphiS1}
    \|D_{t,q+1}\phi_{S_1}(t)\|_{N-1}
    &\lesssim i_q^{1/2-\delta}\lambda\qq^N\delta\qq+\lambda\qq^{N-2\gamma}\delta\qqq^{3/2}\delta\qq^{1/2}+\lambda\qq^{N-1}\delta\qqq^{3/2}\rho^{1/2}\epsilon^{-1}\\
     &+i\qq^{-1/2-\delta}\begin{cases}
        1,\quad &N=1\\
        \lambda\qq^{N-1}\delta\qq^{1/2}\vee l^{1-N},\quad &N>1
    \end{cases}
    \end{align}   
\end{lemma}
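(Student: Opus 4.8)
The plan is to decompose $\phi_{S_1}$ according to \eqref{e:phiS1} into the four summands $\bigl(\frac12\Tr(R\qq-\frac32 f\,\Id)+\frac12(|v_q|^2+|w|^2)\bigr)\varz$, $(\varz\otimes v_q)w$, $(\varz\otimes\varz+u_q\SymOt\varz)v_l$ and $\varp\,w$, and to estimate each one separately by the multiplicative H\"older inequality $\|g_1\cdots g_k\|_N\lesssim\sum_{j_1+\dots+j_k=N}\prod_i\|g_i\|_{j_i}$, exactly as in the proofs of \cref{lemma:RS,lemma:phiD1}. The inputs are the already-established bounds: \eqref{e:wIndOnM} (and \eqref{est:w0} with $s=1$) for $w$; \eqref{e:Diffstoc} for $\varz=z\qq-z_q$ and for $z\qq-z_l$; \eqref{s:estUqN} for $v_q,u_q$, together with the Bernstein bound $\|v_l\|_N\lesssim l^{-N}$ for the regularized field; \eqref{est:R1} together with the bound $|f(t)|\lesssim\lambda\qq^{-\gamma}\delta\qqq$ (available since $f=f_A+f_{D_2}+f_{S_2}$ does not involve $\phi_{S_1}$ and each piece is controlled in \cref{lemma:phiA,lemma:phiD2} and the companion lemma for $\phi_{S_2}$) to bound $\Tr(R\qq-\frac32 f\,\Id)$; and the explicit expression \eqref{e:varp} for $\varp$, whose H\"older norms are estimated as in the proof of \cref{lemma:pressure}. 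Collecting the contributions, the factor $v_l$ in $(\varz\otimes\varz+u_q\SymOt\varz)v_l$ produces the $l^{-N}i_q^{1/2-\delta}$ term, the terms involving $\Tr(R\qq-\frac32 f\,\Id)$ and $\varp\,w$ produce $\lambda\qq^{N-2\gamma}\delta\qqq^{3/2}$, and the remaining products produce $i_q^{1/2-\delta}$ (for $N=0$) or $i_q^{1/2-\delta}\lambda\qq^N\delta\qq^{1/2}$ (for $N>0$); \eqref{est:phiS1} then follows once $b_0,a_0,n_0$ are fixed so that the ancillary parameter inequalities ($i_q^{1/2-\delta}\lesssim\delta\qqq^{3/2}$, $\rho^{1/2}\lesssim1$, $\lambda_q<l^{-1}$, $\lambda_q^N\delta_q\lesssim\lambda\qq^N\delta\qq^{1/2}$, $\delta\qqq\lesssim\delta\qq^{1/2}$) hold.

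For the material derivative \eqref{e:DtphiS1} I would split $D_{t,q+1}=D_{t,l}+(u\qq-u_l)\cdot\nabla$ and apply Leibniz, so that in each monomial one factor is hit either by $D_{t,l}$ or by the extra spatial derivative from the transport term, the latter being absorbed via \eqref{e:DiffUl}. The material derivatives of the factors are then supplied by \eqref{est:R2} for $R\qq-\frac23 f\,\Id$ (and $|\partial_t f|\lesssim\lambda\qq^{-\gamma}\delta\qqq\delta\qq^{1/2}$); \eqref{est:Dtq1uq} for $D_{t,q+1}u_q$; \eqref{est:Dtq1ZVar} and its analogue from \cref{cor:Dtq1Z} for $D_{t,q+1}\varz$ and $D_{t,q+1}(z\qq-z_l)$; \eqref{est:w0} with $s=1$ for $D_{t,l}w$; \eqref{est: DtlDiffv} for $D_{t,l}v_l$; and $D_{t,q+1}\varp$, handled as in the proof of \cref{lemma:pressure}. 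The factor $\epsilon^{-1}$ in the target term $\lambda\qq^{N-1}\delta\qqq^{3/2}\rho^{1/2}\epsilon^{-1}$ enters through $D_{t,l}w$ acting on $\varp\,w$ (using $\epsilon^{-1}\lesssim\lambda\qq\rho^{1/2}$ when convenient), while the $i\qq^{-1/2-\delta}$ contributions come from $D_{t,q+1}\varz$ through \eqref{est:Dtq1ZVar}.

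The proof is essentially bookkeeping, and the bookkeeping itself is the only delicate point: $\phi_{S_1}$ is a sum of products of up to three factors obeying \emph{different} scaling laws for spatial derivatives — $w$ and $R\qq$ cost $\lambda\qq$ per derivative, $v_q$ and $u_q$ cost $\lambda_q$, $v_l$ costs $l^{-1}$ in the crude bound, while $\varz$ and $z\qq-z_l$ are essentially derivative-free but exponentially small in $i_q$ — and taking $D_{t,q+1}$ roughly doubles the number of monomials. Verifying that every resulting monomial in $\lambda_q,\delta_q,l,\epsilon,\mu,i_q,\rho$ is absorbed into the three (resp. four) terms on the right-hand side of \eqref{est:phiS1} (resp. \eqref{e:DtphiS1}) for $b$ close to $1$ and $a$ large is routine but tedious, and is the same kind of estimate already performed in \cref{lemma:RS,lemma:phiD1,lemma:phiD2}.
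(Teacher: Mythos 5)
Your proposal is correct and follows essentially the same route as the paper: a termwise decomposition of \eqref{e:phiS1}, the product rule for H\"older norms, and the previously established bounds on $w$, $\varz$, $R\qq$, $f$, $\varp$, together with $D_{t,q+1}=D_{t,l}+(u\qq-u_l)\cdot\nabla$ for the advective derivative. The only (cosmetic) difference is that the paper first recombines $\tfrac{|v_q|^2+|w|^2}{2}\varz+(\varz\otimes v_q)w=\tfrac{|v\qq|^2}{2}\varz$ and bounds $D_{t,q+1}v\qq$ directly from the equation $D_{t,q+1}v\qq=\dvg(R\qq-\tfrac32 f\,\Id)-\nabla p\qq+u\qq\cdot\nabla z\qq$, whereas you estimate $D_{t,q+1}w$ and $D_{t,q+1}v_q$ separately; both yield the stated bounds.
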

\begin{proof}
We divide the right-hand side of \eqref{e:phiS1} into several terms, each to be controlled separately.
First, the trace term can be controlled using the bounds \eqref{est:R1}, \eqref{est:R2}, and \eqref{est:Dtq1ZVar}, more specifically we have
\begin{align}\label{est:phiS1second}
\left\|\Tr\left(R\qq-\frac32f\,\Id\right)\varz \right\|_N
&\lesssim 
\lambda\qq^N \delta_{q+2}\,i_q^{1/2-\delta},
\\
\left\|D_{t,q+1}\left[\Tr\left(R\qq-\frac32f\,\Id\right)\varz \right]\right\|_{N-1}
&\lesssim \label{est:DphiS1second}
\lambda\qq^{N-1}\delta_{q+2}(i\qq^{-1/2-\delta}+i_q^{1/2-\delta}\lambda\qq).
\end{align}
Next, notice that 
    \begin{equation}
    \frac{|v_q|^2+|w|^2}{2}\varz\,+\, (\varz\otimes v_q)w=\frac{|v\qq|^2}{2}\varz,
    \end{equation}
for which we have the estimates 
    \begin{align}\label{est:phiS1first}
    \left\|\frac{|v\qq|^2}{2}\varz\right\|_N
    &\lesssim 
    i_q^{1/2-\delta}\begin{cases}
        1,\quad &N=0\\
        \lambda\qq^N\delta\qq^{1/2},\quad &N>0
    \end{cases},\\
    \left\|D_{t,q+1}\left(\frac{|v\qq|^2}{2}\varz\right)\right\|_{N-1}
    &\lesssim 
    \lambda\qq^{N}\delta\qq i_q^{1/2-\delta}+ i\qq^{-1/2-\delta}\begin{cases}
        1,\quad &N=0\\
        \lambda\qq^{N-1}\delta\qq^{1/2},\quad &N>0
    \end{cases},
    \end{align}
where the second inequality we have used \autoref{lemma:stocEstimates}, \eqref{est:Dtq1ZVar}, \eqref{est:R1}, \eqref{est:pq1} and 
\[D_{t,q+1}v\qq=\dvg\left(R\qq-\frac{3}{2}f\,\Id\right)-\nabla p\qq+u\qq\cdot \nabla z\qq
.\]
Finally, upon recalling the definition \eqref{e:varp} of the pressure increment $\varp$, the remaining term $\tilde{\phi}_{S_1}:=(\varz\otimes \varz +u_q\SymOt \varz)v_l + \varp w$ is controlled by means of \autoref{prop:estw}, \autoref{lemma:stocEstimates}, the bounds \eqref{s:V}, \eqref{est:Dtq1ZVar}, \eqref{e:DiffUl}, \eqref{est:Dtq1uq}, \eqref{est: DtlDiffv}, and the following inequality:
\begin{align} \label{eq:D_q+1_vl}
    &\|D_{t,q+1}v_l\|_{N-1}\le \|D_{t,l}v_l\|_{N-1}+\|(u\qq-u_l)\cdot\nabla v_l\|_{N-1}\lesssim \lambda\qq^{N-1}\lambda_q\,\delta_q.
\end{align}
Namely, we have 
\begin{align}\label{est:phiS1third}
    \|\tilde{\phi}_{S_1}\|_N
    &\lesssim l^{-N}i_q^{1/2-\delta}+\lambda\qq^{N-2\gamma}\delta\qqq^{3/2},\\
    \label{est:DphiS1third}
    \|D_{t,q+1} \tilde{\phi}_{S_1}\|_{N-1}
    &\lesssim i\qq^{-1/2-\delta}l^{1-N}+i_q^{1/2-\delta}\lambda\qq^{N-1}\lambda_q\delta_q\\
    &\quad+\lambda\qq^{N-2\gamma}\delta\qqq^{3/2}\delta\qq^{1/2}+\lambda\qq^{N-1}\delta\qqq^{3/2}\rho^{1/2}\epsilon^{-1}.
\end{align}

\end{proof}
\begin{lemma}\label{lemma:phiS2}
Let $\phi_{S_2}$, $f_{S_2}$ be defined by \eqref{e:phiS2} and \eqref{e:f}. For $N\le 2$, it holds for any $t\le \mathfrak{t}$ that
    \begin{align}
    \label{est:phiS2}\|\phi_{S_2}(t)\|_N
    &\lesssim \lambda\qq^N\delta\qqq i_q^{1/2-\delta}+i_q^{1/2-\delta}\begin{cases}
        1,\quad &N=0\\
        \lambda_q^N\delta_q^{1/2},\quad &N>0
    \end{cases},
    \\
    \label{e:DtphiS2}\|D_{t,q+1}\phi_{S_2}(t)\|_{N-1}
    &\lesssim 
    \lambda\qq^{N-1}\lambda_q\delta_q^{1/2}i_q^{1/2-\delta}+\lambda\qq^{N-1}\delta\qqq i\qq^{-1/2-\delta}+l\lambda\qq^{N}\delta\qqq+i\qq^{-1/2-\delta}\begin{cases}
        1,\quad &N=1\\
        \lambda_q^{N-1}\delta_q^{1/2},\quad &N>1
    \end{cases},\\
    |f_{S_2}(t)|+|f'_{S_2}(t)|
    &\lesssim i_q^{1/2-\delta}.
    \end{align}   
\end{lemma}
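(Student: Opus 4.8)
The plan is to bound the scalar field $\varphi_{S_2}$ of \eqref{e:varphiS2} (and its advective derivative) group by group, and then transfer the bounds to $\phi_{S_2}=\overline{\mathcal{R}}\varphi_{S_2}$ and to $f_{S_2}(t)=\int_0^t\fint_{\T^3}\varphi_{S_2}\,dx\,ds$. Recall that $\overline{\mathcal{R}}=-\nabla(-\Delta)^{-1}$ maps mean-zero fields boundedly $C^N\to C^N$ for every $N\ge 0$ (its convolution kernel is integrable on $\T^3$) and, composed with a Littlewood--Paley projection onto frequencies $\gtrsim\lambda\qq$, gains a factor $\lambda\qq^{-1}$, exactly the mechanism used in the proof of \cref{lemma:phiD2}. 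I would split $\varphi_{S_2}$ into (i) the antidivergence-stress term $R_A:\nabla(z_q-z_l)^T$; (ii) the two ``noise-gradient'' terms $(u_q\SymOtNoTr\varz+\varz\otimes\varz):\nabla z_q^T$ and $v_q\cdot\dvg(u_q\otimes\varz+\varz\otimes z\qq)$; and (iii) the term $-(R\qq-\tfrac23 f\,\Id):\nabla\varz^T$. Throughout I use $l\le i_q^{1/2-\delta}$, the bound $\|z_q\|_{N+1}\le L$ for $N\le n_0+4$, the increment bound $\|z_q-z_l\|_M\lesssim l\,\|z_q\|_{M+1}\lesssim L\,i_q^{1/2-\delta}$ (coming from \eqref{e:Diffstoc}), and that $u_q,v_q,\varz,z\qq$ are divergence free, so $\dvg(u_q\otimes\varz)=(u_q\cdot\nabla)\varz$ and $\dvg(\varz\otimes z\qq)=(\varz\cdot\nabla)z\qq$.

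For the $C^N$ bound \eqref{est:phiS2}: term (i) is estimated by combining \cref{lemma:RA} with $\|\nabla(z_q-z_l)\|_M\lesssim L\,i_q^{1/2-\delta}$, absorbing the small prefactor $(\lambda\qq^{-1}+(\lambda\qq\mu)^{-n_0-1})\rho/\mu$ into $\delta\qqq$ by taking $b$ close to $1$ and $a$ large, which yields $\lesssim\lambda\qq^N\delta\qqq i_q^{1/2-\delta}$ (alternatively, split $R_A$ into high/low frequencies as in \cref{lemma:phiD2}). Term (iii) is estimated by \eqref{est:R1} and \eqref{e:Diffstoc}, i.e. $\|R\qq-\tfrac23 f\,\Id\|_j\le\lambda\qq^{j-2\gamma}\delta\qqq$ against $\|\nabla\varz\|_{N-j}\lesssim L\,i_q^{1/2-\delta}$, again giving $\lesssim\lambda\qq^N\delta\qqq i_q^{1/2-\delta}$. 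Term (ii) is estimated by \eqref{s:estUqN} for $u_q,v_q$, \eqref{e:Diffstoc} for $\varz$, and $\|z_q\|_{N+1}\le L$; the top-order count gives $i_q^{1/2-\delta}$ for $N=0$ and $\lambda_q^N\delta_q^{1/2}i_q^{1/2-\delta}$ for $N>0$, the cubic piece $(\varz\otimes\varz):\nabla z_q^T$ being strictly smaller. Since $\overline{\mathcal{R}}:C^N\to C^N$ is bounded, the same bounds pass to $\phi_{S_2}$, which is \eqref{est:phiS2}.

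For \eqref{e:DtphiS2} I would write $D_{t,q+1}\phi_{S_2}=\overline{\mathcal{R}}D_{t,q+1}\varphi_{S_2}+[u\qq\cdot\nabla,\overline{\mathcal{R}}]\varphi_{S_2}$; the commutator gains a derivative and costs $\|u\qq\|_1\lesssim\lambda\qq\delta\qq^{1/2}$, so it is dominated by the previous step. To bound $D_{t,q+1}\varphi_{S_2}$ I apply $D_{t,q+1}$ termwise: on $R_A$ use the advective-derivative bound of \cref{lemma:RA}; on $R\qq-\tfrac23 f\,\Id$ use \eqref{est:R2}; on a $u_q$ or $v_q$ factor use \eqref{est:Dtq1uq} and the analogous $\|D_{t,q+1}v_q\|_N\lesssim i_q^{-1/2-\delta}+\lambda\qq^N\lambda_q\delta_q$ (from \eqref{est:Dtq1uq}, \eqref{e:dtStoc}); and whenever $D_{t,q+1}$ falls on a factor $\nabla z_q$, $\nabla(z_q-z_l)$ or $\nabla\varz$, commute $\nabla$ past $D_{t,q+1}$ (paying a $\nabla u\qq$ term) and invoke $\|\partial_t z_q\|_N\lesssim L\,i_q^{-1/2-\delta}$ from \eqref{e:dtStoc} together with \cref{cor:Dtq1Z} for $D_{t,q+1}(z\qq-z_q)$ and $D_{t,q+1}(z_q-z_l)$. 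Collecting: term (i) produces $\lambda\qq^{N-1}\delta\qqq i\qq^{-1/2-\delta}+l\lambda\qq^N\delta\qqq+\lambda\qq^{N-1}\lambda_q\delta_q^{1/2}i_q^{1/2-\delta}$; term (iii) produces $\lambda\qq^{N-1}\delta\qqq i\qq^{-1/2-\delta}$ (from $D_{t,q+1}(R\qq-\tfrac23 f\,\Id)$ and from the $i\qq^{-1/2-\delta}$ part of $D_{t,q+1}\nabla\varz$, via \eqref{est:Dtq1ZVar}) together with $\lambda\qq^{N-1}\lambda_q\delta_q^{1/2}i_q^{1/2-\delta}$; term (ii) produces the $N$-dependent piece $i\qq^{-1/2-\delta}\{1\text{ for }N=1,\ \lambda_q^{N-1}\delta_q^{1/2}\text{ for }N>1\}$ together with $\lambda\qq^{N-1}\lambda_q\delta_q^{1/2}i_q^{1/2-\delta}$, which is exactly the right-hand side of \eqref{e:DtphiS2}. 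Finally, for $f_{S_2}$ no cancellation from the oscillations is needed, since the claimed bound is only $i_q^{1/2-\delta}$: it suffices to use $|f_{S_2}(t)|\le 2T\sup_{s\le\mathfrak{t}}\|\varphi_{S_2}(s)\|_0$ and $|f'_{S_2}(t)|=|\fint_{\T^3}\varphi_{S_2}(t)|\le\|\varphi_{S_2}(t)\|_0$, and the $N=0$ termwise bound gives $\|\varphi_{S_2}(t)\|_0\lesssim i_q^{1/2-\delta}$.

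The main obstacle is the advective-derivative estimate for terms (i) and (iii). For (iii) one must control $D_{t,q+1}\nabla\varz$ by commuting $\nabla$ with $D_{t,q+1}$ and then invoking \cref{cor:Dtq1Z}; this is the source of the $i\qq^{-1/2-\delta}$ term in \eqref{e:DtphiS2}. For (i) the delicate contribution pairs the worst term $(l\lambda\qq)^{-h_0}\lambda\qq^2\lambda_q\delta_q^{1/2}\rho/\mu$ of the bound on $\|D_{t,q+1}R_A\|$ with $D_{t,q+1}(z_q-z_l)$ from \cref{cor:Dtq1Z}. In both cases one then has to verify, with $b$ close to $1$, $n_0$ and $h_0$ large and $a$ large, that every resulting product is dominated by one of the four terms allowed on the right-hand sides of \eqref{est:phiS2} and \eqref{e:DtphiS2} — routine but lengthy parameter bookkeeping, entirely analogous to that already carried out in \cref{lemma:phiD1,lemma:phiD2,lemma:phiS1}.
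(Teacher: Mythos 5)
Your proposal is correct and follows essentially the same route as the paper: a termwise decomposition of $\varphi_{S_2}$, bounds via \cref{lemma:RA}/\eqref{est:R1}/\eqref{est:R2}, \cref{lemma:stocEstimates}, \cref{cor:Dtq1Z} and \eqref{est:Dtq1uq}, the commutator argument $D_{t,q+1}\overline{\mathcal{R}}=\overline{\mathcal{R}}D_{t,q+1}+[u\qq\cdot\nabla,\overline{\mathcal{R}}]$ for the advective derivative, and the trivial $L^\infty$-boundedness of $\overline{\mathcal{R}}$ for $f_{S_2}$. The only (immaterial) differences are your grouping of the two noise-gradient terms and your use of \cref{lemma:RA} directly for $R_A:\nabla(z_q-z_l)^T$ where the paper invokes \eqref{est:R1}.
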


\begin{proof}
Since we will only use that $\|\overline{\mathcal{R}}\|_{L^{\infty}\to L^{\infty}}\lesssim 1$, the estimates on $f_{S_2}$
follow from the estimate on $\|\phi_{S_2}\|_0$.

Let us start with an estimate on $\bar{\phi}_{S_2}
:= \overline{\mathcal{R}}[R_A:\nabla(z_q-z_l)^T]$.
In this case, \eqref{est:R1} implies that
\begin{align}\label{est:phiS2first}
    &\|\bar{\phi}_{S_2}\|_N  \lesssim l\lambda\qq^{N-\gamma}\delta\qqq.
\end{align}
As for the advective derivative, we have that
\begin{equation}
\begin{aligned}
    \|D_{t,q+1}\bar{\phi}_{S_2}\|_{N-1}
    &\le 
    \left\|\overline{\mathcal{R}}[D_{t,q+1}\left(R_A:\nabla(z_q-z_l)^T\right)]\right\|_{N-1}
    +
    \left\|[u\qq\cdot\nabla,\overline{\mathcal{R}}](R_A:\nabla(z_q-z_l)^T)\right\|_{N-1}\\
    &\le \left\|D_{t,q+1}\left(R_A:\nabla(z_q-z_l)^T\right)\right\|_{N-1}+\sum_{j=0}^{N-1}\|u\qq\|_j\|R_A:\nabla(z_q-z_l)^T\|_{N-j}\\
    &\le l\lambda\qq^{N-1}\delta\qqq \left(i_q^{-1/2-\delta}+\lambda\qq\right),
\end{aligned}
\end{equation}
where the last inequality is due to \autoref{lemma:stocEstimates} and the estimates \eqref{est:R1} and \eqref{est:R2}.

The next term we study is $\tilde{\phi}_{S_2}:= \overline{\mathcal{R}}[(u_q\SymOtNoTr\varz+\varz \otimes \varz ):\nabla z_q^T ]$, which is controlled similarly to bounds \eqref{est:phiS1third} and \eqref{est:DphiS1third} previously obtained for $\tilde{\phi}_{S_1}$, with the difference that \eqref{eq:D_q+1_vl} is now replaced by the bound $\|D_{t,q+1}\nabla z_q\|_N\lesssim i_q^{-1/2-\delta}+\begin{cases}
        1,\quad &N=0\\
        \lambda\qq^N\delta\qq^{1/2},\quad &N>0
    \end{cases}$, consequence of \autoref{lemma:stocEstimates}, 
giving
\begin{align}\label{est:phiS2third}
    &\|\tilde{\phi}_{S_2}\|_N\lesssim i_q^{1/2-\delta}\begin{cases}
        1,\quad &N=0\\
        \lambda_q^N\delta_q^{1/2},\quad &N>0
    \end{cases},\\
    &\|D_{t,q+1}\tilde{\phi}_{S_2}\|_{N-1}\lesssim \lambda\qq^{N-1}\lambda_q\delta_q^{1/2}\,i_q^{1/2-\delta}+i\qq^{-1/2-\delta}\begin{cases}
        1,\quad &N=1\\
        \lambda_q^{N-1}\delta_q^{1/2},\quad &N>1
    \end{cases}.
\end{align}
Furthermore, similar arguments leading to \eqref{est:phiS1second} and \eqref{est:DphiS1second} also yield
\begin{align}\label{est:phiS2second}
    \left\|\overline{\mathcal{R}}\left[\left(R\qq-\frac32f\,\Id\right):\nabla \varz^T\right]\right\|_N
    &\lesssim 
    \lambda\qq^N \delta_{q+2}\,i_q^{1/2-\delta},\\
        \left\|D_{t,q+1}\overline{\mathcal{R}}\left[\left(R\qq-\frac32f\,\Id\right):\nabla \varz^T\right]\right\|_{N-1}
        &\lesssim 
        \lambda\qq^{N-1}\delta_{q+2}(i\qq^{-1/2-\delta}+i_q^{1/2-\delta}\lambda\qq).
\end{align}
At last, \autoref{lemma:stocEstimates}, \eqref{est:Dtq1uq}, and the following inequalities
\begin{align}
    \|D_{t,q+1}v_q\|_N
    &\lesssim \lambda\qq^N\lambda_q\delta_q,
    \\
    \|D_{t,q+1}\nabla \varz\|_N
    &\lesssim i\qq^{-1/2-\delta}+i_q^{1/2-\delta}\begin{cases}
        1,\quad &N=0\\
        \lambda\qq^N\delta\qq^{1/2},\quad &N>0
    \end{cases},
    \\
    \|D_{t,q+1}\nabla z\qq\|_N
    &\lesssim 
    i\qq^{-1/2-\delta}+\begin{cases}
        1,\quad &N=0\\
        \lambda\qq^N\delta\qq^{1/2},\quad &N>0
    \end{cases}, 
\end{align}
imply that
\begin{align}\label{est:phiS2fourth}
    \left\|\overline{\mathcal{R}}\Big[v_q\cdot \dvg(u_q\otimes \varz+\varz\otimes z\qq)\Big]\right\|_N
    &\lesssim 
    i_q^{1/2-\delta}\begin{cases}
        1,\quad &N=0\\
        \lambda_q^N\delta_q^{1/2},\quad &N>0
    \end{cases},
    \\
    \left\|D_{t,q+1}\overline{\mathcal{R}}\Big[v_q\cdot \dvg(u_q\otimes \varz+\varz\otimes z\qq)\Big]\right\|_{N-1}
    &\lesssim 
    i_q^{1/2-\delta}\lambda\qq^{N-1}\lambda_q\,\delta_q^{1/2}
    +
    i\qq^{-1/2-\delta}\begin{cases}
        1,\quad &N=1\\
        \lambda_q^{N-1}\delta_q^{1/2},\quad &N>1
    \end{cases}.
\end{align}
\end{proof}

\subsubsection{Proof of \autoref{prop:phi}: Conclusion}
 Bounds \eqref{est:phi} and \eqref{est:Dphi} follow from \autoref{lemma:phiA}, \autoref{lemma:phiD1},  \autoref{lemma:phiD2}, \autoref{lemma:phiS1}, \autoref{lemma:phiS2} above and our choice of parameters, up to taking the constant $a_0>1$ large enough so to absorb all the implicit constants coming from the aforementioned lemmas.
As for the estimates on $f$, we have $f := f_A+ f_{D_2}+f_{S_2}$ and thus by \autoref{lemma:phiA},  \autoref{lemma:phiD2}, and \autoref{lemma:phiS2}:
    \begin{align}
        &|\partial_t f|\le |\partial_t f_A+\partial_t f_{D_2}+\partial_t f_{S_2}|\lesssim\lambda\qq^{-3\gamma/2}\delta\qqq \delta\qq^{1/2}.
    \end{align}
    Finally, the claim on $f$ follows from $|f|\le T\sup_{t \in [0,\mathfrak{t}]}|\partial_t f(t)|$ and from picking $a_0>1$ large enough.

\section{Proof of \autoref{lemma:iter3}} \label{sec:forced}

In this section, we focus on proving \autoref{lemma:iter3}. Its proof closely follows that of \autoref{IterLemma}, so we will primarily emphasize the technical steps that differ.

As mentioned, \autoref{lemma:iter3} covers two separate scenarios: one in which the forcing term is updated together with the iteration, which is needed for the proof of \autoref{thm:main3}, and another in which the forcing is fixed in advance, which is used in the proof of \autoref{thm:main2}.

It is somewhat easier to start from the second scenario. In this case, recalling that $z_q \equiv0$ since we did not decompose the velocity using the Da Prato-Debussche trick, the equivalent of \eqref{eq:decomposition_Rqq} for the new pressure and Reynolds stress is given by 
    \begin{align}
        -\nabla p\qq+\dvg(R\qq)+g\qq
        &=
        \partial_t w+v_l\cdot \nabla w+\dvg\Big(w_o\otimes w_o-c_0\Big)+w\cdot \nabla v_l
        \\
        &\quad+
        \dvg\Big(w_c\otimes w+w_o\otimes w_c+w\SymOt (v_q-v_l)+R_q-R_l\Big)
        \\
        &\quad+ g_q-\nabla p_q.
    \end{align}
If we impose the constraints $p_q = p\qq$ and $g_q = g\qq$, the equation above gives the candidate new Reynolds stress    
\begin{align}
    R\qq := R_A + R_T,
\end{align}
where $R_A$, $R_T$ are defined in \eqref{e:RdecA} and \eqref{e:RdecT}, and $R_S = 0$. Notice that, differently from what we did in the proof of \autoref{IterLemma}, here we do not incorporate the spatial average corrector term $\frac{2}{3}f(t)\Id$.
This, indeed, would prevent us from obtaining estimates that are uniform with respect to $t \in \R$, and we could not apply the Krylov-Bogoliubov argument to construct statistically stationary solutions of \eqref{eq:FE}. 
However, since $f$ was chosen so that $f'(t)$ cancels out the space average in the equation \eqref{e:LEIq1} for the new current $\phi\qq$ to apply antidivergence estimates, we must now introduce an artificial average corrector $e\qq-e_q$ during the iteration. More specifically, we have the analog of \eqref{e:NewCurrent1}:
    \begin{align}\label{eq:LEI.forced.iteration}
        \partial_t&\frac{|v\qq|^2}{2}
        +
        v\qq\cdot \nabla p\qq
        +
        v\qq \cdot\dvg\big(v\qq\otimes v\qq\big) 
        =
        v_{q+1} \cdot g\qq  -E' +e\qq 
        \\
        &+\dvg(R\qq v\qq)
        +D_{t,q}\left(\frac{\Tr(R_q)+|w|^2+2(v_q-v_l)\cdot w}{2}\right)+\dvg\left(\phi_q+\frac{|w|^2}{2}w-R\qq w\right)
        \\
        &+\dvg\left((R_q+w \otimes w -R\qq)(v_q-v_l)  
    \right)
    +
    \dvg\left(\frac{|v_q-v_l|^2}{2}w\right)
        \\
        &+\left((w_o \otimes w_o-c_0)-R_A\right):\nabla v_l^T
        \\
        &+
        w\cdot\dvg\Big\{(p\qq-p_l)\Id+\P_{\leq l^{-1}}R_q
        +
        R_{comm} \Big\}
        -w \cdot g_q +e_q -e\qq,
    \end{align} 
    where $D_{t,l}=\partial_t+v_l\cdot \nabla$.
From this equation, and arguing as in \autoref{sec:current} above, we let
\begin{align}
    \phi\qq &:= \phi_A + \phi_{D_1}+ \phi_{D_2},
    \quad
    e\qq - e_q := \fint_{\T^3} \left( \varphi_A -w \cdot g_q +\varphi_{D_2}  \right),
\end{align}
with $\varphi_A$, $\varphi_{D_2}$ defined as in \eqref{e:varphiA} and \eqref{e:phiD2}, $\phi_A:= \overline{\mathcal{R}}(\varphi_A-w\cdot g_q)$, $\phi_{D_2} :=  \overline{\mathcal{R}}\varphi_{D_2}$, and $\phi_{D_1}$ defined as in \eqref{e:phiD1}.
Notice that $\phi_{D_1}$ in \autoref{sec:current} depended on $f$ only through the term $R\qq - \frac32 f \Id$, which we can simply replace by $R_A+R_T$ here.

Let us now move to describe how to set up the convex integration scheme that proves the first part of \autoref{lemma:iter3}. 
The condition $p\qq = p_q$ will still be true; however, the external forcing will not remain constant during the iteration. 
In order to absorb the increment $g_q-g\qq$ in the new Reynolds stress, here we define $R\qq := R_A + R_T$, where
\begin{align}\label{eq:RAnew}
    R_A := \mathcal{R} \left( 
    \partial_t w+u_l\cdot \nabla w+w\cdot \nabla u_l +\dvg\Big(w_o\otimes w_o-c_0 \Big)
    +g_q-g\qq
    \right) ,
\end{align}
and $R_T$ is defined as in \eqref{e:RdecT}.
We will use the increment of the forcing $\tilde{g}\qq := g\qq-g_q$ to reduce the space averages in \eqref{eq:LEI.forced.iteration} through the ``work'' $w \cdot \tilde{g}\qq$. 
In fact, in this case the analog of \eqref{eq:LEI.forced.iteration} reads as
   \begin{align}\label{eq:LEI.forced.iteration2}
        \partial_t&\frac{|v\qq|^2}{2}
        +
        v\qq\cdot \nabla p\qq
        +
        v\qq \cdot\dvg\big(v\qq\otimes v\qq\big) 
        =
        v\qq \cdot g\qq -E' +e\qq 
        \\
        &+\dvg(R\qq v\qq)
        +D_{t,q}\left(\frac{\Tr(R_q)+|w|^2+2(v_q-v_l)\cdot w}{2}\right)+\dvg\left(\phi_q+\frac{|w|^2}{2}w-R\qq w\right)
        \\
        &+\dvg\left((R_q+w \otimes w -R\qq)(v_q-v_l)  
    \right)
    +
    \dvg\left(\frac{|v_q-v_l|^2}{2}w\right)
        \\
        &+\left[(w_o \otimes w_o-c_0)-R_A\right]:\nabla v_l^T
        \\
        &+
        w\cdot\dvg\Big\{(p\qq-p_l)\Id+\P_{\leq l^{-1}}R_q
        +
        R_{comm} \Big\}
       +v_q \cdot g_q -v\qq \cdot g\qq 
       +e_q - e\qq,
    \end{align} 
and using $\tilde{g}_{q+1} = g\qq-g_q$ we can rewrite
\begin{align} 
       v_q \cdot g_q -v\qq \cdot g\qq 
       +e_q - e\qq
       &=
       -w \cdot \tilde{g}_{q+1} - v_q \cdot \tilde{g}_{q+1}- w \cdot {g}_{q}
      +e_q-e\qq.
   \end{align} 
The increment $\tilde{g}\qq$ will be decomposed into a principal part $\tilde{g}_{q+1,o}$ so that 
\begin{align}
  \varphi_{\neq 0} := -w_o \cdot \tilde{g}_{q+1,o}+e_q
  \quad
  \mbox{ is high-frequency},
\end{align}
and a compressibility corrector $\tilde{g}_{q+1,c}$;
the other terms $-w_c \cdot \tilde{g}_{q+1,o}-w_o \cdot \tilde{g}_{q+1,c}- v_q \cdot \tilde{g}_{q+1}- w \cdot {g}_{q}$ will be either small or high-frequency, and absorbed into a new current $\phi\qq := \phi_A +  \phi_{D_1}+ \phi_{D_2}+ \phi_F$, where 
\begin{align}
    \phi_F &:= \overline{\mathcal{R}} \varphi_F,
    \\
    \varphi_F &:= \varphi_{\neq 0} -w_c \cdot \tilde{g}_{q+1,o}-w_o \cdot \tilde{g}_{q+1,c}- v_q \cdot \tilde{g}_{q+1}- w \cdot {g}_{q}.
\end{align}
Then the new energy loss tensor $e\qq$ will serve the purpose to cancel out space averages:
\begin{align}
   e\qq := \fint_{\T^3} \left( \varphi_A +\varphi_{D_2} +\varphi_F  \right). 
\end{align}

\subsection{Adaptive random forcing case}
In this subsection we discuss the iterative estimates needed to prove the first part of \autoref{lemma:iter3}, where the external random forcing is modified at each step.

Let us start by defining the velocity and the random forcing increments.
Firstly, following \autoref{sec:velocity}, we can define $v\qq=v_q+w_o+w_c$.
On the other hand, we let $g\qq:=g_q+\tilde{g}\qq$ with increment defined as
 \begin{align}
     \label{eq:gtilde}
     \tilde{g}\qq(t,x)&:=\sum_{I\in \I_R}\theta_I(t)
\chi_I(\xi_I(t,x))h_I(t,x)\nabla\xi_I(t,x)^{-1}f_I
  \psi_{I}(\lambda\qq \xi_I(t,x))
  \\
  &\quad+
  \frac{1}{\lambda\qq}\sum_{I\in \I_R}\sum_{k \in \Z^3 \setminus\{0\}}
  \theta_I(t)
  \nabla(\chi_I(\xi_I(t,x))h_I(t,x))
  \times \left( \nabla\xi_I^{T}(t,x)\frac{ik \times f_I}{|k|^2}\right)
   \mathring{b}_{I,k}e^{i\lambda\qq \xi_I(t,x) \cdot k}
  \\
  &=:\tilde{g}_{q+1,o}(t,x) + \tilde{g}_{q+1,c}(t,x),
 \end{align}
 where the amplitude function is defined as
\begin{align}\label{eq:h_I}
    h_I(t,x) := - \frac{e_q(t)}{6\,a_I(t,x) |\overline{f_I}|^2},
    \quad
    I \in \mathcal{I}_R.
\end{align}
In particular, we can rewrite $\tilde{g}\qq$ as follows: 
\begin{align} \label{eq:gtildeFour}
\tilde{g}\qq (t,x)
&=
\sum_{m \in \Z}
\sum_{k \in \Z^3 \setminus\{0\}} \left(\tilde{b}_{m,k}(t,x)+ \frac{1}{\lambda\qq} \tilde{e}_{m,k}(t,x) \right) e^{i \lambda\qq \xi_m(t,x) \cdot k},
\end{align} 
where
\begin{align}
 \tilde{b}_{m,k}(t,x) &:=
\sum_{I \in \mathcal{I}_m \cap \mathcal{I}_R}
\theta_I 
\chi_I(\xi_I(t,x))h_I(t,x)\nabla\xi_I(t,x)^{-1}f_I
\mathring{b}_{I,k},
  \\
 \\
\tilde{e}_{m,k}(t,x) &:=
\sum_{I \in \mathcal{I}_m \cap \mathcal{I}_R}
\theta_I 
  \nabla(\chi_I(\xi_I(t,x))h_I(t,x))
  \times \left( \nabla\xi_I^{T}(t,x)\frac{ik \times f_I}{|k|^2}\right) \mathring{b}_{I,k}.
\end{align}
Notice that this choice of $\tilde{g}\qq$ implies that
\begin{equation}
\begin{aligned}
    \varphi_{\neq 0}
    &= 
    \frac{e_q}{6}\sum_{I\in \I_R}\theta_I^2
  \chi_I^2(\xi_I)
  \sum_{k \in \Z^3 \setminus \{0\}}
  \mathring{c}_{I,{k}} e^{i \lambda\qq \xi_I \cdot k}
  \\
  &=
  -\frac{e_q}{6} \sum_{m \in \Z}\sum_{k\in \Z^3\setminus \{0\}}\sum_{I \in \mathcal{I}_m \cap \mathcal{I}_R} \theta_I^2 \chi_I^2 (\xi_I) \mathring{c}_{I,k}\,e^{i \lambda\qq \xi_m \cdot k}.
\end{aligned}
\end{equation}
Since $v\qq$ has been built as in \autoref{sec:velocity}, \eqref{s:V} holds. Let us focus on \eqref{s:g3} and \eqref{s:distance.g}.

\begin{lemma} \label{lem:g}
    For $s \in \{0,1\}$, $N \leq n_0+3$ and $t \in \R$ we have
    \begin{align} \label{est:gtilde}
        \epsilon^s \| D^s_{t,l} \tilde{g}\qq(t) \|_N
        \lesssim
        \lambda\qq^N \rho.
    \end{align}
Moreover, 
\begin{equation}\label{est:DtqGq}
    \|D_{t,q+1}g\qq(t)\|_{N-2}\le \lambda\qq^{N-1}\delta\qq^{3/2}.
\end{equation}
\end{lemma}
\begin{proof}
Recall $    h_I(t,x) := - \frac{e_q(t)}{a_I(t,x) |\overline{f_I}|^2}$ and the bound $\epsilon^s\|D^s_{t,l} a_I\|_N \lesssim \mu^{-N} \rho^{1/2}$ from the proof of \autoref{lemma:coeff}.
Thus, by \eqref{s:e3} on the dissipation $e_q$ and arguing similarly to the proof of \autoref{lemma:coeff} we have the estimates
\begin{align} \label{est:tilde.b}
    \epsilon^s \| D^s_{t,l} \tilde{b}_{m,k} \|_N
        &\lesssim
        \sup_{I \in \mathcal{I}_R} |\mathring{b}_{I,k}| \rho \mu^{-N},
        \\ \label{est:tilde.e}
        \epsilon^s \| D^s_{t,l} \tilde{e}_{m,k} \|_N
        &\lesssim
        \sup_{I \in \mathcal{I}_R} |\mathring{b}_{I,k}| \rho \mu^{-N-1},
\end{align}
and therefore \eqref{est:gtilde} follows by similar arguments as those used in the proof of \autoref{prop:estw}. Finally, \eqref{est:DtqGq} follows from
\begin{equation}
    \begin{aligned}
        \|D_{t,q+1}g\qq(t)\|_{N-2}&= \|D_{t,q}g_q(t)+D_{t,l}\tilde{g}\qq(t)+w\cdot\nabla g\qq(t)+(v_q-v_l)\cdot\nabla \tilde{g}\qq(t)\|_{N-2}\\
        &\lesssim\lambda_q^{N-1}\delta_q^{3/2}+\epsilon^{-1}\lambda\qq^{N-2}\rho+\lambda\qq^{N-1}\rho^{3/2}.
    \end{aligned}
\end{equation}
\end{proof}

Now, we shift our focus on the Reynold stress $R\qq$, the current $\phi\qq$ and the energy loss $e\qq$.
To show \eqref{s:R}, \eqref{s:Phi} and \eqref{s:e3} we only need to take into account the new terms due to the random forcing.

\begin{lemma}\label{lemma:gtilde}
   For every $t \in \R$ and $N \leq n_0+3$ we have
    \begin{align}
        \| \mathcal{R}\tilde{g}\qq(t,x) \|_{N}
        &\lesssim 
        \rho \lambda\qq^{N} [\lambda\qq^{-1} + (\lambda\qq \mu)^{-n_0-1}],
        \\
        \| D_{t,q+1} \mathcal{R}\tilde{g}\qq(t,x) \|_{N-2}
        &\lesssim
        \rho \lambda\qq^{N-2}\left[(\epsilon\lambda\qq)^{-1}+\lambda\qq^{-n_0}\mu^{-n_0-1}+(l\lambda\qq)^{-h_0}\lambda\qq^2\lambda_q\delta_q^{1/2}\right].
    \end{align}
    As a consequence,
    \begin{align}
    \|R_A(t)\|_N
    &\lesssim \lambda\qq^N\left[\lambda\qq^{-1}+(\lambda\qq\mu)^{-n_0-1}\right]\frac{\rho}{\mu},\\
    \|D_{t,q+1}R_A(t)\|_{N-2}
    &\lesssim \lambda\qq^{N-2}\left[(\epsilon\lambda\qq)^{-1}+\lambda\qq^{-n_0}\mu^{-n_0-1}+(l\lambda\qq)^{-h_0}\lambda\qq^2\lambda_q\delta_q^{1/2}\right]\frac{\rho}{\mu}.
    \end{align}
\end{lemma}
\begin{proof}
We intend to apply \autoref{prop:antidiv1} and \autoref{prop:antidiv2} to $F=\tilde{g}\qq$, with $a_{m,k}= \tilde{b}_{m+1} + \lambda\qq^{-1} \tilde{e}_{m,k}$. By previous \autoref{lem:g}, here the assumptions of \autoref{prop:antidiv1} are satisfied with $\lambda=\lambda\qq$ and $s_j = \rho \mu^{-j}$, whereas the assumptions of \autoref{prop:antidiv2} are satisfied with $D_{t,l}a_{m,k}= D_{t,l}\tilde{b}_{m+1} + \lambda\qq^{-1}D_{t,l}\tilde{e}_{m,k}$, $c=\epsilon$ and $\lambda,s_j$ as above.
At last, the estimate on $R_A$ follows from these bounds in combination with \autoref{lemma:RA}.
\end{proof}

\begin{lemma}\label{lemma:phiF}
For every $t \in \R$ and $N\le 2$ it holds 
    \begin{align} 
        \|\phi_F(t)\|_N&\le \lambda\qq ^{N-2\gamma}\delta_{q+2}^{3/2},
        \\
        \|D_{t,q+1}\phi_F(t)\|_{N-1}&\le \lambda\qq ^{N-2\gamma}\delta_{q+2}^{2}\delta\qq^{1/2} .
    \end{align}  
\end{lemma}
\begin{proof}
To estimate $\overline{\mathcal{R}}(\varphi_{\neq 0}+v_q\cdot \tilde{g}\qq+w\cdot g_q)$, we apply \autoref{prop:antidiv1} and \autoref{prop:antidiv2} with 
\begin{equation} \label{eq:overlineAmk}
\begin{aligned}
    a_{m,k}&=-\frac{e'_q}{6}\left(\sum_{I \in \mathcal{I}_m \cap \mathcal{I}_R} \theta_I^2 \chi_I^2 (\xi_I) \mathring{c}_{I,k}\right)+v_q \cdot\left(\tilde{b}_{m,k} + \frac{1}{\lambda\qq} \tilde{e}_{m,k}  \right)
    +\left({b}_{m,k} + \frac{1}{\lambda\qq}  {e}_{m,k}  \right) \cdot g_q.
\end{aligned}
\end{equation}
Indeed, \eqref{s:e3} and \eqref{s:g3} imply that the assumptions of \autoref{prop:antidiv1} and \autoref{prop:antidiv2} hold with $\lambda=\lambda\qq$, $s_j=\rho^{1/2}\mu^{-j}$ and $c=\epsilon.$
 
For the term $\varphi_c:=-w_c \cdot \tilde{g}_{q+1,o}-w_o \cdot \tilde{g}_{q+1,c}$ it is not necessary to apply estimates for the antidivergence operator, since we already have
\begin{align} \label{e:varphiG}
  \| \varphi_c \|_N 
    &\lesssim
    (\lambda\qq \mu)^{-1} \rho^{3/2}  \lambda\qq^N,
    \\
    \|D_{t,l} \varphi_c \|_{N-1} \label{e:DvarphiG}
    &\lesssim
    \epsilon^{-1}(\lambda\qq \mu)^{-1} \rho^{3/2} \lambda\qq^{N-1}.
\end{align}
Indeed, under the condition $\nu<\frac{1-7\alpha}{4\alpha}$, it holds
\begin{align}
    \| \overline{\mathcal{R}}\varphi_c \|_N
    \lesssim
    \| \varphi_c \|_N
    \lesssim
    (\lambda\qq \mu)^{-1} \rho^{3/2}  \lambda\qq^N
    \ll
    \lambda\qq ^{N-2\gamma}\delta_{q+2}^{3/2},
\end{align}
and by Schauder estimates we have for some $0 < \delta \ll 1$
\begin{align}
    \| D_{t,q+1} \overline{\mathcal{R}} \varphi_c\|_{N-1} 
    &\lesssim
    \| \overline{\mathcal{R}} D_{t,l} \varphi_c\|_{N-1}
    +
    \| (v\qq-v_l) \cdot \nabla \overline{\mathcal{R}} \varphi_c\|_{N-1}
    +
    \| \overline{\mathcal{R}} \dvg(v_l \otimes \varphi_c)\|_{N-1}
    +
    \|v_l \cdot \nabla \overline{\mathcal{R}}\varphi_c\|_{N-1}
    \\
    &\lesssim 
    \| D_{t,l} \varphi_c\|_{N-1}
    +
    \sum_{j=0}^{N-1}
    (\| (v\qq-v_l)\|_{N-1-j} + \|v_l \|_{N-1-j} ) \|\nabla \overline{\mathcal{R}} \varphi_c\|_{j}
    +
    \| v_l \otimes \varphi_c\|_{N-1+\delta}
    \\
    &\lesssim 
    \| D_{t,l} \varphi_c\|_{N-1}
    +
  \| \varphi_c\|_{N-1+\delta}
  \\
  &\lesssim
  (\epsilon^{-1}+\lambda\qq^{\delta} )(\lambda\qq \mu)^{-1} \rho^{3/2} \lambda\qq^{N-1} 
  \ll
  \lambda_{q+1}^{N-3\gamma/2} \delta\qqq^{2}.
\end{align}
\end{proof}
\begin{lemma} \label{lem:E'}
For every $t \in \R$ we have
  \begin{align}
   |e\qq (t)| 
  &\leq \lambda_{q+1}^{-2\gamma} \delta\qqq^{3/2},
   \\
   |e'\qq (t)| 
  &\leq \lambda_{q+1}^{1-3\gamma/2} \delta\qqq^{2}.  
  \end{align}  
\end{lemma}

\begin{proof}
The control over $\fint_{\T^3} \left( \varphi_A(t,x)+\varphi_{D_2}(t,x) \right)\,dx$
follows by replicating the estimates given in \autoref{lemma:phiA}, \autoref{lemma:phiD2}, up to taking $n_0,h_0$ sufficiently large. \\
To control the average of $\varphi_{F} = \varphi_{\neq 0}
  + \varphi_c-  v_q \cdot \tilde{g}\qq - w\cdot g_q $ we argue as follows. For $|\fint_{\T^3} \varphi_c| \lesssim \| \varphi_c\|_0$ we invoke \eqref{e:varphiG}; on the other hand, for $\fint_{\T^3} (\varphi_{\neq 0}-  v_q \cdot \tilde{g}\qq - w\cdot g_q )$ we invoke the stationary phase lemma (see \cite[Lemma A.2]{DLK23}) to obtain
  \begin{align} \label{eq:auxiliary.E''bis}
      \left|\fint_{\T^3} {a}_{m,k} 
  e^{i \lambda\qq \xi_m  \cdot k} \right|
  \lesssim
  \rho^{1/2} (\lambda\qq\mu)^{-n_0-1}
 \le
 \lambda\qq^{-3\gamma} \delta\qqq^{3/2},
  \end{align}
up to choosing $n_0$ large enough.
Notice that the bound on $  |e\qq (t)| $ is slightly better than the one on $|\partial_t f|$ given by \autoref{prop:phi} since here $z=0$, hence $\varphi_{S_2} = 0$.

Furthermore, since $e\qq$ is space independent we also have
 \begin{align}
    e'\qq(t)&:= \fint_{\T^3} \left( \varphi'_A(t,x)+\varphi'_{D_2}(t,x)+\varphi_F'(t,x)\right)\,dx
    =
    \fint_{\T^3} \left( D_{t,l}\varphi_A(t,x)+ D_{t,l}\varphi_{D_2}(t,x)+D_{t,l}\varphi_F(t,x)\right)\,dx.
\end{align}

Notice that $D_{t,l}\varphi_A(t,x) =\sum D_{t,l}(a_{m,k}) e^{i\lambda\qq\xi_m \cdot k} $, where $a_{m,k}$ are defined in the proof of \autoref{lemma:phiA} and satisfy $\|D_{t,l}a_{m,k}\|_{N} \lesssim \delta\qq \epsilon^{-2} \mu^{-N}$ for every $N \leq n_0+1$, uniformly in $m \in \Z$ and with enough summability with respect to $k \in \Z^3\setminus \{0\}$. Then, the stationary phase lemma implies that
\begin{align}
 \left|  \fint_{\T^3}  D_{t,l}\varphi_A   \right|
 =
 \left|  \sum_{m,k} \fint_{\T^3}  D_{t,l} a_{m,k} e^{i\lambda\qq \xi_m \cdot k} \right|
\lesssim
\frac{\delta\qq \epsilon^{-2} \mu^{-N} + \delta\qq \epsilon^{-2} l^{1-N}}{\lambda\qq^N}
\le
\lambda_{q+1}^{1-2\gamma} \delta\qqq^{2},
\end{align}
up to taking $n_0,h_0$ sufficiently large.

Next, by \eqref{e:phiD2} we have $\varphi_{D_2} = R_A : \nabla v_l^T$, where we recall that 
\begin{gather} 
  R_A = \mathcal{R}(G_1+G_2+G_3+G_4) , 
\end{gather}
with tensors $\{G_i\}_{i \in \{1,2,3\}}$  defined in the proof of \autoref{lemma:phiD2} and $G_4 := -\tilde{g}\qq$.
Moreover, notice that since $v_l$ has Fourier frequencies of size $\lesssim l^{-1}$, we have
\begin{align}
  \fint_{\T^3} 
  D_{t,l} \varphi_{D_2}
  =
  \sum_{i=1}^4
  \fint_{\T^3} D_{t,l}(\mathcal{R} G_i:\nabla v_l^T)
  =
  \sum_{i=1}^4
  \fint_{\T^3} D_{t,l}(\mathcal{R}\proj_{\lesssim\lambda\qq}G_i:\nabla v_l^T).
\end{align}

By \eqref{eq:auxiliary.E''} we have for every $i \in \{1,2,3\}$
\begin{align}\label{est4}
    \|D_{t,l}(\mathcal{R}\proj_{\lesssim\lambda\qq}G_i:\nabla v_l^T)\|_0
    \lesssim
    \frac{\rho}{\mu}(\lambda\qq\mu)^{-n_0-1}\left[\,\lambda\qq\lambda_q\delta_q^{1/2}\right]
    \le
    \lambda_{q+1}^{1-2\gamma} \delta\qqq^{2},
\end{align}
up to taking $n_0$ large enough. 
A similar argument and \autoref{lemma:gtilde} imply that \eqref{est4} holds for $i=4$, too.

Let us move to the term with $\varphi_{F} = \varphi_{\neq 0}
  + \varphi_c-  v_q \cdot \tilde{g}\qq - w\cdot g_q $. For $\varphi_c$ we have by \eqref{e:DvarphiG}
\begin{align}
    \left|\fint_{\T^3}  D_{t,l}\varphi_c  \right| \leq \| D_{t,l}\varphi_c\|_0
    \lesssim
    \epsilon^{-1}(\lambda\qq \mu)^{-1} \rho^{3/2} 
    \le
    \lambda_{q+1}^{1-2\gamma} \delta\qqq^{2}.
\end{align}
The other terms are treated by applying again the stationary phase lemma. Indeed, $\varphi_{\neq 0}
  -  v_q \cdot \tilde{g}\qq - w\cdot g_q=\sum {a}_{m,k} e^{i\lambda\qq\xi_m \cdot k}$, where ${a}_{m,k}$ are given by \eqref{eq:overlineAmk}.
  Since $\|D_{t,l}{a}_{m,k}\|_N\lesssim \rho^{1/2}\epsilon^{-1}\mu^{-N}$ for all $N\le n_0+1$, then the stationary phase lemma implies that 
  \begin{equation}
      \left|\fint_{\T^3}D_{t,l}(\varphi_{\neq 0}
  -  v_q \cdot \tilde{g}\qq - w\cdot g_q)\right|\lesssim\frac{\rho^{1/2}\epsilon^{-1}\mu^{-N}+\rho^{1/2}\epsilon^{-1}l^{1-N}}{\lambda\qq^N}.
  \end{equation}
  Therefore, the claim follows after picking $n_0$ sufficiently large.
\end{proof}
Putting all these lemmas together, plus the previously obtained estimates of sections \autoref{ssec:estimates_v}, \autoref{ssec:estimatesR}, and \autoref{ssec:estimatesPhi}, we obtain the first part of \autoref{lemma:iter3}.

\subsection{Fixed random forcing case}
As mentioned at the beginning of this section, the fixed random forcing case (i.e. $g_q=g\qq$) requires control over
\begin{equation}
    \phi_A=\overline{\mathcal{R}}(\varphi_A-w\cdot g_q),\quad e\qq-e_q=\fint_{\T^3}\varphi_A+\varphi_{D_2}-w\cdot g_q,
\end{equation}
where $\varphi_A$ and $\varphi_{D_2}$ are given by \eqref{e:varphiA} and \eqref{e:phiD2}.
To conclude the proof of \autoref{lemma:iter3} we only need the following:
\begin{lemma}
For every $t \in \R$ and $N\le 2$ it holds 
    \begin{align}\label{est5}
        \|\phi_A(t)\|_N&\le \lambda\qq ^{N-2\gamma}\delta_{q+2}^{3/2},
        \\
        \|D_{t,q+1}\phi_A(t)\|_{N-1}&\le \lambda\qq ^{N-2\gamma}\delta_{q+2}^{2}\delta\qq^{1/2} .
    \end{align}  
Moreover,  
    \begin{equation}\label{est6}
        |e\qq(t)-e_q(t)|\le \lambda\qq^{-2\gamma}\delta_{q+2}^{3/2}.
    \end{equation}
\end{lemma}
\begin{proof}
    The claim follows from previous iterative estimates. In particular, \eqref{est5} follows directly from \autoref{lemma:phiA} and \autoref{lemma:phiF}. Instead, \eqref{est6} is due to \autoref{lem:E'}.
\end{proof}

\appendix
\section{}

\subsection{Estimates for transport equations}
We report some well-known estimates regarding smooth solutions $f:\R \times \R^3 \to \R$ of the transport equation
\begin{equation}\label{e:transport}
    \begin{cases}
        &\partial_t f+v\cdot \nabla f=g,\\
        &f(t_0,\cdot)=f_0,
    \end{cases}
\end{equation}
where $v:\R \times\R^3 \to \R^3$ is a smooth vector field and $g :\R \times\R^3 \to \R$ is a smooth scalar field. 
We adopt the notation $f_t := f(t,\cdot)$, and similarly for $v,g$, et cetera.
The following proposition is a slight modification of \cite[Proposition B.1]{BuDLSzVi19}.
\begin{prop}\label{prop:transport}
In the same setting as above, for any $\alpha \in[0,1]$ it holds
    \begin{equation}\label{e:1stTransport}
        \|f_t\|_\alpha\lesssim e^{\alpha(t-t_0)\|v_{[t_0,t]}\|_{1}}\left(\|f_0\|_\alpha+\int_{t_0}^t\|g_s\|_\alpha ds\right),
        \qquad
        \|v_{[t_0,t]}\|_1:=\sup_{s\in [t_0,t]}\|v_s\|_{C^1_x}.
    \end{equation}
   
In addition, if $|t-t_0|\|v_{[t_0,t]}\|_1\le 1$, then for any $N\geq 1$ and $\alpha\in[0,1)$ 
\begin{equation}\label{e:2ndTransport}
    [f_t]_{N+\alpha}\lesssim [f_0]_{N+\alpha}+|t-t_0|[v_{[t_0,t]}]_{N+\alpha}[f_0]_1+\int_{t_0}^t\left([g_s]_{N+\alpha}+(t-s)[v_{[t_0,t]}]_{N+\alpha}[g_s]_1\right)ds,
\end{equation}
where $[v_{[t_0,t]}]_{N+\alpha}:=\sup_{s\in[t_0,t]}[v_s]_{N+\alpha}$.

Moreover, the spatial inverse $\phi$ of the Lagrangian flux:
\begin{align}
    \phi_t(x) = (X_t)^{-1}(x),
    \quad
    \dot{X}_t=v(t,X_t),
    \quad
    X(x,t_0)=x,
    \quad
    \forall x \in \R^3,
\end{align}
satisfies the bounds
\begin{align} \label{e:ClosetoIdFlux}
\|\nabla \phi_t-\Id\|_0 &\lesssim |t-t_0|\|v_{[t_0,t]}\|_1,
    \\
    [\phi_t]_N &\lesssim|t-t_0|[v_{[t_0,t]}]_N, \quad \forall N\geq 2.
\end{align}
\end{prop}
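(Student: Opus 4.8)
The plan is to prove the whole proposition via the method of characteristics. Let $X_t$ be the forward Lagrangian flow of $v$ started at time $t_0$, i.e. $\dot X_t(x)=v(t,X_t(x))$ with $X_{t_0}(x)=x$, and let $\phi_t=X_t^{-1}$ be its spatial inverse. Along characteristics \eqref{e:transport} integrates to $f(t,X_t(x))=f_0(x)+\int_{t_0}^t g(s,X_s(x))\,ds$, hence $f_t=\big(f_0+\int_{t_0}^t g_s\circ X_s\,ds\big)\circ\phi_t$. Thus the statement reduces to (a) Hölder bounds on $X_t$ and $\phi_t$, and (b) standard composition estimates for Hölder functions of the type $[h\circ\psi]_\alpha\lesssim [h]_\alpha\|\nabla\psi\|_0^\alpha$ and $[h\circ\psi]_{N+\alpha}\lesssim[h]_{N+\alpha}\|\nabla\psi\|_0^{N+\alpha}+[h]_1[\psi]_{N+\alpha}$ (see, e.g., \cite[Proposition C.1]{BD15+}).

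For step (a), I would differentiate the flow ODE in space: $\partial_t\nabla X_t=(\nabla v)(t,X_t)\nabla X_t$, so Gr\"onwall gives $\|\nabla X_t\|_0\le\exp\big((t-t_0)\|v_{[t_0,t]}\|_1\big)$, and the same bound for $\nabla\phi_t$, which solves $\partial_t\nabla\phi_t+v\cdot\nabla(\nabla\phi_t)=-\nabla\phi_t\,\nabla v$. Subtracting $\nabla X_{t_0}=\Id$ and integrating yields $\|\nabla X_t-\Id\|_0$ and $\|\nabla\phi_t-\Id\|_0$ bounded by $|t-t_0|\|v_{[t_0,t]}\|_1\,e^{(t-t_0)\|v_{[t_0,t]}\|_1}$; under the normalization $|t-t_0|\|v_{[t_0,t]}\|_1\le1$ the exponential is $\le e$, giving \eqref{e:ClosetoIdFlux}. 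For $[\phi_t]_N$ with $N\ge2$ one differentiates $N$ times and expands by the Leibniz/Fa\`a di Bruno rule: every genuinely nonlinear term carries at least two factors among $\nabla X_t,\dots$ (all $O(1)$ by the previous step) and at most one factor $D^{N}v$, so a Gr\"onwall argument in the top seminorm, with the lower-order contributions absorbed by $|t-t_0|\|v_{[t_0,t]}\|_1\le1$, gives $[\phi_t]_N\lesssim|t-t_0|[v_{[t_0,t]}]_N$; the fractional case $N+\alpha$ is identical after passing to $[D^{N-1}\phi_t]_{1+\alpha}$.

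For step (b), \eqref{e:1stTransport} follows by combining the trivial bound $\|f_t\|_0\le\|f_0\|_0+\int_{t_0}^t\|g_s\|_0\,ds$ with the composition estimate applied to $\psi=\phi_t$ and to $\psi=X_s$, together with $\|\nabla\phi_t\|_0,\|\nabla X_s\|_0\le e^{(t-t_0)\|v_{[t_0,t]}\|_1}$. For \eqref{e:2ndTransport} one applies the higher-order composition estimate with $\psi=\phi_t$ and $h=f_0+\int_{t_0}^t g_s\circ X_s\,ds$: the first term uses $[\phi_t]_{N+\alpha}\lesssim|t-t_0|[v_{[t_0,t]}]_{N+\alpha}$ from step (a) and $[h]_1\lesssim[f_0]_1+\int\|g_s\|_1$, while the inner composition gives $[g_s\circ X_s]_{N+\alpha}\lesssim[g_s]_{N+\alpha}+[X_s]_{N+\alpha}[g_s]_1$ with $[X_s]_{N+\alpha}\lesssim(s-t_0)[v_{[t_0,s]}]_{N+\alpha}$; reorganizing the resulting double integral produces the weight $(t-s)$ in the statement. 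With $|t-t_0|\|v_{[t_0,t]}\|_1\le1$ all exponentials are $O(1)$ and disappear into $\lesssim$.

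The main obstacle I anticipate is the bookkeeping in step (a) for the higher seminorms: one must check that the $N$-fold differentiation of the flow ODE never produces a product of two high-order derivatives of $X_t$ (which would break linearity in $[v]_{N+\alpha}$), and that the Gr\"onwall constant is controlled using only $|t-t_0|\|v_{[t_0,t]}\|_1\le1$ and not a bound on $[v]_N$ itself. This is precisely where the argument differs slightly from \cite[Proposition B.1]{BuDLSzVi19}, and is the reason the normalization hypothesis is imposed; the composition estimates of step (b) are then routine.
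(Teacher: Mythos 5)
The paper itself does not prove this proposition: it is imported as a slight modification of \cite[Proposition B.1]{BuDLSzVi19}, and your method-of-characteristics strategy (integrate along the flow, then combine H\"older bounds on $X_t$, $\phi_t$ with composition estimates) is exactly the standard proof of that result. The arguments you sketch for \eqref{e:1stTransport}, for \eqref{e:ClosetoIdFlux}, and the Gr\"onwall-plus-interpolation argument for $[\phi_t]_N$ under the normalization $|t-t_0|\|v_{[t_0,t]}\|_1\le 1$ are all sound.

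There is, however, one step that fails as written: the derivation of the weight $(t-s)$ in \eqref{e:2ndTransport}. You write $f_t=\bigl(f_0+\int_{t_0}^t g_s\circ X_s\,ds\bigr)\circ\phi_t$ and apply the composition estimate twice, once to the inner composition (which produces $[X_s]_{N+\alpha}\lesssim(s-t_0)[v_{[t_0,t]}]_{N+\alpha}$) and once to the outer composition with $\phi_t$ (which produces a term $[\phi_t]_{N+\alpha}[h]_1\lesssim(t-t_0)[v_{[t_0,t]}]_{N+\alpha}\bigl([f_0]_1+\int_{t_0}^t[g_s]_1\,ds\bigr)$). The resulting coefficient of $[v_{[t_0,t]}]_{N+\alpha}[g_s]_1$ is of order $(s-t_0)+(t-t_0)$, which does not vanish as $s\to t$ and cannot be ``reorganized'' into $(t-s)$; the stated inequality is strictly stronger than what this yields. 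The fix is to avoid the double composition: write $g_s\circ X_s\circ\phi_t=g_s\circ\Phi^t_s$ where $\Phi^t_s:=X_s\circ\phi_t$ is the flow of $v$ started at time $t$ and run back to time $s$, so that $[\Phi^t_s]_{N+\alpha}\lesssim(t-s)[v_{[t_0,t]}]_{N+\alpha}$ under the same normalization, and apply the composition estimate once to each summand. This repairs the proof with no new ideas; the weaker bound would in fact suffice for the applications in this paper (where $t-t_0$ is already of the order of the mollification scale), but it does not prove the proposition as stated.
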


\subsection{Antidivergence estimates}

In this subsection, we report a technical generalization of \cite[Corollary 8.2]{DLK23}. Let us start by recalling the following lemma from \cite{DLK23}.

\begin{lemma}(\cite[Lemma 8.1
]{DLK23}).  \label{lemma:antidiv}
    Let $\mathfrak{m} \in \mathcal{S}(\R^3)$ and define the Fourier multiplier operator $T_\mathfrak{m}$, acting an smooth functions $f \in C^\infty(\T^3,\mathbb{C})$ (smoothness meant in real sense) as follows:
    \begin{equation}
        \mathscr{F}
    [T_\mathfrak{m}(f)](k)=\mathscr{F}[f](k)\mathfrak{m}(k), \quad k \in \Z^3 .
    \end{equation}
Then, for any $n_0\in\N$, $\lambda>0$ and any scalar functions $a,\, \xi\in C^{\infty}(\T^3)$, the function $T_\mathfrak{m}(ae^{i\lambda \xi})$ can be decomposed as
\[T_\mathfrak{m}(ae^{i\lambda \xi})=\left[a\,\mathfrak{m}(\lambda\nabla\xi)+\sum_{k=1}^{2n_0}C_k^{\lambda}(\xi,a):(\nabla^k\mathfrak{m})(\lambda\nabla\xi)+\mathcal{\epsilon}_{n_0}(\xi,a)\right]e^{i\lambda\xi},\]
for some tensor-valued coefficents $C_k^{\lambda}(\xi,a)$ and smooth function $\mathcal{\epsilon}_{n_0}$ given by
    \begin{align}\label{e:epsn0}
        \mathcal{\epsilon}_{n_0}(\xi,a)(x)
        &:=
        \sum_{n_1+n_2=n_0+1}\frac{(-1)^{n_1}c_{n_1,n_2}}{n_0!}\\
        &\quad
        \cdot\int_0^1\int_{\R^3}\overset{\vee}{\mathfrak{m}}(y)e^{-i\lambda \nabla\xi(x)\cdot y}((y\cdot \nabla)^{n_1}a)(x-ry)e^{i\lambda Z[\xi]_{x,y}(r)}\beta_{n_2}[\xi](r)(1-r)^{n_0}dy\,dr,
    \end{align}
where $c_{n_1,n_2}$ are coefficients depending only on $n_1,n_2$, and $\beta_n[\xi]$ is  defined by the following expressions:
\begin{align}
    &\label{e:betan} \beta_n[\xi](r):=B_n(i\lambda Z'(r),\cdots,i\lambda Z^{(n)}(r)),\\
    &\label{e:Z}Z(r):=Z[\xi]_{x,y}(r):=r\int_0^1(1-s)(y\cdot\nabla)^2\xi(x-rsy)\,ds, \quad x ,y  \in \R^3 ,\\
    &\label{e:Bn}B_n(x_1,\dots,x_n):=\sum_{k=1}^nB_{n,k}(x_1,x_2,\dots,x_{n-k+1}),\\
    &\label{e:Bnk}B_{n,k}(x_1,x_2,\dots,x_{n-k+1}):=\sum \frac{n!}{j_1!j_2!\cdots j_{n-k+1}!}\left(\frac{x_1}{1!}\right)^{j_1}\cdots\left(\frac{x_{n-k+1}}{n-k+1!}\right)^{j_{n-k+1}},
\end{align}
and the last summation is over $\{j_k\}\subset \N$ such that
\[j_1+\cdots+j_{n-k+1}=k,\quad j_1+2j_2+\cdots+(n-k+1)j_{n-k+1}=n.\]
\end{lemma}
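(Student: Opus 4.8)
The plan is to reduce the statement to a pointwise Taylor expansion of the symbol. First, since $\mathfrak{m}\in\mathcal{S}(\R^3)$ its inverse Fourier transform $\overset{\vee}{\mathfrak{m}}$ is again Schwartz, and for any smooth periodic $f$ one has the convolution representation $T_\mathfrak{m}(f)(x)=\int_{\R^3}\overset{\vee}{\mathfrak{m}}(y)\,f(x-y)\,dy$: this follows by expanding $f$ in Fourier series, using $\int_{\R^3}\overset{\vee}{\mathfrak{m}}(y)e^{-ik\cdot y}\,dy=\mathfrak{m}(k)$, and justifying the interchange of sum and integral via the rapid decay of $\widehat f(k)$ and of $\overset{\vee}{\mathfrak{m}}$. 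Applying this with $f=ae^{i\lambda\xi}$ and Taylor expanding the phase to second order, $\xi(x-y)=\xi(x)-y\cdot\nabla\xi(x)+Z[\xi]_{x,y}(1)$ (which is precisely the definition of $Z$ at $r=1$), one obtains
\[
T_\mathfrak{m}(ae^{i\lambda\xi})(x)=e^{i\lambda\xi(x)}\int_{\R^3}\overset{\vee}{\mathfrak{m}}(y)\,e^{-i\lambda\nabla\xi(x)\cdot y}\,F_{x,y}(1)\,dy,\qquad F_{x,y}(r):=a(x-ry)\,e^{i\lambda Z[\xi]_{x,y}(r)}.
\]

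Next I would Taylor expand $r\mapsto F_{x,y}(r)$ about $r=0$ to order $n_0$ with integral remainder, noting $F_{x,y}(0)=a(x)$ since $Z[\xi]_{x,y}(0)=0$. The derivatives $F_{x,y}^{(j)}$ are computed by the Leibniz rule together with Faà di Bruno's formula for $\frac{d^n}{dr^n}e^{i\lambda Z(r)}=e^{i\lambda Z(r)}B_n(i\lambda Z'(r),\dots,i\lambda Z^{(n)}(r))=e^{i\lambda Z(r)}\beta_n[\xi](r)$ and $\frac{d^j}{dr^j}a(x-ry)=(-1)^j((y\cdot\nabla)^j a)(x-ry)$ — this is exactly where the incomplete Bell polynomials $B_{n,k}$ and the functions $\beta_n[\xi]$ arise. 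The order-zero term contributes $a(x)\int_{\R^3}\overset{\vee}{\mathfrak{m}}(y)e^{-i\lambda\nabla\xi(x)\cdot y}\,dy=a(x)\mathfrak{m}(\lambda\nabla\xi(x))$, the principal term; the terms of order $1\le j\le n_0$ are polynomials in $y$ with coefficients assembled from the $D^\alpha a(x)$, higher derivatives of $\xi$ at $x$, and powers of $\lambda$; and the order-$(n_0+1)$ integral remainder, written out via the same Leibniz/Faà di Bruno expansion, is exactly the claimed $\epsilon_{n_0}(\xi,a)$, with $c_{n_1,n_2}=\binom{n_0+1}{n_1}=\frac{(n_0+1)!}{n_1!\,n_2!}$, which depends only on $n_1,n_2$ since $n_0=n_1+n_2-1$.

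It then remains to integrate the polynomial-in-$y$ Taylor terms against $\overset{\vee}{\mathfrak{m}}(y)e^{-i\lambda\nabla\xi(x)\cdot y}$. From $y^\gamma e^{-iv\cdot y}=i^{|\gamma|}\partial_v^\gamma e^{-iv\cdot y}$ and differentiation under the (absolutely convergent) integral one gets $\int_{\R^3}\overset{\vee}{\mathfrak{m}}(y)\,y^\gamma e^{-i\lambda\nabla\xi(x)\cdot y}\,dy=i^{|\gamma|}(\partial^\gamma\mathfrak{m})(\lambda\nabla\xi(x))$, so each monomial of total $y$-degree $k$ appearing in the order-$j$ Taylor term produces a contraction of $(\nabla^k\mathfrak{m})(\lambda\nabla\xi(x))$ with a tensor coefficient, and gathering these over all $j$ defines $C_k^\lambda(\xi,a)$. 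The step I expect to be the main nuisance is the combinatorial bookkeeping of the degrees: one checks that $Z^{(i)}[\xi]_{x,y}(0)$ is a polynomial in $y$ of degree $i+1$, so the block $\beta_{j_2}[\xi](0)$ has $y$-degree at most $2j_2$, and combined with the degree $j_1$ coming from $(y\cdot\nabla)^{j_1}a$ the order-$j$ term (with $j_1+j_2=j$) has $y$-degree at most $2j$; hence the index $k$ runs over $\{1,\dots,2n_0\}$ precisely, matching the sum in the statement. The remaining points — Schwartz decay legitimizing every integral and every differentiation under the integral sign, and the periodization underlying the convolution identity on $\T^3$ — are routine.
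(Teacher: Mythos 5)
Your proposal is correct. The paper itself gives no proof of this lemma --- it is imported verbatim from \cite[Lemma 8.1]{DLK23} --- and your argument (periodized convolution representation of $T_\mathfrak{m}$, second-order Taylor expansion of the phase producing $Z[\xi]_{x,y}$, Taylor expansion in $r$ of $a(x-ry)e^{i\lambda Z(r)}$ to order $n_0$ with integral remainder via Leibniz and Fa\`a di Bruno, and conversion of the polynomial-in-$y$ terms into derivatives of $\mathfrak{m}$ at $\lambda\nabla\xi$) is exactly the standard proof of that cited result, including the correct degree count $k\le 2n_0$ and the identification $c_{n_1,n_2}=\binom{n_0+1}{n_1}$.
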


\begin{prop}\label{prop:antidiv1}
    Let $v\in C^{\infty}(\T^3,\R^3)$ and $l,\epsilon,\lambda$ be positive constants, and $\overline{n},n_0\in \N^*$ s.t. $\overline{n}\geq n_0+1$.
   Let $v_l:=\proj_{\le l^{-1}}v$ and consider, for any $m\in \Z$, the solution $\xi_m$ to the vector-valued transport equation 
    \begin{equation}
    \begin{cases}
        &\partial_t \xi_m+v_l\cdot \nabla \xi_m=0,\\
        &\xi_m(x,\epsilon(m-1/8))=x, \quad \forall x \in \R^3.
    \end{cases}
\end{equation}

Let $F:=\sum_{k\in\Z^3\setminus\{0\}}\sum_{m\in\Z}a_{m,k}e^{i\lambda\xi_m\cdot k}$, where the functions $a_{m,k}$ satisfy the following conditions:
\begin{enumerate}
    \item\label{ass:SupportTime} $\supp_t(a_{m,k})\subset (\epsilon(m-1/8),\epsilon(m+9/8))$, and $\epsilon>0$ is such that $2\epsilon\|v\|_{C_tC^1_x}\le1/4$;\\
    \item\label{ass:estimateAmk} there exist a positive real-valued sequence $(\mathring{a}_k)_{k \in \N}$, and constants $a_F>0$ with the following property: for every {$j \le  \overline{n}
    $} there exists a finite constant $s_j$ such that
    \begin{equation}
       \sum_k|k|^{{\overline{n}}}\mathring{a}_k\le a_F, \quad \|a_{m,k}(t)\|_j\leq  s_j\mathring{a}_k, \quad
       \forall (m,k) \in \Z \times (\Z^3 \setminus \{0\});
    \end{equation}

    \item\label{ass:s_n} the sequence $(s_n)_{n \in \N}$ is non-decreasing and the following inequalities hold for every $n \leq m$: 
    \[l^{-m}s_n\le s_{m+n},\quad s_n\lambda^m\le  s_0\lambda^{n+m}, \quad s_m\lambda^{-m}\le s_n\lambda^{-n}.\]
\end{enumerate}
Then, for every {$N \le \overline{n}$} and every $t \in \R$ we have
\begin{align}
    &\label{e:Antidiv1}\|\mathcal{R}F(t)\|_N\lesssim \left(s_0\lambda^{N-1}+\lambda^{-n_0-1}s_{n_0+1}\lambda^N\right)a_F.
\end{align}
\end{prop}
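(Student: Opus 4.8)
The plan is to reduce to a single oscillatory building block, apply the Fourier‑multiplier expansion of \cref{lemma:antidiv} to the symbol of $\mathcal{R}$, and then resum. First, since the $a_{m,k}$ have pairwise disjoint temporal supports for $|m-m'|\geq 2$, for each fixed $t$ the sum over $m$ contains only a universally bounded number of nonzero terms; hence it suffices to bound $\|\mathcal{R}(a_{m,k}e^{i\lambda\xi_m\cdot k})\|_N$ uniformly in $m$ and then sum the resulting estimates over $k\in\Z^3\setminus\{0\}$ against the weight $\sum_k|k|^{(n_0+2)\vee N}\mathring{a}_k\leq a_F$. I would also record at the outset that the third scaling hypothesis forces $l^{-1}\leq\lambda$ (take $n=0$, $m=1$ in $l^{-m}s_n\leq s_{m+n}$ and use $s_0>0$), a fact used repeatedly below; and that, by \cref{prop:transport} (in particular \eqref{e:ClosetoIdFlux}) together with the smallness $2\epsilon\|v\|_{C_tC^1_x}\leq 1/4$, one has $\|\nabla\xi_m-\Id\|_0\lesssim\epsilon\|v\|_{C^1_x}\ll1$ and $[\xi_m]_M\lesssim\epsilon[v_l]_M\lesssim\epsilon\|v\|_1\,l^{1-M}\lesssim l^{1-M}$, so that $|\lambda(\nabla\xi_m)^{T}k|$ is comparable to $\lambda|k|\gtrsim\lambda$ on $\supp a_{m,k}$ and $\|e^{i\lambda\xi_m\cdot k}\|_M\lesssim(\lambda|k|)^M$.

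Next I would set up the multiplier decomposition. The antidivergence $\mathcal{R}$ is a Fourier multiplier whose symbol $\mathfrak{m}_{\mathcal{R}}$ is smooth and positively homogeneous of degree $-1$ away from the origin; since the building blocks $a_{m,k}e^{i\lambda\xi_m\cdot k}$ have Fourier support contained in $\{|\zeta|\gtrsim\lambda\}$ up to rapidly decaying tails (their amplitudes have all derivatives controlled at the scale $\lesssim\lambda$), one may replace $\mathfrak{m}_{\mathcal{R}}$ by a Schwartz symbol $\mathfrak{m}=\mathfrak{m}_\lambda\in\mathcal{S}(\R^3)$ of the form $\mathfrak{m}_\lambda(\zeta)=\lambda^{-1}g(\zeta/\lambda)$, with $g$ a fixed Schwartz function agreeing with the ($\lambda$‑independent, by homogeneity) rescaled symbol on a fixed annulus around $\{|\eta|\sim1\}$, the mismatch producing a negligible error. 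Applying \cref{lemma:antidiv} with this $\mathfrak{m}$, the given $\lambda$, $a=a_{m,k}$ and $\xi=\xi_m\cdot k$ yields
\[
\mathcal{R}(a_{m,k}e^{i\lambda\xi_m\cdot k})=\Big[a_{m,k}\,\mathfrak{m}(\lambda(\nabla\xi_m)^{T}k)+\sum_{j=1}^{2n_0}C_j^{\lambda}(\xi_m\cdot k,a_{m,k}):(\nabla^{j}\mathfrak{m})(\lambda(\nabla\xi_m)^{T}k)+\epsilon_{n_0}(\xi_m\cdot k,a_{m,k})\Big]e^{i\lambda\xi_m\cdot k}+(\text{negligible}).
\]

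The core is estimating the three pieces, each multiplied by $e^{i\lambda\xi_m\cdot k}$, and here the three scaling relations on $(s_n)$ are exactly what makes the bookkeeping close. For the leading term, homogeneity gives $|\mathfrak{m}(\lambda(\nabla\xi_m)^{T}k)|\lesssim(\lambda|k|)^{-1}$; distributing $N$ derivatives, using $\|e^{i\lambda\xi_m\cdot k}\|_N\lesssim(\lambda|k|)^N$, $\|a_{m,k}\|_j\leq s_j\mathring{a}_k$, the smallness of $\nabla^{\geq2}\xi_m$, and the monotonicity/scaling of $(s_n)$ to absorb the mixed terms, one gets $\lesssim s_0\lambda^{N-1}|k|^{N-1}\mathring{a}_k$. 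For the intermediate terms $1\leq j\leq 2n_0$, $\nabla^{j}\mathfrak{m}$ is homogeneous of degree $-1-j$ (contributing $(\lambda|k|)^{-1-j}$) while $C_j^{\lambda}(\xi_m\cdot k,a_{m,k})$ is a sum of products of derivatives of $a_{m,k}$ with $j$ factors of $\lambda\nabla^{\geq2}(\xi_m\cdot k)$, each of which costs at most $\lambda|k|\,l^{-1}\cdot l=\lambda|k|$ by the bound on $[\xi_m]_M$; the scaling relations then show each such term is dominated by the leading one. For the error term, I would insert the explicit formula \eqref{e:epsn0} with $n_1+n_2=n_0+1$, use that $\overset{\vee}{\mathfrak{m}_\lambda}(y)=\lambda^{2}\overset{\vee}{g}(\lambda y)$, change variables $y\mapsto y/\lambda$ (extracting a factor $\lambda^{-1-n_1}$ and turning each $\lambda Z^{(i)}[\xi_m\cdot k]$ into $\lambda^{-1}$ times a bounded, $|y|$‑ and $|k|$‑polynomially weighted quantity, as $Z$ involves only second and higher derivatives of $\xi_m$), bound $\|(y\cdot\nabla)^{n_1}a_{m,k}\|\lesssim s_{n_1}\mathring{a}_k$, and invoke the scaling relations in the form $l^{-(n_0+1)}s_0\leq s_{n_0+1}$ together with $\|e^{i\lambda\xi_m\cdot k}\|_N\lesssim(\lambda|k|)^N$ — note that $a_{m,k}$ is only controlled up to $n_0+1$ derivatives, so the extra $N$ derivatives must fall on the phase, producing exactly the factor $\lambda^N$; this gives a bound of the form $\lambda^{-n_0-1}s_{n_0+1}\lambda^{N}$ times $\mathring{a}_k$ and a power of $|k|$ absorbed by the weighted summability. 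Summing the three estimates over $k$ against $\sum_k|k|^{(n_0+2)\vee N}\mathring{a}_k\leq a_F$, and over the finitely many relevant $m$, produces exactly \eqref{e:Antidiv1}.

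The main obstacle is precisely this last bookkeeping: verifying that every term arising from the Faà di Bruno/Taylor expansion of \cref{lemma:antidiv} — the $2n_0$ intermediate symbols and the remainder, with derivatives distributed in all possible ways among $a_{m,k}$, the powers of $\nabla^{\geq2}\xi_m$, and the oscillatory phase — is controlled by one of the two terms on the right‑hand side of \eqref{e:Antidiv1}; this is exactly what the three scaling relations on $(s_n)$ are engineered to guarantee. The remaining steps are a routine (if lengthy) adaptation of \cite[Corollary 8.2]{DLK23}, the only extra input being the transport estimates of \cref{prop:transport} needed to handle the time dependence of $\xi_m$ through the band‑limited flow of $v_l$.
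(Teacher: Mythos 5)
Your proposal is correct in substance and rests on the same analytic engine as the paper --- the non-stationary-phase expansion of \cref{lemma:antidiv}, the transport estimates for $\xi_m$, and the three scaling relations on $(s_n)$ --- but it is organized differently. The paper never expands the symbol of $\mathcal{R}$ itself. It applies \cref{lemma:antidiv} to Littlewood--Paley multipliers only, in order to establish the identity \eqref{e:decF}: $F$ coincides with its projection onto frequencies $\gtrsim\lambda$ up to a low-frequency remainder $-\sum_{m,k}\epsilon_{n_0}^{\lambda}(k\cdot\xi_m,a_{m,k})e^{i\lambda k\cdot\xi_m}$ whose $C^0$ norm is $\lesssim a_F s_{n_0+1}\lambda^{-n_0-1}$. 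The conclusion then follows from the operator bound $\|\mathcal{R}\proj_{\gtrsim\lambda}f\|_N\lesssim\lambda^{-1}\|f\|_N$ applied to $\|F\|_N\lesssim a_F s_0\lambda^{N}$, and from Bernstein on the low-frequency piece. Your route --- truncating $\mathfrak{m}_{\mathcal{R}}$ to a Schwartz symbol on the annulus $|\zeta|\sim\lambda$ and expanding --- obliges you to also estimate the $2n_0$ intermediate terms $C_j^{\lambda}:\nabla^{j}\mathfrak{m}$, which the paper's decomposition avoids entirely; what it buys is that the gain of $\lambda^{-1}$ appears pointwise from homogeneity rather than from a mapping property of $\mathcal{R}\proj_{\gtrsim\lambda}$.

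The one genuinely loose point is the dismissal of the mismatch $(\mathcal{R}-T_{\mathfrak{m}_\lambda})(a_{m,k}e^{i\lambda\xi_m\cdot k})$ as ``negligible''. The low-frequency content of the building block is \emph{not} rapidly decaying to all orders: the amplitudes are controlled only up to $n_0+1$ derivatives, so the tail at frequencies $<c\lambda$ has $C^0$ size $s_{n_0+1}\lambda^{-n_0-1}\mathring{a}_k|k|^{n_0+1}$ --- after Bernstein, exactly the size of the second term of \eqref{e:Antidiv1}, and not dominated by the first. Quantifying this tail requires precisely the estimate \eqref{est:epsN0L} that the paper proves (and that you reproduce for the remainder of your own expansion), so the ingredient is at your disposal; but as written, the second term of the target bound is misattributed to the $\epsilon_{n_0}$ error of your symbol expansion (which, with the normalization $\mathfrak{m}_\lambda=\lambda^{-1}g(\cdot/\lambda)$, actually comes out one power of $\lambda$ smaller), while its true source --- the low-frequency mismatch --- is waved away. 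To complete the argument you must state and prove the tail bound explicitly rather than declare it negligible.
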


\begin{proof}
     First of all, let us quickly show (more details can be found at \cite{DLK23}) that
 \begin{equation}\label{e:decF}
     F=\proj_{\gtrsim3\lambda/8 }\left(\sum_{m,k}a_{m,k}e^{i\lambda k \cdot \xi_m}\right)-\sum_{m,k}\epsilon_{n_0}^{\lambda}(k\cdot \xi_m,a_{m,k})e^{i\lambda k\cdot \xi_m},
 \end{equation}
 where
 \begin{align}
     \proj_{\gtrsim3\lambda/8}&=\sum_{2^j\geq 3\lambda/8}\proj_{2^j},
     \\
     \epsilon_{n_0}^{\lambda}(k\cdot \xi_m,a_{m,k})&\label{e:epsN0L}=\sum_{2^j\geq 3\lambda/8}\epsilon_{n_0,j}(k\cdot \xi_m,a_{m,k}),
 \end{align}
with $\epsilon_{n_0,j}$ being obtained by \autoref{lemma:antidiv} applied to the Fourier multiplier operator $\proj_{2^j}$.
Notice that, by construction, $\xi_m$ is smooth and space-periodic, since $v_l$ is so.

In order to show \eqref{e:decF}, let us consider $(m,k)\in \Z\times(\Z^3 \setminus \{0\})$ and $j_k \in \N$ such that $2^{j_k}\geq \frac54\lambda|k|$. Then 
\begin{equation}
 \proj_{\gtrsim3\lambda/8}
 =
 \sum_{3\lambda/8\le2^s\le 2^{j_k}}\proj_{2^s}+\proj_{>2^{j_k}}=:T_{\mathfrak{m}_1}+\sum_{j>j_k}T_{\mathfrak{m}_j},   
\end{equation}
where the functions $\mathfrak{m}_1\in \mathcal{S}(\R^3)$ and $T_{\mathfrak{m}_j}=\proj_{2^j}$ satisfy
\begin{equation}
  \supp \,\mathfrak{m}_1\subset B(0,2^{j_k+1}),
  \, \quad 
  \mathfrak{m}_1\equiv 1 \,\text{ on  }\,\overline{B(0,2^{j_k})}\setminus B\left(0,\frac{3}{4}\lambda |k|\right),
  \quad 
  \mathfrak{m}_j\equiv 0 \text{ on }\overline{B}\left(0,\frac{5}{4}\lambda |k|\right).  
\end{equation}
Moreover for every $t\in \supp_t a_{m,k}$, Assumption \ref{ass:SupportTime} of this proposition and \eqref{e:ClosetoIdFlux} imply that
\begin{equation}
    \|| (\nabla \xi_m(t))^Tk|-|k|\|_0\le 2\epsilon|k|\|v\|_{C_tC_x^1}\le \frac{|k|}{4}\implies \|\nabla(k\cdot  \xi_m(t))\|_0 \in \left[\frac34|k|,\frac54|k|\right]
\end{equation}
Therefore,  \autoref{lemma:antidiv} applied to $T_{m_1}$, and noticing that $\mathfrak{m}_1(\lambda\nabla(k\cdot\xi_m)) \equiv 1$, $\nabla^h \mathfrak{m}_1(\lambda\nabla(k\cdot\xi_m)) \equiv 0$ for every $h \geq 1$, giving:
\begin{align}
    T_{\mathfrak{m}_1}(a_{m,k}e^{i\lambda k\cdot \xi_m})
    &-\sum_{3\lambda/8\le 2^s\le 2^{j_k}}
    \mathcal{\epsilon}_{n_0,s}(k \cdot\xi_m,a_{m,k}) 
    e^{i\lambda k \cdot\xi_m}
    \\
    &=
    a_{m,k}\,\underbrace{\mathfrak{m}_1(\lambda\nabla(k\cdot\xi_m))}_{\equiv1}
    e^{i\lambda k \cdot\xi_m}
    +
    \sum_{h=1}^{2n_0}C_h^{\lambda}(k\cdot\xi_m,a_{m,k}):(\underbrace{\nabla^h \mathfrak{m}_1}_{\equiv0})(\lambda\nabla(k\cdot\xi_m))
    e^{i\lambda k \cdot\xi_m}
    \\
    &=
    a_{m,k}     e^{i\lambda k \cdot\xi_m}.
\end{align}
At last, \eqref{e:decF} follows from applying \autoref{lemma:antidiv} to $T_{\mathfrak{m}_j}$ for all $j>j_k$. Indeed, 
\[T_{\mathfrak{m}_j}(a_{m,k}e^{i\lambda k\cdot \xi_m})=\epsilon_{n_0,j}(k\cdot \xi_m,a_{m.k})e^{i\lambda k\cdot \xi_m}\]
thanks to $(\nabla^h\mathfrak{m}_j)(\lambda \nabla(k\cdot\xi_m))=0$ for all $h\in \N$.

 Now, for fixed $(m,k)$, we want to estimate $\|\epsilon_{n_0}^{\lambda}(k\cdot \xi_m,a_{m,k})\|_0$. Based on \eqref{e:epsN0L}, we have to consider $\epsilon_{n_0,j}(k\cdot \xi_m,a_{m.k})$, which is given by \eqref{e:epsn0}.

Let $Z$ be defined by \eqref{e:Z} and $Z_0(r):=r^{-1}Z(r)$, then
\begin{equation}\label{e_Zn}
    Z^{(n)}=nZ_0^{(n-1)}+Z_0^{(n)}, \quad Z^{(n)}(r):=\int_0^1(1-s)(-s)^n(y\cdot \nabla)^{n+2}(k\cdot \xi_m)(x-rsy)\,ds.
\end{equation}
Therefore,
Assumption \ref{ass:SupportTime} and \autoref{prop:transport} imply that
\begin{align}\label{est:Z}
    \|Z^{(n)}\|_0\lesssim |y|^{n+1}\|k\cdot \xi_m\|_{n+1}+|y|^{n+2}\|k\cdot \xi_m\|_{n+2}\lesssim |y|^{n+1}l^{-n}|k|[1+|y|l^{-1}].
\end{align}
Then, by means of \eqref{e:Bn}, \eqref{e:Bnk}, we have that
    \begin{align} \label{est:B}
        \|\beta_{n_2}[k\cdot \xi_m](r)\|_0&\lesssim\sum_{m=1}^{n_2}\lambda^m|y|^{m+n_2}|k|^m[1+|y|l^{-1}]^ml^{-n_2}\\
        &\lesssim(|y|l^{-1})^{n_2}|k|^{n_0+1}\sum_{m=1}^{n_2}(\lambda |y|)^m\sum_{i=0}^m(|y|l^{-1})^{i}.
    \end{align}

Instead, Assumption \ref{ass:estimateAmk} implies that
\begin{align}
    &\label{est:aPart}\|(y\cdot \nabla)^{n_1}a_{m,k}(x-ry)\|_0\lesssim |y|^{n_1}|\mathring{a}_k|s_{n_1}.
\end{align}
Then, \eqref{est:B}, \eqref{est:aPart} imply that
    \begin{align}
        \|\epsilon_{n_0,j}\|_0&\lesssim \sum_{n_1,n_2}s_{n_1}|k|^{n_0+1}|\mathring{a}_k|\sum_{m=1}^{n_2}\lambda^m\sum_{i=0}^ml^{-n_2-i}\int |\overset{\vee}{\mathfrak{m}}_{j}(y)||y|^{n_0+1+m+i}dy\\
        &\lesssim \sum_{n_1,n_2}s_{n_1}|k|^{n_0+1}|\mathring{a}_k|\sum_{m=1}^{n_2}\lambda^m\sum_{i=0}^ml^{-n_2-i}2^{-j(n_0+1+m+i)},
    \end{align}
where the last inequality is due to $\|\overset{\vee}{\mathfrak{m}}_{j}(y)|y|^n\|_{L^1}\lesssim2^{-jn}$.
Therefore, we have that 
\begin{align}\label{est:epsN0L}
    \|\epsilon_{n_0}^{\lambda}\|_0
    &\lesssim
    \sum_{n_1,n_2}s_{n_1}|k|^{n_0+1}|\mathring{a}_k|\sum_{m=1}^{n_2}\lambda^m\sum_{i=0}^ml^{-n_2-i}\sum_{2^j\gtrsim\lambda}2^{-j(n_0+1+m+i)}
    \\ \nonumber
    &\lesssim |k|^{n_0+1}|\mathring{a}_k|s_{n_0+1}\lambda^{-n_0-1},
\end{align}
where the last inequality makes use of Assumption \ref{ass:s_n}.
The assumption on the time supports, i.e. condition \ref{ass:SupportTime}, implies that 
\begin{equation}\label{e:epsN0LSum}
    \left\|\sum_{m,k}\epsilon_{n_0}^{\lambda}(k\cdot \xi_m,a_{m,k})\right\|_0
    \lesssim
    \sum_{k}|k|^{n_0+1}|\mathring{a}_k|s_{n_0+1}\lambda^{-n_0-1}\le a_Fs_{n_0+1}\lambda^{-n_0-1}.
\end{equation}
So, \eqref{e:decF} allows to conclude as follows:
    \begin{align}
        \|\mathcal{R}F\|_N&\le \|\mathcal{R}\proj_{\gtrsim3\lambda/8}F\|_N+\|\mathcal{R}\sum_{m,k}\epsilon^{\lambda}_{n_0}(k\cdot\xi_m,a_{m,k})\|_N\\
        &\lesssim \frac{\|F\|_N}{\lambda}+\lambda^N\|\sum_{m,k}\epsilon_{n_0}^{\lambda}(k\cdot \xi_m,a_{m,k})\|_0\\
        &\lesssim a_F[s_0\,\lambda^{N-1}+s_{n_0+1}\lambda^{-n_0-1} \lambda^N],
    \end{align}
where the second inequality is due to the Bernstein inequality, while the third one is due to
\begin{equation}\label{est:Fnorm}
\|F\|_N\lesssim\sum_k\sum_{N_1+N_2\le N}|\mathring{a}_k| s_{N_1}|k|^{N_2}\lambda^{N_2}\le a_F\, s_0\,\lambda^N,
\end{equation}
and the last inequality holds because of Assumption \ref{ass:s_n}.
\end{proof}

\begin{prop}\label{prop:antidiv2}Under the same assumptions of \autoref{prop:antidiv1}, let $l>\frac{100}{\lambda}$ and $w\in C^{\infty}(\T^3,\R^3)$ such that there exists a non-decreasing sequence $(b_n)_{n \in \N}$ satisfying for every $n,m \in \N$
\begin{align}
  b_n\lambda^m\le b_0\lambda^{m+n}  ,
  \quad
  \|w\|_n\le b_n.
\end{align}

Let us moreover suppose there exists $c>0$ such that
\begin{equation}\label{ass:estamk2}
    c\|D_{t,l}a_{m,k}(t)\|_j \le s_j\mathring{a}_k,
\end{equation}
where $D_{t,l}:=\partial_t+v_l\cdot\nabla$ and $j\le {\overline{n}}$, and that for any $n \in \N$
\begin{equation}\label{ass:controlV}
    \|v\|_{n+1}\le l^{-n}\|v\|_1.
\end{equation}
Then for every $N\le {\overline{n}-2}$ and $h_0\in \N\setminus\{0,1\}$ it holds
     \begin{align}\label{e:Antidiv2}
         \|D_t\mathcal{R}F(t)\|_N
         &\lesssim \Big[s_0(c\lambda)^{-1}+c^{-1}s_{n_0+1}\lambda^{-n_0-1}+s_0\lambda^{-1}\|v\|_1+\|v\|_0\lambda^{-n_0} s_{n_0+1}\\
         &\quad+(1+\lambda^{-n_0} s_{n_0+1})(b_0+l\|v\|_1)+s_0(l\lambda)^{-h_0}\lambda^2\|v\|_1\Big]\lambda^Na_F,
\end{align}
where $D_t:=\partial_t+(w+v)\cdot\nabla$.
\end{prop}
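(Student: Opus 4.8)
The plan is to reduce the bound on $D_t\mathcal{R}F$ to \cref{prop:antidiv1} supplemented by commutator estimates. First I split the advective derivative as $D_t = D_{t,l} + (w + v - v_l)\cdot\nabla$, noting $v - v_l = \proj_{>l^{-1}}v$, so that
\begin{equation}
  D_t\mathcal{R}F = D_{t,l}\mathcal{R}F + \big((w + \proj_{>l^{-1}}v)\cdot\nabla\big)\mathcal{R}F .
\end{equation}
The second term is elementary: by the Leibniz rule, the bounds $\|w\|_j \le b_j \le b_0\lambda^j$, the Bernstein estimate $\|\proj_{>l^{-1}}v\|_0 \lesssim l\|v\|_1$ together with assumption \eqref{ass:controlV} (which gives $\|\proj_{>l^{-1}}v\|_j \lesssim l^{1-j}\|v\|_1 \le l\lambda^j\|v\|_1$ for $j\ge 1$, since $l\lambda > 1$), and the already established \eqref{e:Antidiv1} for $\|\mathcal{R}F\|_{M}$, $M \le N+1 \le 2$, one gets for this term a contribution bounded by $(b_0 + l\|v\|_1 + \|v\|_0)$ times a combination of $s_0$ and $\lambda^{-n_0}s_{n_0+1}$, which is of the form appearing in \eqref{e:Antidiv2}.

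For the main term $D_{t,l}\mathcal{R}F$ I would invoke the decomposition \eqref{e:decF}, writing $\mathcal{R}F = \mathcal{R}\proj_{\gtrsim 3\lambda/8}F - \mathcal{R}E$ with $E := \sum_{m,k}\epsilon_{n_0}^{\lambda}(k\cdot\xi_m, a_{m,k})e^{i\lambda k\cdot\xi_m}$, and push $D_{t,l}$ inside. Since $\mathcal{R}$ and $\proj_{\gtrsim 3\lambda/8}$ are time-independent Fourier multipliers, $D_{t,l}$ commutes with $\partial_t$ and one has the identity
\begin{equation}
  D_{t,l}\mathcal{R}\proj_{\gtrsim 3\lambda/8}F = \mathcal{R}\proj_{\gtrsim 3\lambda/8}(D_{t,l}F) + [v_l\cdot\nabla,\mathcal{R}]\proj_{\gtrsim 3\lambda/8}F + \mathcal{R}[v_l\cdot\nabla,\proj_{\gtrsim 3\lambda/8}]F .
\end{equation}
The key point is that $D_{t,l}\xi_m = 0$, hence $D_{t,l}F = \sum_{m,k}(D_{t,l}a_{m,k})e^{i\lambda k\cdot\xi_m}$ is again of the form treated in \cref{prop:antidiv1}, the new amplitudes $D_{t,l}a_{m,k}$ keeping the same time support and, by \eqref{ass:estamk2}, obeying $\|D_{t,l}a_{m,k}\|_j \le (s_j/c)\mathring{a}_k$. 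Thus \cref{prop:antidiv1} applied with $s_j$ replaced by $s_j/c$ yields $\|\mathcal{R}(D_{t,l}F)\|_N \lesssim [s_0(c\lambda)^{-1} + c^{-1}s_{n_0+1}\lambda^{-n_0-1}]\lambda^N a_F$, which accounts for the first two terms of \eqref{e:Antidiv2}, and (again via \eqref{e:decF}) the same estimate governs $\mathcal{R}\proj_{\gtrsim 3\lambda/8}(D_{t,l}F)$. The summand $\mathcal{R}E$ is harmless: the proof of \cref{prop:antidiv1} gives $\|E\|_M \lesssim s_{n_0+1}\lambda^{-n_0-1}\lambda^M a_F$, so that $D_t\mathcal{R}E$ is of order $\lambda^{-n_0}$ and is absorbed into the $(1+\lambda^{-n_0}s_{n_0+1})(b_0+l\|v\|_1+\|v\|_0)$ term.

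What remains — and this is the technical heart of the proof — are the two commutators $[v_l\cdot\nabla,\mathcal{R}]\proj_{\gtrsim 3\lambda/8}F$ and $\mathcal{R}[v_l\cdot\nabla,\proj_{\gtrsim 3\lambda/8}]F$. These I would estimate by the symbol-calculus/Littlewood--Paley commutator bounds used throughout \cite{DLK23} (essentially \cite[Lemma A.4]{DLK23}, combined with the flow estimates \eqref{e:ClosetoIdFlux}, \eqref{e:2ndTransport} to control derivatives of $\xi_m$): since $v_l$ is frequency-localized at scale $\lesssim l^{-1}$ while $\proj_{\gtrsim 3\lambda/8}F$ lives at frequencies $\gtrsim \lambda \gg l^{-1}$, the principal symbols cancel, leaving a main term $\lesssim \lambda^{-1}\|v\|_1\,\|\proj_{\gtrsim 3\lambda/8}F\|_N \lesssim s_0\lambda^{-1}\|v\|_1\,\lambda^N a_F$ by \eqref{est:Fnorm}, while the remainder of the symbol expansion, pushed to order $h_0$ and estimated with assumption \eqref{ass:controlV}, is controlled using the inequality $(l\lambda)^{-h_0}\lambda^2 \le l$ (valid once $h_0 \ge 2$ and $l\lambda$ is large, which is the precise use of the hypothesis $l > 100/\lambda$) by $\lesssim s_0(l\lambda)^{-h_0}\lambda^2\|v\|_1\,\lambda^N a_F$. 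Collecting all the pieces gives \eqref{e:Antidiv2}. I expect the main obstacle to be precisely the bookkeeping of these commutator remainders: one must repeatedly exploit that a spatial derivative falling on the modulated phase $e^{i\lambda k\cdot\xi_m}$ costs a factor $\lambda$ whereas a derivative of $v_l$ or of an amplitude costs only $l^{-1}$, and use this gain $h_0$ times to overcome the $\lambda^2$ losses produced by differentiating the order-$(-1)$ symbol of $\mathcal{R}$ twice — this is exactly the role of the auxiliary parameter $h_0$ and of the structural assumptions \eqref{ass:controlV}, \eqref{ass:estamk2}.
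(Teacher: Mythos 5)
Your overall architecture coincides with the paper's: the same three contributions appear, namely $\mathcal{R}(D_{t,l}F)$ treated by \cref{prop:antidiv1} applied to the amplitudes $D_{t,l}a_{m,k}$ (which is legitimate precisely because $D_{t,l}\xi_m=0$ and \eqref{ass:estamk2} gives the bounds with $s_j$ replaced by $s_j/c$), the transport term $\big((w+(v-v_l))\cdot\nabla\big)\mathcal{R}F$ treated by Leibniz, Bernstein and \eqref{e:Antidiv1}, and the commutator $[v_l\cdot\nabla,\mathcal{R}]$ treated by splitting $F$ via \eqref{e:decF}, Taylor-expanding the symbol of $\mathcal{R}$ to order $h_0$ on the high-frequency part (main term $s_0\lambda^{-1}\|v\|_1$, remainder $s_0(l\lambda)^{-h_0}\lambda^2\|v\|_1$, with $l>100/\lambda$ used to keep $|\eta+\sigma(k-\eta)|\gtrsim|\eta|$ in the remainder), and Bernstein on the $\epsilon_{n_0}^{\lambda}$ part.

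There is, however, one step whose justification would fail as written. By first inserting the decomposition \eqref{e:decF} into $\mathcal{R}F$ and only then applying $D_{t,l}$, you create the term $D_{t,l}\mathcal{R}E$ with $E:=\sum_{m,k}\epsilon_{n_0}^{\lambda}(k\cdot\xi_m,a_{m,k})e^{i\lambda k\cdot\xi_m}$, and you dismiss it on the grounds that $\|E\|_M\lesssim s_{n_0+1}\lambda^{-n_0-1}\lambda^M a_F$. This does not suffice: $D_{t,l}\mathcal{R}E=\mathcal{R}(D_{t,l}E)+[v_l\cdot\nabla,\mathcal{R}]E$, and the time derivative hidden in $D_{t,l}E$ is not controlled by any spatial norm of $E$; one must differentiate the oscillatory-integral representation \eqref{e:epsn0} of the error (the phases $e^{-i\lambda\nabla\xi\cdot y}$, $e^{i\lambda Z}$ and the Bell polynomials $\beta_{n_2}$ all carry time dependence), which is exactly the content of the separate estimate \eqref{e:Antidiv3} and costs an extra factor $(c^{-1}\vee\|v_l\|_1)$. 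The paper avoids this entirely by never differentiating the decomposition: it writes $D_t\mathcal{R}F=\mathcal{R}(D_{t,l}F)+[v_l\cdot\nabla,\mathcal{R}]F+\big((w+(v-v_l))\cdot\nabla\big)\mathcal{R}F$ and applies \eqref{e:decF} only to the undifferentiated $F$ inside the commutator. Your ordering also produces the extra commutator $\mathcal{R}[v_l\cdot\nabla,\proj_{\gtrsim3\lambda/8}]F$, absent from the paper's proof, whose contribution ($\sim l\|v\|_1 s_0\lambda^N a_F$ after the frequency-localization gain) needs separate bookkeeping to be matched against the stated right-hand side. Both issues disappear if you adopt the paper's order of operations; as it stands, the treatment of $D_{t,l}\mathcal{R}E$ is a genuine gap.
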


\begin{proof}
By standard computations, we have that
\begin{equation}
    D_t\mathcal{R}F=\mathcal{R}(D_{t,l}F)+[v_l\cdot \nabla, \mathcal{R}]F+[(w+(v-v_l))\cdot \nabla]\mathcal{R}F.
\end{equation}
Since $D_{t,l}F=\sum_{k,m}D_{t,l}a_{m,k}e^{i\lambda\xi_m \cdot k}$, then the previous \autoref{prop:antidiv1} implies that
\begin{equation}\label{est:DRF1}
    \|\mathcal{R}(D_{t,l}F)\|_N\lesssim a_F\lambda ^N[s_0(c\lambda)^{-1}+s_{n_0+1}\lambda^{-n_0-1}c^{-1}].
\end{equation}
Instead,
\begin{align}\label{est:DRF2}
    \|[(w+(v-v_l))\cdot \nabla]\mathcal{R}F\|_N&\lesssim \sum_{N_1+N_2\le N}\|w+(v-v_l)\|_{N_1}\|RF\|_{N_2+1}\\
    &\lesssim \sum_{N_1+N_2\le N} (b_{N_1}+\|v\|_1 l^{1-N_1})\lambda^{N_2}(1+s_{n_0+1}\lambda^{-n_0})\\
    &\lesssim (b_0+l\|v\|_1)(1+s_{n_0+1}\lambda^{-n_0})\lambda^N,
\end{align}
where the second inequality is due to the previous result, Bernstein inequality, \eqref{ass:controlV} and the assumptions on $w$, while the third is due to $b_n\lambda^m\le b_0\lambda^{m+n}$.

At last, we need to estimate
\begin{equation}
    [v_l\cdot\nabla,\mathcal{R}]F
    =
    [v_l\cdot\nabla,\mathcal{R}]\proj_{\gtrsim \lambda}F
    -
    [v_l\cdot \nabla,\mathcal{R}]\sum_{m,k}\epsilon^{\lambda}_{n_0}(k\cdot \xi_m,a_{m,k})e^{i\lambda k\cdot \xi_m}=:I_1+I_2.
\end{equation}
Since $I_2$ has localized active frequencies, the Bernstein inequality implies that
\begin{equation}\label{est:DRF3}
\begin{aligned}
    \|I_2\|_N\lesssim \lambda^N\|I_2\|_0&\lesssim\lambda^N\|v_l\|_0\|\nabla \proj_{\lesssim\lambda}F\|_0
    \\
    &\lesssim
    \lambda^{N+1} \|v\|_0 \left\|\sum_{m,k}\epsilon^{\lambda}_{n_0}(k\cdot \xi_m,a_{m,k})\right\|_0\\
    &\lesssim \lambda^N\|v\|_0s_{n_0+1}\lambda^{-n_0}a_F,
\end{aligned}
\end{equation}
where the last inequality is due to \eqref{e:epsN0LSum}.
To study $I_1$, we preliminarily observe that by \cite[Proposition 4.12]{RT25} the operator $\mathcal{R}$ can be seen as a Fourier multiplier with coefficients $\mathscr{R}(k)$, $k \in \mathbb{Z}^3 \setminus \{0\}$, and since $I_1$ has zero space average we can rewrite
\begin{align}
    -I_1
        &=
        \sum_{\substack{k,\eta \in \Z^3 \setminus \{0\}}}
        i\eta e^{ik\cdot x}\mathscr{F}[v_l](k-\eta)\mathscr{F}[\proj_{\gtrsim \lambda}F](\eta)
        \left( \mathscr{R}(k)-\mathscr{R}(\eta) \right).
\end{align}
We can extend the function $\mathscr{R}$ to a smooth function defined on $\R^3$, still denoted by $\mathscr{R}$ with a slight abuse of notation, that coincides with the expression given in \cite[Equation 4.17]{RT25} for every $k \in \R^3 \setminus B(0,1/2)$. 
Since $l^{-1} < \frac{\lambda}{100}$ by assumption and by construction $\mathscr{F}[\proj_{\gtrsim \lambda}F](\eta) = 0$ for $\eta \lesssim \lambda$, $\mathscr{F}[v_l](k-\eta) = 0$ for $|k-\eta| > 2l^{-1}$, for every $k,\eta \in \Z^3 \setminus \{0\}$ contributing to the sum above it holds $|\eta-\sigma(\eta-k)|\geq \frac{|\eta|}2 \gtrsim \lambda$ for every $\sigma \in [0,1]$. As a consequence, we can apply integration by parts and the mean value theorem to obtain:
\begin{equation}
    \begin{aligned}
        -I_1
        &=
        \sum_{\substack{k,\eta \in \Z^3\setminus \{0\}}}
        i\eta e^{ik\cdot x}\mathscr{F}[v_l](k-\eta)\mathscr{F}[\proj_{\gtrsim \lambda}F](\eta)
        \left( \mathscr{R}(k)-\mathscr{R}(\eta) \right)
        \\
        &=
        \sum_{\substack{k,\eta \in \Z^3\setminus \{0\},\\|\eta|\gtrsim \lambda}}
        i\eta e^{ik\cdot x} \mathscr{F}[v_l](k-\eta)\mathscr{F}[F](\eta)
        \Bigg( \sum_{h=1}^{h_0}\frac{1}{h!}[(k-\eta)\cdot \nabla]^h\mathscr{R}(\eta)\\
        &+\frac{1}{h_0!}\int_0^1[(k-\eta)\cdot  \nabla  ]^{h_0+1}\mathscr{R}(\eta+\sigma(k-\eta))(1-\sigma)^{h_0}d\sigma \Bigg)=:J_1+J_2.
    \end{aligned}
\end{equation}
Let us observe that by \cite[Equation 4.17]{RT25} we have $|\nabla^h \mathscr{R}(\eta)| \sim |\eta|^{-1-h}$.   
Therefore
\begin{equation}\label{est:DRF4}
\begin{aligned}
    \|J_1\|_N
    &\lesssim
    \sum_{h=1}^{h_0}\sum_{N_1+N_2\le N}l^{1-h-N_1}\|v\|_1\|F\|_{N_2+1}\lambda^{-h-1}\\
    &\lesssim s_0\|v\|_1\lambda^{-2}\sum_{N_1+N_2\le N}l^{-N_1}\lambda^{N_2+1}a_F\lesssim a_F s_0\|v\|_1\lambda^{N-1},
\end{aligned}
\end{equation}
where the second-to-last inequality is due to \eqref{est:Fnorm}.
At last, 
\begin{equation}\label{est:DRF5}
    \begin{aligned}
        \|J_2\|_N
        &\lesssim
        \sum_{\substack{k,\eta \in \Z^3,\\|\eta|\gtrsim \lambda}}
        |k|^N|\eta||\mathscr{F}[v_l](k-\eta)||\mathscr{F}[F](\eta)||k-\eta|^{h_0+1}|\eta-\sigma(\eta-k)|^{-h_0-2}
        \\
        &\lesssim
        \sum_{\substack{k,\eta \in \Z^3,\\|\eta|\gtrsim \lambda}} 
        |\eta|^{N-h_0-1}|\mathscr{F}[F](\eta)||k-\eta|^{h_0}|\mathscr{F}[\nabla v_l](k-\eta)|
        \\
         &\lesssim
         \frac{l^{-h_0}}{\lambda^{h_0+1}} \|\nabla^N \proj_{\gtrsim\lambda}F \|_{L^2}\|\nabla v\|_{L^2} 
         \lesssim 
         (l\lambda)^{-h_0}\lambda^2\|v\|_1s_0\,a_F\lambda^N.
    \end{aligned}
\end{equation}
where the second inequality is due to $|\eta-\sigma(\eta-k)|\geq \frac{|\eta|}2$ and $|k|\lesssim |\eta|$.
All in all, \eqref{est:DRF1}, \eqref{est:DRF2}, \eqref{est:DRF3}, \eqref{est:DRF4}, \eqref{est:DRF5} prove the claim.
\end{proof}
\begin{oss}\label{oss:postAntidiv}
    Notice that 
    \begin{equation}
     \sum_{h=1}^{h_0}\|\nabla^hv_l\nabla^h\mathcal{R}\proj_{\gtrsim\lambda}F\|_N\lesssim \sum_{h=1}^{h_0}\sum_{N_1+N_2\le N}l^{1-h}\|\nabla v_l\|_{N_1}\lambda^{-h}\|F\|_{N_2}.   
    \end{equation}
    This inequality and \eqref{est:DRF5} imply that
    \begin{equation}
        \|[v_l\cdot\nabla,\mathcal{R}]\proj_{\gtrsim \lambda}F\|_N\lesssim (l+(l\lambda)^{-h_0}\lambda^2)\sum_{N_1+N_2\le N}\|\nabla v_l\|_{N_1}\|F\|_{N_2}.
    \end{equation}
\end{oss}
\begin{prop}
    Under the same assumption of  \autoref{prop:antidiv2}, it holds that
    \begin{align}
        \label{e:Antidiv3}
        \left\|D_{t,l}\sum_{m,k}\epsilon^{\lambda}_{n_0}(k\cdot \xi_m,a_{m,k})e^{i\lambda k\cdot \xi_m}\right\|_0
        \lesssim 
        a_F(c^{-1}\vee \|v_l\|_1)s_{n_0+1}\lambda^{-n_0-1},
    \end{align}
\end{prop}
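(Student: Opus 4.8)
The plan is as follows. Since $D_{t,l}\xi_m=0$ by construction of the flows $\xi_m$, one has $D_{t,l}e^{i\lambda k\cdot\xi_m}=0$, and therefore
\[
D_{t,l}\Big(\sum_{m,k}\epsilon^{\lambda}_{n_0}(k\cdot\xi_m,a_{m,k})e^{i\lambda k\cdot\xi_m}\Big)
=\sum_{m,k}\big(D_{t,l}\epsilon^{\lambda}_{n_0}(k\cdot\xi_m,a_{m,k})\big)e^{i\lambda k\cdot\xi_m};
\]
only the error coefficient is affected by the derivative. Since by Assumption \ref{ass:SupportTime} the time-supports in $m$ overlap boundedly and $|e^{i\lambda k\cdot\xi_m}|=1$, it suffices to bound $\|D_{t,l}\epsilon^{\lambda}_{n_0}(k\cdot\xi_m,a_{m,k})\|_0$ for a single pair $(m,k)$ and sum over $k$. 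I would then recall $\epsilon^{\lambda}_{n_0}=\sum_{2^j\geq 3\lambda/8}\epsilon_{n_0,j}$ with $\epsilon_{n_0,j}$ given by the explicit kernel \eqref{e:epsn0}, and differentiate under the integral, distributing $D_{t,l}=\partial_t+v_l\cdot\nabla$ by the Leibniz rule over the four factors $e^{-i\lambda\nabla(k\cdot\xi_m)\cdot y}$, $((y\cdot\nabla)^{n_1}a_{m,k})(x-ry)$, $e^{i\lambda Z[k\cdot\xi_m]_{x,y}(r)}$ and $\beta_{n_2}[k\cdot\xi_m](r)$.

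The goal is to show that every resulting term is $\lesssim(c^{-1}\vee\|v_l\|_1)\,|k|^{n_0+2}\mathring{a}_k\,s_{n_0+1}\lambda^{-n_0-1}$, so that summation over $k$ using $\sum_k|k|^{n_0+2}\mathring{a}_k\leq a_F$ from Assumption \ref{ass:estimateAmk} (this is exactly why the weight $(n_0+2)\vee N$ is imposed there) closes the estimate. When $D_{t,l}$ hits the amplitude $a_{m,k}$ it produces $D_{t,l}a_{m,k}$, which by \eqref{ass:estamk2} satisfies $\|D_{t,l}a_{m,k}\|_j\leq c^{-1}s_j\mathring{a}_k$ for $j\leq n_0+1$; the resulting contribution has exactly the structure of $\epsilon^{\lambda}_{n_0}(k\cdot\xi_m,D_{t,l}a_{m,k})$, so the computation leading to \eqref{est:epsN0L}--\eqref{e:epsN0LSum} applies verbatim with $s_j$ replaced by $c^{-1}s_j$, giving the bound with prefactor $c^{-1}$. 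When $D_{t,l}$ hits a factor built from $\xi_m$, I would use $D_{t,l}\xi_m=0$ — whence $D_{t,l}(\nabla\xi_m)^{-1}=\nabla v_l(\nabla\xi_m)^{-1}$ and, inductively, $D_{t,l}$ of any polynomial expression in $\nabla^{\leq N}\xi_m$ is $\|v_l\|_1$ times an expression of the same homogeneity in $\xi_m$, the usual $l^{-1}$ losses being controlled through \eqref{ass:controlV} and \cref{prop:transport} — so that the estimates \eqref{est:Z} and \eqref{est:B} are reproduced with one extra power of $|y|$ and a prefactor $\|v_l\|_1$.

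The single delicate point is the $\lambda$-versus-$2^{-j}$ bookkeeping when $D_{t,l}$ lands on the oscillatory factors: $D_{t,l}e^{-i\lambda\nabla(k\cdot\xi_m)\cdot y}$ and $D_{t,l}e^{i\lambda Z}$ carry an explicit $\lambda$, but the phase derivatives satisfy $|D_{t,l}(\nabla(k\cdot\xi_m)\cdot y)|\lesssim|k|\,|y|\,\|v_l\|_1$ and $|D_{t,l}Z|\lesssim|k|\,|y|^{2}l^{-1}\|v_l\|_1$, so the stray $\lambda$ is always accompanied by at least one extra power of $|y|$. Integrating against $\overset{\vee}{\mathfrak{m}}_j$, this extra $|y|$ yields one extra factor $2^{-j}$, and since $\lambda\sum_{2^j\gtrsim\lambda}2^{-j(p+1)}\sim\sum_{2^j\gtrsim\lambda}2^{-jp}$, the final power of $\lambda$ in \eqref{est:epsN0L} is unchanged while the prefactor picks up $|k|\|v_l\|_1$ (with an even better gain for the $Z$-phase). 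Collecting all terms as in the proof of \cref{prop:antidiv1} then gives $\|D_{t,l}\epsilon^{\lambda}_{n_0}(k\cdot\xi_m,a_{m,k})\|_0\lesssim(c^{-1}\vee\|v_l\|_1)|k|^{n_0+2}\mathring{a}_k s_{n_0+1}\lambda^{-n_0-1}$, and summing over $k$ and over the boundedly many overlapping $m$ yields \eqref{e:Antidiv3}.

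An equivalent, arguably cleaner route is to invoke the identity $\epsilon^{\lambda}_{n_0}(k\cdot\xi_m,a_{m,k})e^{i\lambda k\cdot\xi_m}=-\proj_{<3\lambda/8}(a_{m,k}e^{i\lambda k\cdot\xi_m})$ from \eqref{e:decF}: then $D_{t,l}$ of this quantity equals $\epsilon^{\lambda}_{n_0}(k\cdot\xi_m,D_{t,l}a_{m,k})e^{i\lambda k\cdot\xi_m}-[v_l\cdot\nabla,\proj_{<3\lambda/8}](a_{m,k}e^{i\lambda k\cdot\xi_m})$, the first summand being handled exactly as above and the commutator being controlled by a Bernstein/commutator bound together with the observation that the frequency shell on which the cutoff $\proj_{<3\lambda/8}$ transitions lies strictly below $\tfrac34\lambda|k|$, i.e. in the non-resonant regime where $a_{m,k}e^{i\lambda k\cdot\xi_m}$ is already $O(\lambda^{-n_0-1})$ in $C^0$. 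I expect the $\lambda$-versus-$2^{-j}$ bookkeeping (third paragraph) to be the only real obstacle; the rest is a routine rerun of \eqref{est:Z}--\eqref{e:epsN0LSum}.
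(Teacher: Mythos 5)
Your main route is exactly the paper's proof: differentiate the explicit kernel \eqref{e:epsn0} under the integral, invoke \eqref{ass:estamk2} when $D_{t,l}$ lands on the amplitude and transport/flow bounds (costing $\|v_l\|_1$) when it lands on the $\xi_m$-dependent factors, and absorb the stray $\lambda$'s from the oscillatory phases into the extra powers of $|y|$, hence of $2^{-j}$, precisely as the paper does in \eqref{est:DexpGradXi}--\eqref{est:DexpZ} and \eqref{est:DepsN0L}. The one detail you gloss over is that $D_{t,l}$ applied to a factor evaluated at the shifted point $x-ry$ is not the material derivative at that point but carries an extra commutator $(v_l(x)-v_l(x-ry))\cdot\nabla$, bounded by $|y|\,\|v_l\|_1$ times one additional derivative (cf.\ \eqref{est:DaPart}); this is where part of the $\|v_l\|_1$ in the prefactor originates, and it fits into your bookkeeping without altering the conclusion.
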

\begin{proof}
     For fixed $(m,k)$, we want to estimate $\|D_{t,l}\epsilon_{n_0}^{\lambda}\|_0$, given by \eqref{e:epsN0L}. we have to consider $\epsilon_{n_0,j}(k\cdot \xi_m,a_{m.k})$, which is given by \eqref{e:epsn0}.
Based on \eqref{e_Zn} and $Z_0(r):=r^{-1}Z(r)$, we have that
\begin{equation}
\begin{aligned}
    \|D_{t,l}Z^{(n)}_0\|_0
    &\lesssim 
    \left\|\int_0^1(1-s)(-s)^n[v_l(x)-v_l(x-yrs)]\cdot \nabla_x[(y\cdot \nabla)^{n+2}(k\cdot\xi_m)(x-rsy)]ds\right\|_0 
    \\
    &+
    \left\|\int_0^1(1-s)(-s)^n[D_{t,l}(y\cdot \nabla)^{n+2}]k\cdot\xi_m(x-rsy)ds\right\|_0
    \\
    &\lesssim
    \|v_l\|_1|y|^{n+2}l^{-n-1}|k|[1+l^{-1}|y|],
\end{aligned}
\end{equation}
where the last inequality is due to the following one, which is proved in \cite{DLK23},
\begin{equation}\label{e:techical1}
    \|D_{t,l}(y\cdot\nabla)^{n}g\|_0
    \lesssim 
    |y|^{n}
    \left(\|D_{t,l}g\|_n+\sum_{\substack{m_1+m_2=n,\\m_1\geq 1}} \|v_l\|_{m_1}\|g\|_{m_2+1}\right).
\end{equation}
Then, similarly to \eqref{est:Z}, \eqref{est:B}, we have that
\begin{align}
    \label{est:DZ} \|D_{t,l}Z^{(n)}\|_0
    &\lesssim 
    |y|^{n+1}l^{-n}|k|\|v_l\|_1[1+|y|l^{-1}+|y|^2l^{-2}],\\
    \label{est:DB} \|D_{t,l}\beta_{n_2}[k\cdot \xi_m]\|_0
    &\lesssim
    \|v_l\|_1[1+|y|^2l^{-2}](|y|l^{-1})^{n_2}|k|^{n_0+1}\sum_{m=1}^{n_2}(\lambda |y|)^m\sum_{i=0}^m(|y|l^{-1})^{i}.
\end{align}
Now, \eqref{e:MatDerFlux},\eqref{prop:transport} imply that
\begin{align}
    \label{est:DexpGradXi} \|D_{t,l}(e^{-i\lambda y\cdot \nabla (k\cdot \xi_m)})\|_0&\lesssim \lambda |y|\|v_l\|_1|k|,\\
    \label{est:DexpZ}
        \|D_{t,l}e^{iZ(k\cdot \xi_m)}\|_0
        &\lesssim
        \lambda \Bigg\|\int_0^1(1-s)(y\otimes y):[(D_{t,l}\nabla^2(k\cdot \xi_m)(x-ry)\\
        &\quad+\left((v_l(x)-v_l(x-ry))\cdot\nabla\right)\nabla^2(k\cdot \xi_m)(x-ry)ds]e^{i\lambda Z(r)}\Bigg\|_0\\
        &\lesssim\lambda |y|^2|k|l^{-1}\|v_l\|_1[1+|y|l^{-1}].
\end{align}
Instead, Assumption \ref{ass:estimateAmk} and \eqref{e:techical1} imply that
\begin{align}
    &\label{est:DaPart}\begin{aligned}
        \|D_{t,l}[(y\cdot \nabla)^{n_1}a_{m,k}(x-ry)]\|_0&\lesssim\|(v_l(x)-v_l(x-yr))\cdot \nabla [(y\cdot \nabla)^{n_1}a_{m,k}(x-ry)]\|_0\\
        &\quad+\|(D_{t,l}(y\cdot \nabla)^{n_1}a_{m,k}(x-ry)\|_0\\
        &\lesssim |y|^{n_1+1}\|v_l\|_1s_{n_1+1}|\mathring{a}_k|+|y|^{n_1}|\mathring{a}_k|s_{n_1}(c^{-1}\vee \|v_l\|_1).
    \end{aligned}
\end{align}
By following the computations used to get \eqref{est:epsN0L}, \eqref{est:DB}, \eqref{est:DexpZ}, \eqref{est:DexpGradXi}, \eqref{est:DaPart} imply that 
\begin{equation}\label{est:DepsN0L}
    \|D_{t,l}\epsilon^{\lambda}_{n_0}\|_0\lesssim |k|^{n_0+2}\mathring{a}_k(c^{-1}\vee \|v_l\|_1)s_{n_0+1}\lambda^{-n_0-1}.
\end{equation}
Then, the claim follows by just summing up in $m$ and $k$.
\end{proof}

\bibliography{biblio}{}

@article {Ku04,
    AUTHOR = {Kuksin, Sergei B.},
     TITLE = {The {E}ulerian limit for 2{D} statistical hydrodynamics},
   JOURNAL = {J. Statist. Phys.},
  FJOURNAL = {Journal of Statistical Physics},
    VOLUME = {115},
      YEAR = {2004},
    NUMBER = {1-2},
     PAGES = {469--492},
      ISSN = {0022-4715,1572-9613},
   MRCLASS = {76D05 (35Q30 35R60 60H40 76M35 82C31 82D15)},
  MRNUMBER = {2070104},
MRREVIEWER = {Nader\ Masmoudi},
       DOI = {10.1023/B:JOSS.0000019830.64243.a2},
       URL = {https://doi.org/10.1023/B:JOSS.0000019830.64243.a2},
}

@article {BeCZGH16,
    AUTHOR = {Bedrossian, Jacob and Coti Zelati, Michele and Glatt-Holtz,
              Nathan},
     TITLE = {Invariant measures for passive scalars in the small noise
              inviscid limit},
   JOURNAL = {Comm. Math. Phys.},
  FJOURNAL = {Communications in Mathematical Physics},
    VOLUME = {348},
      YEAR = {2016},
    NUMBER = {1},
     PAGES = {101--127},
      ISSN = {0010-3616,1432-0916},
   MRCLASS = {35R60 (35K15 35P05 37D99 60H30 76F20)},
  MRNUMBER = {3551262},
MRREVIEWER = {S\'ebastien\ J.\ Boyaval},
       DOI = {10.1007/s00220-016-2758-9},
       URL = {https://doi.org/10.1007/s00220-016-2758-9},
}

@article{LuLuZh25,
      title={A proof of Onsager's conjecture for the stochastic 3D Euler equations}, 
      author={Huaxiang Lü and Lin Lü and Rongchan Zhu},
      year={2025},
      journal={arxiv:2505.06915}, 
}

@article{KiKo24,
      title={Non-uniqueness of H\"older continuous solutions for stochastic Euler and Hypodissipative Navier-Stokes equations}, 
      author={Kush Kinra and Ujjwal Koley},
      year={2024},
      journal={arXiv:2407.20270},
      primaryClass={math.AP},
      url={https://arxiv.org/abs/2407.20270}, 
}

@article{LuZh24,
title = {Stationary solutions to stochastic 3D Euler equations in Hölder space},
journal = {Stochastic Processes and their Applications},
volume = {177},
pages = {104465},
year = {2024},
issn = {0304-4149},
doi = {https://doi.org/10.1016/j.spa.2024.104465},
url = {https://www.sciencedirect.com/science/article/pii/S0304414924001716},
author = {Lin Lü and Rongchan Zhu},
}

@article {BrMo21,
    AUTHOR = {Breit, D. and Moyo, T. C.},
     TITLE = {Dissipative solutions to the stochastic {E}uler equations},
   JOURNAL = {J. Math. Fluid Mech.},
  FJOURNAL = {Journal of Mathematical Fluid Mechanics},
    VOLUME = {23},
      YEAR = {2021},
    NUMBER = {3},
     PAGES = {Paper No. 80, 23},
      ISSN = {1422-6928,1422-6952},
   MRCLASS = {60H15 (35Q31 35R60 76B03 76M35)},
  MRNUMBER = {4289878},
MRREVIEWER = {Gaurav\ Dhariwal},
       DOI = {10.1007/s00021-021-00606-x},
       URL = {https://doi.org/10.1007/s00021-021-00606-x},
}

@article {BeFl99,
    AUTHOR = {Bessaih, Hakima and Flandoli, Franco},
     TITLE = {{$2$}-{D} {E}uler equation perturbed by noise},
   JOURNAL = {NoDEA Nonlinear Differential Equations Appl.},
  FJOURNAL = {NoDEA. Nonlinear Differential Equations and Applications},
    VOLUME = {6},
      YEAR = {1999},
    NUMBER = {1},
     PAGES = {35--54},
      ISSN = {1021-9722,1420-9004},
   MRCLASS = {60H15 (35Q30 35R60 76B03 76M35)},
  MRNUMBER = {1674779},
MRREVIEWER = {Marek\ Capi\'nski},
       DOI = {10.1007/s000300050063},
       URL = {https://doi.org/10.1007/s000300050063},
}

@article{MiVa00,
author = {R. Mikulevicius and G. Valiukevicius},
title = {{On Stochastic Euler equation in $\mathbb{R}^d$}},
volume = {5},
journal = {Electronic Journal of Probability},
number = {none},
publisher = {Institute of Mathematical Statistics and Bernoulli Society},
pages = {1 -- 20},
keywords = {Euler equation, Stochastic partial differential equations},
year = {2000},
doi = {10.1214/EJP.v5-62},
URL = {https://doi.org/10.1214/EJP.v5-62}
}

@article{GiKwNo,
      title={The $L^3$-based strong Onsager theorem}, 
      author={Vikram Giri and Hyunju Kwon and Matthew Novack},
      year={2025},
      journal={arXiv:2305.18509},
      primaryClass={math.AP},
      url={https://arxiv.org/abs/2305.18509}, 
}

@article {Is22,
    AUTHOR = {Isett, Philip},
     TITLE = {Nonuniqueness and existence of continuous, globally
              dissipative {E}uler flows},
   JOURNAL = {Arch. Ration. Mech. Anal.},
  FJOURNAL = {Archive for Rational Mechanics and Analysis},
    VOLUME = {244},
      YEAR = {2022},
    NUMBER = {3},
     PAGES = {1223--1309},
      ISSN = {0003-9527,1432-0673},
   MRCLASS = {35Q31 (76B03)},
  MRNUMBER = {4419613},
       DOI = {10.1007/s00205-022-01780-6},
       URL = {https://doi.org/10.1007/s00205-022-01780-6},
}

@article {CoDR20,
    AUTHOR = {Colombo, Maria and De Rosa, Luigi},
     TITLE = {Regularity in time of {H}\"older solutions of {E}uler and
              hypodissipative {N}avier-{S}tokes equations},
   JOURNAL = {SIAM J. Math. Anal.},
  FJOURNAL = {SIAM Journal on Mathematical Analysis},
    VOLUME = {52},
      YEAR = {2020},
    NUMBER = {1},
     PAGES = {221--238},
      ISSN = {0036-1410,1095-7154},
   MRCLASS = {35Q31 (35A01 35D30 35Q30 35R11 76D03)},
  MRNUMBER = {4051979},
MRREVIEWER = {Benedetta\ Ferrario},
       DOI = {10.1137/19M1259900},
       URL = {https://doi.org/10.1137/19M1259900},
}

@article{RT25,
title = {Non-uniqueness of weak solutions for a logarithmically supercritical hyperdissipative Navier-Stokes system},
journal = {Journal of Functional Analysis},
volume = {289},
number = {4},
pages = {110989},
year = {2025},
issn = {0022-1236},
doi = {https://doi.org/10.1016/j.jfa.2025.110989},
url = {https://www.sciencedirect.com/science/article/pii/S0022123625001715},
author = {Marco Romito and Francesco Triggiano},
keywords = {Navier-Stokes equations, Convex integration, Non-uniqueness, Slightly supercritical hyper-dissipation},
}

@article {ChFeFl21,
    AUTHOR = {Chiodaroli, Elisabetta and Feireisl, Eduard and Flandoli,
              Franco},
     TITLE = {Ill-posedness for the full {E}uler system driven by
              multiplicative white noise},
   JOURNAL = {Indiana Univ. Math. J.},
  FJOURNAL = {Indiana University Mathematics Journal},
    VOLUME = {70},
      YEAR = {2021},
    NUMBER = {4},
     PAGES = {1267--1282},
      ISSN = {0022-2518,1943-5258},
   MRCLASS = {76N10 (35Q31 60H05 60H15)},
  MRNUMBER = {4318474},
MRREVIEWER = {Olga\ S.\ Rozanova},
       DOI = {10.1512/iumj.2021.70.8591},
       URL = {https://doi.org/10.1512/iumj.2021.70.8591},
}

@article {DLSz10,
    AUTHOR = {De Lellis, Camillo and Sz\'ekelyhidi, Jr., L\'aszl\'o},
     TITLE = {On admissibility criteria for weak solutions of the {E}uler
              equations},
   JOURNAL = {Arch. Ration. Mech. Anal.},
  FJOURNAL = {Archive for Rational Mechanics and Analysis},
    VOLUME = {195},
      YEAR = {2010},
    NUMBER = {1},
     PAGES = {225--260},
      ISSN = {0003-9527,1432-0673},
   MRCLASS = {35Q31 (35A02 35L65 76N15)},
  MRNUMBER = {2564474},
MRREVIEWER = {Stefano\ Bianchini},
       DOI = {10.1007/s00205-008-0201-x},
       URL = {https://doi.org/10.1007/s00205-008-0201-x},
}

@article {Sc77,
    AUTHOR = {Scheffer, Vladimir},
     TITLE = {Hausdorff measure and the {N}avier-{S}tokes equations},
   JOURNAL = {Comm. Math. Phys.},
  FJOURNAL = {Communications in Mathematical Physics},
    VOLUME = {55},
      YEAR = {1977},
    NUMBER = {2},
     PAGES = {97--112},
      ISSN = {0010-3616,1432-0916},
   MRCLASS = {35Q10 (49F99)},
  MRNUMBER = {510154},
       URL = {http://projecteuclid.org/euclid.cmp/1103900978},
}

@article {FlRo02,
    AUTHOR = {Flandoli, Franco and Romito, Marco},
     TITLE = {Partial regularity for the stochastic {N}avier-{S}tokes
              equations},
   JOURNAL = {Trans. Amer. Math. Soc.},
  FJOURNAL = {Transactions of the American Mathematical Society},
    VOLUME = {354},
      YEAR = {2002},
    NUMBER = {6},
     PAGES = {2207--2241},
      ISSN = {0002-9947,1088-6850},
   MRCLASS = {60H15 (35Q30 35R60 76D03 76M35)},
  MRNUMBER = {1885650},
MRREVIEWER = {Marek\ Capi\'nski},
       DOI = {10.1090/S0002-9947-02-02975-6},
       URL = {https://doi.org/10.1090/S0002-9947-02-02975-6},
}

@article {CaKoNi82,
    AUTHOR = {Caffarelli, L. and Kohn, R. and Nirenberg, L.},
     TITLE = {Partial regularity of suitable weak solutions of the
              {N}avier-{S}tokes equations},
   JOURNAL = {Comm. Pure Appl. Math.},
  FJOURNAL = {Communications on Pure and Applied Mathematics},
    VOLUME = {35},
      YEAR = {1982},
    NUMBER = {6},
     PAGES = {771--831},
      ISSN = {0010-3640,1097-0312},
   MRCLASS = {35Q10 (76D05)},
  MRNUMBER = {673830},
MRREVIEWER = {Tai\ Ping\ Liu},
       DOI = {10.1002/cpa.3160350604},
       URL = {https://doi.org/10.1002/cpa.3160350604},
}

@article {BD15+,
    AUTHOR = {Buckmaster, Tristan and De Lellis, Camillo and Isett, Philip
              and Sz\'ekelyhidi, Jr., L\'aszl\'o},
     TITLE = {Anomalous dissipation for {$1/5$}-{H}\"older {E}uler flows},
   JOURNAL = {Ann. of Math. (2)},
  FJOURNAL = {Annals of Mathematics. Second Series},
    VOLUME = {182},
      YEAR = {2015},
    NUMBER = {1},
     PAGES = {127--172},
      ISSN = {0003-486X,1939-8980},
   MRCLASS = {35Q31 (35B65)},
  MRNUMBER = {3374958},
MRREVIEWER = {Francesco\ Fanelli},
       DOI = {10.4007/annals.2015.182.1.3},
       URL = {https://doi.org/10.4007/annals.2015.182.1.3},
}

@article {DLK23,
    AUTHOR = {De Lellis, Camillo and Kwon, Hyunju},
     TITLE = {On nonuniqueness of {H}\"older continuous globally dissipative
              {E}uler flows},
   JOURNAL = {Anal. PDE},
  FJOURNAL = {Analysis \& PDE},
    VOLUME = {15},
      YEAR = {2022},
    NUMBER = {8},
     PAGES = {2003--2059},
      ISSN = {2157-5045,1948-206X},
   MRCLASS = {35Q31 (35D30 76B03)},
  MRNUMBER = {4546502},
       DOI = {10.2140/apde.2022.15.2003},
       URL = {https://doi.org/10.2140/apde.2022.15.2003},
}

@article {HoZhZh25,
    AUTHOR = {Hofmanov\'a, Martina and Zhu, Rongchan and Zhu, Xiangchan},
     TITLE = {Non-unique ergodicity for deterministic and stochastic 3{D}
              {N}avier-{S}tokes and {E}uler equations},
   JOURNAL = {Arch. Ration. Mech. Anal.},
  FJOURNAL = {Archive for Rational Mechanics and Analysis},
    VOLUME = {249},
      YEAR = {2025},
    NUMBER = {3},
     PAGES = {Paper No. 33, 54},
      ISSN = {0003-9527,1432-0673},
   MRCLASS = {35Q30 (35Q31 35R60)},
  MRNUMBER = {4903692},
       DOI = {10.1007/s00205-025-02102-2},
       URL = {https://doi.org/10.1007/s00205-025-02102-2},
}

@book {FMRT01,
    AUTHOR = {Foias, C. and Manley, O. and Rosa, R. and Temam, R.},
     TITLE = {Navier-{S}tokes equations and turbulence},
    SERIES = {Encyclopedia of Mathematics and its Applications},
    VOLUME = {83},
 PUBLISHER = {Cambridge University Press, Cambridge},
      YEAR = {2001},
     PAGES = {xiv+347},
      ISBN = {0-521-36032-3},
   MRCLASS = {76-02 (35Q30 37L30 37N10 76D05 76D06 76F05 76F20)},
  MRNUMBER = {1855030},
MRREVIEWER = {Xiaoming\ Wang},
       DOI = {10.1017/CBO9780511546754},
       URL = {https://doi.org/10.1017/CBO9780511546754},
}

@article{Ro10,
author = {Marco Romito},
title = {Existence of martingale and stationary suitable weak solutions for a stochastic {Navier–Stokes} system},
journal = {Stochastics},
volume = {82},
number = {3},
pages = {327--337},
year = {2010},
publisher = {Taylor \& Francis},
doi = {10.1080/17442501003721542},
}

@article{DuRo00,
doi = {10.1088/0951-7715/13/1/312},
url = {https://dx.doi.org/10.1088/0951-7715/13/1/312},
year = {2000},
month = {jan},
publisher = {},
volume = {13},
number = {1},
pages = {249},
author = {Jean Duchon and  Raoul Robert},
title = {Inertial energy dissipation for weak solutions of incompressible {Euler and Navier-Stokes} equations},
journal = {Nonlinearity},
}

@article{Is18,
	author = {Isett, P.},
	date-modified = {2019-10-03 17:56:56 +0200},
	doi = {10.4007/annals.2018.188.3.4},
	fjournal = {Annals of Mathematics. Second Series},
	issn = {0003-486X},
	journal = {Ann. of Math. (2)},
	mrclass = {35A02 (35D30 35Q31 76B03 76F02 76F05)},
	mrnumber = {3866888},
	number = {3},
	pages = {871--963},
	title = {A proof of {O}nsager's conjecture},
	url = {https://doi.org/10.4007/annals.2018.188.3.4},
	volume = {188},
	year = {2018},
	Bdsk-Url-1 = {https://doi.org/10.4007/annals.2018.188.3.4}}

@article{BuDLSzVi19,
	author = {Buckmaster, Tristan and De Lellis, Camillo and Sz\'{e}kelyhidi, Jr., L and Vicol, V.},
	date-added = {2019-10-03 13:49:52 +0200},
	date-modified = {2019-10-03 18:40:04 +0200},
	doi = {10.1002/cpa.21781},
	fjournal = {Communications on Pure and Applied Mathematics},
	issn = {0010-3640},
	journal = {Comm. Pure Appl. Math.},
	mrclass = {76B03 (35Q31)},
	mrnumber = {3896021},
	number = {2},
	pages = {229--274},
	title = {Onsager's conjecture for admissible weak solutions},
	url = {https://doi.org/10.1002/cpa.21781},
	volume = {72},
	year = {2019},
	Bdsk-Url-1 = {https://doi.org/10.1002/cpa.21781}}

@article{Ki09,
	author = {Kim, Jong Uhn},
	date-added = {2021-03-03 09:49:33 +0100},
	date-modified = {2021-03-03 09:49:41 +0100},
	doi = {10.1016/j.jfa.2009.03.012},
	fjournal = {Journal of Functional Analysis},
	issn = {0022-1236},
	journal = {J. Funct. Anal.},
	mrclass = {60H10 (35R60 76B03 76M35)},
	mrnumber = {2514056},
	mrreviewer = {Zhongmin Qian},
	number = {11},
	pages = {3660--3687},
	title = {Existence of a local smooth solution in probability to the stochastic {E}uler equations in {${\bf R}^3$}},
	url = {https://doi.org/10.1016/j.jfa.2009.03.012},
	volume = {256},
	year = {2009},
	Bdsk-Url-1 = {https://doi.org/10.1016/j.jfa.2009.03.012}}

@article{HoZhZh22,
author = {Hofmanová, Martina and Zhu, Rongchan and Zhu, Xiangchan},
title = {On Ill- and Well-Posedness of Dissipative Martingale Solutions to Stochastic 3{D E}uler Equations},
journal = {Communications on Pure and Applied Mathematics},
volume = {75},
number = {11},
pages = {2446-2510},
doi = {https://doi.org/10.1002/cpa.22023},
url = {https://onlinelibrary.wiley.com/doi/abs/10.1002/cpa.22023},
eprint = {https://onlinelibrary.wiley.com/doi/pdf/10.1002/cpa.22023},
year = {2022}
}

@article{BuDLIsSz16,
author = {Buckmaster, Tristan and De Lellis, Camillo and Székelyhidi Jr., László},
title = {Dissipative {E}uler Flows with {O}nsager-Critical Spatial Regularity},
journal = {Communications on Pure and Applied Mathematics},
volume = {69},
number = {9},
pages = {1613-1670},
doi = {https://doi.org/10.1002/cpa.21586},
year = {2016}
}

@article{DLSz09,
author = {C. De Lellis and L. Székelyhidi Jr.},
title = {The Euler equations as a differential inclusion},
volume = {170},
journal = {Annals of Mathematics},
number = {3},
publisher = {Department of Mathematics of Princeton University},
pages = {1417-1436},
year = {2009},
}

@article{BuDLIsSz15,
author = {T. Buckmaster and C. De Lellis and P. Isett and L. Székelyhidi Jr.},
title = {Anomalous dissipation for 1/5-{H}\"older {E}uler flows},
volume = {182},
journal = {Annals of Mathematics},
number = {1},
publisher = {Department of Mathematics of Princeton University},
pages = {127–172},
year = {2015},
}

@article{DLS13,
  title={{Dissipative continuous Euler flows}},
  author={De Lellis, Camillo and Székelyhidi, László},
  journal={Invent. Math.},
  year={2013},
  volume={193},
  pages={377-407}
}

@article{DLS14,
author = {De Lellis, Camillo and Székelyhidi, László},
journal = {J. Eur. Math. Soc. (JEMS)},
number = {7},
pages = {1467-1505},
publisher = {European Mathematical Society Publishing House},
title = {Dissipative {E}uler flows and {O}nsager's conjecture},
url = {http://eudml.org/doc/277168},
volume = {016},
year = {2014},
}

@article{BuVi19,
author = {Tristan Buckmaster and Vlad Vicol},
title = {{Nonuniqueness of weak solutions to the Navier-Stokes equation}},
volume = {189},
journal = {Ann. of Math.},
number = {1},
publisher = {Department of Mathematics of Princeton University},
pages = {101--144},
keywords = {convex integration, Euler equations, Intermittency, inviscid limit, Navier-Stokes, turbulence, weak solutions},
year = {2019},
doi = {10.4007/annals.2019.189.1.3},
URL = {https://doi.org/10.4007/annals.2019.189.1.3}
}

@article {Si86,
	AUTHOR = {Simon, Jacques},
	TITLE = {Compact sets in the space {$L^p(0,T;B)$}},
	JOURNAL = {Ann. Mat. Pura Appl. (4)},
	FJOURNAL = {Annali di Matematica Pura ed Applicata. Serie Quarta},
	VOLUME = {146},
	YEAR = {1986},
	PAGES = {65--96},
	ISSN = {0003-4622},
	MRCLASS = {46E40 (46E30)},
	MRNUMBER = {916688},
	MRREVIEWER = {James Bell Cooper},
	DOI = {10.1007/BF01762360},
	URL = {https://doi.org/10.1007/BF01762360},
}

@article{DPMa87,
	author = {DiPerna, R. J. and Majda, A. J.},
	date-modified = {2021-03-03 09:44:20 +0100},
	fjournal = {Communications in Mathematical Physics},
	issn = {0010-3616},
	journal = {Comm. Math. Phys.},
	mrclass = {35Q10 (76D05)},
	mrnumber = {877643},
	mrreviewer = {G. A. Nariboli},
	number = {4},
	pages = {667--689},
	title = {Oscillations and concentrations in weak solutions of the incompressible fluid equations},
	url = {http://projecteuclid.org/euclid.cmp/1104116630},
	volume = {{\bf 108}},
	year = {1987},
	Bdsk-Url-1 = {http://projecteuclid.org/euclid.cmp/1104116630}}
\bibliographystyle{alpha}
\end{document}